\theoremstyle{plain}
\newtheorem{theorem}{Theorem}[section]
\newtheorem{lemma}[theorem]{Lemma}
\newtheorem{proposition}[theorem]{Proposition}
\theoremstyle{definition}
\newtheorem{remark}[theorem]{Remark}
\newtheorem*{remark*}{Remark}
\newtheorem*{problem*}{Problem}
\def\aut#1{\mathrm{Aut}(#1)}
\def\B{\mathfrak{B}}
\def\Z{\mathbb Z}
\def\G{\mathcal G}
\def\k{\xi}
\def\h{H}
\def\setof#1#2{\{#1\, : \,#2\}}
\def\t{t}
\def\r{r}
\def\H{\mathfrak{H}}
\def\vs{\vspace{5pt}}
\newcommand{\NC}{{(K)}\xspace}
\newcommand{\nc}{{(R)}\xspace}
\newcommand{\Hol}{\operatorname{Hol}}
\newcommand{\Fix}{\operatorname{Fix}}
\newcommand{\dm}{\mathcal{D}}
\keywords{Yang-Baxter equation, set-theoretical solution, skew brace, Hopf--Galois extensions}
\title{Skew braces of size  $p^2q$ II: non-abelian type}
\begin{document}

\author{E. Acri}
\author{M. Bonatto}

\address[E. Acri, M. Bonatto]{IMAS--CONICET and Universidad de Buenos Aires, 
Pabell\'on~1, Ciudad Universitaria, 1428, Buenos Aires, Argentina}
\email{eacri@dm.uba.ar}

\email{marco.bonatto.87@gmail.com}

\maketitle

\begin{abstract}
    In this paper we enumerate the skew braces of non-abelian type of size $p^2q$ for $p,q$ primes with $q>2$ by the classification of regular subgroups of the holomorph of the non-abelian groups of the same order. Since Crespo dealt with the case $q=2$, this paper completes the enumeration of skew braces of size $p^2q$ started in a previous work by the authors. In some cases, we provide also a structural description of the skew braces. As an application, we prove a conjecture posed by V. Bardakov, M. Neshchadim and M. Yadav.
\end{abstract}

\section{Introduction}

The set-theoretical Yang--Baxter equation (YBE) was introduced in \cite{MR1183474} as a discrete version of the braid equation. A pair $(X,r)$ where $X$ is a set and $r:X\times X\to X\times X$ is a map, is a \emph{set-theoretical solution to the Yang--Baxter equation} if
\begin{equation}\label{YBE}
(id_X \times r)(r\times id_X)(id_X \times r)=(r\times id_X)(id_X \times r)(r\times id_X)
\end{equation}
holds.
A solution $(X,r)$ is {\it non-degenerate} if the map $r$ is defined as
\begin{equation}\label{non-deg}
r:    X\times X\longrightarrow X\times X,\quad (x,y)\mapsto (\sigma_x(y),\tau_y(x)),
\end{equation}
where $\sigma_x,\tau_x$ are permutations of $X$ for every $x\in X$. Non-degenerate solutions have been studied intensively over the past years \cite{MR1722951,MR1637256, MR1769723,MR1809284}. 

In order to describe \emph{non-degenerate involutive solutions}, that is solutions satisfying $r^2=id_{X\times X}$, Rump introduced the ring-like binary algebraic structures called {\it (left) braces} in \cite{MR2278047}. Later \emph{skew (left) braces} have been defined by Guarnieri and Vendramin in \cite{MR3647970}, to capture also non-involutive solutions.

A \emph{skew (left) brace} is a triple $(B,+,\circ)$ where $(B,+)$ and $(B,\circ)$ are groups (not necessarily abelian) such that
\[
a\circ(b+c)=a\circ b-a+a\circ c
\]
holds for every $a,b,c\in B$. Braces are skew braces with abelian additive group.

The problem of finding non-degenerate solutions to \eqref{YBE} is closely related to the classification problem for skew braces. Indeed in \cite{MR3835326}, a canonical structure of skew brace over a permutation group related to a non-degenerate solution has been described and it has been shown how to recover all the non-degenerate solutions with a given associated skew brace. So, in a sense, the study of non-degenerate solutions to \eqref{YBE} can be reduced to the classification of skew braces.

Some recent results on the classification problem for (skew) braces are, for instance: the classification of braces with cyclic additive group \cite{MR2298848, Rump}, skew braces of size $pq$ for $p,q$ different primes \cite{skew_pq}, braces of size $p^2q$ for $p,q$ primes with $q>p+1$ \cite{Dietzel}, skew braces of size $p^2q$ with cyclic $p$-Sylow subgroup and $p>2$ \cite{Caranti}, skew braces of size $2p^2$ \cite{Crespo_2p2}, braces of order $p^2,p^3$ where $p$ is a prime \cite{p_cube}, skew braces of order $p^3$ \cite{NZ} and skew braces of squarefree size \cite{squarefree}.

This paper is the second part of the enumeration and classification of skew braces of size $p^2q$ where $p,q$ are different primes. In \cite{abelian_case}, we dealt with the left braces. In this paper, we complete the classification focusing on the skew braces with non-abelian additive group.

In \cite{Caranti} the authors partially obtained the same results, through the connection between skew braces and Hopf--Galois extensions explained in \cite{Leandro-Byott} (and they also claim that the missing cases are the subject of a second paper in preparation). The common results of \cite{Caranti} and \cite{Dietzel} agree with ours. In fact, our methods are different but we are covering and extending their results related to the enumeration of skew braces of order $p^2q$.

The main tool in this paper is the algorithm for the construction of skew braces with a given additive group developed in \cite{MR3647970} (we used the same approach in \cite{skew_pq}). We obtain all the skew braces with additive group $B$ from regular subgroups of its holomorph $\Hol(B)=B\rtimes\aut{B}$. The isomorphism classes of skew braces are parametrized by the orbits of such subgroups under the action by conjugation of the automorphism group of $B$ in $\Hol(B)$ \cite[Section 4]{MR3647970}. 

We enumerate the skew braces according to their additive group, and the structure of the paper is displayed in the following table. The classification of the groups of size $p^2q$ can be found in \cite{EnumerationGroups} and the description of their automorphism groups in \cite{auto_pq}.

\begin{center}
	\begin{tabular}{l|c|c}
		& Groups & Sections \\
		\hline
		&&\\[-1em]
		$p= 1 \pmod{q}$    & $\Z_{p^2}\rtimes_{\t} \Z_q$ & \ref{subsection:p=1(q)_non-abelian_cyclic_p-Sylow} \\
		& $\G_k$ & \ref{subsection:p=1(q)_G_k}, \ref{subsection:p=1(q)_G_0}, \ref{subsection:p=1(q)_G_-1}, \ref{subsection:p=1(q)_G_1} \\
		\hline 
		&&\\[-1em]
		$p= -1 \pmod{q}$  & $\G_F$ & \ref{section:p=-1(q)} \\
		\hline
		&&\\[-1em]
		$q = 1\pmod{p}$,   & $\Z_q\rtimes_\r \Z_{p^2}$ & \ref{subsection:q=1(p)_non-abelian_p-Sylow_cyclic} \\
		$q\neq 1\pmod{p^2}$    & $\Z_p\times (\Z_q\rtimes_\r \Z_p)$ & \ref{subsection:q=1(p)_non-abelian_non-cyclic_p-Sylow} \\
		
		\hline
		&&\\[-1em]
		$q= 1\pmod{p^2}$         & $\Z_q\rtimes_{h^p} \Z_{p^2}$ & \ref{subsection:q=1(p^2)_non-abelian_cyclic_p-Sylow1} \\
		& $\Z_p\times (\Z_q\rtimes_{h^p} \Z_p)$ & \ref{subsection:q=1(p^2)_non-abelian_non-cyclic_p-Sylow} \\
		& $\Z_q\rtimes_h \Z_{p^2}$ & \ref{subsection:q=1(p^2)_non-abelian_cyclic_p-Sylow2}
	\end{tabular}
	\bigskip
	
\end{center}

The enumeration results are collected in suitable tables in each section. The skew braces of non-abelian type of size $2p^2$ have been enumerated by Crespo, so we do not include tables for this case and we refer the reader to \cite[Section 5]{Crespo_2p2}. If $q=1\pmod{p}$ and $q\neq 1\pmod{p^2}$ there are $2$ non-abelian groups if $p^2q\neq 12$. If $p^2q=12$, there are $3$ groups. Since the skew braces of size $12$ are included in the GAP library \emph{YangBaxter}, \cite{YBE}, we omit them.

In \cite[Table 5.3]{MR3647970} we have the number of skew braces of order $n\leq 120$ with some exceptions. Due to a computational improvement of the algorithm calculating skew braces, in \cite{skew_trick} we have many tables collecting the number of skew braces of order $n\leq 868$ with some exceptions. We highlight that none of these exceptions are of the form $p^2q$ for different primes $p,q$. All this data is available in \cite{YBE}. This information was of invaluable help when working on the present paper and all of our results agree with those tables.

In \cite[Conjecture 4.1]{skew_trick}, the authors conjecture the number of skew braces of size $p^2q$ based on their tables. In Section \ref{section:conjecture}, as an application of the results obtained in \cite{abelian_case} and in the present work, we can give a positive answer to that conjecture. 
 
\section{Preliminaries}

A \emph{skew (left) brace} is a triple $(B,+,\circ)$ where $(B,+)$ and $(B,\circ)$ are groups and $$a\circ(b+c)=a\circ b-a+a\circ c$$ holds for every $a,b,c\in B$. We say that a skew brace $(B,+,\circ)$ is of $\chi$-type if $(B,+)$ has the group theoretical property $\chi$.

A {\it bi-skew brace} is a skew brace $(B,+,\circ)$ such that $(B,\circ,+)$ is also a skew brace (see \cite{biskew}). Equivalently
\begin{equation*}\label{eq for biskew}
	x+(y\circ z)=(x+ y)\circ x'\circ (x+ z) 
\end{equation*}
holds for every $x,y,z\in B$, where $x'$ denotes the inverse of $x$ in $(B,\circ)$.

Given a skew brace $(B,+,\circ)$, the mapping
\begin{equation*}\label{axiom}
	\lambda:(B,\circ)\to \aut{B,+}, \quad \lambda_a(b)=-a+a\circ b
\end{equation*}
is a homomorphism of groups. 

A subgroup $I$ of $(B,+)$ is a {\it left ideal} of $B$ if $\lambda_a(I)\leq I$ for every $a\in B$. Every left ideal is a subgroup of $(I,\circ)$. In particular, $$\Fix(B)=\setof{a\in B}{\lambda_b(a)=a, \, \text{ for every }b\in B}$$
is a left ideal. If $I$ is also a normal subgroup of both $(B,+)$ and $(B,\circ)$ then $I$ is called an {\it ideal}.

\subsection{Skew braces and regular subgroups}

Given a group $A$ we denote by $\pi_1$ and $\pi_2$ the canonical mappings
\begin{eqnarray*}
\pi_1:\Hol(A)&\longrightarrow& A,\quad (x,f)\mapsto x,\\
\pi_2:\Hol(A)&\longrightarrow& A,\quad (x,f)\mapsto f.
\end{eqnarray*}

In order to classify all skew braces of non-abelian type of size $p^2q$ where $p,q$ are primes, we are going to apply the same ideas as in \cite{abelian_case}, where we obtained a classification of skew braces of abelian type (also called braces).

A subgroup $G$ of $\Hol (A)$ is said to be \emph{regular} if the corresponding set of permutations is a regular subgroup of $\mathbb{S}_A$, the symmetric group over $A$ (i.e. the natural action is transitive and it has trivial stabilizers). We are going to exploit the connection between skew braces with given additive group $A$ and regular subgroups of the holomorph of $A$, as explained in \cite{MR3647970}. This connection is summarized in the following theorem.

\begin{theorem}\cite[Theorem 4.2, Proposition 4.3]{MR3647970}\label{thm:skew_holomorph}
	Let $(A,+)$ be a group. If $\circ$ is an operation such that $(A,+,\circ)$ is a skew brace, then $\{ (a,\lambda_a): a\in A \}$ is a regular subgroup of $\Hol(A,+)$. Conversely, if $G$ is a regular subgroup of $\Hol(A,+)$, then $A$ is a skew brace with $$a\circ b=a+f(b)$$ where $(\pi_1|_G)^{-1}(a)=(a,f)\in G$ and $(A,\circ)\cong G$.
	
	Moreover, isomorphism classes of skew braces over $A$ are in bijective correspondence with the orbits of regular subgroups of $\Hol(A)$ under the action of $\aut{A}$ by conjugation.
\end{theorem}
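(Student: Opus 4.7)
The plan is to verify three assertions by direct computation in $\Hol(A,+)=A\rtimes\aut{A,+}$: first that $G=\{(a,\lambda_a):a\in A\}$ is a regular subgroup with $(A,\circ)\cong G$, then that every regular subgroup arises in this way from a well-defined skew brace structure on $A$, and finally that $\aut{A,+}$-conjugacy of regular subgroups matches skew brace isomorphism.

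For the first claim, I would begin by deriving from the skew brace axiom that each $\lambda_a$ is additive and that $\lambda:(A,\circ)\to\aut{A,+}$ is a group homomorphism. (Additivity of $\lambda_a$ is essentially the brace identity; the relation $\lambda_{a\circ b}=\lambda_a\lambda_b$ reduces, after unwinding definitions, to the identity $a\circ(-b)=a-(a\circ b)+a$, which follows by applying the brace axiom to $a\circ(b+(-b))=a\circ 0=a$.) Once $\lambda$ is a homomorphism, the semidirect product law yields
\[
(a,\lambda_a)(b,\lambda_b)=(a+\lambda_a(b),\lambda_a\lambda_b)=(a\circ b,\lambda_{a\circ b}),
\]
using $a+\lambda_a(b)=a+(-a+a\circ b)=a\circ b$. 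This simultaneously shows that $G$ is a subgroup of $\Hol(A,+)$, that $a\mapsto(a,\lambda_a)$ is a group isomorphism $(A,\circ)\to G$, and that the natural action of $G$ on $A$ is left multiplication in $(A,\circ)$; regularity is immediate.

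For the converse, regularity of $G$ makes $\pi_1|_G:G\to A$ a bijection (surjectivity is transitivity, injectivity is the trivial-stabilizer condition). Writing $(a,f_a)$ for the unique preimage of $a$, multiplication in $G$ forces the operation $a\circ b:=a+f_a(b)$ to satisfy $f_{a\circ b}=f_af_b$; hence $\circ$ is associative with two-sided identity $0$, inverses exist, and $a\mapsto(a,f_a)$ is a group isomorphism $(A,\circ)\cong G$. The skew brace identity is then a two-line verification using $f_a\in\aut{A,+}$:
\[
a\circ(b+c)=a+f_a(b)+f_a(c)=(a+f_a(b))-a+(a+f_a(c))=a\circ b-a+a\circ c.
\]

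For the parametrization of isomorphism classes, I would argue that a bijection $\varphi:A\to A$ is a skew brace isomorphism $(A,+,\circ_1)\to(A,+,\circ_2)$ exactly when $\varphi\in\aut{A,+}$ and $\varphi\lambda^{(1)}_a\varphi^{-1}=\lambda^{(2)}_{\varphi(a)}$ for every $a\in A$; a short computation in $\Hol(A,+)$ shows that this second condition is equivalent to conjugation by $(0,\varphi)$ carrying the graph of $\lambda^{(1)}$ to the graph of $\lambda^{(2)}$. The main obstacle, such as it is, lies not in any single step but in keeping the two group operations notationally straight during the semidirect product calculations; conceptually the theorem is an unwinding of definitions once the dictionary between $\lambda$, regular subgroups, and the left multiplication action is set up.
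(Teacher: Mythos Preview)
The paper does not actually prove this theorem: it is stated as a citation of \cite[Theorem 4.2, Proposition 4.3]{MR3647970} and used as a black box throughout, so there is no ``paper's own proof'' to compare against. Your argument is the standard one and is correct; in particular the key identities $(a,\lambda_a)(b,\lambda_b)=(a\circ b,\lambda_{a\circ b})$, the recovery of $\circ$ from a regular subgroup via $a\circ b=a+f_a(b)$, and the translation of skew brace isomorphisms into conjugation by $(0,\varphi)$ in $\Hol(A,+)$ are exactly what underlies the cited result.
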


We can take advantage of the classification of groups of size $p^2q$ for odd primes provided in \cite[Proposition 21.17]{EnumerationGroups} and of size $4q$ in \cite{kohl_4q} and of the description of their automorphism groups given in \cite{auto_pq} to compute the orbits of regular subgroups under the action by conjugation of $\aut{A}$ on $\Hol(A)$. The enumeration of such regular subgroups will provide the enumeration of skew braces of the same size, according to Theorem \ref{thm:skew_holomorph}.

\begin{remark}\label{remark for lambdas}
	Let $(A,+)$ be a group and $G$ a regular subgroup of $\Hol(A)$. According to Theorem \ref{thm:skew_holomorph}, $(A,+,\circ)$ where 
	\begin{equation}\label{circ}
	a\circ b=a+\pi_2((\pi_1|_G)^{-1}(a))(b)
	\end{equation}
	for every $a,b\in A$ is a skew brace. In other words, $\lambda_a=\pi_2((\pi_1|_G)^{-1}(a))$ and 
	so $|\ker{\lambda}|=\frac{|G|}{|\pi_2(G)|}$.
	
As in \cite{abelian_case}, in some cases, we will use \eqref{circ} to explicitly compute the operations of the skew braces.
\end{remark}

We identify an element of the holomorph $(a,f)\in\Hol (A,+)$ with a permutation over the underlying set $A$ acting as $(a,f)\cdot x=a+f(x)$ for every $x\in A$.

Notice that if $(a,f),(b,g)\in G$ then
$$\pi_1(a,f)=\pi_1(b,g)\, \text{ if and only if } \, 
(a,f)^{-1} (b,g)\in H=G\cap (\{1\}\times \aut{A}).$$
Therefore the mapping $\pi_1$ restricted to $G$ factors through the canonical projection onto the set of cosets with respect to the subgroup $H$. If $G$ is finite, we have that $|G|=|H||\pi_1(G)|$ and $|\pi_1(G)|$ divides the size of $G$.

\begin{lemma}\label{rem for regularity}\cite[Lemma 2.5]{abelian_case}
	Let $A$ be a finite group and $G\leq \Hol(A)$. The following are equivalent:
	\begin{itemize}
		\item[(i)] $G$ is regular.
		\item[(ii)] $|A|=|G|$ and $\pi_1(G)=A$.
		\item[(iii)] $|A|=|G|$ and $G\cap \left(\{1\}\times \aut{A}\right)=1$. 
	\end{itemize}
\end{lemma}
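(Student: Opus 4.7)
The plan is to analyze the natural action of $\Hol(A)$ on $A$ at a distinguished base point, identify the orbit and stabilizer of that point with $\pi_1(G)$ and $G\cap(\{1\}\times\aut{A})$ respectively, and then apply the orbit--stabilizer theorem together with the definition of regularity.

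First I would fix the identity element $0 \in (A,+)$ (written as $1$ when viewed inside $\Hol(A)$) and compute $(a,f)\cdot 0 = a + f(0) = a$. This yields two immediate identifications: the orbit $G\cdot 0$ equals $\pi_1(G)$, and the stabilizer $G_0$ equals $G \cap (\{1\}\times \aut{A})$. Next I would recall that regularity of $G$ on $A$ is transitivity plus freeness. Transitivity on $A$ is exactly $\pi_1(G)=A$, and freeness is triviality of every stabilizer; since point stabilizers are pairwise conjugate, freeness is equivalent to $G_0=1$, i.e.\ $G\cap(\{1\}\times\aut{A})=1$. The orbit--stabilizer formula then reads
\[
|G|\;=\;|G\cdot 0|\cdot|G_0|\;=\;|\pi_1(G)|\cdot|G\cap(\{1\}\times\aut{A})|.
\]

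With these tools in hand, (i)$\Rightarrow$(ii) and (i)$\Rightarrow$(iii) follow immediately from the translation above, and the equality $|G|=|A|$ drops out of orbit--stabilizer since regularity makes $G\cdot 0=A$ while $G_0=1$. For (ii)$\Rightarrow$(iii), using $|G|=|A|=|\pi_1(G)|$ in the displayed identity forces $|G\cap(\{1\}\times\aut{A})|=1$; for (iii)$\Rightarrow$(ii), from $|G|=|A|$ and $|G\cap(\{1\}\times\aut{A})|=1$ the identity gives $|\pi_1(G)|=|A|$, and since $\pi_1(G)\subseteq A$ this forces $\pi_1(G)=A$. In either case both halves of regularity hold simultaneously, so (ii)$\Rightarrow$(i) and (iii)$\Rightarrow$(i).

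There is no real obstacle here: the whole argument is bookkeeping around the observation that $(a,f)\cdot 0 = a$ realises $\pi_1|_G$ as the orbit map at $0$. The one point to handle carefully is that $\pi_1$ is merely a set map (not a group homomorphism, since $\Hol(A)$ is a semidirect product and $G$ need not respect its factors), so I would keep all the counting at the level of sets via orbit--stabilizer rather than invoking any kernel/image exact sequence.
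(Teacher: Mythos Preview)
Your argument is correct and is the standard way to prove this lemma: identify the orbit and stabilizer of the identity under the natural action of $\Hol(A)$ on $A$, and apply orbit--stabilizer. The paper itself does not give a proof of this statement; it simply cites \cite[Lemma~2.5]{abelian_case}, so there is no in-paper argument to compare against, but your proof is exactly the expected one.
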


\begin{lemma}\label{pi_1 for fix}
	Let $A$ be a group and $G=\langle u_i\alpha_i ,\, i \in I\rangle\leq \Hol(A)$ where $u_i\in A$, $\alpha_i\in \aut{A}$ for $i\in I$. Then: 
	\begin{itemize}
		\item[(1)] $\pi_1(G) \subseteq \langle h(g)\mid \, g\in U,\, h\in\pi_2(G)\rangle$, where $U=\langle u_i,\, i\in I\rangle$.
		\item[(2)] If $\alpha_i(u_i)=u_i$ then $\langle u_i\rangle \subseteq \pi_1(G)$.
	\end{itemize}
\end{lemma}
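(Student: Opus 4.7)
The plan is to unwind the definition of the holomorph and compute the first coordinate of an arbitrary word in the generators $(u_i,\alpha_i)$ and their inverses, using the semidirect-product multiplication $(a,f)(b,g)=(a+f(b),fg)$ together with the inverse formula $(u,\alpha)^{-1}=(-\alpha^{-1}(u),\alpha^{-1})$. Since $\pi_2\colon\Hol(A)\to\aut A$ is a group homomorphism, $\pi_2(G)=\langle\alpha_i:i\in I\rangle$, so every automorphism appearing in the calculation automatically lies in $\pi_2(G)$.

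For part~(1), the first step is to write a generic element of $G$ as a product $(v_1,\beta_1)\cdots(v_n,\beta_n)$, where for each $k$ either $(v_k,\beta_k)=(u_{i_k},\alpha_{i_k})$ (so $v_k=u_{i_k}$) or $(v_k,\beta_k)=(u_{i_k},\alpha_{i_k})^{-1}=(-\alpha_{i_k}^{-1}(u_{i_k}),\alpha_{i_k}^{-1})$. Iterating the holomorph multiplication then gives
\[
\pi_1\bigl((v_1,\beta_1)\cdots(v_n,\beta_n)\bigr)=v_1+\beta_1(v_2)+(\beta_1\beta_2)(v_3)+\cdots+(\beta_1\cdots\beta_{n-1})(v_n).
\]
Each summand is of the shape $\pm h(u_{i_k})$ with $h\in\pi_2(G)$, because $\beta_1\cdots\beta_{k-1}\in\pi_2(G)$ and any extra factor $\alpha_{i_k}^{-1}$ hidden inside $v_k$ also sits in $\pi_2(G)$; the sign is absorbed using that $h$ is a homomorphism, so $h(-u_{i_k})=-h(u_{i_k})$. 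Since $u_{i_k}\in U$, this summand lies in $\langle h(g):g\in U,\,h\in\pi_2(G)\rangle$, and hence so does the whole sum, yielding~(1).

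For part~(2), the hypothesis $\alpha_i(u_i)=u_i$ also gives $\alpha_i^{-1}(u_i)=u_i$, and therefore $(u_i,\alpha_i)^{-1}=(-u_i,\alpha_i^{-1})$. A routine induction on $|n|$ using the multiplication rule then yields $(u_i,\alpha_i)^n=(nu_i,\alpha_i^n)$ for every $n\in\mathbb Z$, where $nu_i$ denotes the $n$-th power of $u_i$ in $(A,+)$ (unambiguous even if $A$ is non-abelian). Projecting via $\pi_1$ gives $nu_i\in\pi_1(G)$ for all $n$, i.e.\ $\langle u_i\rangle\subseteq\pi_1(G)$.

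The whole argument is essentially bookkeeping in the semidirect product, and no deep obstacle arises. The only points demanding care are that the automorphism $\alpha_{i_k}^{-1}$ produced by inverting a generator remains inside $\pi_2(G)$, and that sign ambiguities from $h(-u_{i_k})=-h(u_{i_k})$ get absorbed into the subgroup generated by the $h(g)$, which follows because $U$ is closed under inverses.
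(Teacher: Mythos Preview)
Your proof is correct and follows essentially the same approach as the paper's: both compute the first coordinate of a word in the generators via the semidirect-product multiplication and inverse formulas, and both obtain part~(2) from the identity $(u_i\alpha_i)^n=u_i^n\alpha_i^n$ when $\alpha_i$ fixes $u_i$. The paper's version is terser (it just displays the product and inverse formulas and says ``the claim follows by induction''), while you have spelled out the induction explicitly.
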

\begin{proof}
(1)	We have that
	\begin{equation}\label{PP}
	u_i \alpha_i x f=u_i \alpha_i(x) \alpha_i f \quad \text{and} \quad (u_i \alpha_i)^{-1}=\alpha_i^{-1}(u_i)^{-1}\alpha_i^{-1},
	\end{equation}
	for every $i\in I$, $x\in A$ and $f\in \aut{A}$, thus the claim follows by induction.

(2) According to \eqref{PP} we have that
$$(u_i \alpha_i)^n=u_i^n \alpha_i^n$$
and so $\pi_1((u_i \alpha_i)^n)=u_i^n$ for every $n\in \Z$.
\end{proof}

In order to show that a subgroup of the holomorph is regular or not we will make use of Lemma \ref{rem for regularity} and Lemma \ref{pi_1 for fix}. In most of the cases we will omit the explicit computations.

In \cite{abelian_case} we described a technique, inspired by \cite[Section 2.2]{NZpaper}, to obtain the conjugacy classes we are looking for. Let us briefly recall it. 

As a general strategy, we show a list of representatives of regular subgroups and then we prove that any regular subgroup is conjugate to one of the groups in the list. Given such list, the isomorphism class of the representatives of conjugacy classes can be easily obtained using the list of groups of size $p^2q$ and so we omit a proof of such isomorphisms.

To compute the list of representatives, first we need to identify some properties of regular subgroups that are invariant under conjugation by elements of the subgroup $\{1\}\times \aut{A}$.
\begin{itemize}
    \item The map $\pi_2$ is a group homomorphism and, if $G$ is a regular subgroup of $\Hol(A)$, the size of $\pi_2(G)$ divides both $|(A,+)|$ and $|\aut{A,+}|$ and it is invariant under conjugation. So we can compute the regular subgroups according to the size of their image under $\pi_2$. The unique trivial skew brace over $A$ is clearly associated to the unique regular subgroup with trivial image under $\pi_2$ (i.e. $A\times \{1\}$), so we assume that $|\pi_2(G)|>1$.
    
    \item Let us assume that the image under $\pi_2$ has size $k>1$. The conjugacy class of the image under $\pi_2$ in $\aut{A}$ is invariant under conjugation. So we can provide a list of representatives of conjugacy classes of subgroups of $\aut{A}$ of size $k$ and we assume that $\pi_2(G)$ is one of the subgroups in the list.
    
    \item Let $G$ and $G'$ be two regular subgroups such that $\pi_2(G)=\pi_2(G')$. If $hGh^{-1}=G'$ then $h$ is an element of the normalizer of their image under $\pi_2$ and moreover $h \ker{\pi_2|_G} h^{-1}=h (\ker{\pi_2|_G})=\ker{\pi_2|_{G'}}$. 
\end{itemize} 
Sometimes the invariants in the list above do not identify a conjugacy class of regular subgroups in the sense of Theorem \ref{thm:skew_holomorph}, and so we need also to employ some further invariant to provide a list of non-conjugate regular subgroups. Assume that $G$ and $G^\prime$ are regular subgroups of $\Hol(A)$ conjugate by $h\in \aut{A}$, then:
\begin{itemize}
\item the $p$-Sylow subgroups of $G$ are conjugate by $h$ to the $p$-Sylow subgroups of $G'$.
    \item Let $\H$ be a normal subgroup of $\Hol(A)$. The following diagram is commutative:  
\begin{equation}\label{diagram H}
\xymatrix{ \Hol(A) 
	\ar@{>>}[d]  \ar[r]^{\widehat{h}}&  \Hol(A)\ar@{>>}[d] \\
	\Hol(A)/\H \ar[r]^{\widehat{h\H}}& \Hol(A)/\H}
\end{equation}
where $\widehat{h}$, $\widehat{h\H}$ are the inner automorphisms and the unlabeled arrows are the canonical homomorphisms. Then their images in the quotient $\Hol(A)/\H$ are conjugate by $h\H$. So the conjugacy class of the image in the factor $\Hol(A)/\H$ is invariant up to conjugation.
\end{itemize}

Using Lemma \ref{rem for regularity} and the invariants above we can provide a list of regular subgroups for each admissible value of $k$ and claim that they are not conjugate. Usually we start by computing the values of the main invariants we already mentioned, namely:
\begin{itemize}
	\item Find a set of representatives of the conjugacy classes of subgroups of size $k$ of $\aut{A,+}$.
	\item The kernel of $\pi_2$ is a subgroup of $(A,+)$. So we need to find a set of representatives of the orbits of subgroups of size $\frac{|A|}{k}$ of $A$, under the action of the normalizer of the image of $\pi_2$ on $A$.
\end{itemize}

The next step is to prove that such list is actually a set of representatives, namely that any other regular subgroup is conjugate to one of them. First we need to construct the regular subgroups. 

Let $G$ be a regular subgroup. If the set $\setof{\alpha_i}{1\leq i\leq n}$ generates $\pi_2(G)$ and $\setof{k_j}{1\leq j\leq m}$ generates the kernel of $\pi_2|_G$ then $G$ has the following {\it standard presentation}: 
$$G=\langle k_1,\ldots, k_m, u_1\alpha_1,\ldots, u_n\alpha_n \rangle,$$
for some $u_i\in A$. In particular, $u_i\neq 1$, otherwise $\alpha_i\in G\cap (\{1\}\times \aut{A})$ and so $G$ is not regular. We can replace any generator $g$ of $G$ by the product of $g$ with any other generator of $G$, in order to obtain a nicer presentation of the same group $G$. In particular, we can choose any representative of the coset of $u_i$ with respect to the kernel, without changing the group $G$. 

The group $G$ has to satisfy the following necessary conditions, that together provide constraints over the choice of the elements $u_i$: 
\begin{itemize}	
	\item[\NC] The kernel of $\pi_2|_G$ is normal in $G$.	
	\item[\nc] The generators $\setof{u_i\alpha_i}{1\leq i\leq n}$ satisfy the same relations as $\setof{\alpha_i}{1\leq i\leq n}$ modulo $\ker{\pi_2|_G}$ (e.g. if $\alpha_i^n=1$ then $(u_i\alpha_i)^n\in \ker{\pi_2|_G}$). 
\end{itemize}

Given one such group, we can conjugate by the normalizer of $\pi_2(G)$ in $\aut{A}$ that stabilizes the kernel of $\pi_2|_G$ in order to show that $G$ is conjugate to one of the groups in the list of the chosen representatives.

\subsection{Notation} 

If $C$ is a cyclic group acting on a group $G$ by $\rho:C\longrightarrow \aut{G}$ and $\rho(1)=f$, then $G\rtimes_{f} C$ denotes the semidirect product determined by the action $\rho$. In this case the group operation on $G\rtimes_f C$ is 
\begin{eqnarray*}
	\begin{pmatrix} x_1 \\ y_1 \end{pmatrix}  \begin{pmatrix} x_2 \\ y_2 \end{pmatrix} =\begin{pmatrix} x_1 f^{y_1} (x_2) \\ y_1 y_2\end{pmatrix},
\end{eqnarray*}
for every $x_1,x_2\in G$ and $y_1,y_2\in C$.


\begin{remark}\label{subgroups of GL}\cite[Remark 3.5]{abelian_case}
	Let $p,q$ be primes such that $p=1\pmod q$ and let $g$ be an element of order $q$ of $\Z_p^\times$. As in \cite{abelian_case} we denote by $\B$ the subset of $\Z_q$ that contains $0,1,-1$ and one out of $k$ and $k^{-1}$ for $k\neq 0,1,-1$ and by $\dm_{a,b}$ the diagonal matrix with diagonal entries $g^a$ and $g^b$. We are using that, up to conjugation, the subgroups of order $q$ of $GL_2(p)$ are generated by one of the matrices
	$$\dm_{1,s}=\begin{bmatrix} g & 0\\ 0 & g^s \end{bmatrix},\quad \dm_{0,1}=\begin{bmatrix} 1 & 0\\ 0 & g \end{bmatrix},$$
	for $0\leq s\leq q-1$ and a set of representatives of conjugacy classes of such groups in $GL_2(p)$ is given by the groups generated by $\dm_{1,s}$ 
	for $s\in\B$.
	
	Up to conjugation, the unique subgroup of order $p$ of $GL_2(p)$ is generated by
	$$C=\begin{bmatrix} 1& 1\\ 0 & 1 \end{bmatrix}.$$
	
	A set of representatives of conjugacy classes of subgroups of order $pq$ of $GL_2(p)$ is given by $H_s=\langle C,\dm_{1,s}\rangle$ and $\widetilde{H}=\langle C,\dm_{0,1}\rangle$, where $0\leq s\leq q-1$. 
\end{remark}

\section{Skew braces of size $p^2q$ with $p=1\pmod{q}$}\label{section:p=1(q)}

Let $p,q$ be primes such that $p=1\pmod{q}$ and $p,q>2$. Recall that we omit the case $q=2$ since it was covered by Crespo in \cite{Crespo_2p2}. Following the same notation as in \cite{abelian_case} we denote by 
$g$ a fixed element of order $q$ in $\Z_{p}^\times$ and by $\t$ a fixed element of order $q$ in $\Z_{p^2}^\times$. The non-abelian groups of size $p^2q$ are the following:

\begin{itemize}
	\item[(i)] $\mathbb{Z}_{p^2}\rtimes_{\t} \mathbb{Z}_q=\langle \sigma, \tau\ |\ \sigma^{p^2}=\tau^q=1,\ \tau\sigma\tau^{-1}=\sigma^{\t} \rangle$.
	\item[(ii)] $\G_{k}=\langle \sigma,\tau,\epsilon\,|\, \sigma^p=\tau^p=\epsilon^q=1,\, \epsilon \sigma \epsilon^{-1}=\sigma^g,\, \epsilon \tau \epsilon^{-1}=\tau^{g^k}\rangle \cong\mathbb{Z}_{p}^2 \rtimes_{\dm_{1,k}} \mathbb{Z}_q$, 
	for $k\in\B$.
\end{itemize}

Tables \ref{table:1} and \ref{table:2} collect the enumeration of skew braces according to their additive and multiplicative groups, taking into account that the groups $\G_k$ and $\G_{k^{-1}}$ are isomorphic for $k\neq 0$.

Note that for $q=3$ we have $\B=\{0,1,-1\}$ and $2^{-1}=2=-1$ in $\Z_3^\times$. The enumeration of skew braces according to the isomorphism class of their multiplicative group is slightly effected by these facts and so we have a separate table for this case.

\begin{table}[H]
	\centering
	\small{
		\begin{tabular}{c|c|c|c|c|c|c|c|c}
			$+ \backslash \circ$ & $\Z_{p^2q}$ & $\Z_{p^2}\rtimes_{\t}\Z_q$ &$\Z_p^2\times\Z_q$ &   $\G_s$, $s\neq 0,\pm 1, 2$ & $\G_0$ & $\G_{-1}$ & $\G_1$ & $\G_{2}$\\
			\hline
			$\Z_{p^2}\rtimes_{\t}\Z_q$ & $4$ & $2(q-1)$& - &- &- &- &- &- \\
			$\G_k$, $k\neq 0,\pm 1$ & - &-& $4$  & $8(q+1)$ & $8(q+1)$ & $4(q+1)$ & $4(q-1)$ & $8(q+1)$ \\
			$\G_0$ & - &-& $2$  & $4q$ & $4q$ & $2q$ & $2(q-1)$ & $4q$ \\
			$\G_{-1}$ & - &-& $3$ & $4q+p+2$ & $4q+p+2$ & $3q+p-1$ & $2(q-1)$ & $4q+p+2$\\
			$\G_1$ & - &-& $5$  & $3(q+2)$ & $4(q+1)$ & $2(q+1)$ & $3q-1$ & $6q$
	\end{tabular}}
	\vs
	\caption{Enumeration of skew braces of size $p^2q$ according to the additive and multiplicative isomorphism class of groups where $p=1\pmod{q}$ and $q>3$.}\label{table:1}
	
	\small{    \begin{tabular}{c|c|c|c|c|c|c}
			$+ \backslash \circ$ & $\Z_{3p^2}$ & $\Z_{p^2}\rtimes_{\t}\Z_3$ &$\Z_p^2\times\Z_3$ & $\G_0$ & $\G_{-1}$ & $\G_1$ \\
			\hline
			$\Z_{p^2}\rtimes_{\t}\Z_3$ & $4$ & $4$ & - &- &- &- \\
			$\G_0$ & - &- & $2$  & $12$ & $6$ & $4$ \\
			$\G_{-1}$ & - &- & $3$ & $p+14$ & $p+8$ & $4$ \\
			$\G_1$ & - &-& $5$  & $16$ & $10$ & $8$ 
	\end{tabular}}
	\vs
	\caption{Enumeration of skew braces of size $3p^2$ according to the additive and multiplicative isomorphism class of groups where $p=1\pmod{3}$.
	}
\label{table:2}
\end{table}

\subsection{Skew braces of $\Z_{p^2}\rtimes_{\t} \Z_q$-type}\label{subsection:p=1(q)_non-abelian_cyclic_p-Sylow}

In this section we denote by $A$ the group $\Z_{p^2}\rtimes_{\t} \Z_q$. According to the description of the automorphism group of groups of order $p^2q$ given in \cite[Theorem 3.4]{auto_pq}, we have that 
$$\phi:\Hol(\Z_{p^2})=\Z_{p^2}\rtimes \Z_{p^2}^\times\longrightarrow \aut{A},\quad (i,j) \mapsto\varphi_{i,j}=\begin{cases} \tau\mapsto \sigma^i\tau,\\ \sigma \mapsto \sigma^j\end{cases}$$
is an isomorphism of groups.
In particular $|\aut{A}|=p^3(p-1)$. Thus if $G$ is a regular subgroup of $\Hol(A)$ then $|\pi_2(G)|\in \{p,q,p^2,pq,p^2q\}$. Let us show that $|\pi_2(G)|$ can not equal $p$.


\begin{lemma}
	Let $G$ be a regular subgroup of $\Hol(A)$. Then $|\pi_2(G)|\neq p$.
\end{lemma}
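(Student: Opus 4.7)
The plan is to assume for contradiction that $|\pi_2(G)|=p$ and then exhibit a nontrivial element of $G\cap(\{1\}\times\aut{A})$, contradicting regularity via Lemma \ref{rem for regularity}. Let $K:=\ker\pi_2|_G$, a subgroup of $A$ of order $pq$. Since the Sylow $p$-subgroup of $A$ is the cyclic group $\langle\sigma\rangle$, its unique subgroup of order $p$, namely $M:=\langle\sigma^p\rangle$, must lie in $K$. The quotient $A/M$ is isomorphic to the non-abelian group of order $pq$: conjugation by $\bar\tau$ on $\langle\bar\sigma\rangle$ is multiplication by $t\bmod p$, and since $t\in\Z_{p^2}^\times$ has order $q$ and $\gcd(p,q)=1$, the reduction $t\bmod p$ retains order $q$ in $\Z_p^\times$. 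In this quotient, $K/M$ is a Sylow $q$-subgroup.

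The first step I would carry out is to show $N_A(K)=K$. In a non-abelian group of order $pq$, the Sylow $q$-subgroups are self-normalizing (the number of Sylows is $p$, not $1$), so $N_{A/M}(K/M)=K/M$, and pulling back along $A\twoheadrightarrow A/M$ gives $N_A(K)=K$.

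The second step is to show that $\alpha(K)=K$ for every order-$p$ element $\alpha\in\aut{A}$. Under the isomorphism $\phi:\Hol(\Z_{p^2})\to\aut{A}$, $\alpha$ corresponds to some $(i,v)$ of order $p$ in $\Z_{p^2}\rtimes\Z_{p^2}^\times$. Since $\Z_{p^2}^\times$ is cyclic of order $p(p-1)$, its unique subgroup of order $p$ consists of the elements $\equiv 1\pmod p$, so $v\equiv 1\pmod p$. A direct expansion of powers of $(1+p)$ (using that $p$ is odd) yields $N(v):=1+v+\cdots+v^{p-1}\equiv p\pmod{p^2}$, and the relation $(i,v)^p=(i\cdot N(v),1)=(0,1)$ then forces $p\mid i$. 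Consequently $\alpha(\sigma)=\sigma^v\equiv\sigma$ and $\alpha(\tau)=\sigma^i\tau\equiv\tau$ modulo $M$, so $\alpha$ acts trivially on $A/M$ and hence preserves any subgroup containing $M$, in particular $K$.

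To conclude, take a standard presentation $G=\langle K,u\alpha\rangle$ with $\alpha$ a generator of $\pi_2(G)$. Normality of $K$ in $G$ (automatic since $K$ is the kernel of $\pi_2|_G$) combined with the identity $(u\alpha)(k,1)(u\alpha)^{-1}=(u\alpha(k)u^{-1},1)$ forces $u\alpha(K)u^{-1}=K$, and together with $\alpha(K)=K$ this gives $u\in N_A(K)=K$. But then $(1,\alpha)=(u,1)^{-1}(u,\alpha)\in G$ is a nontrivial element of $G\cap(\{1\}\times\aut{A})$, contradicting Lemma \ref{rem for regularity}. I expect the only delicate point to be the arithmetic in the second step showing that order-$p$ automorphisms are trivial on $A/M$; once that is in place, the argument is routine group-theoretic bookkeeping.
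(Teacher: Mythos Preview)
Your proof is correct and takes a somewhat different route from the paper's. The paper first conjugates the kernel $K=\ker\pi_2|_G$ to the standard form $\langle\tau,\sigma^p\rangle$, then writes $G=\langle\tau,\sigma^p,\sigma^a\alpha\rangle$ and checks the normality condition \NC on the single generator $\tau$ explicitly: $\sigma^a\alpha\tau\alpha^{-1}\sigma^{-a}=\sigma^{a(1-t)+pb}\tau$ lies in $\langle\sigma^p,\tau\rangle$ only if $p\mid a$ (using that $t\not\equiv 1\pmod p$), whence $\alpha\in G$. Your argument instead avoids normalizing $K$: you observe structurally that every order-$p$ automorphism acts trivially on $A/M$ (hence preserves any $K\supseteq M$), and that each such $K$ is self-normalizing in $A$ because its image in the non-abelian quotient $A/M$ of order $pq$ is a self-normalizing Sylow $q$-subgroup. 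The two arguments share the same underlying arithmetic---that order-$p$ automorphisms are congruent to the identity modulo $M$, which is exactly the paper's remark that $\alpha(\tau)=\sigma^{pb}\tau$---but your packaging via normalizers is coordinate-free and makes the obstruction more transparent, while the paper's computation is in keeping with the explicit generator-and-relation style used throughout the section.
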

%
%
\begin{proof}
	Assume that $G$ is a subgroup of size $p^2q$ of $\Hol(A)$ and $|\pi_2(G)|=p$. Up to conjugation, we can 
	choose the generators of $\ker{\pi_2|_G}$ to be $\tau$ and $\sigma^p$. Hence the standard presentation of $G$ is
	$$G=\langle \tau, \sigma^p, \sigma^a \alpha\rangle$$
	where $\alpha\in \aut{A}$ is an automorphism of order $p$ that satisfies $\alpha(\tau)=\sigma^{pb}\tau$ for some $0\leq b\leq p-1$. By condition \NC we have that
	$$\sigma^a \alpha \tau \alpha^{-1}\sigma^{-a}=\sigma^{a(1-\t)+pb}\tau \in \langle \sigma^p,\tau\rangle.$$
	Then $a=0\pmod{p}$, i.e. $a=a^\prime p$. Hence $(\sigma^p)^{-a^\prime}\sigma^{a^\prime p}\alpha=\alpha\in G\cap (\{1\}\times \aut{A})$ and then $G$ is not regular by Lemma \ref{rem for regularity}.
\end{proof}

\begin{lemma}\label{case pq}
	There exists a unique conjugacy class of regular subgroups $G$ of $\Hol(A)$ with $|\pi_2(G)|=pq$. A representative is
	$$H=\langle \sigma^p,\ \sigma\varphi_{0,p+1},\ \tau^{-1}\varphi_{0,\t}\rangle\cong \mathbb{Z}_{p^2q}.$$
\end{lemma}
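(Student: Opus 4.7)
The plan is to follow the general scheme from the preliminaries: pin down $\ker\pi_2|_G$ and $\pi_2(G)$ up to conjugation, then reduce a standard presentation of $G$ to canonical form using conditions \NC and \nc together with the normalizer action.

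Since $|\ker\pi_2|_G|=p$, I first need to identify the order-$p$ subgroups of $A$. From $\tau\sigma\tau^{-1}=\sigma^\t$ one gets $(\sigma^j\tau^k)^q=\sigma^{j(1+\t^k+\cdots+\t^{(q-1)k})}$, and for $k\not\equiv 0\pmod q$ the inner sum vanishes modulo $p^2$; hence every element outside $\langle\sigma\rangle$ has order $q$, so $\langle\sigma^p\rangle$ is the unique subgroup of order $p$ of $A$ and must equal $\ker\pi_2|_G$. For $\pi_2(G)$, the identification $\aut{A}\cong\Hol(\Z_{p^2})$ and the conjugation formula $(a,b)(i,j)(a,b)^{-1}=(a(1-j)+bi,j)$ show, using that $1-\t$ is a unit modulo $p^2$, that every order-$q$ subgroup is conjugate to $\langle\varphi_{0,\t}\rangle$ and that only an order-$p$ element with vanishing first coordinate normalizes it. Thus the unique conjugacy class of order-$pq$ subgroups is represented by $\langle\varphi_{0,p+1},\varphi_{0,\t}\rangle$, cyclic of order $pq$ since its generators both lie in the abelian group $\{0\}\times\Z_{p^2}^\times$.

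Next I would take the standard presentation $G=\langle\sigma^p,\,u_1\varphi_{0,p+1},\,u_2\varphi_{0,\t}\rangle$. Condition \NC is automatic since $\langle\sigma^p\rangle$ is characteristic in $A$. Condition \nc imposes that $(u_1\varphi_{0,p+1})^p$, $(u_2\varphi_{0,\t})^q$ and the commutator $[u_1\varphi_{0,p+1},u_2\varphi_{0,\t}]=(u_1\varphi_{0,p+1}(u_2)\varphi_{0,\t}(u_1)^{-1}u_2^{-1},1)$ all belong to $\langle\sigma^p\rangle$. Writing $u_i=\sigma^{a_i}\tau^{c_i}$ and expanding with $\tau\sigma=\sigma^\t\tau$, these become explicit congruences on $(a_i,c_i)$. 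The normalizer of $\pi_2(G)$ in $\aut{A}$ equals its centralizer $\{0\}\times\Z_{p^2}^\times$, acting by $\varphi_{0,b}(\sigma^x\tau^y)=\sigma^{bx}\tau^y$; this freedom rescales the $\sigma$-exponents while fixing the $\tau$-exponents, and combined with the indeterminacy of $u_i$ modulo $\langle\sigma^p\rangle$ it reduces to $u_1=\sigma$, $u_2=\tau^{-1}$, matching $H$. The main technical obstacle is precisely here: the commutator relation couples the exponents of $u_1$ and $u_2$, so the three \nc conditions must be solved simultaneously and one must verify that the normalizer orbits contain exactly the stated solution.

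Finally, I would verify directly that $H$ is regular and identify it as $\Z_{p^2q}$. Setting $a=\sigma\varphi_{0,p+1}$ and $b=\tau^{-1}\varphi_{0,\t}$, the congruence $(p+1)^k\equiv 1+pk\pmod{p^2}$ gives $a^p=\sigma^p$, so $a$ has order $p^2$, while $\varphi_{0,\t}(\tau)=\tau$ gives $b^q=1$. The identity $\tau^{-1}\sigma^\t=\sigma\tau^{-1}$ implies $ab=ba$, so $H$ is abelian of order $p^2q$. Any $a^ib^j\in H\cap(\{1\}\times\aut{A})$ satisfies $(p+1)^i\t^j=1$, forcing $p\mid i$ and $j=0$; then $a^i=\sigma^i=1$ only for $i=0$. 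Hence $H$ is regular by Lemma \ref{rem for regularity}(iii), and $H\cong\Z_{p^2q}$.
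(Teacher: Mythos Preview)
Your argument has a genuine gap in the classification of order-$pq$ subgroups of $\aut{A}$. You fix a Sylow $q$-subgroup $\langle\varphi_{0,\t}\rangle$ and then look for order-$p$ elements that normalize it, concluding that only elements of $\{0\}\times\Z_{p^2}^\times$ work and hence that $\langle\varphi_{0,p+1},\varphi_{0,\t}\rangle$ represents the unique class. But since $p>q$ here (as $p\equiv 1\pmod q$), it is the Sylow $p$-subgroup that is automatically normal in a group of order $pq$, not the Sylow $q$-subgroup. The correct constraint is therefore that $\varphi_{0,\t}$ normalize the order-$p$ subgroup, not the reverse. Carrying out that computation produces a second conjugacy class, represented by the \emph{non-abelian} group $\langle\varphi_{p,1},\varphi_{0,\t}\rangle\cong\Z_p\rtimes\Z_q$: indeed $(0,\t)(p,1)(0,\t)^{-1}=(\t p,1)=\varphi_{p,1}^{\t}$, so $\langle\varphi_{p,1}\rangle$ is normalized by $\varphi_{0,\t}$ while $\langle\varphi_{0,\t}\rangle$ is not normalized by $\varphi_{p,1}$.

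The paper treats this missing case explicitly and shows it yields no regular subgroup: when $\pi_2(G)=\langle\varphi_{p,1},\varphi_{0,\t}\rangle$, the \nc relations (lifting $\varphi_{p,1}^p=1$ and $\varphi_{0,\t}\varphi_{p,1}\varphi_{0,\t}^{-1}=\varphi_{p,1}^{\t}$) force the $\sigma$-exponent of the generator over $\varphi_{p,1}$ to lie in $p\Z$, whence $\varphi_{p,1}\in G\cap(\{1\}\times\aut{A})$ and $G$ is not regular. Only after disposing of this case does the analysis of the abelian image $\langle\varphi_{0,p+1},\varphi_{0,\t}\rangle$ proceed, along the lines you sketch. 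As a minor point, in your regularity check the sentence ``$a^ib^j\in H\cap(\{1\}\times\aut{A})$ satisfies $(p+1)^i\t^j=1$'' tests the wrong projection: membership in $\{1\}\times\aut{A}$ means $\pi_1(a^ib^j)=1$, not that the automorphism part is trivial.
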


\begin{proof}
	According to Lemma \ref{pi_1 for fix}(2), $\langle \sigma^p,\tau\rangle \subseteq \pi_1(H)$ and clearly $\sigma\in \pi_1(H)$. So $|\pi_1(H)|>pq$ and so $|\pi_1(H)|=|H|=p^2q$. According to Lemma \ref{rem for regularity}, $H$ is regular.
		
	Let $G$ be a regular subgroup of $\Hol(A)$ with $|\pi_2(G)|=pq$. Then $K=\ker{\pi_2|_G}=\langle \sigma^p \rangle$. Up to conjugation, the subgroups of size $pq$ of $\aut{A}$ are $\langle \varphi_{p,1},\ \varphi_{0,\t}\rangle$ and $\langle \varphi_{0,\t},\ \varphi_{0,p+1}\rangle$.
		
	In the first case, we have
	$$G=\langle \sigma^p,\ \sigma^a\tau^b\varphi_{p,1},\ \sigma^c\tau^d\varphi_{0,\t}\rangle.$$
	We need to check the conditions \nc. The generators of $\pi_2(G)$ satisfy the relations $(\varphi_{p,1})^p=1$ and $\varphi_{0,t}\varphi_{p,1}\varphi_{0,\t}^{-1}=\varphi_{p,1}^{\t}$. Thus we have
		\begin{equation*}
	(\sigma^a\tau^b\varphi_{p,1})^p K =\sigma^{a\sum_{j=0}^{p-1}t^{bj}}\tau^{bp} K \overset{\nc}{=} K
	\end{equation*}
and so $b=0$. Accordingly, $d\neq 0$, otherwise by Lemma \ref{pi_1 for fix}(1) we have that $\pi_1(G)\subseteq \langle \sigma\rangle$. Moreover	
\begin{equation*}(\sigma^c\tau^d\varphi_{0,\t}) \sigma^a\varphi_{p,1}\left(\sigma^c\tau^d\varphi_{0,\t}\right)^{-1} K = \sigma^{\t^{d+1}a}\varphi_{p,1}^{\t} K\overset{\nc}{=}(\sigma^a\varphi_{p,1})^t K=\sigma^{\t a} \varphi_{p,1}^t K
	\end{equation*}
	and then $a=pa^\prime$. Therefore, $\sigma^{-a^\prime p}\sigma^{a^\prime p}\varphi_{p,1}=\varphi_{p,1}\in G$ and so $G$ is not regular, contradiction.
		
	Let $\pi_2(G)=\langle \varphi_{0,\t},\ \varphi_{0,p+1}\rangle$. Then,
	$$G=\langle \sigma^p,\ \sigma^a\tau^b\varphi_{0,p+1},\ \sigma^c\tau^d\varphi_{0,\t}\rangle.$$
	The relations satisfied by the generators of $\pi_2(G)$ are $\varphi_{0,\t}^q=\ \varphi_{0,p+1}^p=[\varphi_{0,t},\varphi_{0,p+1}]=1$. By the conditions \nc we have that $\sigma^a\tau^b\varphi_{0,p+1}$ and $\sigma^c\tau^d\varphi_{0,\t}$ satisfy the same relations modulo $K$. Accordingly, since
	\begin{eqnarray*}
	(\sigma^a\tau^b\varphi_{0,p+1})^p K&=& \sigma^{a\sum_{j=0}^{p-1}t^{bj}} \tau^{bp}K\overset{\nc}{=}K,\\
	(\sigma^a\varphi_{0,p+1})\sigma^c\tau^d\varphi_{0,\t}K&=&\sigma^{a+c}\tau^{d} \varphi_{0,p+1}\varphi_{0,\t}K\overset{\nc }{=} (\sigma^c\tau^d\varphi_{0,\t})\sigma^a\varphi_{0,p+1} K=\sigma^{t^{d+1}a+c}\tau^{d}\varphi_{0,p+1}\varphi_{0,\t}K,
	\end{eqnarray*}
	then $b=0$ and $d=-1$. Finally,
$$		(\sigma^c\tau^{-1}\varphi_{0,\t})^q K = \sigma^{qc}K \overset{\nc}{=} K$$	and so $c=0\pmod{p}$. If $a= 0 \pmod{p}$, then according to Lemma \ref{pi_1 for fix}(2), $\pi_1(G)\subseteq \langle \sigma^p, \tau\rangle$ and so $a\neq 0\pmod {p}$. Then $G$ is conjugate to $H$ by $\varphi_{0,a^{-1}}$.
\end{proof}

\begin{lemma}
	A set of representatives of regular subgroups $G$ of $\Hol(A)$ with $|\pi_2(G)|=p^2$ is
	$$G_{s}=\langle \tau,\ \sigma^{\frac{1}{\t-1}}\varphi_{1,(p+1)^s}\rangle\cong \mathbb{Z}_{p^2q}$$
	for $s=0,1$. 
\end{lemma}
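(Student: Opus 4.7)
The plan is threefold: (i) verify that $G_0$ and $G_1$ are regular subgroups of $\Hol(A)$ isomorphic to $\Z_{p^2q}$; (ii) show they lie in distinct $\aut{A}$-orbits; (iii) prove every regular $G\leq\Hol(A)$ with $|\pi_2(G)|=p^2$ is conjugate to one of them. For (i), setting $\alpha=\varphi_{1,(p+1)^s}$, the identities $\alpha(\tau)=\sigma\tau$ and $\tau\sigma=\sigma^{\t}\tau$ give, via a direct computation, $(\sigma^{1/(\t-1)}\alpha)\,\tau\,(\sigma^{1/(\t-1)}\alpha)^{-1}=\tau$, so the two generators of $G_s$ commute. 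Since $\alpha$ has order $p^2$ for both $s=0$ and $s=1$ (the latter following from $(p+1)^p\equiv 1+p^2\pmod{p^3}$, which gives $\varphi_{1,p+1}^p=\varphi_{p,1}\neq 1$) and $(\sigma^{1/(\t-1)}\alpha)^{p^2}=1$, the group $G_s$ is isomorphic to $\Z_q\times\Z_{p^2}\cong\Z_{p^2q}$. Regularity then follows from Lemmas \ref{rem for regularity} and \ref{pi_1 for fix}, since $\tau,\sigma^{1/(\t-1)}\in\pi_1(G_s)$ yields $\pi_1(G_s)=A$. For (ii), conjugation by $\aut{A}$ preserves the image of any subgroup of $\aut{A}$ under the canonical projection $\aut{A}\to\Z_{p^2}^\times$ (whose kernel is characteristic); but $\pi_2(G_0)=\langle\varphi_{1,1}\rangle$ has trivial image, whereas $\pi_2(G_1)=\langle\varphi_{1,p+1}\rangle$ has image $\langle p+1\rangle$ of order $p$.

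For (iii), I first classify subgroups of order $p^2$ of $\aut{A}$ up to conjugation. All such lie in the (unique, hence normal) $p$-Sylow $P=\langle\varphi_{1,1},\varphi_{0,p+1}\rangle\cong\Z_{p^2}\rtimes_{p+1}\Z_p$, non-abelian of exponent $p^2$ with centre $Z(P)=\langle\varphi_{p,1}\rangle$. The $p+1$ maximal subgroups of $P$ correspond to the lines of $P/Z(P)\cong\Z_p^2$ and are $\langle\varphi_{1,1}\rangle$, the unique elementary abelian one $\langle\varphi_{0,p+1},\varphi_{p,1}\rangle$, and the cyclic subgroups $\langle\varphi_{1,(p+1)^s}\rangle$ for $s\in\Z_p^\times$. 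Since $P$ fixes each of its maximal subgroups by conjugation, it suffices to analyse the action of $\varphi_{0,\zeta}$ for $\zeta\in\Z_{p^2}^\times$. The identity $\varphi_{0,\zeta}\varphi_{i,j}\varphi_{0,\zeta^{-1}}=\varphi_{\zeta i,j}$ and reduction modulo $Z(P)$ show that the $p-1$ cyclic non-central maximals fuse into a single $\aut{A}$-orbit, while $\langle\varphi_{1,1}\rangle$ and the elementary abelian subgroup are each fixed. Thus there are three conjugacy classes: (a) $\langle\varphi_{1,1}\rangle$, (b) $\langle\varphi_{0,p+1},\varphi_{p,1}\rangle$, (c) $\langle\varphi_{1,p+1}\rangle$.

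Case (b) yields no regular $G$: normalising $\ker\pi_2|_G=\langle\tau\rangle$ (possible since $\varphi_{-c,1}\in P$ sends $\langle\sigma^c\tau\rangle$ to $\langle\tau\rangle$ and normalises $\pi_2(G)$), one writes $G=\langle\tau,\sigma^{a_1}\varphi_{0,p+1},\sigma^{a_2}\varphi_{p,1}\rangle$; condition \NC on the second generator gives $\sigma^{a_1(1-\t)}\tau\in\langle\tau\rangle$, forcing $a_1\equiv 0\pmod{p^2}$ (as $1-\t$ is a unit), so $\varphi_{0,p+1}\in G\cap(\{1\}\times\aut{A})$, contradicting Lemma \ref{rem for regularity}. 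In cases (a) and (c), the same normalisation yields $G=\langle\tau,\sigma^i\varphi_{1,(p+1)^s}\rangle$ (after absorbing any $\tau$-component of $u$ into the kernel), and condition \NC reads $\sigma^{1+i(1-\t)}\tau\in\langle\tau\rangle$, which uniquely forces $i=1/(\t-1)$, so $G=G_s$. The main technical step is the fusion analysis in the classification of maximal subgroups of $P$; once this is in place, the rigidity of condition \NC determines the rest.
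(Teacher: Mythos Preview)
Your proof is correct and largely parallels the paper's, but with two genuine methodological differences worth noting.

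First, to rule out the non-cyclic case for $\pi_2(G)$, the paper invokes \cite[Lemma~2.1]{abelian_case}: since $(A,+)$ has cyclic $p$-Sylow subgroup, so must $(A,\circ)\cong G$, hence $\pi_2(G)$ (a quotient of $G$) is cyclic. This dispatches the elementary abelian subgroup $\langle\varphi_{0,p+1},\varphi_{p,1}\rangle$ in one line. You instead carry out an explicit classification of the $p+1$ maximal subgroups of the $p$-Sylow $P\leq\aut{A}$, determine their fusion under $\aut{A}$, and eliminate the elementary abelian class by a direct \NC computation. Your route is more self-contained (no external structural lemma) and in fact supplies the justification for the paper's assertion that ``the unique cyclic subgroups of order $p^2$ of $\aut{A}$ up to conjugation are $\langle\varphi_{1,(p+1)^s}\rangle$ for $s=0,1$'', which the paper states without proof.

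Second, to pin down the exponent $\tfrac{1}{\t-1}$, the paper argues that $G$ has a normal $q$-Sylow subgroup (the kernel $\langle\tau\rangle$), hence $G$ is abelian, and abelianness forces the value of $b$. You instead compute the \NC condition directly and solve $1+i(1-\t)\equiv 0$. Both arguments are short; yours is marginally more explicit, the paper's marginally more conceptual.
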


\begin{proof}
The groups $G_0$ and $G_1$ are not conjugate since their images under $\pi_2$ are not. Applying Lemma \ref{rem for regularity} and Lemma \ref{pi_1 for fix} as we did in Lemma \ref{case pq}, we can show that the groups $G_s$ are regular. Indeed, $(\sigma^{\frac{1}{\t-1}}\varphi_{1,(p+1)^s})^p=\sigma^{\frac{p}{\t-1}}\varphi_{p,1}$ and $\varphi_{p,1}(\sigma^\frac{p}{\t-1})=\sigma^{\frac{p}{\t-1}}$. So we have that $\langle \tau, \sigma^p\rangle\subseteq \pi_1(G_s)$ and $\sigma^{\frac{1}{\t-1}}\in \pi_1(G_s)$. Thus $|\pi_1(H)|=p^2q$ and so $H$ is regular. 
	
	Let $G$ be a regular subgroup of $\Hol(A)$ with $|\pi_2(G)|=p^2$. According to \cite[Lemma 2.1]{abelian_case}, the $p$-Sylow subgroup of the multiplicative group of the skew brace associated to $G$ is cyclic and then so is $\pi_2(G)$. The unique cyclic subgroups of order $p^2$ of $\aut{A}$ up to conjugation are $\langle \varphi_{1,(p+1)^s}\rangle$ for $s=0,1$. The size of the kernel of $\pi_2$ is $q$ and so, in both cases,  we can assume that $\ker{\pi_2|_G}$ is generated by $\tau$ up to conjugation by an automorphism in the normalizer of $\pi_2(G)$. Therefore
	$$G=\langle \tau,\ \sigma^b\varphi_{1,(p+1)^s}\rangle$$
	for some $b\neq 0$. The group $G$ has a normal $q$-Sylow subgroup and then it is abelian, and thus $b=\frac{1}{\t-1}$. 
\end{proof}
The group 
\begin{equation}\label{other invariant 1}
\H_1=\langle \sigma, \varphi_{1,1}\rangle    
\end{equation}
is normal in $\Hol(A)$. Let $u$ be a generator of $\Z_{p^2}^\times$, then $\Hol(A)/\H_1=\langle \tau,\,\varphi_{0,u}\rangle\cong \Z_q\times \Z_{p^2}^\times$ and in particular it is abelian.

\begin{lemma}\label{Zp2 rtimes Z_q q}
	The skew braces of $A$-type with $|\ker{\lambda}|=p^2$ are $B_s=(A,+,\circ)$ where $(B_s,+)=\Z_{p^2}\rtimes_{\t} \Z_q$ and $(B_s,\circ)= \Z_{p^2}\rtimes_{\t^{\frac{s+1}{s}}} \Z_q$ for $1\leq s\leq q-1$. In particular, $B_s$ is a bi-skew brace and
	$$(B_s,\circ)\cong \begin{cases} \mathbb{Z}_{p^2q},\, \text{ if } s={q-1},\\
	A,\, \text{ otherwise.}\end{cases}$$
\end{lemma}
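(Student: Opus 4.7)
By Remark \ref{remark for lambdas}, the condition $|\ker\lambda|=p^2$ is equivalent to $|\pi_2(G)|=q$ for the regular subgroup $G\leq\Hol(A)$ attached to the skew brace. The plan is to classify such $G$ up to $\aut{A}$-conjugation and then read off the multiplicative group and the bi-skew property. Since $p\equiv 1\pmod q$ forces $p>q$, the group $A$ has a unique subgroup of order $p^2$, namely $\langle\sigma\rangle$, so $\ker\pi_2|_G=\langle\sigma\rangle$. Moreover, under the identification $\aut{A}\cong\Z_{p^2}\rtimes\Z_{p^2}^\times$, a short computation shows that every subgroup of order $q$ of $\aut{A}$ is conjugate to $\langle\varphi_{0,\t}\rangle$, whose normalizer is $\{\varphi_{0,y}:y\in\Z_{p^2}^\times\}$. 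So up to conjugation we may take $\pi_2(G)=\langle\varphi_{0,\t}\rangle$ and obtain the standard presentation $G=\langle\sigma,\,\sigma^a\tau^b\varphi_{0,\t}\rangle$ for some $a\in\Z_{p^2}$, $b\in\Z_q$.

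Condition \NC is immediate because $\varphi_{0,\t}$ preserves $\langle\sigma\rangle$. For \nc, the geometric-series identity gives $\sum_{k=0}^{q-1}\t^{k(b+1)}\equiv 0\pmod{p^2}$ when $b\neq q-1$, so $(\sigma^a\tau^b\varphi_{0,\t})^q=1$, while for $b=q-1$ the $q$-th power equals $\sigma^{aq}\in\langle\sigma\rangle$. The key reduction is that $\sigma^a\in\langle\sigma\rangle\leq G$, so absorbing $\sigma^a$ into the first generator yields $G=G_b:=\langle\sigma,\tau^b\varphi_{0,\t}\rangle$; the parameter $a$ disappears. For $b=0$ we have $\varphi_{0,\t}\in G_0$, violating regularity by Lemma \ref{rem for regularity}; for $b=s\in\{1,\dots,q-1\}$, Lemma \ref{pi_1 for fix} shows $\pi_1(G_s)\supseteq\langle\sigma,\tau^s\rangle=A$, so $G_s$ is regular.

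To prove that $G_1,\dots,G_{q-1}$ are pairwise non-conjugate I compute the $\lambda$-map via \eqref{circ}. Since $(\tau^s\varphi_{0,\t})^l=\tau^{sl}\varphi_{0,\t}^l$, the preimage of $\sigma^i\tau^j$ under $\pi_1|_{G_s}$ is $\sigma^i\tau^{sl}\varphi_{0,\t}^l$ with $l=js^{-1}$, hence
\[
\lambda^{(s)}_{\sigma^i\tau^j}=\varphi_{0,\,\t^{j/s}}.
\]
Every $\varphi_{x,y}\in\aut{A}$ fixes the $\tau$-component of elements of $A$, so if $h\in\aut{A}$ conjugates $G_s$ to $G_{s'}$ then the induced relation on $\lambda$ evaluated at $\tau$ reduces to $\varphi_{0,\t^{1/s'}}=\varphi_{x(1-\t^{1/s}),\,\t^{1/s}}$, forcing $\t^{1/s}=\t^{1/s'}$ and hence $s=s'$.

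The multiplicative structure follows by direct computation from $\lambda^{(s)}$:
\[
(\sigma^i\tau^j)\circ(\sigma^{i'}\tau^{j'})=\sigma^{i+i'\t^{j(s+1)/s}}\tau^{j+j'},
\]
so $(B_s,\circ)\cong\Z_{p^2}\rtimes_{\t^{(s+1)/s}}\Z_q$. The exponent $(s+1)/s$ vanishes in $\Z_q$ exactly when $s=q-1$, giving $(B_{q-1},\circ)\cong\Z_{p^2q}$; otherwise $\t^{(s+1)/s}$ has order $q$ in $\Z_{p^2}^\times$ and $(B_s,\circ)\cong A$. For the bi-skew statement, the explicit formula for $\circ$ shows that each $\varphi_{0,y}$ scales the $\sigma$-exponent under both $\cdot$ and $\circ$ by the same factor, hence $\lambda^{(s)}_a\in\aut{B_s,\circ}$ for every $a\in B_s$; by the standard criterion from \cite{biskew} this is equivalent to $B_s$ being a bi-skew brace. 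The main obstacle is the non-conjugacy step: the parameter $a$ is a priori free and could yield spurious additional orbits, and this is clarified precisely by the collapse $G_{a,s}=G_{0,s}$ combined with the $\tau$-component invariance of $\aut{A}$ acting on $A$.
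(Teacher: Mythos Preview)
Your proof is correct and follows the same overall strategy as the paper: reduce to the standard presentation $G_s=\langle\sigma,\tau^s\varphi_{0,\t}\rangle$ with $1\leq s\leq q-1$, check regularity, and then read off the $\circ$-structure. The explicit computation of $\lambda^{(s)}$ and of the formula for $\circ$ matches the paper's (the paper obtains the same formula via \cite[Lemma~2.2]{abelian_case}).

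The one genuinely different step is the non-conjugacy argument. The paper uses the normal subgroup $\H_1=\langle\sigma,\varphi_{1,1}\rangle\unlhd\Hol(A)$: since $\Hol(A)/\H_1$ is abelian, conjugate subgroups have equal image there, and $G_s\H_1/\H_1=\langle\tau^s\varphi_{0,\t}\rangle$ recovers $s$. This is an instance of the invariant described in \eqref{diagram H}, which the paper applies uniformly across many cases. Your argument instead computes $\lambda^{(s)}_\tau=\varphi_{0,\t^{1/s}}$ and observes that conjugation by any $\varphi_{i,j}$ preserves the second coordinate of this automorphism, forcing $\t^{1/s}=\t^{1/s'}$. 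Both arguments are short and valid; the quotient method has the advantage of fitting into the paper's general machinery (and reappears for $\G_k$, $\G_{-1}$, $\G_1$, $\G_F$), while your approach is more self-contained and avoids introducing $\H_1$. For the bi-skew claim the paper simply invokes \cite[Corollary~1.2]{skew_pq}, whereas you verify directly that each $\varphi_{0,y}$ is an automorphism of $(B_s,\circ)$; this is a correct and slightly more explicit route to the same conclusion.
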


\begin{proof}
	Let us consider the groups
	$$G_s=\langle \sigma,\ \tau^s\varphi_{0,\t}\rangle\cong \begin{cases} \Z_{p^2q}, \, \text{ if } s=q-1,\\ A, \, \text{otherwise} \end{cases}$$
	 for $1\leq s\leq q-1$. The subset $\pi_1(G_s)$ contains $\langle \sigma\rangle$ and $\tau^s$ and then $|\pi_1(G)|>p^2$ and it divides $p^2q$. So $\pi_1(G)=A$ and according to Lemma \ref{rem for regularity} we have that $G_s$ is regular. Let $\H_1$ be the group defined in \eqref{other invariant 1}. If $G_s$ and $G_{s'}$ are conjugate, then so are their images in $\Hol(A)/\H_1$ which is abelian. Therefore $\langle \tau^s \varphi_{0,t}\rangle=\langle \tau^{s^\prime} \varphi_{0,t}\rangle$, and so it follows that $s=s^\prime.$ 
		
	Let $G$ be a regular subgroup of $\Hol(A)$ with $|\pi_2(G)|=q$. The unique subgroup of order $p^2$ of $A$ is the unique $p$-Sylow subgroup generated by $ \sigma$ and, up to conjugation, the unique subgroup of order $q$ in $\aut{A}$ is generated by $\varphi_{0,\t}$. Then 
	$$G=\langle \sigma, \tau^s \varphi_{0,\t}\rangle=G_s$$ for some $s\neq 0$.
	
	For the skew brace $B_s$ associated to $G_s$ we have that $\langle\sigma\rangle\leq \ker{\lambda}$ and $\tau\in \Fix(B_s)$. So, using \cite[Lemma 2.2]{abelian_case} we have that $\lambda_{\sigma^n \tau^m}=\lambda_{\sigma^n \tau^{\frac{sm}{s}}}=\lambda_{\tau^s}^m$ and then the multiplicative group structure of $B_s$ is given by the formula 
	\begin{eqnarray*}
		\begin{pmatrix} x_1 \\ x_2 \end{pmatrix} \circ \begin{pmatrix} y_1 \\ y_2 \end{pmatrix} 
		=\begin{pmatrix} x_1+\t^{\frac{s+1}{s}x_2}y_1\\
			x_2+y_2 \end{pmatrix}
	\end{eqnarray*}
	for every $0\leq x_1,y_1\leq p^2-1$, $0\leq x_2,y_2\leq q-1$. So $(B_s,+)=\Z_{p^2}\rtimes_{\t}\Z_q$ and $(B_s,\circ)=\Z_{p^2}\rtimes_{\t^{\frac{s+1}{s}}}\Z_q$ and according to \cite[Corollary 1.2]{skew_pq} $B_s$ is a bi-skew brace. To compute the isomorphism class of $(B_s,\circ)$ note that the group is abelian if and only if $s=q-1$.
\end{proof}

\begin{lemma}
	A set of representatives of conjugacy classes of regular subgroups $G$ of $\Hol(A)$ with $|\pi_2(G)|=p^2q$
	is
	$$G_{d}=\langle \sigma^{\frac{1}{t-1}}\varphi_{1,1},\ \tau^d\varphi_{0,\t}\rangle\cong A$$
	where $1\leq d\leq q-1$. 
\end{lemma}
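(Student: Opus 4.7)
The plan has three parts: verify that each $G_d$ is a regular subgroup isomorphic to $A$, distinguish the $G_d$'s up to conjugation via a suitable invariant, and show that every regular subgroup with $|\pi_2(G)|=p^2q$ is conjugate to some $G_d$.

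Setting $u=\sigma^{1/(t-1)}\varphi_{1,1}$ and $v=\tau^d\varphi_{0,t}$, a direct computation in $\Hol(A)$ (using that $\varphi_{1,1}$ fixes $\sigma$ and $\varphi_{0,t}$ fixes $\tau$) gives $u^{p^2}=1$, $v^q=1$ and $vuv^{-1}=u^t$, which are precisely the defining relations of $A=\Z_{p^2}\rtimes_t\Z_q$; hence $|G_d|\leq p^2q$. Since $\pi_2(G_d)=\langle\varphi_{1,1},\varphi_{0,t}\rangle$ already has order $p^2q$, equality holds, $G_d\cong A$, and $\ker\pi_2|_{G_d}=1$, so $G_d$ is regular by Lemma \ref{rem for regularity}(iii). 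For non-conjugacy I use the normal subgroup $\H_1$ from \eqref{other invariant 1}: the quotient $\Hol(A)/\H_1$ is abelian, so conjugation is trivial on its subgroups. Since $u\in\H_1$, the image of $G_d$ in $\Hol(A)/\H_1$ is the cyclic subgroup $\langle\tau^d\varphi_{0,t}\H_1\rangle$ of order $q$, and these subgroups are pairwise distinct for $d\in\{1,\ldots,q-1\}$, so the $G_d$'s lie in distinct $\aut A$-orbits.

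For completeness, let $G$ be a regular subgroup with $|\pi_2(G)|=p^2q$. Since $\ker\pi_2|_G=1$, $G\cong\pi_2(G)\cong(B,\circ)$, and by \cite[Lemma 2.1]{abelian_case} its $p$-Sylow subgroup is cyclic (as is that of $(B,+)=A$). Up to conjugation the cyclic subgroups of order $p^2$ in $\aut A\cong\Hol(\Z_{p^2})$ are $\langle\varphi_{1,(p+1)^s}\rangle$ for $s\in\{0,1\}$; a short computation shows that no element of order $q$ in $\aut A$ normalizes $\langle\varphi_{1,p+1}\rangle$ (the obstruction being $t\not\equiv 1\pmod{p}$), so only $s=0$ extends to a subgroup of order $p^2q$, namely $\langle\varphi_{1,1},\varphi_{0,t}\rangle$, which is moreover normal in $\aut A$ as the preimage of the unique Sylow $q$-subgroup of $\aut A/\langle\varphi_{1,1}\rangle\cong\Z_{p^2}^\times$. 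Thus we may assume $\pi_2(G)=\langle\varphi_{1,1},\varphi_{0,t}\rangle$ and write $G=\langle\sigma^a\tau^b\varphi_{1,1},\sigma^c\tau^d\varphi_{0,t}\rangle$. Since the kernel is trivial the relations of $\pi_2(G)$ must hold exactly in $G$: $(\sigma^a\tau^b\varphi_{1,1})^{p^2}=1$ forces $b=0$; regularity forces $d\neq 0$ by Lemma \ref{pi_1 for fix}(1); the conjugation relation forces $a=1/(t-1)$ after dividing through by the unit $t(t^d-1)\in\Z_{p^2}$; and $v^q=1$ forces $c=0$ when $d=q-1$.

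To handle the residual freedom in $c$ when $d\neq q-1$, I would conjugate $G$ by $\varphi_{i,1}$: this element centralizes $u$ (because $\varphi_{i,1}$ fixes $\sigma$ and $\varphi_{i,1}\varphi_{1,1}\varphi_{-i,1}=\varphi_{1,1}$), while the unique element of the conjugated group whose $\pi_2$-image equals $\varphi_{0,t}$ evaluates to $\sigma^{c+i(t^{d+1}-1)/(t-1)}\tau^d\varphi_{0,t}$. Since $(t^{d+1}-1)/(t-1)$ is a unit in $\Z_{p^2}$ precisely when $d\neq q-1$, $i$ can be chosen to annihilate $c$, bringing $G$ to $G_d$. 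The main obstacle I anticipate is this final normalization: after conjugation the natural generator coming from $v$ has $\pi_2$-coordinate $\varphi_{i(1-t),t}$ rather than $\varphi_{0,t}$, so one must multiply by the appropriate power of $u$ to recover the standard presentation before reading off the exact $c$-transformation rule.
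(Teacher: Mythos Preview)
Your proof is correct and follows essentially the same route as the paper's: the paper also uses the quotient by $\H_1$ to separate the $G_d$'s, reduces $\pi_2(G)$ to $\langle\varphi_{1,1},\varphi_{0,t}\rangle$ via the cyclic $p$-Sylow argument, and then conjugates by $\varphi_{1,1}^n=\varphi_{n,1}$ with $n=\frac{c(t-1)}{1-t^{d+1}}$ to eliminate $c$. Your anticipated ``obstacle'' is not one---you have already carried out the required multiplication by a power of $u$ and obtained the correct transformation $c\mapsto c+i(t^{d+1}-1)/(t-1)$, which matches the paper's choice of $n$ exactly.
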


\begin{proof}
	According to Lemma \ref{pi_1 for fix}(2), $\langle \sigma,\tau \rangle\subseteq \pi_1(G_d)$ and so $\pi_1(G_d)=A$. The same argument used in Lemma \ref{Zp2 rtimes Z_q q} shows that they are not pairwise conjugate.
		
	Let $G$ be a regular subgroup with $|\pi_2(G)|=p^2q$. According to \cite[Lemma 2.1]{abelian_case}, the $p$-Sylow subgroup of the multiplicative group of the skew brace associated to $G$ is cyclic, and then so is the $p$-Sylow subgroup of $\pi_2(G)$. 
	Up to conjugation we can assume that $\pi_2(G)$ is $\langle \varphi_{1,1},\ \varphi_{0,\t}\rangle\cong A$, 
	i.e.
	$$G=\langle \sigma^a\tau^b\varphi_{1,1},\ \sigma^c\tau^d\varphi_{0,\t}\rangle.$$
	From the conditions \nc we have
	\begin{align}
	    (\sigma^a\tau^b\varphi_{1,1})^{p^2}&= 1 \label{p2q case eq 1}, \\
	    (\sigma^c\tau^d\varphi_{0,\t})^q &=1 \label{p2q case eq 1 bis}, \\
	  (\sigma^c\tau^d\varphi_{0,\t})  \sigma^a\tau^b\varphi_{1,1}(\sigma^c\tau^d\varphi_{0,\t})^{-1}&=(  \sigma^a\tau^b\varphi_{1,1})^\t.\label{p2q case eq 2}
	\end{align}
	The equality in \eqref{p2q case eq 1} implies that the same relation is true modulo $\H_1$ and so $b=0$. If $d=0$, then $G$ is not regular by Lemma \ref{pi_1 for fix}(1). Accordingly $d\neq 0$ and from \eqref{p2q case eq 2} we have $a=\frac{1}{\t-1}$ and so the standard presentation of $G$ is 
	$$G=\langle \sigma^{\frac{1}{\t-1}}\varphi_{1,1},\ \sigma^c\tau^d\varphi_{0,\t}\rangle.$$ 

	%
	If $d=-1$ then from \eqref{p2q case eq 1 bis} it follows that $c=0$. Otherwise, $G$ is conjugate to $G_d$ by $\varphi_{1,1}^n$ where $n=\frac{c(\t-1)}{1-\t^{d+1}}$.
\end{proof}

We summarize the content of this subsection in the following table:
\begin{table}[H]
	\centering
	\small{
		\begin{tabular}{c|c|c}
			$|\ker{\lambda}|$ &  $\mathbb{Z}_{p^2q}$  &  $\Z_{p^2}\rtimes_{\t}\Z_q$  \\
			\hline
			$1$ &- & $q-1$ \\
			$p$  & $1$ &- \\
			$q$ & $2$ &- \\
			$p^2$ & $1$ & $q-2$ \\
			$p^2q$ &- & $1$
	\end{tabular}}
	\vs
	\caption{Enumeration of skew braces of $\Z_{p^2}\rtimes_{\t}\Z_q$-type for $p=1\pmod{q}$.}
	\label{table:3.3}
\end{table}

\subsection{Skew braces of $\G_k$-type for $k\neq 0,\pm 1$}\label{subsection:p=1(q)_G_k}

In this section we assume that $k\in \B\setminus\{0,1,-1\}$ and accordingly $q>3$.
Recall that a presentation of the group $\G_k$ is the following
\begin{equation}\label{presentation_G_k}
\G_k=\langle \sigma,\tau,\epsilon\ \big\lvert\ \, \sigma^p=\tau^p=\epsilon^q=[\sigma,\tau]=1,\ \epsilon \sigma \epsilon^{-1}=\sigma^g, \,\epsilon \tau \epsilon^{-1}=\tau^{g^k}\rangle  
\end{equation}
where $g$ is a fixed element of order $q$ in $\Z_p^\times$ as before.
An automorphism of $\G_k$ is determined by its image on the generators, i.e. by its restriction to $\langle\sigma, \tau\rangle$ given by a matrix and by the image on $\epsilon$. 
According to \cite[Subsections 4.1, 4.3]{auto_pq}, the mapping
\begin{eqnarray*}
	\phi:\mathbb{Z}_p^2 \rtimes_{\rho} \left(\mathbb{Z}_{p}^\times\times \mathbb{Z}_p^\times\right) & \longrightarrow & \aut{\G_k}, \\
	\left[(n,m),(a,b)\right] & \mapsto &	h = 	\begin{cases}
		h|_{\langle\sigma,\tau\rangle}=\begin{bmatrix}
			a& 0\\
			0 & b
		\end{bmatrix}
		,\\ \epsilon\mapsto   \sigma^n\tau^m \epsilon   ,    
	\end{cases}
\end{eqnarray*}
is a group isomorphism (the action $\rho$ is defined by setting $\rho(a,b)(n,m)=(an,bm)$).	In particular, $|\aut{\G_{k}}|=	p^2(p-1)^2$ and the unique $p$-Sylow subgroup of $\aut{\G_k}$ is generated by $\alpha_1=[(1,0),(1,1)]$ and $\alpha_2=[(0,1),(1,1)]$. 

Let $G$ be a regular subgroup of $\Hol(\G_k)$. Since $p^2q$ divides $|\aut{G_k}|$ we need to discuss all possible cases for the size of $\pi_2(G)$.

In the group $\Hol(\G_k)$ we have that
\begin{eqnarray}
(\sigma^a \tau^b \alpha_1)\epsilon(\sigma^a\tau^b \alpha_1)^{-1} &=& \sigma^{1+(1-g)a}\tau^ {(1-g^k)
b}\epsilon,\label{formulas for G_k}\\
(\sigma^c \tau^d \alpha_2)\epsilon(\sigma^c\tau^d\epsilon \alpha_2)^{-1} &=& \sigma^{(1-g)c}\tau^ {1+(1-g^k)d}\epsilon\label{formulas for G_k 2},
\end{eqnarray}
hold for every $0\leq a,b,c,d\leq p-1$.

\begin{lemma}\label{G_k_sub_p}
	A set of representative regular subgroups $G$ of $\Hol(\G_k)$ with $|\pi_2(G)|=p$ is
	$$H_i=\langle \epsilon, \tau, \sigma^{\frac{1}{g-1}}\alpha_1\alpha_2^i\rangle\cong\G_0,\quad K_i=\langle \epsilon, \sigma, \tau^{\frac{1}{g^k-1}}\alpha_1^i\alpha_2\rangle\cong\G_0,$$
	for $i=0,1$.
\end{lemma}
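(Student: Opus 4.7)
The plan is to follow the general strategy outlined after Theorem \ref{thm:skew_holomorph}: enumerate the possible kernels of $\pi_2|_G$ and images $\pi_2(G)$ up to conjugation, exclude the combinations that fail to produce regular subgroups, and then reduce each admissible generator choice to a standard form by conjugating with elements of the stabilizer of the kernel together with modification by kernel elements. Since $k \neq 0, \pm 1$, the diagonal matrix $\dm_{1,k}$ has two distinct eigenlines $\langle \sigma \rangle$ and $\langle \tau \rangle$ in $\Z_p^2$, so the only subgroups of order $pq$ of $\G_k$ are $\langle \epsilon, \sigma\rangle$ and $\langle \epsilon, \tau\rangle$. Moreover, using that conjugation by $[(n,m),(a,b)]$ sends $[(j,l),(1,1)] \mapsto [(aj,bl),(1,1)]$, a short computation gives three $\aut{\G_k}$-conjugacy classes of cyclic subgroups of order $p$ in the $p$-Sylow of $\aut{\G_k}$: $\langle \alpha_1\rangle$, $\langle \alpha_2\rangle$, and $\langle \alpha_1 \alpha_2\rangle$ (this last class absorbing every $\langle \alpha_1\alpha_2^i\rangle$ with $i\neq 0$).

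Regularity of the four listed groups I would verify via Lemma \ref{pi_1 for fix}: since both $\alpha_1$ and $\alpha_2$ act trivially on $\langle \sigma, \tau\rangle$, each twisted generator $\sigma^{1/(g-1)}\alpha_1\alpha_2^i$ or $\tau^{1/(g^k-1)}\alpha_1^i\alpha_2$ is $\pi_2(G)$-fixed, contributing via Lemma \ref{pi_1 for fix}(2) a generator of order $p$ in $\pi_1$. Combined with $\epsilon$ and $\tau$ (resp.\ $\sigma$) sitting in the kernel, this forces $\pi_1(G) = \G_k$, whence regularity by Lemma \ref{rem for regularity}.

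For pairwise non-conjugacy, the two families $H_i$ and $K_j$ are separated by their kernels: $\langle \sigma\rangle$ and $\langle \tau\rangle$ are characteristic in $\G_k$ (every automorphism acts diagonally on $\Z_p^2$), so an automorphism sending $\tau$ into $\langle \epsilon, \sigma\rangle$ would force $\tau^b \in \langle \epsilon, \sigma\rangle$ with $b\neq 0$, which is impossible. Within each family, the images $\pi_2(H_0)=\langle\alpha_1\rangle$, $\pi_2(H_1)=\langle\alpha_1\alpha_2\rangle$ and $\pi_2(K_0)=\langle\alpha_2\rangle$, $\pi_2(K_1)=\langle\alpha_1\alpha_2\rangle$ lie in distinct $\aut{\G_k}$-orbits by the three-class description above.

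For exhaustiveness, I would start from a standard presentation $G = \langle K,\ \sigma^a\tau^b\alpha\rangle$ with $K$ one of the two kernels and $\alpha$ a representative image generator, then apply condition (K) using formulas \eqref{formulas for G_k} and \eqref{formulas for G_k 2}. The normality condition forces one of the coefficients $a$ or $b$ modulo $p$; in the two combinations (kernel $\langle \epsilon,\tau\rangle$, image $\langle\alpha_2\rangle$) and (kernel $\langle \epsilon,\sigma\rangle$, image $\langle\alpha_1\rangle$), this forces the non-kernel part to vanish and an element of $\aut{\G_k}$ ends up in $G$, so regularity fails by Lemma \ref{rem for regularity}. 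The four remaining pairs, after multiplying by kernel elements to kill one parameter and conjugating by the torus part of the stabilizer of $K$ to normalize the other, yield exactly $H_0, H_1, K_0, K_1$. Finally, to identify each of them with $\G_0$, I would exhibit an eigenbasis of the $p$-Sylow of $G$ under the $\epsilon$-action: a routine change of variable of the form $t' = t\,\sigma^{1/(g-1)}$ (or the $\tau$-analogue) replaces the twisted generator by an element centralized by $\epsilon$, leaving $\epsilon$ acting with eigenvalues $\{g,1\}$ on the $p$-Sylow, i.e.\ as $\dm_{1,0}$.

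The main obstacle is the bookkeeping in the last step: tracking exactly which conjugations in the stabilizer of the kernel together with which modifications by kernel elements are needed to reach the listed normal forms, while simultaneously checking that no further identification among the four representatives is introduced. Once this is organized cleanly (case by case on the four admissible pairs), everything reduces to the eigenspace computation that yields the isomorphism with $\G_0$.
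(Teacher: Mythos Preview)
Your proposal is correct and follows essentially the same approach as the paper: normalize the kernel of $\pi_2|_G$ to one of the two $\aut{\G_k}$-classes $\langle\epsilon,\sigma\rangle$, $\langle\epsilon,\tau\rangle$, pair it with the three conjugacy classes of order-$p$ subgroups $\langle\alpha_1\rangle,\langle\alpha_2\rangle,\langle\alpha_1\alpha_2\rangle$ of $\aut{\G_k}$, and let condition \NC via \eqref{formulas for G_k}--\eqref{formulas for G_k 2} pin down the remaining coefficient, eliminating two of the six pairs. Two small caveats: $\G_k$ actually has $2p$ subgroups of order $pq$ (not literally two---the $q$-Sylow is not unique), though they do form exactly two $\aut{\G_k}$-orbits as you need; and the ``torus conjugation to normalize the other parameter'' you mention at the end is unnecessary, since condition \NC already determines the coefficient uniquely (e.g.\ $a=\tfrac{1}{g-1}$).
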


\begin{proof}
	Up to conjugation, the subgroups of order $p$ of $\aut{\G_k}$ are $\langle \alpha_1\rangle, \langle \alpha_2\rangle$ and $\langle\alpha_1\alpha_2\rangle$. The groups in the statement are not conjugate since either their images or their kernels with respect to $\pi_2$ are not. Each of them has the form
	$$G=\langle \epsilon, u, v \theta\rangle$$
	with $\theta(v)=v$ and $\G_k=\langle \epsilon, u,v\rangle$. According to Lemma \ref{pi_1 for fix}(2), $\pi_1(G)=\G_k$, i.e. $G$ is regular.
	
	Let $G$ be a regular subgroup of $\Hol(\G_k)$ with $|\pi_2(G)|=p$. The kernel of $\pi_2$ has size $pq$ and therefore it has an element of order $q$ of the form $u\epsilon$ where $u\in \langle \sigma,\tau\rangle$. The subgroup $L=\langle \alpha_1,\alpha_2\rangle$ normalizes $\pi_2(G)$, so we can conjugate $G$ by a suitable element of $L$ and according to \eqref{formulas for G_k} and \eqref{formulas for G_k 2} we can assume that $u=0$. Then the kernel has the form $\langle \epsilon,\ v\rangle$ for $v\in \langle \sigma,\tau\rangle$. The $p$-Sylow subgroup of $\ker{\pi_2}$ is normal and therefore $v=\sigma$ or $\tau$. Therefore the group $G$ has the following form
	$$G=\langle \epsilon, v, w \theta\rangle$$
	where $\theta\in\{\alpha_1,\alpha_2,\alpha_1\alpha_2\}$ and, either $v=\sigma$ and $1\neq w\in \langle \tau\rangle$ or $v=\tau$ and $1\neq w\in \langle\sigma \rangle$. By condition \NC we need to check that
	$$w\theta \epsilon \theta^{-1} w^{-1}=w\theta(\epsilon) w^{-1}\in \ker{\pi_2|_G}.$$
	 Using \eqref{formulas for G_k} and \eqref{formulas for G_k 2} it follows that $G$ is either $H_i$ or $K_i$ for $i=0,1$. 
\end{proof}
\begin{lemma}\label{G_k_sub_pp}
	The unique skew brace of $\G_k$-type with $|\ker{\lambda}|=q$ is $(B,+,\circ)$ where $(B,+)=\Z_p^2\rtimes_{\dm_{1,k}} \Z_q$ and
		\begin{eqnarray*} 
		\begin{pmatrix} x_1 \\ x_2 \\ x_3 \end{pmatrix} \circ \begin{pmatrix} y_1 \\ y_2\\ y_3 \end{pmatrix} 
		=\begin{pmatrix}x_1 g^{y_3}+y_1 g^{x_3}\\  x_2 g^{k y_3}+y_2 g^{kx_3} \\ x_3+y_3\end{pmatrix}.
	\end{eqnarray*}
	for every $0\leq x_1,x_2,y_1,y_2\leq p-1$ and $0\leq x_3,y_3\leq q-1$. In particular, $(B,\circ)\cong \Z_p^2\times \Z_q$. 
\end{lemma}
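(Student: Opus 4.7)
My plan is: by Theorem~\ref{thm:skew_holomorph} combined with Remark~\ref{remark for lambdas}, the hypothesis $|\ker\lambda|=q$ translates into classifying regular subgroups $G\leq\Hol(\G_k)$ with $|\pi_2(G)|=p^2$ up to $\aut{\G_k}$-conjugation, and then reading off $\circ$ from \eqref{circ}. The choice of $\pi_2(G)$ and of $\ker{\pi_2|_G}$ is essentially forced: since $|\aut{\G_k}|=p^2(p-1)^2$ and the $p$-Sylow $\langle\alpha_1,\alpha_2\rangle$ of $\aut{\G_k}$ is unique (hence normal), one must have $\pi_2(G)=\langle\alpha_1,\alpha_2\rangle$. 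The kernel $\ker{\pi_2|_G}$ is a $q$-Sylow of $\G_k$, all of which are $\G_k$-conjugate by Sylow via inner automorphisms, and such inner automorphisms normalize the unique $p$-Sylow of $\aut{\G_k}$. So after conjugation in $\aut{\G_k}$, I may assume $\ker{\pi_2|_G}=\langle\epsilon\rangle$.

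Next I pin down the generators. Writing $G=\langle\epsilon,u\alpha_1,v\alpha_2\rangle$ with $u,v\in\G_k$, condition~\NC forces $(u\alpha_1)\epsilon(u\alpha_1)^{-1}$ and $(v\alpha_2)\epsilon(v\alpha_2)^{-1}$ to lie in $\langle\epsilon\rangle$. Extending \eqref{formulas for G_k}--\eqref{formulas for G_k 2} to allow an $\epsilon$-component in $u,v$, these equations force the $\tau$-exponent of $u$ and the $\sigma$-exponent of $v$ to vanish, while the remaining $\sigma$- and $\tau$-exponents are pinned down by the $\epsilon$-exponents. Since $\epsilon\in G$, I may premultiply the generators by suitable powers of $\epsilon$ to cancel those $\epsilon$-components, arriving at the canonical form $G=\langle\epsilon,\beta_1,\beta_2\rangle$ with $\beta_1=\sigma^{\frac{1}{g-1}}\alpha_1$ and $\beta_2=\tau^{\frac{1}{g^k-1}}\alpha_2$. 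Direct verification via \eqref{formulas for G_k}--\eqref{formulas for G_k 2} shows that $\epsilon,\beta_1,\beta_2$ commute pairwise, whence $G\cong\Z_p\times\Z_p\times\Z_q$; regularity is immediate from Lemma~\ref{pi_1 for fix}(2) since $\langle\sigma\rangle,\langle\tau\rangle,\langle\epsilon\rangle\subseteq\pi_1(G)$.

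Finally I read off $\circ$. A general element of $G$ has the form $\beta_1^m\beta_2^n\epsilon^l$; collecting $\alpha_1^m\alpha_2^n(\epsilon^l)$ using the identity $(\sigma^M\epsilon)^l=\sigma^{M(g^l-1)/(g-1)}\epsilon^l$ yields $\pi_1$-image $\sigma^{mg^l/(g-1)}\tau^{ng^{kl}/(g^k-1)}\epsilon^l$. Inverting this at $a=\sigma^{x_1}\tau^{x_2}\epsilon^{x_3}$ gives $\lambda_a=\alpha_1^{(g-1)x_1 g^{-x_3}}\alpha_2^{(g^k-1)x_2 g^{-kx_3}}$, and substituting into \eqref{circ} and simplifying via $\epsilon\sigma=\sigma^g\epsilon$ and $\epsilon\tau=\tau^{g^k}\epsilon$ produces the claimed formula. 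The isomorphism $(B,\circ)\cong G\cong\Z_p^2\times\Z_q$ is then immediate. The main obstacle is the exponent bookkeeping in this final step, tracking $g$-powers through the nested action of $\alpha_1^m$ on $\epsilon^l$ and then through the reordering of $\sigma,\tau$ past $\epsilon$; the uniqueness argument above is essentially forced by the normality of the $p$-Sylow of $\aut{\G_k}$ together with Sylow's theorem.
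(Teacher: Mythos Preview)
Your proof is correct and follows essentially the same route as the paper: identify $\pi_2(G)$ as the unique (normal) $p$-Sylow $\langle\alpha_1,\alpha_2\rangle$ of $\aut{\G_k}$, normalize $\ker{\pi_2|_G}$ to $\langle\epsilon\rangle$, pin down the remaining generators, and read off $\circ$. The only cosmetic differences are that the paper deduces the constraints on the generators from the observation that $G$ must be abelian (a group of order $p^2q$ with normal $q$-Sylow is abelian here since $p\nmid q-1$), whereas you extract them directly from condition~\NC via \eqref{formulas for G_k}--\eqref{formulas for G_k 2}; and the paper obtains the $\circ$-formula by invoking \cite[Lemma~2.1]{abelian_case} through $\epsilon\in\ker\lambda$ and $\sigma,\tau\in\Fix(B)$, while you carry out the inversion of $\pi_1|_G$ by hand. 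Both arguments yield the same unique subgroup $H=\langle\epsilon,\sigma^{1/(g-1)}\alpha_1,\tau^{1/(g^k-1)}\alpha_2\rangle$ and the same formula.
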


\begin{proof}
	According to \eqref{formulas for G_k} and \eqref{formulas for G_k 2}, the subgroup 
	$$H=\langle \epsilon,\,\sigma^{\frac{1}{g-1}}\alpha_1,\,\tau^{\frac{1}{g^k-1}}\alpha_2\rangle$$
	is isomorphic to $\Z_p^2\times \Z_q$ and $|\pi_2(G)|=p^2$. According to Lemma \ref{pi_1 for fix}(2), $\pi_1(H)=\langle \epsilon,\sigma,\tau\rangle$ and so $H$ is regular.
	
	Let $G$ be a regular subgroup with $|\pi_2(G)|=p^2$. Then the image of $\pi_2$ is the unique $p$-Sylow subgroup of $\aut{\G_k}$. The kernel is a subgroup of order $q$ of $G$, and, according to \eqref{formulas for G_k} and \eqref{formulas for G_k 2}, we can assume that it is generated by $\epsilon$. The group $G$ has a standard presentation as
	$$G=\langle \epsilon,\ \sigma^a\tau^b\alpha_1,\ \sigma^c\tau^d\alpha_2\rangle$$
	and it is abelian, since it has a normal $q$-Sylow subgroup. Using \eqref{formulas for G_k} and \eqref{formulas for G_k 2} again, we have that $b=c=0$, $a=\frac{1}{g-1}$ and $d=\frac{1}{g^k-1}$, i.e. $G=H$.
		
	Let $B$ be the skew brace associated to $H$. Since $\epsilon\in \ker{\lambda}$ and $\sigma,\tau\in \Fix(B)$, by \cite[Lemma 2.1]{abelian_case} we can compute the formula for the $\circ$ operation of $B$ as in the statement.
\end{proof}

The subgroup
\begin{equation}\label{other invariant 2} 
\H_2=\langle \sigma,\ \tau,\ \alpha_1,\ \alpha_2\rangle   \cong \Z_p^4
\end{equation}
is normal in $\Hol(\G_k)$. Let $\mu$ be a generator of $\Z_p^\times$, then $\Hol(\G_k)/\H_2=\langle \epsilon, [(0,0),(1,\mu)], [(0,0),(\mu,1)] \rangle  \cong \Z_q\times \Z_p^\times \times \Z_p^\times$. In particular, it is abelian. 


The formula
\begin{equation}\label{conj by alphas}
    \alpha_1^n \alpha_2^m u\epsilon^d \theta \alpha_2^{-m} \alpha_1^{-n}=u\sigma^{x n\frac{g^d-1}{g-1}}\alpha_1^{(1-x)n}\tau^{y  m\frac{g^{kd}-1}{g^k-1}}\alpha_2^{(1-y)m}\epsilon^d \theta,\tag{$F_k$}
\end{equation}
where $u\in \langle \sigma,\tau\rangle$ and $\theta=[(0,0),(x,y)]$ holds in $\Hol(\G_k)$. 

Let us denote $\beta_s=[(0,0),(g,g^s)]$ and $\widetilde\beta=[(0,0),(1,g)]$ for $0\leq s\leq q-1$.

\begin{proposition}\label{G_k_sub_q}
	The skew braces of $\G_k$-type with $|\ker{\lambda}|=p^2$ are $(B_{a,b},+,\circ)$ where $(B_{a,b},+)=\Z_p^2  \rtimes_{\dm_{1,k}} \Z_q$ and
	$$(B_{a,b},\circ)=\Z_p^2\rtimes_{\dm_{a+1,b+k}} \Z_q\cong \begin{cases}
	\Z_p^2\times \Z_q, \, \text{ if } a=q-1, \, b=-k \pmod{q},\\
	\G_0,\, \text{ if } a=q-1, \, b\neq -k  \pmod{q},\\
	\G_{\frac{b+k}{a+1}},\, \text{otherwise, }\\
	\end{cases}$$
	for $0\leq a,b\leq q-1$ and $(a,b)\neq (0,0)$. In particular, there are $q^2-1$ such skew braces and they are bi-skew braces. 
\end{proposition}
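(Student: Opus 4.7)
The plan is to translate the condition $|\ker\lambda|=p^2$ into $|\pi_2(G)|=q$ for the regular subgroup $G\leq\Hol(\G_k)$ associated to the brace via Theorem \ref{thm:skew_holomorph}, and classify such $G$ up to $\aut{\G_k}$-conjugation. Since $\ker{\pi_2|_G}$ has order $p^2$ and embeds in $\G_k$, it must coincide with the unique $p$-Sylow subgroup $\langle\sigma,\tau\rangle$. The image $\pi_2(G)$ is a subgroup of order $q$ in $\aut{\G_k}$, and hence lies in the diagonal torus $\Z_p^\times\times\Z_p^\times$. Up to conjugation in $\aut{\G_k}$, there are exactly $q+1$ such subgroups, with representatives $\langle\beta_s\rangle$ for $0\leq s\leq q-1$ and $\langle\widetilde\beta\rangle$ (this is the computation in Remark \ref{subgroups of GL} applied to $(p-1)$-torsion in the torus).

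Next, for each choice of $\pi_2$-image $\langle\alpha\rangle$ I would write a standard presentation $G=\langle\sigma,\tau,u\alpha\rangle$; since $\sigma,\tau\in G$, we may absorb the $\langle\sigma,\tau\rangle$-part of $u$ and reduce to $G_{\alpha,d}=\langle\sigma,\tau,\epsilon^d\alpha\rangle$ for some $d\in\{0,\ldots,q-1\}$. Lemma \ref{pi_1 for fix}(2), noting that $\alpha$ fixes $\epsilon$ so $\alpha(\epsilon^d)=\epsilon^d$, yields regularity precisely when $d\neq 0$. For non-conjugacy, subgroups whose $\pi_2$-images lie in distinct conjugacy classes are immediately non-conjugate. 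Within a fixed $\langle\alpha\rangle$, conjugation by the diagonal torus centralizes both $\alpha$ and $\epsilon$, hence leaves $G_{\alpha,d}$ unchanged; conjugation by the unipotent part $\langle\alpha_1,\alpha_2\rangle$ can only modify the third generator by an element of $\langle\sigma,\tau\rangle$ via \eqref{conj by alphas}, which is absorbed by the kernel. Hence distinct $d$ yield distinct conjugacy classes, and we obtain exactly $(q+1)(q-1)=q^2-1$ regular subgroups.

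To match the $(a,b)$-parametrization, I would use Remark \ref{remark for lambdas}: the element $(\epsilon^d\alpha)^{d^{-1}}\in G$ projects under $\pi_1$ to $\epsilon$, so $\lambda_\epsilon=\alpha^{d^{-1}}$. For $\alpha=\beta_s=\dm_{1,s}$ this gives $\lambda_\epsilon=\dm_{d^{-1},sd^{-1}}$, and as $(s,d)$ ranges over $\{0,\ldots,q-1\}\times\{1,\ldots,q-1\}$ the pair $(a,b)=(d^{-1},sd^{-1})$ covers exactly the pairs with $a\neq 0$; for $\alpha=\widetilde\beta=\dm_{0,1}$ one gets $\lambda_\epsilon=\dm_{0,d^{-1}}$, covering the pairs with $a=0,b\neq 0$. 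The multiplicative relations then follow from $\epsilon\circ x\circ\epsilon'=\epsilon+\lambda_\epsilon(x)-\epsilon$ (valid because $\lambda_\epsilon$ fixes $\epsilon$, whence $\epsilon'=-\epsilon$) combined with the conjugation identities $\epsilon+\sigma-\epsilon=g\sigma$ and $\epsilon+\tau-\epsilon=g^k\tau$ in $(\G_k,+)$, giving $\epsilon\circ\sigma\circ\epsilon'=\sigma^{g^{a+1}}$ and $\epsilon\circ\tau\circ\epsilon'=\tau^{g^{b+k}}$; this identifies $(B_{a,b},\circ)$ with $\Z_p^2\rtimes_{\dm_{a+1,b+k}}\Z_q$, and the stated isomorphism class is then read off from the entries of $\dm_{a+1,b+k}$. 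The bi-skew brace property follows as in Lemma \ref{Zp2 rtimes Z_q q} by invoking \cite[Corollary 1.2]{skew_pq}.

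The delicate point will be the non-conjugacy bookkeeping in the cases $\alpha=\beta_0$ and $\alpha=\widetilde\beta$, where one of the unipotent generators $\alpha_2$ or $\alpha_1$ centralizes $\alpha$ and so formally normalizes $\langle\alpha\rangle$; here one must verify via the explicit formula \eqref{conj by alphas} that the extra factor introduced on conjugation lies in $\langle\sigma,\tau\rangle$ and is therefore absorbed into $\ker{\pi_2|_{G_{\alpha,d}}}$, so that no spurious identification between different values of $d$ occurs.
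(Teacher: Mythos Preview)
Your strategy coincides with the paper's: identify $\ker{\pi_2|_G}=\langle\sigma,\tau\rangle$, reduce $\pi_2(G)$ to the diagonal torus, and parametrize. The paper parametrizes directly by $G_{a,b}=\langle\sigma,\tau,\epsilon\beta_0^a\widetilde\beta^b\rangle$ and obtains any regular $G$ as $G_{an^{-1},bn^{-1}}$ after writing $G=\langle\sigma,\tau,\epsilon^n\beta_0^a\widetilde\beta^b\rangle$, collapsing your two-step $(\alpha,d)$ parametrization into one.

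One imprecision to fix: your claim that conjugation by $\langle\alpha_1,\alpha_2\rangle$ modifies the third generator only by an element of $\langle\sigma,\tau\rangle$ is false as stated. Formula \eqref{conj by alphas} shows that conjugating $\epsilon^d\beta_s$ by $\alpha_1^n\alpha_2^m$ produces an additional factor $\alpha_1^{(1-g)n}\alpha_2^{(1-g^s)m}$, and for $s\neq 0$ this is nontrivial whenever $(n,m)\neq(0,0)$. What you actually need is that any $h$ conjugating $G_{\alpha,d}$ to $G_{\alpha,d'}$ must lie in $N_{\aut{\G_k}}(\langle\alpha\rangle)$; this normalizer equals the diagonal torus unless $\alpha\in\{\beta_0,\widetilde\beta\}$, where it acquires one extra unipotent generator, and restricted to this normalizer your absorption argument is correct. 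Your final paragraph hints at this, but the body should be rewritten to work inside the normalizer from the outset. The paper sidesteps the whole case analysis by passing to the abelian quotient $\Hol(\G_k)/\H_2$ (see \eqref{other invariant 2}): if $G_{a,b}$ and $G_{c,d}$ are conjugate their images there coincide, and since $\epsilon\beta_0^a\widetilde\beta^b\,\H_2$ determines $(a,b)$, one gets $(a,b)=(c,d)$ in one line.

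For the structure and the bi-skew claim the paper invokes \cite[Lemma~2.2]{abelian_case} (using $\epsilon\in\Fix(B_{a,b})$ and $\sigma,\tau\in\ker\lambda$) and \cite[Proposition~1.1]{skew_pq} (using that $\dm_{1,k}$ and $\dm_{a+1,b+k}$ commute); these amount to your direct computations.
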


\begin{proof}
	The subgroups $$G_{a,b}=\langle \sigma,\ \tau,\ \epsilon \beta_0^a \widetilde \beta^b\rangle$$ are regular.
	If $G_{a,b}$ and $G_{c,d}$ are conjugate, then so are their images in the quotient $\Hol(\G_k)/\H_2$, which is abelian. Then $\langle \epsilon \beta_0^a\widetilde \beta^b\rangle=\langle \epsilon \beta_0^c\widetilde \beta^d\rangle$ and so it follows that $(a,b)=(c,d)$.
		
	Let $G$ be a regular subgroup of $\Hol(\G_k)$ such that $|\pi_2(G)|=q$. Then the kernel of $\pi_2$ is the unique $p$-Sylow subgroup of $\G_k$. The elements of order $q$ of $\aut{\G_k}$ are of the form $\alpha_1^s\alpha_2^t \beta_0^a \widetilde \beta^b$ where $s=0$ (resp. $t=0$) whenever $a=0$  (resp. $b=0$). According to formula \eqref{conj by alphas} for $u=1$ and $d=0$, up to conjugation by an element in $\alpha_1\alpha_2$, we can assume that $\pi_2(G)$ is generated by $\beta_0^{a}\widetilde \beta^{b}$ where $(a,b)\neq (0,0)$. Therefore
	$$G=\langle \sigma,\ \tau,\ \epsilon^n \beta_0^{a}\widetilde \beta^{b}\rangle$$
and so $n\neq 0$ since $G$ is regular. Using that $(\epsilon^n\beta_0^a\widetilde \beta^b)^{n^{-1}}=\epsilon \beta_0^{an^{-1}}\widetilde \beta^{bn^{-1}}$ we can conclude that $G=G_{an^{-1},bn^{-1}}$.
		
	Let $B_{a,b}$ be the skew brace associated to the regular subgroup $G_{a,b}$. Note that $\epsilon\in \Fix(B_{a,b})$ and $\sigma,\tau\in \ker{\lambda}$. Therefore we can apply \cite[Lemma 2.2]{abelian_case} and it follows that
	\begin{eqnarray}\label{formula_sub_G_k_q}
	\begin{pmatrix} x_1 \\ x_2 \\x_3\end{pmatrix} \circ \begin{pmatrix} y_1 \\ y_2 \\y_3 \end{pmatrix} 
	=\begin{pmatrix} x_1+g^{(a+1)x_3}y_1 \\ x_2+g^{(b+k)x_3}y_2\\ x_3+y_3 \end{pmatrix}
	\end{eqnarray}
	for every $0\leq x_1,x_2,y_1,y_2\leq p-1,\, 0\leq x_3,y_3\leq q-1$. So $(B_{a,b},+)= \Z_p^2\rtimes_{\dm_{1,k}} \Z_q$, $(B_{a,b},\circ)=\Z_p^2\rtimes_{\dm_{a+1,b+k}} \Z_q$ and the images of the two actions commute. According to \cite[Proposition 1.1]{skew_pq}, $B_{a,b}$ is a bi-skew brace. 
\end{proof}

To compute the regular subgroups of $\Hol(\G_k)$ with image under $\pi_2$ of size $pq$ and $p^2q$ we are computing the conjugacy classes of subgroups of size $pq$ and $p^2q$ of $\aut{\G_k}$.

\begin{lemma}\label{pi_2 pq G_k}
	A set of representatives of conjugacy classes of subgroups of $\aut{\G_k}$ of order $pq$ and order $p^2q$ are
	\begin{center}
		\small{
			\begin{tabular}{c|l|c|l}\label{table for case pq G_k} 
				Size &    $G$ & Parameters & Class \\
				\hline
				&&&\\[-1em]
				$pq$    & $\mathcal H_{1,s}=\langle \alpha_1, \beta_s\rangle$  & $0\leq s\leq q-1$ & $\mathbb{Z}_p\rtimes_{\t} \mathbb{Z}_q$\\
				\cline{2-4}
				&&&\\[-1em]
				&   $\mathcal H_{2,s}=\langle \alpha_2, \beta_s\rangle$  &  $0\leq s\leq q-1$ & $\Z_{pq}$, if $s=0$,\\  
				&&&\\[-1em]
				& & & $\Z_p\rtimes_{\t}\Z_q$, otherwise. \\
				\cline{2-4}
				&&&\\[-1em]
				&  $\mathcal K_i=\langle \alpha_i, \widetilde\beta\rangle$  &  $i=1,2$ & $\Z_{pq}$, if $i=1$,\\
				&&&\\[-1em]
				& & & $\Z_p\rtimes_{\t}\Z_q$, if $i=2$. \\
				\cline{2-4}
				&&&\\[-1em]
				&   $\mathcal W=\langle \alpha_1\alpha_2, \beta_1\rangle$  & - & $\Z_p\rtimes_{\t} \Z_q$\\
				\hline
				&&&\\[-1em]
				$p^2q$ &    $\mathcal T_s=\langle \alpha_1,\alpha_2,\beta_s\rangle$ &  $0\leq s\leq q-1$ & $\G_s$ \\
				\cline{2-4}
				&&&\\[-1em]
				& $\mathcal U=\langle \alpha_1,\alpha_2,\widetilde\beta\rangle$   & - & $\G_0$
		\end{tabular}}
	\end{center}
\end{lemma}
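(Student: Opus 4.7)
The plan hinges on the fact that $P_0=\langle \alpha_1,\alpha_2\rangle\cong \Z_p^2$ is the unique $p$-Sylow subgroup of $\aut{\G_k}$, hence normal, and the quotient $\aut{\G_k}/P_0\cong \Z_p^\times\times \Z_p^\times$ is abelian with unique $q$-Sylow $\langle g\rangle\times\langle g\rangle\cong \Z_q^2$. Let $\pi:\aut{\G_k}\to \Z_p^\times\times \Z_p^\times$ denote the canonical projection. For any subgroup $H$ of order $pq$ or $p^2q$, the pair $(P_1,\pi(H))$ with $P_1:=H\cap P_0$ is a conjugation invariant, and $\pi(H)$ must be one of the $q+1$ order-$q$ subgroups of $\Z_q^2$, represented in $\aut{\G_k}$ by $\langle \beta_s\rangle$ for $0\leq s\leq q-1$ and $\langle \widetilde\beta\rangle$; since the quotient is abelian, distinct values of $\pi(H)$ give non-conjugate subgroups.

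For $|H|=p^2q$ one must have $P_1=P_0$, so $H=P_0\rtimes \widetilde Q$ for a complement $\widetilde Q$ of order $q$. By Schur--Zassenhaus any two complements above a fixed $\pi(H)$ are $P_0$-conjugate, hence conjugacy classes correspond bijectively to the $q+1$ choices of $\pi(H)$, producing the representatives $\mathcal T_s$ and $\mathcal U$. The isomorphism type follows from the natural action of $\beta_s$ (respectively $\widetilde\beta$) on $P_0$, which is $\dm_{1,s}$ (respectively $\dm_{0,1}$), yielding $\G_s$ and $\G_0$.

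For $|H|=pq$, $P_1$ is a one-dimensional subspace of $P_0$ and $\pi(H)$ must normalize $P_1$. Since $(a,b)\in \Z_p^\times\times \Z_p^\times$ acts on $P_0$ by $(n,m)\mapsto(an,bm)$, the action has three orbits on lines: $\langle \alpha_1\rangle$, $\langle \alpha_2\rangle$, and the single orbit of $\langle \alpha_1\alpha_2\rangle$ (comprising all $\langle \alpha_1\alpha_2^c\rangle$ with $c\neq 0$). In the first two cases every subgroup of order $q$ normalizes $P_1$, so each of the $q+1$ choices of $\pi(H)$ yields a distinct class, producing the families $\mathcal H_{1,s},\mathcal K_1$ and $\mathcal H_{2,s},\mathcal K_2$. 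In the third case, $\langle(a,b)\rangle$ normalizes $\langle \alpha_1\alpha_2\rangle$ iff $a=b$, so among order-$q$ subgroups only $\langle \beta_0\rangle$ qualifies, giving the single class $\mathcal W$.

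To finish, one more application of Schur--Zassenhaus shows that two subgroups with the same invariants $(P_1,\pi(H))$ are $P_0$-conjugate, so the $2q+3$ representatives listed exhaust the $pq$-classes. The isomorphism type is read off by evaluating the action of the chosen generator of $\pi(H)$ on a generator of $P_1$: a trivial action gives $\Z_{pq}$, a non-trivial one gives $\Z_p\rtimes_{\t}\Z_q$. The main obstacle is the $pq$-case bookkeeping — correctly identifying the three orbits of lines and, for the oblique orbit, pinning down that only $\langle \beta_0\rangle$ is admissible; once this combinatorial step is settled, the remaining verifications are direct computations in the semidirect product $\aut{\G_k}=\Z_p^2\rtimes_\rho(\Z_p^\times\times \Z_p^\times)$.
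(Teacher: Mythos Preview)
Your overall strategy is correct and is genuinely different from the paper's. The paper argues by hand: it lists the three $p$-elements $\alpha_1,\alpha_2,\alpha_1\alpha_2$ up to conjugation, writes a generic $q$-element as $\alpha_1^n\alpha_2^m\beta_s$ (or $\alpha_1^n\beta_0$, or $\alpha_2^m\widetilde\beta$), and then uses the explicit conjugation formula \eqref{conjugation in aut G_k} to normalise the parameters case by case. Your approach instead exploits the structure $\aut{\G_k}=P_0\rtimes(\Z_p^\times\times\Z_p^\times)$ with $P_0$ normal and abelian quotient, reads off the invariant $\pi(H)$, classifies the orbits of lines in $P_0$ under the diagonal action, and invokes Schur--Zassenhaus to identify complements. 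This is cleaner and makes the count $2(q{+}1)+1$ transparent; the paper's argument is more explicit but requires the conjugation identity to be written down.

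There is one genuine slip. In the oblique case you correctly derive that $\langle(a,b)\rangle$ normalises $\langle\alpha_1\alpha_2\rangle$ iff $a=b$, but then conclude that ``only $\langle\beta_0\rangle$ qualifies''. Since $\beta_s=[(0,0),(g,g^s)]$, the condition $a=b$ reads $g=g^s$, i.e.\ $s=1$; it is $\langle\beta_1\rangle$ (not $\langle\beta_0\rangle$) that normalises the diagonal line, consistent with $\mathcal W=\langle\alpha_1\alpha_2,\beta_1\rangle$ in the statement. Also, your phrasing ``$(P_1,\pi(H))$ is a conjugation invariant'' is slightly imprecise: $P_1$ itself moves under conjugation; the invariant is the \emph{orbit} of $P_1$ under the diagonal $(\Z_p^\times)^2$-action (and here the hypothesis $k\neq 0,\pm1$ is what guarantees $\aut{\G_k}$ is exactly $\Z_p^2\rtimes(\Z_p^\times)^2$, so that $\langle\alpha_1\rangle$ and $\langle\alpha_2\rangle$ really lie in distinct orbits). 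With these two corrections your argument goes through.
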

\begin{proof}
	Let $G$ be a subgroup of size $pq$ of $\aut{\G_k}$. Then $G$ is generated by an element of order $p$ and an element of order $q$. The elements of order $p$ belong to the subgroup generated by $\alpha_1$ and $\alpha_2$ and up to conjugation there are three elements of order $p$, namely $ \alpha_1,\alpha_2,\alpha_1\alpha_2$. The elements of order $q$ can be chosen of the form $\alpha_1^n\alpha_2^m\beta_s$ for $s\neq 0$, $\alpha_1^n\beta_0$ or  $\alpha_2^m \widetilde\beta$.
	
	If the $p$-Sylow subgroup of $G$ is generated by $\alpha_i$, $i=1,2$, using that
	\begin{align}
		[(a,b),(x,y)]\alpha_1^n\alpha_2^m\beta_s[(a,b),(x,y)]^{-1}&=\alpha_1^{xn+(1-g)a}\alpha_2^{ym+(1-g^s)b}\beta_s \label{conjugation in aut G_k},\\
		[(a,b),(x,y)]\alpha_2^m\widetilde\beta [(a,b),(x,y)]^{-1}&=\alpha_2^{ym+(1-g)b}\widetilde\beta ,\notag
	\end{align}
	we have that $G$ is conjugate to $\mathcal H_{i,s}$ or to $\mathcal K_i$. If the $p$-Sylow subgroup of $G$ is generated by $\alpha_1\alpha_2$ then necessarily the element of order $q$ has form $\alpha_1^n\alpha_2^m\beta_1$, since the $p$-Sylow subgroup has to be normal. In this case, then $G=\langle \alpha_1\alpha_2,\ \alpha_1^n\alpha_2^m\beta_1\rangle=\langle \alpha_1\alpha_2,\ \alpha_1^{n-m}\beta_1\rangle$ and according to \eqref{conjugation in aut G_k}, $G$ is conjugate to $\mathcal W$ by a suitable power of $\alpha_1$.
	
	If $G$ is a subgroup of order $p^2q$, then $G$ is generated by $\alpha_1,\alpha_2$ and an element of order $q$, which can be chosen to be $\beta_s$ for $0\leq s\leq q-1$ or $\widetilde\beta$. Such groups are not conjugate, since their restrictions to $\langle \sigma,\tau\rangle$ are not.
\end{proof}

In the following we are employing the same notation as in Lemma \ref{pi_2 pq G_k} and the following formula:
\begin{equation}\label{formula for powers}
    (\sigma^a \tau^b \epsilon^c\theta)^n=\sigma^{a\sum_{i=0}^{n-1} g^{i(c+x)}}\tau^{a\sum_{i=0}^{n-1} g^{i(ck+y)}}\epsilon^{nc}\theta^n\tag{$P_k$}
\end{equation}
where $\theta=[(0,0),(g^x,g^y)]$.

\begin{lemma}\label{lemma G_k pq 1}
	A set of representatives of conjugacy classes of subgroups $G$ of $\Hol(\G_k)$ with $|\pi_2(G)|=pq$ is
	
	\bigskip
	\begin{center}
		\small{
			\begin{tabular}{c|l|c|l|c}
				$\pi_2(G)$ & Subgroups & Parameters & Isomorphism class & \#\\
				\hline
				&&&&\\[-1em]
				$\mathcal H_{1,s}$  &  $\widetilde H_{1,s,d}=\langle \tau, \sigma^{\frac{1}{g-1}}\alpha_1,\epsilon^d\beta_s\rangle$  & $0\leq s\leq q-1$, & $\G_{dk+s}$ & $q(q-1)$\\
				&&&&\\[-1em]
				& & $1\leq d\leq q-1$ & & \\
				&&&&\\[-1em]
				\cline{2-5}
				&&&&\\[-1em]
				&    $\widehat H_{1,s}=\langle \sigma, \tau\alpha_1,\epsilon^{\frac{1-s}{k}}\beta_s\rangle$ &  $2\leq s\leq q$ &$\G_{\frac{k-s+1}{k}}$ & $q-1$\\
				&&&&\\[-1em]
				\cline{2-5}
				&&&&\\[-1em]
				&  $\overline H_{1,s}=\langle \sigma\tau, \sigma^{\frac{1}{g-1}}\alpha_1,\epsilon^{\frac{s-1}{1-k}}\beta_s\rangle$ &  $2\leq s\leq q$ & $\G_{\frac{s-k}{1-k}}$ & $q-1$\\
				&&&&\\[-1em]
				\hline
				&&&&\\[-1em]
				$\mathcal H_{2,s}$    & $\widetilde H_{2,s,d}=\langle \sigma, \tau^{\frac{1}{g^k-1}}\alpha_2,\epsilon^d\beta_s\rangle $ & $0\leq s\leq q-1$, & $   \Z_p^2\times \Z_q$, if $s= 0, d= -1$,
				&    $q(q-1)$\\
				&&&&\\[-1em]
				& & $1\leq d\leq q-1$  & $\G_{0}$, if $s\neq 0,\, d=-1$, &  \\
				&&&&\\[-1em]
				& & &    $\G_{\frac{s}{d+1}}$,  otherwise.  &  \\
				&&&&\\[-1em]
				\cline{2-5}
				&&&&\\[-1em]
				&     $\widehat H_{2,s}=\langle \tau, \sigma\alpha_2,\epsilon^{s-1}\beta_s\rangle $  & $2\leq s\leq q$ &  $\G_0$,  if $s=0$, & $q-1$\\
				&&&&\\[-1em]
				& & & $\G_{\frac{s(k+1)-k}{s}}$, otherwise. &  \\
				&&&&\\[-1em]
				\cline{2-5}
				&&&&\\[-1em]
				& $\overline H_{2,s}=\langle \sigma\tau, \tau^{\frac{1}{g^k-1}}\alpha_2,\epsilon^{\frac{s-1}{1-k}}\beta_s\rangle $ & $2\leq s\leq q$ & $\G_0$ if $s=0$, & $q-1$ \\
				& & & $\G_{\frac{s-k}{s(1-k)}}$, otherwise.   & \\
				&&&&\\[-1em]
				\hline
				&&&&\\[-1em]
				
				$\mathcal K_1$    & $\widetilde K_{1,d}=\langle \tau, \sigma^{\frac{1}{g-1}}\alpha_1,\epsilon^d\widetilde\beta\rangle $ &  $1\leq d\leq q-1$ & $\Z_p^2\times \Z_q$ if $d=-k^{-1}$, &  $q-1$\\
				&&&&\\[-1em]
				& & & $\G_0$, otherwise.  & \\
				&&&&\\[-1em]
				\cline{2-5}
				&&&&\\[-1em]
				& $\widehat K_1=\langle \sigma, \tau\alpha_1,\epsilon^{-k^{-1}}\widetilde\beta\rangle$ &  - & $\G_0$ &$1$\\
				&&&&\\[-1em]
				\cline{2-5}
				&&&&\\[-1em]
				& $\overline K_1=\langle \sigma\tau, \sigma^{\frac{1}{g-1}}\alpha_1,\epsilon^{\frac{1}{1-k}}\widetilde\beta\rangle$&  - &$\G_0$ & $1$\\
				&&&&\\[-1em]
				\hline
				&&&&\\[-1em]
				$\mathcal K_{2}$ & $\widetilde K_{2,d}=\langle \sigma, \tau^{\frac{1}{g^k-1}}\alpha_2,\epsilon^d\widetilde\beta\rangle $ &$1\leq d\leq q-1$ & $\G_d$ &   $q-1$\\
				&&&&\\[-1em]
				\cline{2-5}
				&&&&\\[-1em]
				& $\widehat K_2=\langle \tau, \sigma\alpha_2,\epsilon\widetilde\beta\rangle$  & - & $\G_{k+1}$& $1$ \\
				&&&&\\[-1em]
				\cline{2-5} 
				&&&&\\[-1em]
				
				& $\overline K_2=\langle \sigma\tau, \tau^{\frac{1}{g^k-1}}\alpha_2,\epsilon^{\frac{1}{1-k}}\widetilde\beta\rangle$& - &$\G_{\frac{1}{1-k}}$ & $1$ \\
				&&&&\\[-1em]
				\hline
				&&&&\\[-1em]
				$\mathcal W$ & $\widetilde W_d=\langle \sigma, \tau^{\frac{1}{g^k-1}}\alpha_1\alpha_2,\epsilon^{d}\beta_1\rangle $ &  $1\leq d\leq q-1$&$\G_{d+1}$ & $q-1$\\
				&&&&\\[-1em]
				\cline{2-5}
				&&&&\\[-1em]
				& $\widehat W_{d}=\langle \tau, \sigma^{\frac{1}{g-1}}\alpha_1\alpha_2,\epsilon^d\beta_1\rangle $ & $1\leq d\leq q-1$&  $\G_{dk+1}$& $q-1$\\
		\end{tabular}}
	\end{center}
	
\end{lemma}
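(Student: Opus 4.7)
The plan is to apply the general strategy of Section~2: fix $\pi_2(G)$ up to conjugacy, determine the possible kernels $K=\ker{\pi_2|_G}$, write the standard presentation of $G$, impose the necessary conditions \NC and \nc, and finally reduce by the subgroup of the normalizer of $\pi_2(G)$ in $\aut{\G_k}$ that stabilizes $K$. By Lemma~\ref{pi_2 pq G_k}, up to conjugation $\pi_2(G)$ is one of the five subgroups $\mathcal H_{1,s}$, $\mathcal H_{2,s}$, $\mathcal K_1$, $\mathcal K_2$, $\mathcal W$, giving the five horizontal blocks of the table.

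For each block, the kernel $K$ is a subgroup of order $p$ of $\G_k$, hence lies in $\langle\sigma,\tau\rangle$ and, up to the action of the normalizer of $\pi_2(G)$, can be taken to be $\langle\sigma\rangle$, $\langle\tau\rangle$, or $\langle\sigma\tau\rangle$. Which of these survives is controlled by \NC: the subgroup $K$ must be normalized by both generators of the standard presentation $G=\langle K,\ u\alpha,\ v\epsilon^d\beta\rangle$, and the conjugation action of $v\epsilon^d\beta$ on $K$, computed via~\eqref{formulas for G_k},~\eqref{formulas for G_k 2},~\eqref{conj by alphas}, either places $K$ on an eigenline of the linear part of $\pi_2(G)$ with $d$ free (the generic sub-blocks with $K\in\{\langle\sigma\rangle,\langle\tau\rangle\}$) or pins $d$ to a resonance value such as $(1-s)/k$ or $(s-1)/(1-k)$, which is precisely what permits $K=\langle\sigma\tau\rangle$ in the $\mathcal H$ and $\mathcal W$ blocks and which also explains the restriction $2\le s\le q$ in the rows where $d$ is forced.

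With $\pi_2(G)$ and $K$ fixed, the remaining parts of \nc combined with the power formula~\eqref{formula for powers} pin down the coset representatives $u,v$ modulo $K$ and determine or restrict $d$. Conjugating with the subgroup of the normalizer of $\pi_2(G)$ that fixes $K$ absorbs the residual freedom in $u,v$ and brings them to the explicit values displayed in the table. Regularity of each candidate is then verified by combining Lemma~\ref{rem for regularity} with Lemma~\ref{pi_1 for fix}: part~(2) typically shows that $\sigma$ and $\tau$ lie in $\pi_1(G)$, and part~(1) lifts $\epsilon$ into $\pi_1(G)$, so that $\pi_1(G)=\G_k$.

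To see that the listed groups are pairwise non-conjugate, $\pi_2(G)$ and $K$ serve as primary invariants; for entries sharing both, projecting to the abelian quotient $\Hol(\G_k)/\H_2$ of~\eqref{other invariant 2} makes the parameter $d$ (or $s$) a conjugation invariant that separates them. The abstract isomorphism class of $G$ as a group of order $p^2q$ is read off from the eigenvalues of the order-$q$ generator acting on the $p$-Sylow subgroup of $G$ and matched against the list $\G_\ell$ via Remark~\ref{subgroups of GL} together with $\G_\ell\cong\G_{\ell^{-1}}$; this produces the third column of the table and, upon counting parameters, the cardinalities in the fourth. The main obstacle is bookkeeping: five $\pi_2(G)$-blocks, each branching into up to three kernel subcases with their own degenerate values of $d$ (for instance $d=-1$ in $\widetilde H_{2,s,d}$ when $s=0$, or $d=-k^{-1}$ in $\widetilde K_{1,d}$) where a candidate collapses to a different isomorphism class, must be tracked carefully to guarantee that every regular $G$ appears in exactly one row of the table without collision.
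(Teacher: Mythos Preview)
Your outline matches the paper's approach---fix $\pi_2(G)$ via Lemma~\ref{pi_2 pq G_k}, run through the three candidate kernels, impose \NC{} and \nc, reduce by the normalizer, and separate representatives using $\H_2$---and would succeed if executed. However, your account of \emph{why} $d$ gets pinned is inaccurate in a way that would derail the bookkeeping you flag as the main obstacle. You say \NC{} alone decides: eigenline kernels $\langle\sigma\rangle$, $\langle\tau\rangle$ leave $d$ free, while the resonance values $(1-s)/k$, $(s-1)/(1-k)$ arise for $K=\langle\sigma\tau\rangle$. In fact, for $\pi_2(G)=\mathcal H_{1,s}$ with $K=\langle\sigma\rangle$ the \NC{} condition is automatic, and it is the \nc{} condition---lifting $\beta_s\alpha_1\beta_s^{-1}=\alpha_1^g$ to the generators $\tau\alpha_1$ and $\epsilon^d\beta_s$---that forces $d=(1-s)/k$ (the paper does exactly this in~\eqref{needed for g_0}). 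The value $(s-1)/(1-k)$ is the one attached to $K=\langle\sigma\tau\rangle$, and \emph{that} one does come from \NC. So in each $\mathcal H_{i,s}$ block only one of the two eigenline kernels leaves $d$ free; the other has $d$ fixed by \nc, and the non-eigenline kernel has $d$ fixed by \NC. With your stated belief you would overpopulate the $\widetilde H$ rows and miss the $\widehat H$ rows entirely.
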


\begin{proof}
Any of the subgroups $G$ in the table have the form 
	\begin{equation}\label{shape}
	G=\langle u , w\delta, \epsilon^c h\rangle, 
	\end{equation}
for $\langle u,w\rangle=\langle \sigma,\tau\rangle$, $\delta\in \langle\alpha_1,\alpha_2\rangle$, $c\neq   0$, $\delta(u)=u$, $\delta(w)=w$, $h(\epsilon)=\epsilon$. According to Lemma \ref{pi_1 for fix}(2), we have $\langle \sigma,\tau\rangle \subseteq \pi_1(G)$. Since $\epsilon^c\in \pi_1(G)$, then $|\pi_1(G)|>p^2$ and so $\pi_1(G)=\G_k$.
	
	A set of representatives of the orbits of the elements of $\langle \sigma,\tau\rangle$ under the action of $\aut{\G_k}$ is $\{\sigma,\tau,\sigma\tau\}$. Then the subgroups with the same image under $\pi_2$ and belonging to different rows of the tables are not conjugate, since their kernels with respect to $\pi_2$ are not. 
	
Assume that two groups in the same row are conjugate. Then their images in the abelian quotient $\Hol(\G_k)/\H_2$ coincide. Then according to \eqref{shape}, their images are generated respectively by $\epsilon^c h $ and $\epsilon^d h$ and so it follows that $c=d$. 
	
	
	We focus on the case when the image under $\pi_2$ is $\mathcal H_{1,s}$, for the other cases we can apply the same ideas, so we omit the computation. 
		
	Let $G$ be a regular subgroup of $\Hol(\G_k)$ with $\pi_2(G)=\mathcal H_{1,s}$. The kernel of $\pi_2$ is a subgroup of order $p$ of $\G_k$ and we can choose it up to the action of the normalizer of $\mathcal H_{1,s}$ on the subgroups of $\langle \sigma,\tau\rangle$. Since $\setof{[(0,0),(a,b)]}{1\leq a,b\leq p-1}\leq N_{\aut{\G_k}}(\mathcal H_{1,s})$, we can assume that the kernel is generated by  $\sigma$, by $\tau$ or by $\sigma\tau$. The group $G$ has the form
	$$G=\langle u, \, v \epsilon^b\alpha_1, \, w\epsilon^d \beta_s\rangle$$
	where $u\in \{\sigma,\tau,\sigma\tau\}$ and $v,w\in \langle \sigma,\tau\rangle$. From the \nc condition $(v \epsilon^b\alpha_1)^p\in \ker{\pi_2} =\langle u\rangle$ we have that $(v \epsilon^b\alpha_1)^p \H_2=\epsilon^{bp}\H_2=\H_2$ and so $b=0$. If $d=0$ then according to Lemma \ref{pi_1 for fix}(1), $\pi_1(G)\subseteq \langle \sigma,\tau\rangle$ and so $G$ is not regular. Thus $d\neq 0$. From the \nc condition $\beta_s \alpha_1\beta_s^{-1}=\alpha_1^g$ we have that
	\begin{equation}\label{condition}
	  (w\epsilon^d\beta_s)v\alpha_1 (w\epsilon^d \beta_s)^{-1}=(v\alpha_1)^g=v^g \alpha_1^g \pmod {\langle u\rangle}.   
	\end{equation}

	(i) Assume that $\ker{\pi_2|_G}=\langle \sigma\rangle$. Then $G$ has a standard presentation as
	$$G=\langle \sigma, \, \tau^a\alpha_1,\, \tau^c \epsilon^d \beta_s\rangle$$
	for $a,d\neq 0$ and so up to conjugation by $[(0,0),(1,a^{-1})]$ we can assume that $a=1$. Hence $G$ has the form
	$$G=\langle \sigma, \tau \alpha_1,\tau^c\epsilon^d\beta_s\rangle.$$ By \eqref{condition} we have
	\begin{equation}\label{needed for g_0}
	(\tau^{c}\epsilon^d\beta_s)\tau \alpha_1(\tau^{c}\epsilon^d\beta_s)^{-1}=\sigma^{g\frac{g^c-1}{g-1}}\tau^{g^{dk+s}}\alpha_1^{g}=\tau^g\alpha_1^g\pmod{\langle \sigma\rangle} 
	\end{equation}
	and it follows that $d=\frac{1-s}{k}$ and so $s\neq 1$. If there exists $h\in \aut{\G_k}$ that normalizes $\langle \sigma, \tau\alpha_1\rangle$ and that $h \tau^c \epsilon^{\frac{1-s}{k}}\beta_s h^{-1}\in \widehat H_{1,s} $, the group $G$ is conjugate to $\widehat H_{1,s}$. Hence, according to \eqref{conj by alphas} we can choose $h$ to be a suitable power of $\alpha_1$.
	%
	%

	(ii) Assume that $\ker{\pi_2|_G}=\langle\tau\rangle$. Then 
	$$G=\langle \tau, \sigma^a\alpha_1,\sigma^c\epsilon^d\beta_s\rangle$$
	where $a,d\neq 0$. Similarly to the previous case, equation \eqref{condition} implies that 
	$a=\frac{1}{g-1}$. The relation $(\beta_s)^q=1$ holds and so $(\sigma^c\epsilon^{-1}\beta_s)^q\in \langle \tau\rangle$. According to \eqref{formula for powers} if $d=-1$ then $(\sigma^c\epsilon^d\beta_s)^q=\sigma^{qc}$, and so $c=0$. Otherwise, according to \eqref{conj by alphas}, the group $G$ is conjugate to $\widetilde H_{1,s,d}$ by a suitable power of $\alpha_1$.

	(iii) Assume that $\ker{\pi_2|_G}=\langle \sigma\tau\rangle$. Then 
	$$G=\langle \sigma\tau, \sigma^a\alpha_1,\sigma^c\epsilon^d\beta_s\rangle.$$
	Arguing as in (ii) we have $a=\frac{1}{g-1}$, $d=\frac{s-1}{1-k}\neq 0$, $s\neq 1$ and if $d=-1$ then $c=0$. If $d\neq -1$, according to \eqref{conj by alphas} we have that $G$ is conjugate to $\overline H_{1,s}$ by a suitable power of $\alpha_1$.
\end{proof}

\begin{lemma}\label{G_k_sub_ppq}
	A set of representatives of conjugacy classes of regular subgroups $G$ of $\Hol(\G_k)$ with $|\pi_2(G)|=p^2q$ is
	
	\bigskip
	\begin{center}
		\small{
			\begin{tabular}{c|l|c|c|c}
				$\pi_2(G)$ & Subgroups & Parameters  & Isomorphism class & \#\\
				\hline
				&&&&\\[-1em]
				$\mathcal T_{s}$    &    $\widetilde{T}_{c,s}=\langle \sigma^{\frac{1}{g-1}}\alpha_1,\tau^{\frac{1}{g^k-1}}\alpha_2,\epsilon^{c}\beta_s\rangle$ & $0\leq s\leq q-1$ &$\G_s$ &  $q(q-1)$\\
				&&&&\\[-1em]
				&  & $1\leq c\leq q-1$ & & \\
				&&&&\\[-1em]
				\cline{2-5}
				&&&&\\[-1em]
				&  $\widehat{T}_{s}=\langle  \sigma^{\frac{1}{g-1}}\alpha_1, \sigma\tau^{\frac{1}{g^k-1}}\alpha_2,\epsilon^{s-1}\beta_s \rangle$  & $2\leq s\leq q$& $\G_{s}$ & $q-1$\\
				&&&&\\[-1em]
				\cline{2-5}
				&&&&\\[-1em]
				&  $\overline{T}_{s}=\langle \tau\sigma^{\frac{1}{g-1}}\alpha_1,  \tau^{\frac{1}{g^k-1}}\alpha_2,\epsilon^{\frac{1-s}{k}}\beta_s \rangle$ & $2\leq s\leq q$ & $\G_s$ & $q-1$\\
				&&&&\\[-1em]
				\hline
				&&&&\\[-1em]
				$\mathcal U$ &     $\widetilde{U}_{c}=\langle \sigma^{\frac{1}{g-1}}\alpha_1, \tau^{\frac{1}{g^k-1}}\alpha_2,\epsilon^{c}\widetilde\beta \rangle$  & $1\leq c\leq q-1$ &  $\G_0$ & $q-1$\\
				&&&&\\[-1em]
				\cline{2-5}
				&&&&\\[-1em]
				& $\widehat{U}=\langle \tau \sigma^{\frac{1}{g-1}}\alpha_1, \tau^{\frac{1}{g^k-1}}\alpha_2,\epsilon^{-k^{-1}}\widetilde\beta \rangle $ & -& $\G_0$ & $1$ \\
				&&&&\\[-1em]
				\cline{2-5}
				&&&&\\[-1em]
				& $\overline{U}=\langle \sigma^{\frac{1}{g-1}}\alpha_1,\tau^{\frac{1}{g^k-1}}\sigma\alpha_2, \epsilon\widetilde\beta \rangle $ & -& $\G_0$ &   $1$  
		\end{tabular}}
	\end{center}
\end{lemma}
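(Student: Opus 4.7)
The plan follows the template of the earlier lemmas in this section: I verify regularity of the listed subgroups, argue pairwise non-conjugacy, and show that any regular subgroup $G$ of $\Hol(\G_k)$ with $|\pi_2(G)|=p^2q$ is conjugate to one of the listed representatives. Every subgroup in the table has the form $\langle v_1\delta_1,v_2\delta_2,u\epsilon^c\gamma\rangle$ with $\delta_i\in\{\alpha_1,\alpha_2\}$ fixing $v_i$, and $\gamma\in\{\beta_s,\widetilde\beta\}$ fixing $\epsilon$, where $\gcd(c,q)=1$. By Lemma \ref{pi_1 for fix}(2) the elements $v_1,v_2$ lie in $\pi_1(G)$, and since $\gcd(c,q)=1$ the subgroup $\langle\epsilon\rangle$ also lies in $\pi_1(G)$; hence $\pi_1(G)=\G_k$ and Lemma \ref{rem for regularity} yields regularity.

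For non-conjugacy, subgroups with different $\pi_2(G)$ lie in different orbits. Fix $\pi_2(G)\in\{\mathcal T_s,\mathcal U\}$; since $|G|=|\pi_2(G)|$, the kernel of $\pi_2|_G$ is trivial, so I use two finer invariants. Passing to the abelian quotient $\Hol(\G_k)/\H_2$ kills the $\alpha_i$-generators and leaves the cyclic image $\langle\epsilon^c\gamma\rangle$, which separates the parameter $c$ within the families $\widetilde T_{c,s}$ and $\widetilde U_c$. To distinguish the three rows with the same $\pi_2(G)$, I examine the $p$-Sylow subgroup $G\cap\pi_2^{-1}(\langle\alpha_1,\alpha_2\rangle)$, which is a conjugacy invariant; a computation via \eqref{conjugation in aut G_k} shows that the diagonal torus $\{[(0,0),(x,y)]\}$ acts on its canonical generators by $(a_1,b_1,a_2,b_2)\mapsto(a_1,\lambda b_1,\lambda^{-1}a_2,b_2)$ with $\lambda=y/x$, so the zero-pattern of $(b_1,a_2)$ is preserved, distinguishing $\widetilde T/\widetilde U$ (pattern $(0,0)$), $\widehat T/\widehat U$ (pattern $(0,*)$) and $\overline T/\overline U$ (pattern $(*,0)$).

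For completeness, let $G$ be regular with $|\pi_2(G)|=p^2q$. By Lemma \ref{pi_2 pq G_k} I may conjugate so that $\pi_2(G)\in\{\mathcal T_s,\mathcal U\}$; then $G$ admits a standard presentation $\langle\sigma^{a_1}\tau^{b_1}\alpha_1,\sigma^{a_2}\tau^{b_2}\alpha_2,\sigma^{c_1}\tau^{c_2}\epsilon^c\gamma\rangle$, and Lemma \ref{pi_1 for fix}(1) forces $c\ne 0$. Lifting each conjugation relation $\gamma\alpha_i\gamma^{-1}=\alpha_i^{?}$ to an exact equality in $G$, in the spirit of \eqref{needed for g_0}, forces $a_1=\frac{1}{g-1}$ and $b_2=\frac{1}{g^k-1}$ and supplies two alternatives, which in the $\mathcal T_s$ case read $b_1=0$ or $ck+s\equiv 1\pmod q$, and $a_2=0$ or $c\equiv s-1\pmod q$; the corresponding alternatives in the $\mathcal U$ case are $b_1=0$ or $c\equiv -k^{-1}\pmod q$, and $a_2=0$ or $c\equiv 1\pmod q$. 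If both $b_1\ne 0$ and $a_2\ne 0$, then in the $\mathcal T_s$ case the two conditions combined force $(s-1)(k+1)\equiv 0$ and hence (using $k\ne -1$) $s=1$ and $c=0$, contradicting $c\ne 0$; in the $\mathcal U$ case they force $k\equiv -1$, again contradicting $k\ne -1$. The three surviving zero-patterns of $(b_1,a_2)$ produce the three rows of the table for each choice of $\pi_2(G)$. Finally, conjugation by the torus together with powers of $\alpha_1,\alpha_2$ (using \eqref{conj by alphas} and absorbing the induced $\alpha_i$-powers into the $\alpha_i$-generators of $G$), together with the constraints from $\gamma^q=1$ expanded via \eqref{formula for powers}, normalizes $(c_1,c_2)$ to the values listed. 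The main obstacle is this last reduction: balancing the torus scaling, the $\alpha_i$-shifts, and the relation $\gamma^q=1$ to pin $(c_1,c_2)$ to the canonical representatives, particularly in the rows $\widehat T_s,\overline T_s,\widehat U,\overline U$ where the parameter $c$ itself is rigidified to one of the specific values $s-1$, $(1-s)/k$, $-k^{-1}$, $1$ appearing in the table.
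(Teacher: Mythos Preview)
Your approach matches the paper's: regularity via Lemma~\ref{pi_1 for fix}, non-conjugacy via the invariants $\pi_2(G)$, the image in $\Hol(\G_k)/\H_2$ (for the parameter $c$), and the $p$-Sylow subgroup (for the three rows), and completeness via the \nc~conditions lifting $\beta_s\alpha_i\beta_s^{-1}=\alpha_i^{?}$ to get the case split on $(b_1,a_2)$. Two small points deserve attention.

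First, in your standard presentation you write the $\alpha_i$-generators as $\sigma^{a_i}\tau^{b_i}\alpha_i$ with no $\epsilon$-factor. This needs a one-line justification: the paper starts from $u\epsilon^a\alpha_1,\,v\epsilon^b\alpha_2$ and uses $(u\epsilon^a\alpha_1)^p\overset{\nc}{=}1$ together with the quotient by $\H_2$ to force $a=b=0$. Without this step your presentation is not yet legitimate.

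Second, your pairing of the hat/bar families across $\mathcal T_s$ and $\mathcal U$ is swapped: with your notation the pattern $(b_1,a_2)=(0,\ast)$ corresponds to $\widehat T_s$ (with $c=s-1$) but to $\overline U$ (with $c=1$), while $(\ast,0)$ corresponds to $\overline T_s$ and $\widehat U$. This is only a labeling slip and does not affect the argument. For the final normalization you flag as the main obstacle, the paper resolves it exactly as you outline: in each of the three cases one conjugates by $\alpha_1^r\alpha_2^t$ and uses \eqref{conj by alphas} to reduce to an explicit $2\times 2$ linear system in $(r,t)$ for killing the $\sigma,\tau$-part of the $\epsilon^c$-generator, with \eqref{formula for powers} guaranteeing solvability in the degenerate cases.
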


\begin{proof}
	The groups in different rows of the tables are not conjugate since their $p$-Sylow subgroups are not. The same argument used in Lemma \ref{lemma G_k pq 1} shows that the groups in the table are regular and that the groups in the same row and with the same image under $\pi_2$ are not conjugate (e.g. if $\widetilde{T}_{c,s}$ and $\widetilde{T}_{d,s}$ are conjugate then $c=d$).
		
	Let $G$ be a regular subgroup of $\Hol(\G_k)$ with $\pi_2(G)=\mathcal T_s$. Then
	$$G=\langle u \epsilon^a \alpha_1,v\epsilon^b\alpha_2,w\epsilon^c\beta_s\rangle$$
	for $u,v,w\in \langle \sigma,\tau\rangle$ and $0\leq a,b,c\leq q-1$. We need to check the \nc conditions for the generators of $G$. Since
\begin{align*}
(u\epsilon^a\alpha_1)^p&=(v\epsilon^b\alpha_2)^p\overset{\nc}{=}1, 
\end{align*}
	then we have that $(u\epsilon^a\alpha_1)^p\H_2=\epsilon^a\H_2=(v\epsilon^b\alpha_2)^p\H_2=\epsilon^b\H_2$ and so $a=b=0$. If $c=0$, according to Lemma \ref{pi_1 for fix}(1), $\pi_1(G)\subseteq \langle \sigma,\tau\rangle$ and so $G$ is not regular. Hence $c\neq 0$ and 
\begin{align}
(w\epsilon^c\beta_s)u\alpha_1(w\epsilon^c\beta_s)^{-1}&\overset{\nc}{=}(u\alpha_1)^g=u^g \alpha_1^g,\label{eqeq1}\\
(w\epsilon^c\beta_s)v\alpha_2(w\epsilon^c\beta_s)^{-1}&\overset{\nc}{=}(v\alpha_2)^{g^s}=v^{g^s}\alpha_2^{g^s}.\label{eqeq2}
\end{align}
Equation \eqref{eqeq1} implies that $u=\sigma^{\frac{1}{g-1}}\tau^n$ where either $n=0$ or $c= k^{-1}(1-s)$ and equation \eqref{eqeq2} that $v=\sigma^m\tau^{\frac{1}{g^k-1}}$ where either $m=0$ or $c= s-1$. The equality $c=k^{-1}(1-s)=s-1$ implies that $k=-1$ or $s=1$ and so $c=0$, contradiction. Thus, we have to discuss the following cases:
\begin{itemize}
    \item[(i)] $n=m=0$.
    \item[(ii)] $m=0$, $c=k^{-1}(1-s)$, $n\neq 0$ and $s\neq 1$. 
    \item[(iii)] $n=0$, $c=1-s$, $n\neq 0$ and $s\neq 1$. 
\end{itemize}
	
(i)	If $n=m=0$, then we have that $$G=\langle \sigma^{\frac{1}{g-1}}\alpha_1, \tau^{\frac{1}{g^k-1}}\alpha_2,\sigma^a\tau^b\epsilon^c\beta_s\rangle$$
	for some $a,b$. Let $h=\alpha_1^r \alpha_2^t$. To see that $hGh^{-1}=\widetilde{T}_{c,s}$ for some $r,t$, it is sufficient to show that $h\sigma^a\tau^b\epsilon^c\beta_s h^{-1}\in \widetilde{T}_{c,s}$. Using \eqref{conj by alphas}, this is equivalent to have that $(r,t)$ is a solution of the following system of linear equations: 
	\begin{equation*}
		\begin{cases}{}
			r(g^{c+1}-1)=a(1-g)\\
			t(1-g^{ck+s})=b(1-g^k).
		\end{cases}
	\end{equation*}    
	Using \eqref{formula for powers} and the \nc condition $(\sigma^a\tau^b\epsilon^c\beta_s)^q=1$, if $c+1=0$ then $a=0$ and if $ck+s=0$ then $b=0$. So the system admits a solution for every $a,b$.
	
(ii) Let $m=0$ and $c=1-s$.  Up to conjugation by the automorphism $h=[(0,0),(n^{-1},1)]$ we can assume that $n=1$. Accordingly $G$ has the following form
	\begin{align*}
		G&=\langle \tau\sigma^{\frac{1}{g-1}} \alpha_1,\tau^{\frac{1}{g^k-1}}\alpha_2, \sigma^a\tau^b\epsilon^{\frac{1-s}{k}}\beta_s\rangle
	\end{align*}
	for some $a,b$. Let $(r,t)$ be a solution of the following system of linear equations: 
	\begin{equation*}
			\begin{cases}{}
			r(g^{\frac{1-s+k}{k}}-1)=a(1-g)\\
			t(1-g)=b(1-g^k)+r(1-g^k)(1-g).
		\end{cases}
	\end{equation*}    
	We have $(\sigma^a\tau^b\epsilon^{\frac{1-s}{k}}\beta_s)^q=1$. According to \eqref{formula for powers}, if $\frac{1-s}{k}+1=\frac{1-s+k}{k}=0$ then $a=0$. Hence the system admits a solution for every $a,b$ and $\alpha_1^r \alpha_2^t G(\alpha_1^r \alpha_2^t)^{-1}=\widehat{T}_{s}$. 
	%
	
	(iii) Let $n=0$ and $c=s-1$. Up to conjugation by $[(0,0),(1,m^{-1})]$ we can assume that $m=1$, i.e. 
		\begin{align*}
		G&=\langle \sigma^{\frac{1}{g-1}} \alpha_1,\sigma\tau^{\frac{1}{g^k-1}}\alpha_2, \sigma^a\tau^b\epsilon^{s-1}\beta_s\rangle
	\end{align*}
	for some $a,b$. Let $(r,t)$ be a solution of the following system of linear equations: 
	\begin{equation*}
		\begin{cases}{}
		r(g^s-1)=(1-g^s)t+(1-g)a\\
t(g^{sk-k+s}-1)=(1-g^k)b.
		\end{cases}
	\end{equation*}    
	From the condition $(\sigma^a\tau^b\epsilon^{s-1}\beta_s)^q=1$ and \eqref{formula for powers} we have that if $sk-k+s=0$ then $b=0$ and if $s=0$ then $a=0$. Accordingly the system has solution and $\alpha_1^r \alpha_2^t G(\alpha_1^r \alpha_2^t)^{-1}=\overline{T}_{s}$.

	We can use the same ideas when $\pi_2(G)=\mathcal U$. In such case, $G$ is conjugate to $\widetilde{U}_c$, $\widehat{U}$ or $\overline{U}$.
\end{proof}

We summarize the content of this subsection in the following table:

\begin{table}[H]
	\centering
	
	\small{
		\begin{tabular}{c|c|c|c|c|c|c}
			$|\ker{\lambda}|$ & $\Z_p^2\times\Z_q$ & $\G_k$ & $\G_s,\, s\neq 0,\pm  1, k$& $\G_0$ & $\G_{-1}$ & $\G_1$ \\
			\hline
			$1$ &- & $2(q+1)$ & $2(q+1)$ & $2(q+1)$ & $q+1$ & $q-1$  \\
			$p$ & $2$ & $4(q+2)$ & $4(q+2)$ & $4(q+1)$ & $2(q+2)$ &    $2(q-1)$   \\
			$q$ & $1$ &- &- &- &- &-\\
			$pq$ & - & - & -  & $4$ & - & -\\
			$p^2$ & $1$ & $2q-3$ & $2(q-1)$ & $2(q-1)$ & $q-1$  & $q-1$  \\
			$p^2q$ & - & $1$ & - & - &- & -
	\end{tabular}}
	\vs
	\caption{Enumeration of skew braces of $\G_k$-type for $p=1\pmod{q}$.}
\end{table}

\subsection{Skew braces of $\G_0$-type}\label{subsection:p=1(q)_G_0}

Let us consider the group $\G_0$ with presentation
$$\G_0=\langle \sigma,\tau,\epsilon\ |\ \sigma^p=\tau^p=\epsilon^q=[\sigma,\tau]=[\tau,\epsilon]=1,\, \epsilon \sigma\epsilon^{-1}=\sigma^{g}\rangle.$$
According to the description of the automorphisms of $\G_0$ provided in \cite[Theorem 3.1, 3.4]{auto_pq}, the map
$$\phi: \mathbb{Z}_p \rtimes_{\rho} \left(\mathbb{Z}_{p}^\times\times \mathbb{Z}_p^\times\right)\longrightarrow \aut{\G_0}, \quad [n,(a,b)]\mapsto 	h = 	\begin{cases}
h|_{\langle \sigma,\tau\rangle}=\begin{bmatrix}
a& 0\\
0 & b
\end{bmatrix}
,\quad \epsilon\mapsto   \sigma^n \epsilon       
\end{cases}$$
where $\rho(a,b)(n)=an$, is a group isomorphism. In particular, $|\aut{\G_{0}}|=	p(p-1)^2$ and its $p$-Sylow subgroup is generated by $\alpha=[1,(1,1)]$. 

If $G$ is a regular subgroup of $\Hol(\G_0)$, then $|\pi_2(G)|$ divides $p^2q$ and $|\aut{\G_0}|$, so it divides $pq$. 

In this section we can employ the same computations we did in Section \ref{subsection:p=1(q)_G_k}, taking into account that in this case the $p$-Sylow subgroup of $\aut{\G_0}$ is cyclic and generated by $\alpha$. 

%

\begin{lemma}\label{G_0_sub_p}
	A set of representative of regular subgroups $G$ of $\Hol(\G_0)$ with $|\pi_2(G)|=p$ is
	\begin{align*}
		H=\langle \epsilon, \tau, \sigma^{\frac{1}{g-1}}\alpha\rangle\cong \Z_p^2\times \Z_q, \quad
		K=\langle \epsilon, \sigma,\tau\alpha\rangle\cong \G_0. 
	\end{align*}
\end{lemma}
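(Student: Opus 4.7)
The plan is to follow the same template as the proof of Lemma \ref{G_k_sub_p}, simplified by the fact that the $p$-Sylow of $\aut{\G_0}$ is cyclic of order $p$, generated by $\alpha$, and that $\alpha$ restricts to the identity on $\langle \sigma,\tau\rangle$. First I would verify regularity of $H$ and $K$ directly from Lemma \ref{pi_1 for fix}(2): since $\alpha(\sigma^{1/(g-1)})=\sigma^{1/(g-1)}$, the part (2) of that lemma gives $\langle\sigma\rangle\subseteq \pi_1(H)$, which together with $\tau,\epsilon\in\pi_1(H)$ yields $\pi_1(H)=\G_0$; similarly $\alpha(\tau)=\tau$ gives $\langle\tau\rangle\subseteq\pi_1(K)$ and then $\pi_1(K)=\G_0$. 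Lemma \ref{rem for regularity} then gives regularity. Non-conjugacy of $H$ and $K$ follows by comparing their kernels under $\pi_2$, which are $\langle\epsilon,\tau\rangle$ and $\langle\epsilon,\sigma\rangle$ respectively: since every automorphism $[n,(a,b)]$ of $\G_0$ acts diagonally on $\langle\sigma,\tau\rangle$ (and cannot swap $\langle\sigma\rangle$ with $\langle\tau\rangle$, which are the two eigenspaces for the action of $\epsilon$), these kernels sit in distinct $\aut{\G_0}$-orbits.

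For the uniqueness step, let $G$ be any regular subgroup with $|\pi_2(G)|=p$. Since $\langle\alpha\rangle$ is the unique $p$-Sylow of $\aut{\G_0}$, one has $\pi_2(G)=\langle\alpha\rangle$. The kernel $\ker\pi_2|_G$ is a subgroup of order $pq$ of $\G_0$; its $q$-Sylow is a conjugate of $\langle\epsilon\rangle$, hence of the form $\langle\sigma^c\epsilon\rangle$, and can be sent to $\langle\epsilon\rangle$ by conjugating $G$ by $[-c,(1,1)]$ (which centralizes $\alpha$, hence preserves $\pi_2(G)$). The $p$-part of the kernel must then be a $\epsilon$-invariant subgroup of order $p$ in $\langle\sigma,\tau\rangle$, and the eigenspace decomposition for $\dm_{1,0}$ forces it to be $\langle\sigma\rangle$ or $\langle\tau\rangle$.

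Finally, in each of the two cases I would pin down a standard presentation. If $\ker\pi_2|_G=\langle\epsilon,\tau\rangle$, write $G=\langle\epsilon,\tau,\sigma^a\alpha\rangle$ (absorbing any $\tau$-component of the third generator into $\tau$). Regularity requires $a\neq 0$, and the condition \NC applied to the generator $\epsilon$ of the kernel yields $\sigma^a\alpha\,\epsilon\,(\sigma^a\alpha)^{-1}=\sigma^{1+a(1-g)}\epsilon\in\langle\epsilon,\tau\rangle$, forcing $a=\frac{1}{g-1}$, so $G=H$. If $\ker\pi_2|_G=\langle\epsilon,\sigma\rangle$, write $G=\langle\epsilon,\sigma,\tau^b\alpha\rangle$ with $b\neq 0$; here the \NC conditions are automatic, and conjugating by $[0,(1,b^{-1})]$, which fixes $\sigma$, $\epsilon$ and $\alpha$, sends $\tau^b\alpha$ to $\tau\alpha$, so $G$ is conjugate to $K$.

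The only mildly delicate point will be checking that the conjugation $[-c,(1,1)]$ normalizing the $q$-Sylow of the kernel does not disturb $\pi_2(G)$, which is immediate since $\langle\alpha\rangle$ is normal (indeed unique) in $\aut{\G_0}$. Everything else is a direct, short calculation of the type already carried out in Lemma \ref{G_k_sub_p}, so I would refer to that proof to keep the exposition brief.
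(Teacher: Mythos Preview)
Your proof is correct and follows essentially the same approach as the paper's, which simply refers back to Lemma~\ref{G_k_sub_p} with the note that the $q$-subgroups of $\G_0$ are generated by elements of the form $\sigma^n\epsilon$. Your version spells out explicitly the simplifications coming from the cyclic $p$-Sylow of $\aut{\G_0}$ and the diagonal action on $\langle\sigma,\tau\rangle$, but the line of argument---normalize the $q$-part of the kernel using a power of $\alpha$, identify the two possible $\epsilon$-invariant $p$-parts, then apply condition \NC\ to pin down the third generator---is the same.
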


\begin{proof}
   We can argue as in Lemma \ref{G_k_sub_p} taking into account that the subgroups of order $q$ of $\G_0$ are generated by elements of the form $\sigma^n \epsilon$.
\end{proof}
%
%
%

The subgroup
\begin{equation}\label{H3}
 \H_3=\langle \sigma,\tau,\alpha\rangle
\end{equation}
is a normal subgroup of $\Hol(\G_0)$ with $\Hol(\G_0)/\H_3\cong \Z_q \times \Z_p^\times \times \Z_p^\times$, formula \eqref{formula for powers} holds in $\Hol(\G_0)$ setting $k=0$ and the formula 
 \begin{equation}\label{conj by alphas for G_0}
    \alpha^n u\epsilon^d \theta  \alpha^{-n}=u\sigma^{x n\frac{g^d-1}{g-1}}\alpha^{(1-x)n}\epsilon^d \theta,\tag{$F_0$}
\end{equation}
where $u\in \langle \sigma,\tau\rangle$ and $\theta=[(0,0),(x,y)]$ holds is $\G_0$. Then, using the same argument of Proposition \ref{G_k_sub_q}, we have the following.

\begin{proposition}\label{Formula G0 q}
	The skew braces of $\G_0$-type with $|\ker{\lambda}|=p^2$ are $(B_{a,b},+,\circ)$, where $(B_{a,b},+)=\Z_p^2\rtimes_{\dm_{1,0}} \Z_q$ and
	$$(B_{a,b},\circ)=\Z_p^2\rtimes_{\dm_{a+1,b}} \Z_q\cong \begin{cases}
	\Z_p^2\times \Z_q, \, \text{ if } a=q-1, \, b=0,\\
	\G_0,\, \text{ if } a=q-1, \, b\neq 0,\\
	\G_{\frac{b}{a+1}},\, \text{otherwise, }\\
	\end{cases} $$
	for $0\leq a,b\leq q-1$ and $(a,b)\neq (0,0)$. In particular, there are $q^2-1$ such skew braces and they are bi-skew braces.
\end{proposition}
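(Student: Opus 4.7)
The plan is to transcribe the argument of Proposition \ref{G_k_sub_q}, with the only difference that the $p$-Sylow subgroup of $\aut{\G_0}$ is now cyclic of order $p$, generated by $\alpha$, rather than elementary abelian of rank two. For each pair $(a,b) \in \Z_q^2 \setminus \{(0,0)\}$, I consider the candidate regular subgroup
$$G_{a,b} = \langle \sigma,\, \tau,\, \epsilon\, \beta_0^a \widetilde\beta^b \rangle \leq \Hol(\G_0),$$
where $\beta_0, \widetilde\beta$ denote the elements of order $q$ in $\aut{\G_0}$ analogous to those used in the $\G_k$ section. Regularity follows from Lemma \ref{pi_1 for fix}(2) together with Lemma \ref{rem for regularity}, since $\sigma,\tau$ already lie in $\ker \pi_2|_{G_{a,b}}$ and $\beta_0^a \widetilde\beta^b$ fixes $\epsilon$, forcing $\pi_1(G_{a,b}) = \G_0$.

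To establish non-conjugacy, I use the normal subgroup $\H_3$ defined in \eqref{H3}: the quotient $\Hol(\G_0)/\H_3$ is abelian, so conjugate subgroups must coincide modulo $\H_3$. The image of $G_{a,b}$ in the quotient is generated by $\epsilon\, \beta_0^a \widetilde\beta^b \H_3$, and different pairs $(a,b)$ yield different cyclic images, ruling out conjugacy. For exhaustiveness, given a regular subgroup $G$ with $|\pi_2(G)| = q$, the kernel of $\pi_2|_G$ must be the unique $p$-Sylow subgroup $\langle \sigma, \tau \rangle$ of $\G_0$. Any element of order $q$ in $\aut{\G_0}$ has the form $\alpha^s \beta_0^a \widetilde\beta^b$, and formula \eqref{conj by alphas for G_0} applied with $u = 1$ and $d = 0$ shows that a suitable power of $\alpha$ conjugates such an element into $\beta_0^a \widetilde\beta^b$. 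Thus $G = \langle \sigma, \tau, \epsilon^n \beta_0^a \widetilde\beta^b \rangle$ with $n \neq 0$ (otherwise Lemma \ref{pi_1 for fix}(1) rules out regularity), and rescaling the third generator by its $n^{-1}$-th power identifies $G$ with $G_{an^{-1}, bn^{-1}}$. This produces exactly $q^2 - 1$ conjugacy classes.

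The multiplicative formula is read off as in Proposition \ref{G_k_sub_q}: $\sigma, \tau \in \ker \lambda$ while $\epsilon \in \Fix(B_{a,b})$, so \cite[Lemma 2.2]{abelian_case} gives $\lambda_{\sigma^{x_1}\tau^{x_2}\epsilon^{x_3}} = \lambda_\epsilon^{x_3}$, and a direct expansion using \eqref{circ} yields
$$\begin{pmatrix} x_1 \\ x_2 \\ x_3 \end{pmatrix} \circ \begin{pmatrix} y_1 \\ y_2 \\ y_3 \end{pmatrix} = \begin{pmatrix} x_1 + g^{(a+1)x_3} y_1 \\ x_2 + g^{b x_3} y_2 \\ x_3 + y_3 \end{pmatrix},$$
which identifies $(B_{a,b}, \circ)$ with $\Z_p^2 \rtimes_{\dm_{a+1,b}} \Z_q$. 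The isomorphism class follows by splitting according to whether the diagonal matrix $\dm_{a+1,b}$ is the identity ($a = q-1$, $b = 0$), has exactly one trivial diagonal entry ($a = q-1$, $b \neq 0$), or has both entries nontrivial (otherwise), matching the three branches of the statement. Finally, since the diagonal matrices $\dm_{1,0}$ and $\dm_{a+1,b}$ commute, their images in $\aut{\Z_p^2}$ commute as well, so \cite[Proposition 1.1]{skew_pq} applies and $B_{a,b}$ is a bi-skew brace. I expect the only real subtlety to be in the exhaustiveness paragraph, where one must verify that conjugation by powers of $\alpha$ alone suffices to normalize away the $\alpha^s$-factor in the chosen generator of $\pi_2(G)$; everything else is a verbatim transcription of the $\G_k$ argument with one of the two $p$-generators of the $p$-Sylow of $\aut{\cdot}$ deleted.
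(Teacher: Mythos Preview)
Your proposal is correct and follows essentially the same approach as the paper, which explicitly states that the result follows ``using the same argument of Proposition \ref{G_k_sub_q}'' after recording the analogue \eqref{conj by alphas for G_0} of formula \eqref{conj by alphas} and the normal subgroup $\H_3$. Your handling of the exhaustiveness step---conjugating away the $\alpha^s$-factor via \eqref{conj by alphas for G_0}, noting that $s=0$ automatically when $a=0$---is exactly the adaptation the paper intends, and your derivation of the multiplicative formula and the bi-skew brace property mirrors Proposition \ref{G_k_sub_q} verbatim with $k=0$.
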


Similarly to Lemma \ref{pi_2 pq G_k} we obtain that a set of representative of conjugacy classes of subgroups of order $pq$ of $\aut{\G_0}$ is 
$$\mathcal H_s=\langle \alpha,\beta_s\rangle, \quad \mathcal K=\langle \alpha,\widetilde\beta\rangle$$
where $\beta_s=[0,(g,g^s)]$ and $0\leq s\leq q-1$ and $\widetilde{\beta}=[0,(1,g)]$. 

\begin{lemma}\label{G_0_pq}
	A set of representatives of regular subgroups $G$ of $\Hol(\G_0)$ with $|\pi_2(G)|=pq$ is

	\bigskip
	\begin{center}
		\small{
			\begin{tabular}{c|l|c|c|c}
				$\pi_2(G)$ & Subgroups& Isomorphism class & Parameters   & \#\\
				\hline
				&&&&\\[-1em]
				$\mathcal H_{s}$    &    $\widetilde H_{s,c}=\langle \tau, \sigma^{\frac{1}{g-1}}\alpha,\epsilon^{c}\beta_s\rangle$ & $\G_s$ & $0\leq s\leq q-1$,  & $q(q-1)$\\
				&&&&\\[-1em]
				& & &  $1\leq c\leq q-1$ &  \\
				&&&&\\[-1em]
				\cline{2-5}
				&&&&\\[-1em]
				&  $\widehat H_{s}=\langle \sigma\tau, \sigma^{\frac{1}{g-1}}\alpha,\epsilon^{s-1}\beta_s\rangle$& $\G_s$  & $2\leq s\leq q$ & $q-1$\\
				&&&&\\[-1em]
				\hline
				&&&&\\[-1em]
				$\mathcal H_1$  &  $  \overline H_{1,c}=\langle \sigma, \tau\alpha,\epsilon^c\beta_1\rangle$ & $\G_{c+1}$ &  $1\leq c\leq q-1$ &$q-1$\\
				&&&&\\[-1em]
				\hline
				&&&&\\[-1em]
				$\mathcal K$ &     $\widetilde K_{c}=\langle \tau, \sigma^{\frac{1}{g-1}}\alpha,\epsilon^{c}\widetilde\beta\rangle$ &$\G_0$ &   $1\leq c\leq q-1$ & $q-1$\\
				&&&&\\[-1em]
				\cline{2-5}
				&&&&\\[-1em]
				& $\widehat K =\langle \sigma\tau, \sigma^{\frac{1}{g-1}}\alpha,\epsilon\widetilde\beta\rangle  $ &$\G_0$  & -&  $1$
		\end{tabular}}
	\end{center}
	
	\bigskip
\end{lemma}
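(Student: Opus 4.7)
The plan mirrors the strategy used in Lemma \ref{lemma G_k pq 1}, with simplifications since the $p$-Sylow subgroup of $\aut{\G_0}$ is cyclic (generated by $\alpha$) and the relevant formulas \eqref{formula for powers} (with $k=0$) and \eqref{conj by alphas for G_0} are available.

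First I would verify that each of the listed groups is regular. All the displayed generators have the shape $u$, $w\alpha$, $\epsilon^c h$ with $\langle u\rangle$ a subgroup of $\langle\sigma,\tau\rangle$, $\alpha$ fixing the relevant element of $\langle \sigma,\tau\rangle$, and $h(\epsilon)=\epsilon$. Lemma \ref{pi_1 for fix}(2) then places $\langle\sigma,\tau\rangle\cup\{\epsilon^c\}$ in $\pi_1(G)$, forcing $\pi_1(G)=\G_0$ and hence regularity by Lemma \ref{rem for regularity}. Non-conjugacy is then argued in two steps: groups sitting in different rows of the table have non-conjugate kernels of $\pi_2|_G$ inside $\langle\sigma,\tau\rangle$ (using that $\{\sigma,\tau,\sigma\tau\}$ is a set of orbit representatives under the normalizer of $\pi_2(G)$), and groups in the same row are distinguished by their image in the abelian quotient $\Hol(\G_0)/\H_3$, where the third generator reduces to $\epsilon^c h$, so different values of $c$ give different cosets.

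Next I would show exhaustiveness. Fix a regular $G$ with $|\pi_2(G)|=pq$. Using Lemma \ref{pi_2 pq G_k} (adapted to $\G_0$) we may assume $\pi_2(G)$ equals $\mathcal H_s$ or $\mathcal K$. The kernel of $\pi_2|_G$ has order $p$, and after conjugating by an element of $\setof{[0,(a,b)]}{a,b\in\Z_p^\times}$, which normalizes $\pi_2(G)$, we may take the kernel to be $\langle\sigma\rangle$, $\langle\tau\rangle$ or $\langle\sigma\tau\rangle$. A standard presentation then has the form
$$
G=\langle u,\ v\epsilon^{b}\alpha,\ w\epsilon^{d}\theta\rangle,
$$
with $\theta\in\{\beta_s,\widetilde\beta\}$. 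Reducing modulo $\H_3$ the relation $(v\epsilon^b\alpha)^p\in\ker\pi_2$ forces $b=0$, while $d\neq 0$ is forced by Lemma \ref{pi_1 for fix}(1) (otherwise $\pi_1(G)\subseteq\langle\sigma,\tau\rangle$). The remaining constraint comes from the commutator relation between $\alpha$ and $\theta$: requiring $(w\epsilon^d\theta)(v\alpha)(w\epsilon^d\theta)^{-1}\equiv (v\alpha)^g\pmod{\ker\pi_2|_G}$, expanded via formula \eqref{formulas for G_k} (read in $\Hol(\G_0)$), pins down both $v$ and $d$. Conjugating by a suitable power of $\alpha$ via \eqref{conj by alphas for G_0} then normalises the extra $\sigma$--component of $w$, yielding one of the representatives in the table. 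The analogous argument for $\pi_2(G)=\mathcal K$ (where $\widetilde\beta$ acts trivially on $\langle\sigma\rangle$) produces exactly $\widetilde K_c$ and $\widehat K$; the fact that no ``$\langle\sigma\rangle$-kernel'' case appears is because the corresponding necessary equation has no solution unless $s=1$, giving the extra row $\overline H_{1,c}$ only for $\mathcal H_1$.

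The main obstacle I expect is the bookkeeping in the final reduction step: deciding for each choice of kernel and each choice of $\pi_2(G)$ which values of $d$ (or equivalently which shifts by $\epsilon^d$) survive the \nc condition, and then showing via \eqref{conj by alphas for G_0} and \eqref{formula for powers} that the free parameters in $v$ and $w$ can be absorbed by conjugation. For the special cases where the exponents vanish (such as $d+1=0$ in the $q$-th power relation) the usual conjugation fails, and one must read the surviving representative directly from the defining equations, which is exactly why the rows indexed by $\widehat{\phantom{H}}$ and $\overline{\phantom{H}}$ appear with distinguished values of $d$. Finally the isomorphism classes of $G$ follow immediately from the resulting presentations and need no further argument.
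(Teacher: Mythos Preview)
Your proposal is correct and follows essentially the same approach as the paper's own proof, which is deliberately terse: it simply refers back to Lemma~\ref{lemma G_k pq 1} (setting $k=0$), singles out the two special kernel cases (the $\langle\sigma\rangle$-kernel for $\mathcal H_s$ forces $s=1$ via \eqref{needed for g_0}, and for $\mathcal K$ it forces $a=0$ hence non-regularity), and replaces \eqref{conj by alphas} by \eqref{conj by alphas for G_0} for the final conjugation step. Your plan spells out exactly these ingredients and correctly anticipates where the distinguished values of $d$ arise from vanishing exponents in the $q$-th power relation.
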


    \begin{proof}
        %
	By the same argument used in Lemma \ref{lemma G_k pq 1} we can show that the groups in the table are regular and they are not conjugate (in this case we are going to use the group $\H_3$ as in \eqref{H3}).         
In order to prove that the groups in the table form a set of representatives, we can employ the very same computations we performed in Lemma \ref{lemma G_k pq 1} for the groups with image under $\pi_2$ equal to $\langle \alpha_1, \beta_s \rangle$ and $\langle \alpha_1,\widetilde \beta\rangle$. 

If we assume that $\pi_2(G)=\mathcal{H}_s$ and $\ker{\pi_2|_G}=\langle \sigma\rangle$ the \nc condition \eqref{needed for g_0} implies that $s=1$. For the other cases we can just set $k=0$ and the same computations will do. 

If we assume that $\pi_2(G)=\mathcal{K}$ and $\ker{\pi_2|_G}=\langle \sigma\rangle$ then 
$$G=\langle \sigma,\tau^a \alpha, \tau^b \epsilon^c\widetilde \beta\rangle$$ 
and the same condition implies that $a=0$, i.e. $G$ is not regular. The other cases are basically unchanged.

Moreover we can use \eqref{conj by alphas for G_0} instead of \eqref{conj by alphas} to prove that any regular subgroup $G$ is conjugate to one in the table by a suitable power of $\alpha$.
    \end{proof}

\begin{remark}\label{direct of G_0 type}
	A skew brace $B$ is a direct product if and only if there exists $I,J$ ideals of $B$ such that $I+J=B$ and $I\cap J=0$. The direct product of a skew brace of size $p^2$ and a skew brace of size $q$ is of abelian type. The following are the skew braces of $\G_0$-type which are direct products of the trivial skew brace of size $p$ and a skew brace of size $pq$. Since $\G_0$ is the direct product of $\Z_p$ and $\Z_p\rtimes_g \Z_q$ we need to consider just the skew braces of size $pq$ with non-abelian additive group. According to the classification provided in \cite{skew_pq}, there are $2q-1$ non trivial such skew braces, then we need to find $2q-1$ skew braces that are a direct product:
	\begin{itemize}
		\item[(i)] The skew braces $B_{a,0}$ for $a\neq 0$ as defined in Proposition \ref{Formula G0 q} are direct product of the trivial skew brace of size $p$ and a skew brace of size $pq$ with $|\ker{\lambda}|=p$. 
		\item[(ii)] The skew brace associated to the group $H$ of Lemma \ref{G_0_sub_p} is the direct product of the trivial skew brace of size $p$ and the unique skew brace of size $pq$ with $|\ker{\lambda}|=q$.
		\item[(iii)] The skew brace associated to $\widetilde T_{0,c}$ for $c\neq 0$ as defined in Lemma \ref{G_0_pq} is the direct product of the trivial skew brace of size $p$ and a skew brace of size $pq$ with $|\ker{\lambda}|=1$.
	\end{itemize}
\end{remark}

We summarize the content of this subsection in the following table:
\begin{table}[H]
	\centering
	
	\small{
		\begin{tabular}{c|c|c|c|c|c}
			$|\ker{\lambda}|$ &  $\Z_p^2\times\Z_q$ & $\G_k, k\in \B\setminus\{0,\pm 1\} $ & $\G_0$ & $\G_{-1}$ & $\G_1$ \\
			\hline
			$p$ &-  & $2(q+1)$ & $2q+1$ & $q+1$ & $q-1$  \\
			$pq$  & $1$ & - & $1$ & - & - \\
			$p^2$  & $1$  & $2(q-1)$ & $2q-3$ & $q-1$ & $q-1$ \\
			$p^2q$  &- &- & $1$ &- &- \\
	\end{tabular}}
	\vs
	\caption{Enumeration of skew braces of $\G_0$-type for $p=1\pmod{q}$.}
	\label{table:G_0}
\end{table}

\subsection{Skew braces of $\G_{-1}$-type}\label{subsection:p=1(q)_G_-1}

Let us consider the group $\G_{-1}$ with presentation
$$\G_{-1}=\langle \sigma,\tau,\epsilon\ |\ \sigma^p=\tau^p=\epsilon^q=[\sigma,\tau]=1,\, \epsilon \sigma\epsilon^{-1}=\sigma^{g},\ \epsilon \tau\epsilon^{-1}=\tau^{g^{-1}}\rangle.$$
Let 
$$\mathfrak G=\left(\Z_p^2\rtimes_\rho (\Z_p^\times\times \Z_p^\times)\right)\rtimes_{\rho^\prime} \Z_2$$ where $\rho(a,b)(n,m)=(an,bm)$, $\rho^\prime(1)[(n,m),(a,b)]=[(-gm ,-g^{-1} n),(b,a)]$ for every $0\leq n,m\leq p-1$, $1\leq a,b\leq p-1$. According to \cite[Subsections 4.1, 4.3]{auto_pq}, the mapping
\begin{eqnarray*}
	\phi:\mathfrak G &\longrightarrow &\aut{\G_{-1}},\quad \left[(n,m),(a,b), i\right]  \mapsto 	
	h = 	\begin{cases}
		h |_{\langle\sigma,\tau\rangle}=H_{i}(a,b),\\ \epsilon\mapsto   \sigma^n\tau^m \epsilon^{(-1)^i},
	\end{cases} 
\end{eqnarray*}
where $$H_{0}(a,b)=\begin{bmatrix} a& 0 \\
0& b \end{bmatrix}, \quad H_{1}(a,b)=\begin{bmatrix} 0& a \\
b& 0 \end{bmatrix},$$
is an isomorphism. The map
$$\nu:\aut{\G_{-1}}\longrightarrow \Z_2,\quad [(n,m),(a,b),i]\mapsto  i$$
 is a group homomorphism and the group $\aut{\G_{-1}}$ is generated by $\aut{\G_{-1}}^+=\ker{\nu}$ and by the involution $\psi=[(0,0),(1,1),1]$, defined by
\begin{equation}\label{psi}
\psi=\begin{cases} \sigma\mapsto\tau,\quad     \tau\mapsto \sigma,\quad     \epsilon\mapsto\epsilon^{-1}.
\end{cases}
\end{equation}

Note that $\aut{\G_{-1}}^+\cong \aut{\G_k}$ for $k\neq 0,\pm 1$ and it contains the $p$-Sylow subgroup of $\aut{\G_{-1}}$, generated by $\alpha_1=[(1,0),(1,1),0]$ and $\alpha_2=[(0,1),(1,1),0]$, and 
the elements of odd order of $\aut{\G_{-1}}$. Since $p>2$, then $\pi_2(G)\leq \aut{\G_{-1}}^+$ and $G\leq \G_{-1}\rtimes \aut{\G_{-1}}^+\leq \Hol(\G_{-1})$ for every regular subgroup $G\leq \Hol(\G_{-1})$. Nevertheless the conjugacy classes of regular subgroups might be different, since we need to take into account the conjugation by $\psi$.

In some cases, the computations of the conjugacy classes can be obtained from those in Subsection \ref{subsection:p=1(q)_G_k} just by setting $k=-1$, as for the following that follows directly from Lemma \ref{G_k_sub_pp}.  
\begin{lemma}\label{sub_G_-1_pp}
	The unique skew brace of $\G_{-1}$-type with $|\ker{\lambda}|=q$ is $(B,+,\circ)$ where $(B,+)=\Z_p^2\rtimes_{\dm_{1,-1}} \Z_q$ and
		\begin{eqnarray*} 
		\begin{pmatrix} x_1 \\ x_2 \\ x_3 \end{pmatrix} \circ \begin{pmatrix} y_1 \\ y_2\\ y_3 \end{pmatrix} 
		=\begin{pmatrix}x_1 g^{y_3}+y_1 g^{x_3}\\  x_2 g^{- y_3}+y_2 g^{-x_3} \\ x_3+y_3\end{pmatrix}
	\end{eqnarray*}
	for every $0\leq x_1,x_2,y_1,y_2\leq p-1$ and $0\leq x_3,y_3\leq q-1$. In particular, $(B,\circ)\cong \Z_p^2\times \Z_q$.
\end{lemma}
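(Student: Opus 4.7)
The plan is to transport the argument of Lemma \ref{G_k_sub_pp} verbatim to this setting, with $k=-1$, and then verify that the extra outer automorphism $\psi\in\aut{\G_{-1}}\setminus\aut{\G_{-1}}^+$ does not alter the count of conjugacy classes. The key observation is that since $p>2$, every element of $p$-power order in $\aut{\G_{-1}}$ already lies in the index-two subgroup $\aut{\G_{-1}}^+\cong \aut{\G_k}$, and in fact the unique $p$-Sylow subgroup of $\aut{\G_{-1}}$ is $\langle \alpha_1,\alpha_2\rangle$, the same one used in Subsection \ref{subsection:p=1(q)_G_k}.

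First I would note that any regular subgroup $G$ of $\Hol(\G_{-1})$ with $|\pi_2(G)|=p^2$ must satisfy $\pi_2(G)=\langle \alpha_1,\alpha_2\rangle$, hence $G\leq \G_{-1}\rtimes\aut{\G_{-1}}^+$. Because the commutation rules \eqref{formulas for G_k} and \eqref{formulas for G_k 2} in $\Hol(\G_k)$ are formally the same with $k=-1$ as in $\Hol(\G_{-1})$, the computation performed in Lemma \ref{G_k_sub_pp} applies word for word: up to conjugation by the normalizer of $\pi_2(G)$ in $\aut{\G_{-1}}^+$, the kernel $\ker{\pi_2|_G}$ of order $q$ can be taken to be $\langle \epsilon\rangle$, the standard presentation forces the $\langle \sigma,\tau\rangle$-coefficients of the two lifted $\alpha_i$ to be $\sigma^{1/(g-1)}$ and $\tau^{1/(g^{-1}-1)}$ respectively, and abelianness of $G$ eliminates the remaining freedom. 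This yields exactly the subgroup
\[
H=\langle \epsilon,\ \sigma^{\frac{1}{g-1}}\alpha_1,\ \tau^{\frac{1}{g^{-1}-1}}\alpha_2\rangle
\]
as the unique candidate up to $\aut{\G_{-1}}^+$-conjugacy.

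Next I would dispose of the possible effect of $\psi$: a single $\aut{\G_{-1}}^+$-orbit is automatically contained in a single $\aut{\G_{-1}}$-orbit, so enlarging the conjugating group cannot increase the number of conjugacy classes (and trivially cannot decrease it from one). Therefore $H$ represents the unique class of regular subgroups with $|\pi_2(G)|=p^2$ in $\Hol(\G_{-1})$, corresponding to the unique skew brace with $|\ker{\lambda}|=q$.

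Finally, to read off the multiplicative structure, I would use that $\epsilon\in\ker\lambda$ (since $\epsilon$ is in the kernel of $\pi_2|_H$) and $\sigma,\tau\in\Fix(B)$ (from Lemma \ref{pi_1 for fix}(2) applied to the generators $\sigma^{1/(g-1)}\alpha_1$ and $\tau^{1/(g^{-1}-1)}\alpha_2$). By \cite[Lemma 2.1]{abelian_case} this forces $\lambda_{\sigma^{x_1}\tau^{x_2}\epsilon^{x_3}}=\lambda_\epsilon^{x_3}$, and $\lambda_\epsilon$ acts on $\langle \sigma,\tau\rangle$ by the matrix $\dm_{1,-1}$. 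Plugging this into \eqref{circ} yields the formula in the statement, and a direct inspection shows the operation is commutative with $p$-Sylow of exponent $p$, whence $(B,\circ)\cong \Z_p^2\times \Z_q$. The only delicate point in this plan is the verification that $\psi$-conjugation does not produce a hidden second class, but this is automatic from the singleton count at the previous step.
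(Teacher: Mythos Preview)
Your plan and the uniqueness argument are essentially the paper's own: the paper simply remarks that this lemma ``follows directly from Lemma~\ref{G_k_sub_pp}'' by setting $k=-1$, relying on the fact that $\pi_2(G)\leq\aut{\G_{-1}}^+$ and that the relevant computations in Subsection~\ref{subsection:p=1(q)_G_k} depend only on $k\neq 0,1$. Your observation that passing from $\aut{\G_{-1}}^+$-conjugacy to $\aut{\G_{-1}}$-conjugacy cannot split or merge a single orbit is exactly the point.

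There is, however, a genuine slip in your last paragraph. You write that $\epsilon\in\ker\lambda$ and then claim $\lambda_{\sigma^{x_1}\tau^{x_2}\epsilon^{x_3}}=\lambda_\epsilon^{x_3}$ with $\lambda_\epsilon$ acting by $\dm_{1,-1}$. But $\epsilon\in\ker\lambda$ means $\lambda_\epsilon=\mathrm{id}$, so your formula would force $\lambda$ to be trivial, contradicting $|\pi_2(H)|=p^2$. You have inverted the roles: here it is $\sigma,\tau$ that carry the nontrivial $\lambda$-values (via $\alpha_1,\alpha_2$), while $\epsilon$ lies in the kernel. Concretely, from $(\sigma^{1/(g-1)}\alpha_1)^n=\sigma^{n/(g-1)}\alpha_1^n$ one reads $\lambda_{\sigma^m}=\alpha_1^{m(g-1)}$ and similarly $\lambda_{\tau^m}=\alpha_2^{m(g^{-1}-1)}$; combining this with $\epsilon\in\ker\lambda$ and $\sigma,\tau\in\Fix(B)$ (so that the $\lambda$-map factors appropriately) is what yields the displayed formula for $\circ$. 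Once corrected, the commutativity of $\circ$ and the identification $(B,\circ)\cong\Z_p^2\times\Z_q$ follow as you indicate.
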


We claim that the computations in Lemma \ref{G_k_sub_p}, Proposition \ref{G_k_sub_q}, Lemma \ref{pi_2 pq G_k} and  Lemma \ref{lemma G_k pq 1} depend just on the condition $k\neq 0,1$ and so we can employ them in the proofs of the next results. Note also that the subgroup
$$\H_4=\langle \sigma,\tau,\alpha_1,\alpha_2\rangle$$
is a normal subgroup of $\Hol(\G_{-1})$ and setting $k=-1$ in \eqref{conj by alphas} we get a formula that holds in $\Hol(\G_{-1})$.

\begin{lemma}\label{G_-1_sub_p}
	A set of representatives of regular subgroups $G$ of $\Hol(\G_{-1})$ with $|\pi_2(G)|=p$ is
	\begin{equation*}
		H_i=\langle \epsilon, \tau, \sigma^{\frac{1}{g-1}}\alpha_1\alpha_2^i\rangle\cong \G_0
	\end{equation*}
	for $i=0,1$.
\end{lemma}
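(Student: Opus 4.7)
The plan is to reuse the proof of Lemma \ref{G_k_sub_p} specialized to $k=-1$, and then exploit the extra automorphism $\psi$ of \eqref{psi}, which lives in $\aut{\G_{-1}}$ but has no analogue in $\aut{\G_k}$ for $k \neq 0, \pm 1$. The role of $\psi$ is to collapse the four families $H_0, H_1, K_0, K_1$ that appear in Lemma \ref{G_k_sub_p} down to two.

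First I would observe that $H_0$ and $H_1$ are regular subgroups isomorphic to $\G_0$: the verification is the same one used in Lemma \ref{G_k_sub_p}, since it relies only on Lemma \ref{pi_1 for fix}(2) applied to a presentation of the form $\langle \epsilon, u, v\theta\rangle$ with $\theta(v)=v$ and $\{\epsilon,u,v\}$ generating $\G_{-1}$. Next, running the standard-presentation analysis of Lemma \ref{G_k_sub_p} with $k=-1$ (the argument depends only on $k \neq 0, 1$, which is the case here), one finds that any regular $G$ with $|\pi_2(G)|=p$ has, up to conjugation in the normalizer of $\pi_2(G)$, kernel $\langle \epsilon, \tau\rangle$ or $\langle \epsilon, \sigma\rangle$, and the third generator is pinned down by condition \NC to yield one of $H_0, H_1, K_0, K_1$ with $K_i=\langle \epsilon,\sigma,\tau^{1/(g^{-1}-1)}\alpha_1^i\alpha_2\rangle$.

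The new input for $k=-1$ is then to show that $\psi$ conjugates $K_i$ to $H_i$. A direct computation on generators gives $\psi\alpha_1\psi^{-1}=\alpha_2^{-g^{-1}}$ (since $\psi\alpha_1\psi^{-1}(\epsilon)=\psi(\sigma\epsilon)^{-1}$ \emph{after} $\psi^{-1}=\psi$, evaluates via $\epsilon\tau^{-1}=\tau^{-g^{-1}}\epsilon$ in $\G_{-1}$) and symmetrically $\psi\alpha_2\psi^{-1}=\alpha_1^{-g}$. Applying $\psi$ to the generators of $K_i$, using $\psi(\sigma)=\tau$, $\psi(\epsilon)=\epsilon^{-1}$, and the identity $1/(g^{-1}-1)=-g/(g-1)$, and then rescaling the third generator by a suitable power (absorbing the factor $-g^{-1}$ that appears on $\alpha_2$), one checks that $\psi K_i\psi^{-1}=H_i$.

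Finally I would show $H_0\not\sim H_1$ using the invariant $\pi_2(G)$ up to $\aut{\G_{-1}}$-conjugation: $\pi_2(H_0)=\langle\alpha_1\rangle$ and $\pi_2(H_1)=\langle\alpha_1\alpha_2\rangle$. While $\psi$ does now identify $\langle\alpha_1\rangle$ with $\langle\alpha_2\rangle$ (which is why two classes disappear), the class of $\langle\alpha_1\alpha_2\rangle$ is preserved, since $\psi\alpha_1\alpha_2\psi^{-1}=\alpha_1^{-g}\alpha_2^{-g^{-1}}$ is conjugate to $\alpha_1\alpha_2$ inside $\aut{\G_{-1}}^+$ by a diagonal automorphism but cannot be conjugate to any power of $\alpha_1$ or $\alpha_2$. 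The bookkeeping of scalars in the $\psi K_i\psi^{-1}=H_i$ step is the only point requiring care; everything else is a direct specialization of Lemma \ref{G_k_sub_p}.
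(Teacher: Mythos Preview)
Your overall strategy matches the paper's: specialize Lemma~\ref{G_k_sub_p} to $k=-1$ to obtain the four candidates $H_0,H_1,K_0,K_1$, then use the extra symmetry of $\aut{\G_{-1}}$ to collapse them to two. The gap is the claim that $\psi K_i\psi^{-1}=H_i$ for \emph{both} values of $i$. For $i=0$ this is correct, but for $i=1$ it fails. Using your own formulas $\psi\alpha_1\psi^{-1}=\alpha_2^{-g^{-1}}$ and $\psi\alpha_2\psi^{-1}=\alpha_1^{-g}$, the third generator $\tau^{1/(g^{-1}-1)}\alpha_1\alpha_2$ of $K_1$ is sent by $\psi$ to $\sigma^{-g/(g-1)}\alpha_1^{-g}\alpha_2^{-g^{-1}}$. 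No single power of this element lands in $\langle\sigma^{1/(g-1)}\alpha_1\alpha_2\rangle$, because the exponents of $\alpha_1$ and $\alpha_2$ differ by the factor $g^2\neq 1$ (recall $q>2$). Indeed $\pi_2(\psi K_1\psi^{-1})=\langle\alpha_1\alpha_2^{g^{-2}}\rangle\neq\langle\alpha_1\alpha_2\rangle=\pi_2(H_1)$, so the two groups are visibly distinct.

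The repair is short: compose $\psi$ with the diagonal automorphism $[(0,0),(1,g^2),0]\in\aut{\G_{-1}}^+$, which sends $\alpha_2\mapsto\alpha_2^{g^2}$ while fixing $\epsilon,\tau,\sigma,\alpha_1$; this brings $\psi K_1\psi^{-1}$ onto $H_1$. The paper does not try to use $\psi$ alone uniformly: it treats $K_0\sim H_0$ via $\psi$ and $K_1\sim H_1$ via a separate conjugating element. Your caveat about ``bookkeeping of scalars'' was well placed --- that is exactly where the argument as written breaks.
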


\begin{proof} The groups $H_0$ and $H_1$ are not conjugate since their images under $\pi_2$ are not. Arguing as in Lemma \ref{G_k_sub_p} we can show that every regular subgroup of $\Hol(\G_{-1})$ with $|\pi_s(G)|=p$ is conjugate to the groups $H_i, K_i$ for $i=0,1$ defined by setting $k=-1$ in the groups in Lemma \ref{G_k_sub_p}. It is easy to see that $H_0$ is conjugate to $K_0$ by the automorphism $\psi$ and that $H_1$ is conjugate to $K_1$ by the automorphism $[(0,0),(g^2,1),0]$. 
\end{proof}

Let us denote $\beta_s=[(0,0),(g,g^s),0]$ and $\widetilde \beta=[(0,0),(1,g),0]$ for $0\leq s\leq q-1$.

\begin{proposition}\label{G_{-1}_sub_q}
	The skew braces of $\G_{-1}$-type with $|\ker{\lambda}|=p^2$ are $(B_{a,b},+,\circ)$ where $(B_{a,b},+)=\Z_p^2\rtimes_{\dm_{1,-1}}\Z_q$ and 
	$$(B_{a,b},\circ)=\Z_p^2\rtimes_{\dm_{a+1,b-1}} \Z_q\cong \begin{cases}
	\Z_p^2\times \Z_q, \, \text{ if } a=q-1, \, b=1,\\
	\G_0,\, \text{ if } a=q-1, \, b\neq 1,\\
	\G_{\frac{b-1}{a+1}},\, \text{otherwise},\\
	\end{cases}$$
	for $0\leq a,b\leq q-1$ and $(a,b)\neq (0,0)$. In particular, they are bi-skew braces and $B_{a,b}\cong B_{c,d}$ if and only if $(c,d)=(-b,-a)$ and so there are $\frac{(q-1)(q+2)}{2}$ such skew braces.
\end{proposition}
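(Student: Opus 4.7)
The plan is to mimic the proof of Proposition \ref{G_k_sub_q}, with the extra ingredient that $\aut{\G_{-1}} = \aut{\G_{-1}}^+ \langle \psi \rangle$ properly contains $\aut{\G_{-1}}^+ \cong \aut{\G_k}$. Define the candidate regular subgroups
$$G_{a,b} = \langle \sigma, \tau, \epsilon\,\beta_0^a \widetilde\beta^{b}\rangle$$
for $(a,b) \neq (0,0)$. Since $\sigma, \tau \in \pi_1(G_{a,b})$, Lemma \ref{pi_1 for fix}(2) (combined with the fact that $\epsilon \beta_0^a \widetilde\beta^b$ contributes a nontrivial element to $\pi_1$) forces $|\pi_1(G_{a,b})| = p^2q$, so by Lemma \ref{rem for regularity} each $G_{a,b}$ is regular. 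The first task is to check these exhaust the regular subgroups with $|\pi_2(G)|=q$ up to conjugation by $\aut{\G_{-1}}^+$: since $\ker{\pi_2|_G}$ is the unique $p$-Sylow of $\G_{-1}$, up to conjugation by the centralizer of $\pi_2(G)$ in $\aut{\G_{-1}}^+$ we may bring the degree-$q$ generator to the form $\epsilon^n \beta_0^a \widetilde\beta^b$, and regularity forces $n\neq 0$. This is essentially the verbatim argument of Proposition \ref{G_k_sub_q}, using that $\H_4 = \langle \sigma,\tau,\alpha_1,\alpha_2\rangle \triangleleft \Hol(\G_{-1})$ has abelian quotient, so two groups $G_{a,b}$ and $G_{c,d}$ conjugate by an element of $\aut{\G_{-1}}^+$ must already coincide.

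The genuinely new point is the action of the extra involution $\psi$ defined in \eqref{psi}. A direct calculation in $\aut{\G_{-1}}$ using the semidirect product structure gives
$$\psi \beta_0 \psi^{-1} = \widetilde\beta, \qquad \psi \widetilde\beta \psi^{-1} = \beta_0, \qquad \psi \epsilon \psi^{-1} = \epsilon^{-1},$$
so that $\psi(\epsilon \beta_0^a \widetilde\beta^b)\psi^{-1} = \epsilon^{-1} \widetilde\beta^{a} \beta_0^{b}$; inverting this element (which does not leave the subgroup) yields $\epsilon \beta_0^{-b}\widetilde\beta^{-a}$. Therefore $\psi G_{a,b}\psi^{-1} = G_{-b,-a}$, and the full $\aut{\G_{-1}}$-orbits on the parameters are the orbits of the involution $\iota\colon (a,b)\mapsto (-b,-a)$ on $\Z_q^2\setminus\{(0,0)\}$.

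Counting orbits: $\iota$ fixes exactly the $q-1$ nonzero points of the antidiagonal $a=-b$ and pairs up the remaining $q^2-q$ points, giving
$$(q-1) + \frac{q^2-q}{2} = \frac{(q-1)(q+2)}{2}$$
equivalence classes, as claimed.

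For the multiplicative structure, as in Proposition \ref{G_k_sub_q}, one verifies $\sigma,\tau \in \ker\lambda$ and $\epsilon \in \Fix(B_{a,b})$ for the skew brace $B_{a,b}$ associated to $G_{a,b}$, so \cite[Lemma 2.2]{abelian_case} applies and the circle operation is given by formula \eqref{formula_sub_G_k_q} with $k=-1$, producing $(B_{a,b},\circ) = \Z_p^2 \rtimes_{\dm_{a+1,b-1}} \Z_q$. The additive action $\dm_{1,-1}$ and the multiplicative action $\dm_{a+1,b-1}$ have commuting images, so \cite[Proposition 1.1]{skew_pq} yields the bi-skew brace property. The isomorphism class of $(B_{a,b},\circ)$ is then read off by comparing $\dm_{a+1,b-1}$ with the representatives $\dm_{1,k}$, $k\in\B$. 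I expect the only mildly delicate step to be matching the $\psi$-orbit $\{(a,b),(-b,-a)\}$ with the stated isomorphism condition $B_{a,b}\cong B_{c,d}\iff (c,d)=(-b,-a)$, i.e.\ checking that no further identifications arise from the isomorphism classification of the semidirect products $\Z_p^2 \rtimes_{\dm_{a+1,b-1}} \Z_q$.
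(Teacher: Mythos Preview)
Your proof is correct and follows essentially the same route as the paper: reduce to the $\aut{\G_{-1}}^+$-classification from Proposition~\ref{G_k_sub_q}, then compute the extra identification coming from $\psi$, obtaining $\psi G_{a,b}\psi = G_{-b,-a}$ and the orbit count $\frac{(q-1)(q+2)}{2}$. Your final worry is misplaced, however: the condition $B_{a,b}\cong B_{c,d}$ in the statement refers to \emph{skew brace} isomorphism, which by Theorem~\ref{thm:skew_holomorph} is exactly conjugacy of the corresponding regular subgroups under $\aut{\G_{-1}}$---so you have already proved it, and no separate check involving the group-isomorphism class of $\Z_p^2\rtimes_{\dm_{a+1,b-1}}\Z_q$ is needed.
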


\begin{proof}
	Let $G$ be a regular subgroup of $\Hol(\G_{-1})$ such that $|\pi_2(G)|=q$. Arguing as in Proposition \ref{G_k_sub_q} we can show that every such group is conjugate to a subgroup
	$$G_{a,b}=\langle \sigma,\tau, \epsilon \beta_0^a\widetilde \beta^b\rangle$$
 by an element of $\aut{\G_{-1}}^+$. Since $\psi G_{a,b}\psi =G_{-b,-a}$, 
	the groups of the form $G_{a,-a}$ are normalized by $\psi$ and the other orbits have length $2$. Therefore there are
	$$q-1+\frac{q(q-1)}{2}=\frac{(q-1)(q+2)}{2}$$
	orbits under the action of $\psi$. The statement about the structure of the associated skew braces follows by the same argument of Proposition \ref{G_k_sub_q}.
\end{proof}

\begin{lemma}\label{pi_2 pq for k=-1}
	A set of representatives of conjugacy classes of subgroups of $\aut{\G_{-1}}$ of order $pq$ and $p^2q$ is
	\begin{center}
		
		\small{
			\begin{tabular}{c|l|c|l}\label{subgroups of size pq of aut G_-1}
				Size & $G$ & Parameters & Isomorphism class\\  
				\hline
				&&&\\[-1em]
				$pq$ & $\mathcal H_{1,s}=\langle \alpha_1,\beta_s\rangle$ & $0\leq s\leq q-1$ & $\mathbb{Z}_{p}\rtimes_{g}\Z_q$\\
				&&&\\[-1em]
				\cline{2-4}
				&&&\\[-1em]
				& $\mathcal H_{2,0}=\langle \alpha_2,\beta_0\rangle$  & - & $\Z_{pq}$\\
				&&&\\[-1em]
				\cline{2-4}
				&&&\\[-1em]
				&    $\mathcal W=\langle \alpha_1\alpha_2,\beta_1\rangle$  & - & $\mathbb{Z}_{p}\rtimes_{g}\Z_q$\\
				&&&\\[-1em]
				\hline
				&&&\\[-1em]
				$p^2q$ & $\mathcal T_{s}=\langle \alpha_1,\alpha_2,\beta_s\rangle\cong \G_s$  & $s\in\B$ & $\G_s$
		\end{tabular}}
		
	\end{center}
	where $\B$ is as in Remark \ref{subgroups of GL}.
\end{lemma}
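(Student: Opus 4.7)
The plan is to reduce the problem to Lemma \ref{pi_2 pq G_k} by exploiting the index-$2$ decomposition $\aut{\G_{-1}} = \aut{\G_{-1}}^+ \rtimes \langle\psi\rangle$. Since $p,q$ are odd, every subgroup of order $pq$ or $p^2q$ of $\aut{\G_{-1}}$ sits inside $\aut{\G_{-1}}^+$, and the isomorphism $\aut{\G_{-1}}^+ \cong \aut{\G_k}$ for $k\neq 0,\pm 1$ lets me invoke Lemma \ref{pi_2 pq G_k} with $k=-1$ to obtain a complete list of $\aut{\G_{-1}}^+$-conjugacy class representatives (namely $\mathcal{H}_{i,s}$, $\mathcal{K}_i$, $\mathcal{W}$ for order $pq$, and $\mathcal{T}_s$, $\mathcal{U}$ for order $p^2q$). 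It then only remains to compute the action of $\psi$ by conjugation on these classes and retain one representative per orbit.

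The core of the computation is to evaluate $\psi\alpha_i\psi^{-1}$ and $\psi\beta_s\psi^{-1}$, $\psi\widetilde\beta\psi^{-1}$ directly from \eqref{psi} together with the defining relation $\epsilon\tau\epsilon^{-1}=\tau^{g^{-1}}$ of $\G_{-1}$. The outcome is as follows: $\psi$ swaps the $\aut{\G_{-1}}^+$-classes of $\langle\alpha_1\rangle$ and $\langle\alpha_2\rangle$ and stabilises the class of $\langle\alpha_1\alpha_2\rangle$, $\psi$ sends $\langle\beta_s\rangle$ to $\langle\beta_{s^{-1}}\rangle$ for $s\neq 0$ and $\langle\beta_0\rangle$ to $\langle\widetilde\beta\rangle$. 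Translating these back to the list from Lemma \ref{pi_2 pq G_k}, $\psi$ pairs $\mathcal{H}_{1,s}$ with $\mathcal{H}_{2,s^{-1}}$ for $s\neq 0$, pairs $\mathcal{H}_{1,0}$ with $\mathcal{K}_2$ and $\mathcal{H}_{2,0}$ with $\mathcal{K}_1$, fixes $\mathcal{W}$, pairs $\mathcal{T}_s$ with $\mathcal{T}_{s^{-1}}$ for $s\neq 0$, and pairs $\mathcal{T}_0$ with $\mathcal{U}$.

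Reading off one representative per orbit gives exactly the table in the statement: the subgroups $\mathcal{H}_{1,s}$ for $0\leq s\leq q-1$ cover the orbits $\{\mathcal{H}_{1,s},\mathcal{H}_{2,s^{-1}}\}$ for $s\neq 0$ together with $\{\mathcal{H}_{1,0},\mathcal{K}_2\}$; the representative $\mathcal{H}_{2,0}$ covers $\{\mathcal{H}_{2,0},\mathcal{K}_1\}$; $\mathcal{W}$ is $\psi$-stable; and the index set $\B$ from Remark \ref{subgroups of GL} is by construction a set of representatives for the pairing $\{s,s^{-1}\}$, with $\mathcal{U}$ absorbed into the orbit of $\mathcal{T}_0$. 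The isomorphism types are inherited unchanged from Lemma \ref{pi_2 pq G_k}.

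The step most likely to require additional care is the $\psi$-stability of the class of $\mathcal{W}$. Indeed, $\psi(\alpha_1\alpha_2)\psi^{-1}$ comes out to an element of the form $\alpha_1^a\alpha_2^b$ with $a,b\neq 0$ but $(a,b)$ not proportional to $(1,1)$, so one must explicitly produce a diagonal automorphism $[(0,0),(x,y),0]$ re-conjugating the resulting cyclic subgroup back to $\langle\alpha_1\alpha_2\rangle$. This, combined with the observation that $\psi\beta_1\psi^{-1}$ already generates $\langle\beta_1\rangle$ (since $1^{-1}=1$ in $\Z_q^\times$), establishes that $\psi\mathcal{W}\psi^{-1}$ is $\aut{\G_{-1}}^+$-conjugate to $\mathcal{W}$, closing the argument.
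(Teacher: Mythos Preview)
Your proposal is correct and follows essentially the same route as the paper: reduce to Lemma~\ref{pi_2 pq G_k} via $\aut{\G_{-1}}^+\cong\aut{\G_k}$, then compute the $\psi$-orbits on the resulting list. The only minor difference is in handling $\mathcal{W}$: you argue directly that $\psi\mathcal{W}\psi^{-1}$ is $\aut{\G_{-1}}^+$-conjugate to $\mathcal{W}$ by exhibiting a diagonal automorphism, whereas the paper observes that the $p$-Sylow of $\psi\mathcal{W}\psi=\langle\alpha_1^{-g}\alpha_2^{-g^{-1}},\beta_1\rangle$ is not conjugate to $\langle\alpha_1\rangle$ or $\langle\alpha_2\rangle$, so by completeness of the list it must fall in the class of $\mathcal{W}$.
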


\begin{proof}
	The subgroups of size $pq$ of $\aut{\G_{-1}}^+$ up to conjugation by elements of $\aut{\G_{-1}}^+$ are collected in Lemma \ref{pi_2 pq G_k}. We need to compute the orbits of such groups under the action by conjugation of $\psi$. It is easy to see that 
	\begin{itemize}
	    \item 	$\psi  \mathcal H_{1,s}\psi=\mathcal H_{2,s^{-1}}$ and $\psi \mathcal T_s \psi=\mathcal T_{s^{-1}}$ for $s\neq 0$; 
	    \item $\psi \mathcal H_{1,0}\psi=\mathcal K_2$, $\psi \mathcal H_{2,0}\psi=\mathcal K_1$ and $\psi \mathcal U\psi =\mathcal T_0$; 
	    \item $\psi \mathcal W\psi =\langle \alpha_1^{-g} \alpha_2^{-g^{-1}}, \beta_1  \rangle$ is not conjugate to any other group in the table, since their $p$-Sylow subgroups are not.\qedhere
	\end{itemize}
\end{proof}

\begin{lemma}\label{G_1 sub aut ppq}
	A set of representatives of conjugacy classes of regular subgroups $G$ of $\Hol(\G_{-1})$ with $|\pi_2(G)|=pq$ is the following:
	\bigskip
	\begin{center}
		\small{
			\begin{tabular}{c|l|c|l|c}
				$\pi_2(G)$ & Subgroups & Parameters& Isomorphism class  & \#\\
				\hline
				&&&&\\[-1em]
				$\mathcal H_{1,s}$  &  $\widetilde H_{1,s,d}=\langle \tau, \sigma^{\frac{1}{g-1}}\alpha_1,\epsilon^d\beta_s\rangle$ & $0\leq s\leq q-1$,  &$\G_{s-d}$ &  $q(q-1)$\\
				&&&&\\[-1em]
				& & $1\leq d\leq q-1$ & & \\
				&&&&\\[-1em]
				\cline{2-5}
				&&&&\\[-1em]
				&    $\widehat H_s=\langle \sigma, \tau\alpha_1,\epsilon^{s-1}\beta_s\rangle$ &  $2\leq s\leq q$ &$\G_{s}$ & $q-1$\\
				&&&&\\[-1em]
				\cline{2-5}
				&&&&\\[-1em]
				&  $\overline H_s=\langle \sigma\tau, \sigma^{\frac{1}{g-1}}\alpha_1,\epsilon^{\frac{s-1}{2}}\beta_s\rangle$ &$2\leq s\leq q$ & $\G_{\frac{s+1}{2}}$ &  $q-1$\\
				&&&&\\[-1em]
				\hline
				&&&&\\[-1em]
				$\mathcal H_{2,0}$ & $\widetilde H_{2,0,d}=\langle \sigma, \tau^{\frac{1}{g^{-1}-1}}\alpha_2,\epsilon^d\beta_0\rangle $ & $1\leq d\leq q-1$ & $\mathbb{Z}_p^2\times \mathbb{Z}_q$,  if $d=-1$ & $q-1$\\
				&&&&\\[-1em]
				& & & $\G_{0}$,  otherwise  & \\
				&&&&\\[-1em]
				\cline{2-5}
				&&&&\\[-1em]
				&     $\widehat H_{2,0}=\langle \tau, \sigma\alpha_2,\epsilon^{-1}\beta_0\rangle $ &   - &$ \G_0$ & $1$\\
				&&&&\\[-1em]     
				\cline{2-5}
				&&&&\\[-1em]
				& $\overline H_{2,0}=\langle \sigma\tau, \tau^{\frac{1}{g^{-1}-1}}\alpha_2,\epsilon^{-\frac{1}{2}}\beta_0\rangle $ &   - &$\G_0$ & $1$ \\
				&&&&\\[-1em]
				\hline
				&&&&\\[-1em]
				$\mathcal W$ & $\widetilde W_d=\langle \sigma, \tau^{\frac{1}{g^{-1}-1}}\alpha_1\alpha_2,\epsilon^{d}\beta_1\rangle $ &  $1\leq d\leq q-1$&$\G_{d+1}$ & $q-1$
		\end{tabular}}
	\end{center}
	
\end{lemma}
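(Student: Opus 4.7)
My plan is to parallel the proof of Lemma \ref{lemma G_k pq 1} (specialized to $k=-1$) and then to account for the extra conjugations by the involution $\psi$ defined in \eqref{psi}. Since every regular subgroup $G$ of $\Hol(\G_{-1})$ satisfies $G\leq \G_{-1}\rtimes \aut{\G_{-1}}^+$ and since $\aut{\G_{-1}}^+\cong \aut{\G_k}$ for $k\neq 0,\pm 1$, I can first repeat verbatim the arguments of Lemma \ref{lemma G_k pq 1} with $k=-1$: the formulas \eqref{formulas for G_k}, \eqref{formulas for G_k 2}, \eqref{conj by alphas} and \eqref{formula for powers} remain valid in $\Hol(\G_{-1})$, the normal subgroup $\H_4=\langle \sigma,\tau,\alpha_1,\alpha_2\rangle$ plays the role of $\H_2$, and the necessary conditions \NC and \nc cut out exactly the same list of shapes. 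This yields a complete list of representatives for the orbits of regular subgroups with $|\pi_2(G)|=pq$ under the action of $\aut{\G_{-1}}^+$, consisting of the groups $\widetilde H_{i,s,d}$, $\widehat H_{i,s}$, $\overline H_{i,s}$, $\widetilde K_{i,d}$, $\widehat K_i$, $\overline K_i$ and $\widetilde W_d$, $\widehat W_d$ from Lemma \ref{lemma G_k pq 1}, after setting $k=-1$ and checking that none of the denominators appearing in the standard forms (e.g.\ $1-k$, $k$) vanishes.

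Next I will quotient by the action of $\psi$. By Lemma \ref{pi_2 pq for k=-1} the $\psi$-orbits on the admissible images $\pi_2(G)$ identify $\mathcal H_{1,0}\leftrightarrow \mathcal K_2$, $\mathcal H_{2,0}\leftrightarrow \mathcal K_1$, $\mathcal H_{1,s}\leftrightarrow \mathcal H_{2,s^{-1}}$ for $s\neq 0$, and fix $\mathcal W$. Hence it is enough to retain the representatives whose image under $\pi_2$ is one of $\mathcal H_{1,s}$, $\mathcal H_{2,0}$ or $\mathcal W$, which is exactly the set of rows in the table. For the remaining rows I still have to verify that the $\psi$-conjugate of each chosen representative is itself conjugate, by an element of $\aut{\G_{-1}}^+$ normalizing the image under $\pi_2$, to the listed subgroup with the $\psi$-paired $\pi_2$. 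Using $\psi(\sigma)=\tau$, $\psi(\tau)=\sigma$, $\psi(\epsilon)=\epsilon^{-1}$ together with $\psi\alpha_1\psi=\alpha_2$ and $\psi\beta_s\psi=\beta_{-s}^{\,g\text{-power}}$ (conjugated up to the normalizer), this is a direct computation for each of the six identifications.

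Once the list is shown to be complete and pairwise non-conjugate (the kernels of $\pi_2$, the images of $\pi_2$ and the images in the abelian quotient $\Hol(\G_{-1})/\H_4$ distinguish rows and entries within a row exactly as in Lemma \ref{lemma G_k pq 1}), I will finish by computing the isomorphism class of each skew brace. This amounts to identifying the multiplicative group of each regular subgroup: I would specialize the formulas of Lemma \ref{lemma G_k pq 1} to $k=-1$, obtaining $\G_{s-d}$ for $\widetilde H_{1,s,d}$, $\G_s$ for $\widehat H_s$, $\G_{(s+1)/2}$ for $\overline H_s$, $\G_{d+1}$ for $\widetilde W_d$, and the degeneracy $\Z_p^2\times \Z_q$ versus $\G_0$ for $\widetilde H_{2,0,d}$ according to whether $d=-1$ or not. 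The main obstacle I anticipate is the bookkeeping for the $\psi$-action on the $\mathcal W$-representatives: Lemma \ref{pi_2 pq for k=-1} shows $\psi\mathcal W\psi$ is a genuinely new subgroup (its $p$-Sylow differs from those of the other listed subgroups), so I must check carefully that the $\aut{\G_{-1}}^+$-orbit of each $\widetilde W_d$ already contains the $\psi$-conjugate of $\widehat W_d$, explaining why only the $\widetilde W_d$ family survives in the final list.
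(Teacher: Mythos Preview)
Your plan is essentially the paper's own argument: reduce to the computations of Lemma~\ref{lemma G_k pq 1} with $k=-1$ to obtain the $\aut{\G_{-1}}^{+}$-orbits, then pass to the full $\aut{\G_{-1}}$-orbits by accounting for $\psi$. The paper organizes this slightly differently---it first fixes $\pi_{2}(G)$ from the short list in Lemma~\ref{pi_2 pq for k=-1} and then observes that for $\pi_{2}(G)\neq\mathcal W$ the normalizer of $\pi_{2}(G)$ already lies in $\aut{\G_{-1}}^{+}$, so no further $\psi$-collapse occurs within those rows---but the content is the same.

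There is one misreading you should fix before carrying this out. You say that Lemma~\ref{pi_2 pq for k=-1} shows $\psi\mathcal W\psi$ is ``a genuinely new subgroup''. In fact $\psi\mathcal W\psi=\langle\alpha_{1}^{-g}\alpha_{2}^{-g^{-1}},\beta_{1}\rangle$ is $\aut{\G_{-1}}^{+}$-conjugate to $\mathcal W$ itself (e.g.\ by $[(0,0),(g^{2},1),0]$, which sends $\langle\alpha_{1}\alpha_{2}\rangle$ to $\langle\alpha_{1}^{g^{2}}\alpha_{2}\rangle=\langle\alpha_{1}^{-g}\alpha_{2}^{-g^{-1}}\rangle$ and fixes $\beta_{1}$); the phrase ``not conjugate to any other group in the table'' in that lemma means exactly that $\mathcal W$ is $\psi$-stable as a conjugacy class. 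Consequently $\psi$ does not pair $\mathcal W$ with a different $\pi_{2}$-class; rather, $N_{\aut{\G_{-1}}}(\mathcal W)$ strictly contains $N_{\aut{\G_{-1}}^{+}}(\mathcal W)$, and it is the extra normalizing elements (of the form $h\psi$) that merge the two $\aut{\G_{-1}}^{+}$-families $\widetilde W_{d}$ and $\widehat W_{d}$. The paper checks, via the image in $\Hol(\G_{-1})/\H_{4}$, that such a conjugation forces $c=-d$, and then exhibits an explicit conjugating element sending $\widehat W_{-d}$ to $\widetilde W_{d}$. Once you replace your ``$\psi\mathcal W\psi$ is new'' picture with this ``enlarged normalizer'' picture, your plan goes through exactly as in the paper.
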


\begin{proof}
%
Let $G$ be a regular subgroup of $\Hol(\G_{-1})$ and assume that $\pi_2(G)=\mathcal{H}_{i,s}$ for $s$ and $i$ as in Lemma \ref{pi_2 pq for k=-1}. Using the same computations as in Lemma \ref{lemma G_k pq 1} we can show that $G$ is conjugate to one of the groups in the table with the same image under $\pi_2$ by some element of $\aut{\G_{-1}}^+$. Such groups are not conjugate. Indeed, if $\pi_2(G)\neq \mathcal W$, then its normalizer is contained in $\aut{\G_{-1}}^+\cong \aut{\G_k}$ for $k\neq 0,\pm 1$ and so we can prove that the corresponding groups in the table are not conjugate as we did in Lemma \ref{lemma G_k pq 1}. 

	If $\pi_2(G)=\mathcal{W}$, the groups with $\pi_2(G)=\mathcal{W}$ are conjugate to $\widetilde W_d$ for some $d\neq 0$ by some element in $\aut{\G_{-1}}^+$. If $\widetilde W_c$ and $\widehat W_{d}$ are conjugate in $\aut{\mathcal{G}_{-1}}$ by $h\psi$ for $h\in \aut{\G_{-1}}^+$ so are their images modulo the subgroup $\H_4$. Hence we have
	$$h\psi\epsilon^c  \beta_1 \psi h^{-1}\H_4=\epsilon^{ -c}  \beta_1\H_4\in \langle \epsilon^d \beta_1\H_4\rangle,$$
and so $c=-d$. It is easy to see that the groups $\widetilde W_d$ and $\widehat W_{-d}$ are conjugate by $[(0,0),(1,g^2),0]$.
	
	Therefore the table collects a set of representatives of regular subgroups with the desired properties.
\end{proof}

\begin{lemma}\label{label}	A set of representatives of conjugacy classes of regular subgroups $G$ of $\Hol(G_{-1})$ with $|\pi_2(G)|=p^2q$ is 
		
	\bigskip
	\begin{center}
		\small{
			\begin{tabular}{c|l|c|c|c}
				$\pi_2(G)$ & Subgroups & Class & Parameters & \#\\
				\hline
				&&&&\\[-1em]
				$\mathcal T_{s}$    &    $\widetilde T_{d,s}=\langle \sigma^{\frac{1}{g-1}}\alpha_1, \tau^{\frac{1}{g^{-1}-1}}\alpha_2,\epsilon^{d}\beta_s\rangle$ & $\G_{s}$ & $s\in\B\setminus\{1\}$, & $\frac{q^2-1}{2}$\\
				&&&&\\[-1em]
				& & &  $1\leq d\leq q-1$  & \\
				&&&&\\[-1em]
				\cline{2-5}
				&&&&\\[-1em]
				& $\widehat T_{m,s}=\langle \tau\sigma^{\frac{1}{g-1}}\alpha_1, \sigma^m\tau^{\frac{1}{g^{-1}-1}}\alpha_2,\epsilon^{s-1}\beta_s\rangle$  & $\G_s$ & $s\in\B\setminus\{1,-1\}$,
				& $\frac{(p-1)(q-1)}{2}$\\
				&&&&\\[-1em]
				& & & $0\leq m\leq p-1,\, m\neq -\frac{g}{(g-1)^2} \pmod{p}$ & \\
				&&&&\\[-1em]
				\cline{2-5}
				&&&&\\[-1em]
				& $\overline T_s=\langle \sigma^{\frac{1}{g-1}}\alpha_1, \sigma\tau^{\frac{1}{g^{-1}-1}}\alpha_2,\epsilon^{s-1}\beta_s\rangle$  & $\G_s$ & $s\in\B\setminus\{1,-1\}$
				& $\frac{q-1}{2}$\\ 
				&&&&\\[-1em]
				\hline
				&&&&\\[-1em]
				$\mathcal T_1$    & $\widetilde T_{d,1}=\langle \sigma^{\frac{1}{g-1}}\alpha_1, \tau^{\frac{1}{g^{-1}-1}}\alpha_2,\epsilon^{d}\beta_1\rangle$ & $\G_1$ & $d\in \mathfrak{A}$ & $\frac{q-1}{2}$\\
				&&&&\\[-1em]
				\hline
				&&&&\\[-1em]
				$\mathcal T_{-1}$ & $\widehat T_{c,-1}=\langle \tau\sigma^{\frac{1}{g-1}}\alpha_1, \sigma^n\tau^{\frac{1}{g^{-1}-1}}\alpha_2,\epsilon^{-2}\beta_{-1}\rangle$  & $\G_{-1}$ &  $0\leq n\leq p-1$, $n\neq -\frac{g}{(g-1)^2}  \pmod{p}$  & $p-1$
		\end{tabular}}
	\end{center}
where $\mathfrak A\subseteq\Z_q^\times$ containing one out of $d$ and $-d$ for each $d\in\Z_q^\times$. 
\end{lemma}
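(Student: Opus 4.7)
The plan is to mirror the strategy of Lemma \ref{G_k_sub_ppq}, specialized to $k=-1$, while accounting for two features specific to $\G_{-1}$: the coincidence $k^{-1}=k=-1$, and the extra involution $\psi$ of \eqref{psi} lying in $\aut{\G_{-1}}\setminus \aut{\G_{-1}}^+$.

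\textbf{Regularity.} First I would check that every listed subgroup $G$ is regular. Each generator has the form $u\alpha$ with $\alpha(u)=u$: the automorphisms $\alpha_1,\alpha_2$ fix $\langle \sigma,\tau\rangle$ pointwise, while the third generator is always $\epsilon^c\beta_s$ with no $\langle\sigma,\tau\rangle$-part and $\beta_s$ fixes $\epsilon$. Lemma \ref{pi_1 for fix}(2) then yields $\sigma,\tau,\epsilon\in \pi_1(G)$, so $\pi_1(G)=\G_{-1}$ and $G$ is regular by Lemma \ref{rem for regularity}.

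\textbf{Non-conjugacy.} By Lemma \ref{pi_2 pq for k=-1} the $\aut{\G_{-1}}$-conjugacy class of $\mathcal{T}_s$ is $\{\mathcal{T}_s,\mathcal{T}_{s^{-1}}\}$, so restricting $s\in\B$ separates the $\pi_2$-images. Within a common $\mathcal{T}_s$ the three families $\widetilde T,\widehat T,\overline T$ are distinguished by their $p$-Sylow subgroups (a conjugation invariant); within each family distinct parameters are separated by projecting to the abelian quotient $\Hol(\G_{-1})/\H_4$ with $\H_4=\langle \sigma,\tau,\alpha_1,\alpha_2\rangle$, exactly as in Lemma \ref{G_k_sub_ppq}.

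\textbf{Completeness.} Let $G$ be a regular subgroup with $|\pi_2(G)|=p^2q$. Up to conjugation by $\psi$ we may take $\pi_2(G)=\mathcal{T}_s$ for $s\in\B$. Running the computation of Lemma \ref{G_k_sub_ppq} verbatim with $k$ replaced by $-1$ (conditions \NC and \nc) reduces $G$ to the standard form
$$G=\langle \tau^n \sigma^{\frac{1}{g-1}}\alpha_1,\ \sigma^m\tau^{\frac{1}{g^{-1}-1}}\alpha_2,\ \sigma^a\tau^b\epsilon^c\beta_s\rangle,\qquad c\neq 0,$$
subject to the constraints "$n=0$ or $c=k^{-1}(1-s)=s-1$" and "$m=0$ or $c=s-1$". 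The essential novelty at $k=-1$ is that the two alternatives coincide at $c=s-1$, so for $s\neq \pm 1$ the case $m\neq 0$ and $n\neq 0$ simultaneously is genuinely new and absorbs into the two-parameter family $\widehat T_{m,s}$. A case split on $(m,n)$ then assigns each possibility to exactly one family:
\begin{itemize}
\item $m=n=0$: conjugating by a suitable $\alpha_1^r\alpha_2^t$ via \eqref{conj by alphas} clears $a,b$ from the third generator and gives $\widetilde T_{d,s}$;
\item $m\neq 0,\ n=0$ (forcing $c=s-1$): after rescaling via $[(0,0),(x,y),0]$ one normalizes $m=1$, producing $\overline T_s$;
\item $n\neq 0$ (forcing $c=s-1$): after normalizing $n=1$ and absorbing $(a,b)$ via \eqref{conj by alphas}, the residual parameter $m$ gives $\widehat T_{m,s}$, where the single value $m=-g/(g-1)^2$ must be excluded because at it the resulting group coincides with one already listed.
\end{itemize}

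\textbf{The $\psi$-action at $s=\pm 1$.} For $s=\pm 1$ we have $s=s^{-1}$, so $\psi$ normalizes $\mathcal{T}_s$ and produces further identifications. For $s=1$, $c=s-1=0$ is forbidden, so $\widehat T$ and $\overline T$ are empty, while $\psi$ acts on $\widetilde T_{d,1}$ by sending $d\mapsto -d$, yielding the restricted index set $\mathfrak{A}$. For $s=-1$, $\psi$ identifies the $\widehat T_{m,-1}$ and $\overline T_{-1}$ data, so the surviving list reduces to the single one-parameter family $\widehat T_{c,-1}$ indexed by the free $n$, and the distinct $\widetilde T_{d,-1}$ entries have already been absorbed at the $\pi_2$-conjugation step. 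The hardest part is the bookkeeping at $s=\pm 1$ under the $\psi$-action, together with pinning down why $m=-g/(g-1)^2$ is the unique excluded value in $\widehat T_{m,s}$; both require explicit manipulation in $\Hol(\G_{-1})$ using \eqref{conj by alphas} and \eqref{formula for powers} at $k=-1$.
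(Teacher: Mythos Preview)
Your overall strategy follows the paper's proof closely, but there are two genuine gaps.

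First, your reason for excluding $m=-\frac{g}{(g-1)^2}$ in the $\widehat T_{m,s}$ family is wrong. You write that at this value ``the resulting group coincides with one already listed.'' In fact the exclusion is a \emph{regularity} condition: when $m=-\frac{g}{(g-1)^2}$, the elements $\tau\sigma^{\frac{1}{g-1}}$ and $\sigma^m\tau^{\frac{1}{g^{-1}-1}}$ become linearly dependent in $\langle\sigma,\tau\rangle$, so the $p$-Sylow subgroup $\langle u_1\alpha_1,u_2\alpha_2\rangle$ contains a nontrivial element of $\{1\}\times\aut{\G_{-1}}$, and $G$ fails to be regular by Lemma \ref{rem for regularity}(iii). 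The paper makes this explicit via the determinant computation.

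Second, your non-conjugacy argument does not distinguish the groups $\widehat T_{m,s}$ for different values of $m$. You claim that ``within each family distinct parameters are separated by projecting to the abelian quotient $\Hol(\G_{-1})/\H_4$,'' but the image of $\widehat T_{m,s}$ in this quotient is $\langle \epsilon^{s-1}\beta_s\rangle$, independent of $m$ (since $\sigma^m\in\H_4$). The paper instead observes that for $s\neq \pm 1$ the normalizer of $\mathcal T_s$ in $\aut{\G_{-1}}$ lies in $\aut{\G_{-1}}^+$ and acts on $\langle\sigma,\tau\rangle$ by diagonal matrices, so the $p$-Sylow subgroups of $\widehat T_{m,s}$ for different $m$ are pairwise non-conjugate.

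A smaller point: for $s=-1$ your phrase ``the distinct $\widetilde T_{d,-1}$ entries have already been absorbed at the $\pi_2$-conjugation step'' is at best misleading. These groups remain as $q-1$ distinct representatives in the first row of the table; one must still check (as the paper does) that $\psi$ does not identify any pair $\widetilde T_{c,-1}$, $\widetilde T_{d,-1}$ with $c\neq d$, which follows because $\psi\epsilon^c\beta_{-1}\psi\H_4=(\epsilon^c\beta_{-1})^{-1}\H_4$ generates the same cyclic subgroup.
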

\begin{proof}
Using the same argument as in Lemma \ref{lemma G_k pq 1} we can show that the groups in the list are regular. Let us show that the groups with the same image under $\pi_2$ in the table are not conjugate. Let $\delta=h\psi$ for $h\in \aut{\G_{-1}}^+$.
	\begin{itemize}
		\item if $\pi_2(G)=\mathcal{T}_s$ and $s\neq \pm 1$ then $N_{\aut{\G_1}}(\mathcal T_s)=\aut{\G_{-1}}^+$. So the groups in different rows and the groups in the family $\widehat T_{m,s}$ are not pairwise conjugate, since their $p$-Sylow subgroups are not conjugate by elements in $\aut{\G_{-1}}^+$. 
		
		\item If $s=1$, then $\mathcal{T}_1$ is normal in $\aut{\G_{-1}}$ and $$\delta\epsilon^c  \beta_{ 1} \delta^{-1} \H_4=\psi\epsilon^c  \beta_{ 1} \psi \H_4=\epsilon^{ -c}  \beta_{ 1}\H_4.$$
		So, if $\widetilde T _{d,1}$ and $\widetilde T _{c,1}$ are conjugate by $\delta$ then $c=-d$. 
		\item If $s=-1$, then $\mathcal{T}_{-1}$ is normal in $\aut{\G_{-1}}$ and $$\delta\epsilon^c  \beta_{ -1} \delta^{-1} \H_4=\psi\epsilon^{-c}  \beta_{ -1} \psi \H_4=\epsilon^{ -c}  \beta_{ -1}^{-1}\H_4.$$ 
		If $\widetilde T_{c,-1}$ and $\widetilde T_{d,-1}$ are conjugate by $\delta$, then $c=d$. The groups $\widetilde{T}_{c,-1}$ and $\widehat{T}_{n,-1}$ are not conjugate since their $p$-Sylow subgroups are not.
		 
        
	\end{itemize}


Let $G$ be a regular subgroup of $\Hol(\G_{-1})$. According to Lemma \ref{pi_2 pq for k=-1} we can assume that $\pi_2(G)=\mathcal{T}_s$ for $s\in \mathcal{B}$. From the same computations as in the beginning of Lemma \ref{G_k_sub_ppq}, in which we set $k=-1$, we have that 
$$G=\langle \sigma^{\frac{1}{g-1}}\tau^n\alpha_1, \, \sigma^m\tau^{\frac{1}{g^{-1}-1}}\alpha_2, w \epsilon^c\beta_s\rangle$$
for $c\neq 0$ and some $w\in \langle \sigma,\tau\rangle$. So we need to discuss two cases:
\begin{itemize}
    \item $n=m=0$;
    \item $(n,m)\neq (0,0)$, $c=s-1$ and so $s\neq 1$.
\end{itemize}
In the second case, if $n=0$ we can set $m=1$ up to conjugation by $[(0,0),(m^{-1},m^{-1}),0]$, otherwise we can set $n=1$ conjugating by $[(0,0),(n^{-1},n^{-1}),0]$. Then we reduce the cases to $(n,m)=(0,1)$ or $n=1$ and $0\leq m\leq p-1$. If $n=1$ the elements $\tau \sigma^{\frac{1}{g-1}}$ and $\sigma^{m}\tau^{\frac{1}{g^{-1}-1}}$ need to be linearly independent, i.e.
$$det\begin{bmatrix}\frac{1}{g-1} & m\\ 1 & \frac{1}{g^{-1}-1}\\ \end{bmatrix}=-\frac{g}{(g-1)^2}-m\neq 0\pmod{p}.$$ 
In both cases, up to conjugation by an element in $\langle \alpha_1,\alpha_2\rangle$ (using \eqref{conj by alphas} with $k=-1$) we can assume that $w=1$ and so the regular groups are identified by the parameters $(s,c,n,m)$. 

The groups identified by $(1,c,0,0)$ and $(1,-c,0,0)$ are conjugate by $\psi$. The groups identified by $(-1,-2,0,1)$ and $(-1,-2,1,0)$ are conjugate by $[(0,0),(1,-g),0]\psi$. Accordingly, $G$ is conjugate to one of the groups in the table.
\end{proof}
We summarize the content of this subsection in the following table:

\begin{table}[H]
	\centering
	
	\small{    \begin{tabular}{c|c|c|c|c|c}
			$|\ker{\lambda}|$  & $\Z_p^2\times\Z_q$ & $\G_k$ & $\G_0$ & $\G_{-1}$ & $\G_1$ \\
			\hline
			$1$ &- & $p+q-1$ & $p+q-1$ & $p+q-2$ & $\frac{q-1}{2}$ \\
			$p$ & $1$ & $2(q+2)$ & $2(q+1)$ & $q+2$ & $q-1$ \\
			$q$ & $1$ &- &- &- &- \\
			$pq$ &- &- & $2$ &- &- \\
			$p^2$ & $1$ & $q-1$ & $q-1$ & $q-2$ & $\frac{q-1}{2}$ \\
			$p^2q$ &- &- &- & $1$ &- \\
	\end{tabular}}
	\vs
	\caption{Enumeration of skew braces of $\G_{-1}$-type for $p=1\pmod{q}$.}
\end{table}

\subsection{Skew braces of $\G_1$-type}\label{subsection:p=1(q)_G_1}

A presentation of the group $\G_1$ is the following 
$$\G_1=\langle \sigma,\tau,\epsilon\ |\ \sigma^p=\tau^p=\epsilon^q=[\sigma,\tau]=
1,\, \epsilon \sigma\epsilon^{-1}=\sigma^{g},\, \epsilon \tau\epsilon^{-1}=\tau^{g}\rangle.$$

According to \cite[Subsections 4.1, 4.2]{auto_pq}, the mapping
$$\phi:  \mathbb{Z}_p^2\rtimes GL_2(p)\longrightarrow\aut{\G_1},\quad  [(n,m),H]\mapsto h= 	\begin{cases}
h|_{\langle \sigma,\tau\rangle}=H,\\ \epsilon\mapsto   \sigma^n\tau^m \epsilon 
\end{cases}$$
is a group isomorphism.
In particular, $|\aut{\G_{1}}|=	p^3(p-1)^2(p+1)$, a $p$-Sylow subgroup of $\aut{\G_1}$ is generated by $\alpha_1=[(1,0),Id],\,\alpha_2=[(0,1),Id]$, $\gamma=[(0,0),C]$, where $C$ is defined as in Remark \ref{subgroups of GL} and $\alpha_1$ and $\alpha_2$ generate a normal subgroup in $\aut{\G_1}$.

Note that for $k\neq 0,-1$ the map 
$$\iota:\aut{\G_k}\longrightarrow \aut{\G_1},\quad [(n,m),(a,b)]\mapsto \left[(n,m),\begin{bmatrix} a & 0 \\ 0 & b \end{bmatrix}\right]$$ is an injective group homomorphism. Moreover, the formula \eqref{conj by alphas} with $k=1$ holds in $\Hol(\G_1)$.

The procedure to check the regularity of a subgroup has already been established in the previous subsections, so we are not showing that part of the classification strategy explicitly in this subsection.

In the following we are using the formulas
\begin{align}
\alpha_2\gamma\alpha^{-1}\alpha_2^{-1}& =\alpha_1^{-1}\gamma \label{F1},\\
(\tau^a \alpha_2^b\gamma)^n &=\sigma^{\frac{an(n-1)}{2}}\tau^{an}\alpha_1^{\frac{bn(n-1)}{2}}\alpha_2^{bn}\gamma^n \label{F2}
\end{align}
for $a,b,n\in \mathbb Z$.

\begin{lemma}\label{G_1_sub_p}
	A set of representatives of conjugacy classes of regular subgroups $G$ of $\Hol(\G_{1})$ with $|\pi_2(G)|=p$ is
	\begin{equation*}
		H=\langle \epsilon, \tau, \sigma^{\frac{1}{g-1}}\alpha_1\rangle\cong \G_0,\quad W=\langle \epsilon,\sigma, \tau^{\frac{1}{g-1}}\alpha_2\gamma\rangle\cong \G_0.
	\end{equation*}
\end{lemma}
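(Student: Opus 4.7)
The plan is to follow the strategy of Lemma \ref{G_k_sub_p}, adapted to the fact that the $p$-Sylow of $\aut{\G_1}$ is the Heisenberg-type group $\langle\alpha_1,\alpha_2,\gamma\rangle$ of order $p^3$; so $\pi_2(G)$ may have a nontrivial $\gamma$-component, introducing a new image class to handle. The proof proceeds in three steps: verify regularity and non-conjugacy of $H$ and $W$, enumerate the conjugacy classes of order-$p$ subgroups of $\aut{\G_1}$, and analyse each image class.

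The first step verifies that $H$ and $W$ are regular and non-conjugate. Since $\alpha_1$ acts trivially on $\langle\sigma,\tau\rangle$, Lemma \ref{pi_1 for fix}(2) applied to $\sigma^{1/(g-1)}\alpha_1$ yields $\langle\sigma\rangle\subseteq \pi_1(H)$, hence $\pi_1(H)=\G_1$ and $H$ is regular by Lemma \ref{rem for regularity}. For $W$, applying \eqref{F2} with $a=1/(g-1)$ and $b=1$ gives $\pi_1((\tau^{1/(g-1)}\alpha_2\gamma)^n)=\sigma^{n(n-1)/(2(g-1))}\tau^{n/(g-1)}$; as $n$ ranges over $\Z_p$, combined with $\sigma,\epsilon\in W$, these projections exhaust $\G_1$, showing $W$ is regular. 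For non-conjugacy, pushing to the canonical quotient $\aut{\G_1}\to \aut{\G_1}/\langle\alpha_1,\alpha_2\rangle\cong GL_2(p)$ sends $\pi_2(H)$ to the trivial subgroup and $\pi_2(W)$ to a cyclic group of order $p$; since this size is a conjugation invariant, the two regular subgroups are not conjugate.

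The second step is the case split. Up to conjugation in $\aut{\G_1}$, every subgroup of order $p$ of $\aut{\G_1}$ falls into one of three classes, represented by $\langle\alpha_1\rangle$, $\langle\gamma\rangle$ or $\langle\alpha_2\gamma\rangle$: the $GL_2(p)$-action normalises a subgroup lying in $\langle\alpha_1,\alpha_2\rangle$ to $\langle\alpha_1\rangle$; when $\gamma$ appears, conjugation by $\alpha_2^m$ (via \eqref{F1}) absorbs the $\alpha_1$-component of $\alpha_1^a\alpha_2^b\gamma$, and a scalar diagonal reduces $b$ to $0$ or $1$. For each such image we fix $\pi_2(G)$ and follow the template of Lemma \ref{G_k_sub_p}: the \NC condition on the $p$-Sylow generator $v$ of $\ker\pi_2|_G$ constrains $v$, further conjugation within $\langle\alpha_1,\alpha_2\rangle$ standardises the $q$-Sylow of the kernel to $\langle\epsilon\rangle$, and the remaining generator is pinned down by the last \NC constraint.

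Finally, for $\pi_2(G)=\langle\alpha_1\rangle$ the normalizer acts on $\langle\sigma,\tau\rangle$ through the Borel subgroup of $GL_2(p)$, whose orbits reduce $v$ to $\sigma$ or $\tau$; the \NC condition rules out $v=\sigma$ and forces the new generator to $\sigma^{1/(g-1)}\alpha_1$ in the $v=\tau$ subcase, giving $H$. For $\pi_2(G)=\langle\alpha_2\gamma\rangle$, the relation $\gamma(v)\in\langle v\rangle$ forces $v\in\langle\sigma\rangle$, and a parallel computation via \eqref{F2} pins the remaining generator to $\tau^{1/(g-1)}\alpha_2\gamma$, giving $W$. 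The third class $\pi_2(G)=\langle\gamma\rangle$ is eliminated: again $v=\sigma$, so $G=\langle\sigma,\epsilon,\sigma^s\gamma\rangle$ for some $s\neq 0$; but since $\gamma$ fixes $\sigma$, the generator $\sigma^s\gamma$ commutes with $\sigma\in G$, and then $\gamma=\sigma^{-s}\cdot\sigma^s\gamma\in G\cap(\{1\}\times\aut{\G_1})$ contradicts regularity. The hardest step is this elimination: it relies on the \NC-forced rigidity $v\in\langle\sigma\rangle$ combined with the commutativity of $\sigma$ and $\gamma$.
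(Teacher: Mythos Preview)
Your argument is correct and mirrors the paper's three-case split on $\pi_2(G)\in\{\langle\alpha_1\rangle,\langle\gamma\rangle,\langle\alpha_2\gamma\rangle\}$, with the same \NC\ computations and the same elimination of the $\langle\gamma\rangle$ case. One small imprecision: conjugation within $\langle\alpha_1,\alpha_2\rangle$ does \emph{not} suffice to standardise the $q$-Sylow of the kernel to $\langle\epsilon\rangle$ when $\pi_2(G)$ contains $\gamma$, because $\alpha_2$ fails to normalise $\langle\gamma\rangle$ and $\langle\alpha_2\gamma\rangle$ (only $\alpha_1$ is available, and it cannot kill the $\tau^n$-component of $\tau^n\epsilon$). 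The paper instead keeps the generator $\tau^n\epsilon$ throughout and, in the $\langle\alpha_2\gamma\rangle$ case, removes $n$ at the very end by conjugating with $\alpha_2^{-n}\gamma^{-n}$; your \NC\ constraints and the $\langle\gamma\rangle$ elimination go through unchanged with $\tau^n\epsilon$ in place of $\epsilon$, so this is a cosmetic fix rather than a gap.
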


\begin{proof} 
	Up to conjugation the subgroups of order $p$ of $\aut{\G_{1}}$ are $\langle\alpha_1\rangle$, $\langle\alpha_2^i\gamma\rangle$ for $i=1,2$. So $H$ and $W$ are not conjugate.

	If $\pi_2(G)=\langle \alpha_1\rangle$, arguing as in Lemma \ref{G_k_sub_p} we can show that $G$ is conjugate to $H$.
	
	Let $G$ be a regular subgroup of $\Hol(\G_{1})$ with $\pi_2(G)=\langle \alpha_2^i\gamma\rangle$ for $i=0,1$. Then $K=\ker{\pi_2|_G}$ contains an element of order $q$ and so, up to conjugation by a power of $\alpha_1$ we can assume that $K$ is generated by an element of the form $\tau^n \epsilon$ and by some $v\in \langle \sigma,\tau\rangle$. So $G$ has the following standard presentation
	$$G=\langle \tau^n \epsilon, v, u\alpha_2^i \gamma\rangle$$
	where $u,v\in \langle \sigma,\tau\rangle$. The $p$-Sylow subgroup of $K$ is characteristic in $K$ and $K$ is normal in $G$ then $\langle v   \rangle$ is normal in $G$. Thus
	$$w\alpha_2^i \gamma(u)w^{-1}=\gamma(u)\in \langle u\rangle.$$
	Therefore we can assume that $u=\sigma$ and so $u=\tau^c$ for $c\neq 0$.
   According to condition \NC, we have
	\begin{equation*}\label{condition on c}
		\tau^c\alpha_2^i\gamma\tau^n\epsilon\gamma^{-1}\alpha_2^{-i}\tau^{-c}=\sigma^n\tau^{(1-g)c+i+n}\epsilon\in K.	\end{equation*}
	Hence, $(1-g)c+i=0$. If $i=0$ then $c=0$ and so $G$ is not regular. Therefore $i=1$ and $c=\frac{1}{g-1}$. Finally, using \eqref{F1}, we have that $G$ is conjugate to $W$ by $\alpha_2^{-n}\gamma^{-n}$. 
\end{proof}
\begin{lemma}
	A set of representatives of conjugacy classes of regular subgroups $G$ of $\Hol(\G_{1})$ with $|\pi_2(G)|=p^2$ is
	\begin{equation*}
		H_i=\langle \epsilon, \, \sigma^{\frac{1}{g-1}}\alpha_1,\, \tau^{\frac{1}{g-1}}\alpha_2\gamma^i\rangle\cong \mathbb{Z}_p^2\times \mathbb{Z}_q
	\end{equation*}
	for $i=0,1$.
\end{lemma}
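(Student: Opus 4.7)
My plan is to show regularity and pairwise non-conjugacy of $H_0$ and $H_1$, and then to prove that any regular $G$ with $|\pi_2(G)|=p^2$ is conjugate to one of them by classifying the possible images $\pi_2(G)$ and applying condition \NC to the standard presentation.

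Regularity of the $H_i$ follows from Lemma \ref{rem for regularity} once we check $\pi_1(H_i)=\G_1$. Lemma \ref{pi_1 for fix}(2) covers both nontrivial generators of $H_0$ since $\alpha_1$ fixes $\sigma^{\frac{1}{g-1}}$ and $\alpha_2$ fixes $\tau^{\frac{1}{g-1}}$; in $H_1$ the element $\tau^{\frac{1}{g-1}}\alpha_2\gamma$ itself already places $\tau^{\frac{1}{g-1}}$ in $\pi_1(H_1)$, while $\sigma^{\frac{1}{g-1}}$ is handled by the same lemma. To distinguish $H_0$ and $H_1$ I invoke the normal subgroup $\H=\langle\sigma,\tau,\alpha_1,\alpha_2\rangle$ of $\Hol(\G_1)$, noting that $\langle\alpha_1,\alpha_2\rangle=\ker(\aut{\G_1}\to GL_2(p))$ and hence $\Hol(\G_1)/\H\cong GL_2(p)$. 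The image of $H_0$ in this quotient is trivial, while the image of $H_1$ is generated by the nontrivial unipotent matrix $C$; by the invariance recorded in diagram \eqref{diagram H} the two groups are not conjugate.

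Let now $G$ be regular with $|\pi_2(G)|=p^2$. By Sylow, $\pi_2(G)$ is conjugate into $P=\langle\alpha_1,\alpha_2,\gamma\rangle$, the Heisenberg $p$-Sylow of $\aut{\G_1}$, which has exponent $p$ by \eqref{F2} and center $\langle\alpha_1\rangle$. Every subgroup of $P$ of order $p^2$ contains $\langle\alpha_1\rangle$, so $\pi_2(G)=\langle\alpha_1,\delta\rangle$ with $\delta\in\{\alpha_2,\gamma,\alpha_2^c\gamma\,:\,1\le c\le p-1\}$. An explicit computation using that $N_{\aut{\G_1}}(P)$ consists of the automorphisms $[(n,m),M]$ with $M$ upper-triangular (since only such $M$ satisfy $MCM^{-1}\in\langle C\rangle$) reveals that all the groups $\langle\alpha_1,\alpha_2^c\gamma\rangle$ with $c\neq 0$ are conjugate to $\langle\alpha_1,\alpha_2\gamma\rangle$, while $\langle\alpha_1,\alpha_2\rangle$ and $\langle\alpha_1,\gamma\rangle$ lie in separate $\aut{\G_1}$-orbits. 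Since the inner automorphisms of $\G_1$ coincide with $\langle\alpha_1,\alpha_2\rangle$, act transitively on the $q$-Sylow subgroups of $\G_1$, and normalize each of the three candidate images, we may further assume $\ker{\pi_2|_G}=\langle\epsilon\rangle$.

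Writing the standard presentation $G=\langle\epsilon,\,u_1\alpha_1,\,u_2\delta\rangle$, condition \NC applied to $u_1\alpha_1$ gives $u_1\sigma\epsilon u_1^{-1}\in\langle\epsilon\rangle$ and forces $u_1=\sigma^{\frac{1}{g-1}}$. For $\delta=\alpha_2$ the analogous condition yields $u_2=\tau^{\frac{1}{g-1}}$ and $G=H_0$; for $\delta=\alpha_2\gamma$ it again yields $u_2=\tau^{\frac{1}{g-1}}$ after a conjugation by a suitable power of $\alpha_2$ to absorb the $\sigma$-component, giving $G=H_1$; and for $\delta=\gamma$, since $\gamma(\epsilon)=\epsilon$, condition \NC degenerates to $u_2\epsilon u_2^{-1}\in\langle\epsilon\rangle$, which forces $u_2=1$ and places $\gamma\in G\cap(\{1\}\times\aut{\G_1})$, contradicting regularity by Lemma \ref{rem for regularity}. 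The remaining conditions \nc (trivial $p$-th powers and triviality of the commutator of the generators modulo the kernel) reduce via \eqref{F2} to congruences that hold since $p$ is odd. The main obstacle is the conjugacy classification of order-$p^2$ subgroups of $\aut{\G_1}$, specifically showing that $\langle\alpha_1,\gamma\rangle$ and $\langle\alpha_1,\alpha_2\gamma\rangle$ lie in distinct orbits; once this is in place, discarding the case $\pi_2(G)=\langle\alpha_1,\gamma\rangle$ by condition \NC is immediate.
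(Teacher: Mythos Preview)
Your proof is correct and follows essentially the same route as the paper's. The paper observes that since $\ker{\pi_2|_G}=\langle\epsilon\rangle$ is a normal $q$-Sylow subgroup of the $p^2q$-group $G$, the group $G$ is abelian, and then reads off the constraints on $u_1,u_2$ from commutativity with $\epsilon$; you instead invoke condition \NC\ directly, which is the same computation. One small inaccuracy: for $\delta=\alpha_2\gamma$ the condition \NC\ already forces the $\sigma$-component of $u_2$ to vanish (since $u_2(\alpha_2\gamma)(\epsilon)u_2^{-1}=u_2\tau\epsilon u_2^{-1}=\sigma^{a(1-g)}\tau^{b(1-g)+1}\epsilon$ for $u_2=\sigma^a\tau^b$), so no auxiliary conjugation by a power of $\alpha_2$ is needed.
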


\begin{proof}
	The subgroups of $\aut{\G_1}$ of size $p^2$ are $\langle\alpha_1,\alpha_2\rangle$, $\langle \alpha_1,\gamma\rangle$ and $\langle \alpha_1,\alpha_2\gamma\rangle$, up to conjugation. 
	So, the groups in the statement are not conjugate since their image under $\pi_2$ are not.
	
	Let $G$ be a regular subgroup of $\Hol(\G_{1})$ such that $|\pi_2(G)|=p^2$. The kernel of $\pi_2$ is the normal $q$-Sylow subgroup of $G$ and so $G$ is abelian. Moreover, up to conjugation by the normalizer of $\pi_2(G)$ we can assume that it is generated by $\epsilon$.
	
	If $\pi_2(G)=\langle \alpha_1,\alpha_2\rangle$ then $G$ is conjugate to $H_0$ by an element of $\iota(\aut{\G_k})\leq\aut{\G_1}$ (see Lemma \ref{G_k_sub_pp}). Otherwise we have that
	$$G=\langle \epsilon, u\alpha_1,v\alpha_2^i\gamma\rangle$$
	for some $u,v\in \langle \sigma,\tau\rangle$. From abelianness of $G$ it follows that $u=\sigma^{\frac{1}{g-1}}$ and $v=\tau^{\frac{i}{g-1}}$ by \eqref{F1}. Since $v\neq 1$ then $i=1$ and so $G=H_1$.
%
%
%
\end{proof}   
 
Employing the notation in Remark \ref{subgroups of GL} let us define $\beta_s=[(0,0),\dm_{1,s}]$ and $\widetilde{\beta}=[(0,0),\dm_{0,1}]$ for $1\leq s\leq q-1$. 
\begin{proposition}\label{G_{1}_sub_q}
	The skew braces of $\G_1$-type with $|\ker{\lambda}|=p^2$ are $(B_{a,b},+,\circ)$ where $(B_{a,b},+)=\Z_p^2 \rtimes_{\dm_{1,1}} \Z_q$ and	
	$$(B_{a,b},\circ)=\Z_p^2\rtimes_{\dm_{a+1,b+1}} \Z_q\cong \begin{cases}
	\Z_p^2\times \Z_q, \, \text{ if } a=b=q-1,\\
	\G_0,\, \text{ if } a=q-1, \, b\neq q-1,\\
	\G_{\frac{b+1}{a+1}},\, \text{otherwise, }\\
	\end{cases}$$
	for $0\leq a,b\leq q-1$ and $(a,b)\neq (0,0)$. Moreover, they are bi-skew braces and $B_{a,b}\cong B_{c,d}$ if and only if $(c,d)=(b,a)$ and so there are $\frac{(q-1)(q+2)}{2}$ such skew braces.
\end{proposition}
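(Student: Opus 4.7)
The plan is to mirror Proposition \ref{G_k_sub_q} and Proposition \ref{G_{-1}_sub_q}, adapting to the larger automorphism group of $\G_1$. First, I would define the candidate regular subgroups
$$G_{a,b}=\langle \sigma,\tau,\epsilon\,[(0,0),\dm_{a,b}]\rangle, \quad 0\leq a,b\leq q-1,\ (a,b)\neq(0,0),$$
using the diagonal matrix $\dm_{a,b}$ of Remark \ref{subgroups of GL}. Regularity of each $G_{a,b}$ is immediate from Lemma \ref{pi_1 for fix}(2): since $[(0,0),\dm_{a,b}]$ fixes $\epsilon$, we get $\epsilon\in\pi_1(G_{a,b})$, and together with $\sigma,\tau\in\pi_1(G_{a,b})$ this forces $\pi_1(G_{a,b})=\G_1$. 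Then I would verify that any regular $G$ with $|\pi_2(G)|=q$ is conjugate to some $G_{a,b}$: the kernel of $\pi_2|_G$ has order $p^2$ and is therefore the unique $p$-Sylow subgroup $\langle\sigma,\tau\rangle$ of $\G_1$; any element of order $q$ in $\aut{\G_1}=\Z_p^2\rtimes GL_2(p)$ has matrix part of order $q$, and by Remark \ref{subgroups of GL} such matrices are diagonalizable up to $GL_2(p)$-conjugacy; a reparametrization of the last generator (raise to a suitable power to normalize the $\epsilon$-exponent to $1$, then conjugate by an element of $\langle\alpha_1,\alpha_2\rangle$, exactly as in Proposition \ref{G_k_sub_q}) puts $G$ in the form $G_{a,b}$.

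The main obstacle is the uniqueness of this normal form. In contrast to $\aut{\G_k}$ for $k\neq 0,\pm 1$, where the action on $\langle\sigma,\tau\rangle$ is forced to be diagonal, $\aut{\G_1}$ contains the full $GL_2(p)$, in particular the swap $\xi=[(0,0),P]$ with $P$ the order-$2$ permutation matrix. To compute orbits cleanly I would pass to the quotient $\Hol(\G_1)/\H$ with $\H=\langle\sigma,\tau,\alpha_1,\alpha_2\rangle$; this $\H$ is normal in $\Hol(\G_1)$ and $\Hol(\G_1)/\H\cong\Z_q\times GL_2(p)$, with $\epsilon$ in the central factor. The image of $G_{a,b}$ is the cyclic subgroup $\langle\epsilon\,\dm_{a,b}\rangle$, and a direct computation shows that two such images are conjugate in $\Z_q\times GL_2(p)$ iff $\dm_{a,b}$ and $\dm_{c,d}$ are conjugate in $GL_2(p)$, i.e.\ iff they share an eigenvalue multiset, which occurs iff $(c,d)\in\{(a,b),(b,a)\}$. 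The identification $G_{a,b}\sim G_{b,a}$ is in fact realised in $\Hol(\G_1)$ by $\xi$, which swaps $\sigma$ and $\tau$, fixes $\epsilon$, and conjugates $\dm_{a,b}$ to $\dm_{b,a}$.

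Counting the orbits of the involution $(a,b)\mapsto(b,a)$ on the $q^2-1$ non-zero pairs in $\Z_q^2$ yields $q-1$ diagonal fixed points plus $q(q-1)/2$ length-two orbits, for a total of $(q-1)(q+2)/2$ classes. The formula for $\circ$ and the isomorphism class of $(B_{a,b},\circ)\cong\Z_p^2\rtimes_{\dm_{a+1,b+1}}\Z_q$ then follow verbatim from the computation in Proposition \ref{G_k_sub_q} (specialized to $k=1$), applying \cite[Lemma 2.2]{abelian_case} to $\sigma,\tau\in\ker\lambda$ and $\epsilon\in\Fix(B_{a,b})$; the bi-skew brace property is immediate from \cite[Proposition 1.1]{skew_pq} since the two diagonal actions commute.
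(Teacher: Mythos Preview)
Your proof is correct and essentially the same as the paper's. The paper also reduces to $G_{a,b}=\langle\sigma,\tau,\epsilon\beta_0^a\widetilde\beta^b\rangle$ via the argument of Proposition~\ref{G_k_sub_q}, then determines the conjugacy classes by observing that $h(\epsilon)\equiv\epsilon\pmod{\langle\sigma,\tau\rangle}$ for every $h\in\aut{\G_1}$, so conjugacy of $G_{a,b}$ and $G_{c,d}$ amounts to $GL_2(p)$-conjugacy of $\dm_{a,b}$ and $\dm_{c,d}$; your use of the normal subgroup $\H=\langle\sigma,\tau,\alpha_1,\alpha_2\rangle$ (which is the paper's $\H_5$) to pass to $\Z_q\times GL_2(p)$ is just a repackaging of the same reduction.
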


\begin{proof}
Let $G$ be a regular subgroup of $\Hol(\G_1)$ such that $|\pi_2(G)|=q$. By Remark \ref{subgroups of GL} we have that, up to conjugation, an element of order $q$ of $GL_2(p)$ is a diagonal matrix with entries a power of $g$. Arguing as in Proposition \ref{G_k_sub_q}, also in this case we can show that every such group is conjugate to a subgroup of the form
	$$G_{a,b}=\langle \sigma,\tau, \epsilon \beta_0^a\widetilde\beta^b\rangle.$$
	Hence $G_{a,b}$ and $G_{c,d}$ are conjugate by $h\in \aut{\G_1}$ if and only if 
	$$h(\epsilon)h \beta_0^a\widetilde \beta^b h^{-1}=\epsilon h \beta_0^a \widetilde \beta^b h^{-1}=\epsilon \beta_0^c\widetilde \beta^d\pmod{\langle \sigma,\tau\rangle},$$ i.e. $\beta_0^a\widetilde \beta^b|_{\langle \sigma,\tau\rangle}=\dm_{a,b}$ and $\beta_0^c \widetilde \beta^d|_{\langle\sigma,\tau\rangle}=\dm_{c,d}$ are conjugate. And so either $(a,b)=(c,d)$ or $(a,b)=(d,c)$. In particular there are $\frac{(q-1)(q+2)}{2}$ such classes. The other claims follow as in Proposition \ref{G_k_sub_q}.
\end{proof}

The following lemma collects the conjugacy classes of subgroups of size $pq$ and $p^2q$ of $\aut{\G_1}$. 

\begin{lemma}
	A set of representatives of conjugacy classes of subgroups of size $pq$ and $p^2q$ of $\aut{\G_1}$ is
	\bigskip
	\begin{center}
		\small{
			\begin{tabular}{c|l|c| l}\label{subgroups of size pq of aut G_1}
				Size &    $G$ & Parameters & Class\\
				\hline
				&&&\\[-1em]
				$pq$ &      $\mathcal H_{s}=\langle \alpha_1,\beta_s\rangle$ & $0\leq s\leq q-1$ & $\Z_p\rtimes_g \Z_q$\\
				&&&\\[-1em]
				\cline{2-4}
				&&&\\[-1em]
				&           $\mathcal U=\langle \alpha_2,\beta_0\rangle$ & - & $\mathbb{Z}_{pq}$\\ 
				&&&\\[-1em]
				\cline{2-4}
				&&&\\[-1em]
				&     $\mathcal K_s=\langle \gamma,\beta_s\rangle$ & $0\leq s\leq q-1$ &  $\Z_{pq}$, if $s=1$,\\
				& & &  $\Z_p\rtimes_g \Z_q$, otherwise \\
				&&&\\[-1em]
				\cline{2-4}
				&&&\\[-1em]
				&   $\mathcal M=\langle \alpha_1\alpha_2^{2 }\gamma,\beta_{2^{-1}}\rangle$ & - & $\Z_p\rtimes_g \Z_q$\\
				&&&\\[-1em]
				\cline{2-4}
				&&&\\[-1em]
				& $\mathcal V=\langle \gamma,\widetilde{\beta}\rangle$ & - & $\Z_p\rtimes_g \Z_q$\\
				&&&\\[-1em]
				\hline
				&&&\\[-1em]
				$p^2q$ &  $\mathcal T_{s}=\langle \alpha_1,\alpha_2,\beta_s\rangle$ & $s\in \B$  & $\G_s$\\
				&&&\\[-1em]
				\cline{2-4}
				&&&\\[-1em]
				& $\mathcal R_{s}=\langle \alpha_1,\gamma,\beta_s\rangle$ & $0\leq s\leq q-1$  & $\G_{1-s}$ \\
				&&&\\[-1em]
				\cline{2-4}
				&&&\\[-1em]
				& $\mathcal N=\langle \alpha_1,\gamma,\widetilde{\beta}\rangle$ & - & $\G_{0}$ \\
				&&&\\[-1em]
				\cline{2-4}
				&&&\\[-1em]
				&     $\mathcal L=\langle \alpha_1,\alpha_2\gamma,\beta_{2^{-1}}\rangle$ & - & $\G_{2}$
		\end{tabular}}
	\end{center}
	where $\B$ is as in Remark \ref{subgroups of GL}.
\end{lemma}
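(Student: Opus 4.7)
I would follow the strategy of Lemma \ref{pi_2 pq G_k}, adapted to the present situation. Since $p>q$, every subgroup of $\aut{\G_1}$ of order $pq$ (respectively $p^2q$) has a normal Sylow $p$-subgroup of order $p$ (respectively $p^2$), and is thus generated by this $p$-Sylow together with an element of order $q$ that normalizes it.

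First I would classify the order-$q$ subgroups of $\aut{\G_1}=\Z_p^2\rtimes GL_2(p)$ up to conjugation. An element $[v,H]$ has order $q$ iff $H^q=Id$ and $\sum_{i=0}^{q-1}H^iv=0$; conjugation by the translation $[w,Id]$ replaces $v$ by $v+(Id-H)w$, so whenever $1$ is not an eigenvalue of $H$ we may kill $v$ entirely, and in the remaining case (when $H$ has $1$ as an eigenvalue) the order-$q$ condition forces $v$ to lie in the image of $Id-H$, which is again a coboundary. Combined with Remark \ref{subgroups of GL}, this shows every element of order $q$ is conjugate to $[(0,0),H]$ with $H=\dm_{1,s}$ for $s\in\B$ or $H=\dm_{0,1}$, giving the representatives $\langle\beta_s\rangle$ and $\langle\widetilde\beta\rangle$.

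Next, for each such $q$-element I would enumerate the subgroups of order $p$ and $p^2$ of the Sylow $p$-subgroup $P=\langle\alpha_1,\alpha_2,\gamma\rangle$ of $\aut{\G_1}$ that are normalized by it. The conjugation action of $\beta_s$ on $P$ is
\[
\beta_s\alpha_1\beta_s^{-1}=\alpha_1^g,\quad \beta_s\alpha_2\beta_s^{-1}=\alpha_2^{g^s},\quad \beta_s\gamma\beta_s^{-1}=\gamma^{g^{1-s}},
\]
using the key computation $\dm_{1,s}C\dm_{1,s}^{-1}=C^{g^{1-s}}$; and similarly $\widetilde\beta\alpha_1\widetilde\beta^{-1}=\alpha_1$, $\widetilde\beta\alpha_2\widetilde\beta^{-1}=\alpha_2^g$, $\widetilde\beta\gamma\widetilde\beta^{-1}=\gamma^{g^{-1}}$. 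Using the Heisenberg-type multiplication $(\alpha_1^a\alpha_2^b\gamma^c)^k=\alpha_1^{ka+\binom{k}{2}cb}\alpha_2^{kb}\gamma^{kc}$ in $P$, I would check which order-$p$ lines $\langle\alpha_1^a\alpha_2^b\gamma^c\rangle$ are $\beta_s$-invariant: besides the coordinate lines $\langle\alpha_1\rangle$, $\langle\alpha_2\rangle$, $\langle\gamma\rangle$ (producing $\mathcal H_s$, $\mathcal U$, $\mathcal K_s$), an additional invariant line appears only when $s=2^{-1}$ (which is exactly when the equation $k=g^s=g^{1-s}$ is consistent), and a short calculation together with conjugation by a suitable power of $\alpha_1$ reduces it to $\langle\alpha_1\alpha_2^2\gamma\rangle$, giving $\mathcal M$. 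The analogous analysis for $\widetilde\beta$ yields $\mathcal V$. For the order-$p^2q$ case, every index-$p$ subgroup of $P$ contains $Z(P)=\langle\alpha_1\rangle$ (since $P'=Z(P)$), so the order-$p^2$ subgroups of $P$ correspond to lines in $P/Z(P)\cong\Z_p^2$, and under the diagonal subgroup of $GL_2(p)$ (which normalizes $P$) these collapse to the three representatives $\langle\alpha_1,\alpha_2\rangle$, $\langle\alpha_1,\gamma\rangle$, $\langle\alpha_1,\alpha_2\gamma\rangle$. The first two are normalized by every $\beta_s$ and by $\widetilde\beta$ (yielding $\mathcal T_s$, $\mathcal R_s$ and $\mathcal N$), while the third is normalized by $\beta_s$ only when $s=2^{-1}$ and never by $\widetilde\beta$ (since $\widetilde\beta(\alpha_2\gamma)\widetilde\beta^{-1}=\alpha_2^g\gamma^{g^{-1}}\in\langle\alpha_1,\alpha_2\gamma\rangle$ would require $g=g^{-1}$), giving $\mathcal L$.

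Finally, I would verify that the listed representatives are pairwise non-conjugate by comparing the restrictions to $\langle\sigma,\tau\rangle$ (the images in $GL_2(p)$) and the Sylow $p$-subgroups; this also explains why $s\in\B$ suffices for $\mathcal T_s$ (the swap matrix conjugates $\dm_{1,s}$ to $\dm_{1,s^{-1}}$) but all $0\le s\le q-1$ are needed for $\mathcal R_s$ (the swap does not preserve $C$). The isomorphism classes are read off from the $q$-action on the $p$-part; for instance $\mathcal K_s\cong\Z_{pq}$ iff $g^{1-s}=1$ (i.e.\ $s=1$), and the action of $\beta_s$ on $\langle\alpha_1,\gamma\rangle$ has matrix $\dm_{1,1-s}$, whence $\mathcal R_s\cong\G_{1-s}$. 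The main obstacle, compared with Lemma \ref{pi_2 pq G_k}, is that the Sylow $p$-subgroup of $\aut{\G_1}$ is now the non-abelian Heisenberg group of order $p^3$ rather than an abelian group, which complicates the invariance analysis and is precisely what produces the extra classes $\mathcal M$ and $\mathcal L$ at $s=2^{-1}$.
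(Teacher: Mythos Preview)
Your overall strategy---fix a representative $q$-subgroup, then enumerate the $p$-subgroups it normalizes---is sound, but there is a genuine gap in the execution. Once you have conjugated the $q$-part to $\langle\beta_s\rangle$ with $s\in\B$, you then restrict attention to $p$-subgroups of the particular Sylow $P=\langle\alpha_1,\alpha_2,\gamma\rangle$. This restriction is not justified: not every $\beta_s$-invariant $p$-subgroup of $\aut{\G_1}$ lies in $P$. For instance, set $\gamma'=[(0,0),C^T]$; then $\langle\gamma'\rangle$ is $\beta_s$-invariant (since $\dm_{1,s}$ preserves the line $\langle e_2\rangle$, which is the fixed line of $C^T$) but $\gamma'\notin P$. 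The group $\langle\gamma',\beta_s\rangle$ is conjugate, via the swap $[(0,0),\psi]$, to $\mathcal K_{s^{-1}}$. When $s\in\B\setminus\{0,\pm1\}$ one has $s^{-1}\notin\B$, so this is a conjugacy class that your enumeration never produces. The same phenomenon occurs in the $p^2q$ case with $\langle\alpha_2,\gamma'\rangle$, which yields the classes $\mathcal R_{s^{-1}}$ for $s^{-1}\notin\B$. This is precisely why the final table needs all $0\le s\le q-1$ for $\mathcal K_s$ and $\mathcal R_s$, not just $s\in\B$.

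There is also a smaller imprecision: the claim that ``an additional invariant line appears only when $s=2^{-1}$'' overlooks the cases $s=1$ (where every line in $\langle\alpha_1,\alpha_2\rangle$ is invariant) and $s=0$ (where every line in $\langle\alpha_1,\gamma\rangle$ is invariant). These extra lines happen to give groups conjugate to $\mathcal H_1$ and $\mathcal K_0$ respectively, so they do not produce new representatives, but this needs to be checked.

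The paper sidesteps the first issue by organizing the argument around the image $H|_{\langle\sigma,\tau\rangle}$ in $GL_2(p)$ rather than around the $q$-part. If this image has order $q$, then $H\le\iota(\aut{\G_k})$ and one can invoke Lemma~\ref{pi_2 pq G_k} directly (followed by a few identifications under the larger automorphism group). If the image has order $pq$, the paper first normalizes the $GL_2$-image to $\langle C,\dm_{1,s}\rangle$ (all $0\le s\le q-1$) or $\langle C,\dm_{0,1}\rangle$ using the full $GL_2(p)$, and only then determines the translation parts---so the asymmetry between $C$ and $C^T$ never arises.
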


\begin{proof}
The groups in the table are not pairwise conjugate. Indeed if $h=[(n,m),M]$ conjugate a pair of such groups then their $p$-Sylow subgroups are conjugate by $h$ and their restrictions to $\langle \sigma,\tau\rangle$ are conjugate by $M$. A case-by-case discussion shows that these two conditions can not be satisfied by any pair of groups in the table.

	

	Let $H$ be a subgroup of order $pq$ or $p^2q$ of $\aut{\G_1}$ and let $H|_{\langle \sigma,\tau\rangle}$ denotes the restriction of the action of $H$ to ${\langle \sigma,\tau\rangle}$. If $H|_{\langle \sigma,\tau\rangle}$ has size $q$ then, according to Remark \ref{subgroups of GL} we have that $H|_{\langle \sigma,\tau\rangle}$ is generated by $\dm_{1,s}$ for $s\in \mathcal{B}$. Hence, up to conjugation, $H\leq \iota(\aut{\G_k})$. Using the same notation as in Lemma \ref{pi_2 pq G_k}, we have that $H$ is conjugate to one of the groups $\setof{\mathcal{H}_{1,s},\mathcal{H}_{2,s},\mathcal{W},\mathcal{T}_s}{s\in \mathcal{B}}$ by an element of $\iota(\aut{\G_k})$. Moreover: 
	\begin{itemize}
	   
 	    \item if $s\neq 0, \pm 1$, $\psi \mathcal{H}_{1,s}\psi= \mathcal{H}_{2,{s}^{-1}}$, where $\psi$ is the automorphism swapping $\sigma$ and $\tau$.
	   
	   	    \item The group $\mathcal{W}$ is conjugate to $\mathcal{H}_{1,1}$ by a suitable automorphism of the form $[(0,0),M]$, since $\dm_{1,1}$ is central in $GL_2(p)$. 
	\end{itemize}
	Hence $H$ is conjugate to one of the groups $\setof{\mathcal{H}_{s},\,\mathcal{U}}{0\leq s\leq q-1}$ if $|H|=pq$ or to one of $\setof{\mathcal{T}_{s}}{0\leq s\leq q-1}$ if $|H|=p^2q$ as in the statement.
	%
	
	If $H|_{\langle \sigma,\tau\rangle}$ has size $pq$ then, according to Remark \ref{subgroups of GL}, up to conjugation we have that $H|_{\langle\sigma,\tau\rangle}=\langle C, \dm_{1,s}\rangle$ or $H|_{\langle\sigma,\tau\rangle}=\langle C, \dm_{0,1}\rangle$ for $0\leq s\leq q-1$.
	Therefore 
	$$H=\langle \alpha_1^n\alpha_2^m \gamma,\ \alpha_1^r \alpha_2^t\theta\rangle$$
	for $\theta\in \{\beta_{s}, \widetilde{\beta} : 0\leq s\leq q-1\}$. If $\theta=\beta_{0}$ then $t=0$ and if $\theta=\widetilde \beta$ then $r=0$. Using \eqref{conj by alphas} for $k=1$, up to conjugation we can assume that $r=t=0$. The $p$-Sylow subgroup of $H$ has to be normal, i.e. if $\theta=[(0,0),\dm_{t,s}]$, using \eqref{F2} we have
	\begin{equation*}
		\theta\alpha_1^n\alpha_2^m\gamma\theta^{-1} = \alpha_1^{n g^t }\alpha_2^{m g^s }\gamma^{g^{t-s}}=(\alpha_1^n\alpha_2^m\gamma)^r=\alpha_1^{nr+mr\frac{r-1}{2}}\alpha_2^{mr}\gamma^{r}
	\end{equation*}
for some $r\in \mathbb{Z}$. Thus, $r=g^{s-t}$ and accordingly $n,m$ solve the following system of linear equations:
	\begin{equation*}
		\begin{cases}
 2(g^{t-s}-g^t)n+g^{t-s}(g^{t-s}-1)m=0\\
			(g^{t-s}-g^s)m=0.
		\end{cases}
	\end{equation*}
	Hence: 
	\begin{itemize}
		\item  if $\theta=\beta_{s},\widetilde \beta$ with $s\neq 0, 2^{-1}$ then $n=m=0$, i.e. $H=\mathcal K_s$ or $H=\mathcal V$. 
		\item  If $\theta=\beta_{2^{-1}}$ then $m=2n$ and so, either $n=m=0$ or up to conjugation by $[(0,0),n^{-1}Id]$, $n=1, m=2$, i.e. $H=\mathcal K_{2^{-1}}$ or $H$ is conjugate to $\mathcal M$.
		\item   If $\theta=\beta_{0}$ then either $n=m=0$ or, up to conjugation by $[(0,0),n^{-1}Id]$, $m=0$ and $n=1$. Since $\alpha_2\langle \alpha_1 \gamma, \beta_0\rangle \alpha_2^{-1}=\mathcal K_0$ then $H$ is conjugate to $\mathcal K_0$.
	\end{itemize}
	
		Let $H$ be a subgroup of $\aut{\G_1}$ of size $p^2q$ and let $H|_{\langle \sigma,\tau\rangle}$ has size $pq$. Then, up to conjugation, 
	$$H=\langle \alpha_1^n\alpha_2^m,\ \alpha_1^r\alpha_2^t\gamma,\ \theta\rangle$$
	where $\theta$ is either $\beta_s$ or $\widetilde{\beta}$. Using \eqref{F1} to check the abelianness of the $p$-Sylow subgroup of $H$ it follows that $m=0$ and therefore we can assume that $n=1$ and $r=0$.
	From the normality of the $p$-Sylow subgroup it follows that:
	\begin{itemize}
		\item if $\theta=\beta_s$ for $s\neq 2^{-1}$ then
		$t(g^s-g^{1-s})=0$ and so $t=0$, i.e. $H=\mathcal R_s$. 
		\item If $\theta=\widetilde{\beta}$ then $(1-g)t=0$ and so $t=0$, i.e. $H=\mathcal N$.  
		\item If $\theta= \beta_{2^{-1}}$ then up to conjugation by an element of the form $[(0,0),xId]$ we have $t\in \{0,1\}$, i.e. $H$ is conjugate either to $\mathcal R_{2^{-1}}$ or  to $\mathcal L$. \qedhere
	\end{itemize}
\end{proof} 

The group 
\begin{equation}\label{H_5}
    \H_5=\langle \sigma,\tau,\alpha_1,\alpha_2\rangle
\end{equation}
is a normal subgroup of $\Hol(\G_1)$ and $\Hol(\G_1)/\H_5\cong \Z_q\times GL_2(p)$.
\begin{lemma}\label{G_1 pq}
	A set of representatives of conjugacy classes of regular subgroups $G$ of $\Hol(\G_{1})$ with $|\pi_2(G)|=pq$ is 
	
	\bigskip
	\begin{center}
		\small{
			\begin{tabular}{c|l|c|l|c}
				$\pi_2(G)$ & Subgroups &  Parameters &Class & \#\\
				\hline
				&&&\\[-1em]
				
				$\mathcal H_{s}$ & $\widetilde H_{s,c}=\langle \tau, \sigma^{\frac{1}{g-1}}\alpha_1,\epsilon^{c}\beta_s\rangle$    &  $0\leq s\leq q-1$, & $\G_{c+s}$
				& $q(q-1)$\\
				&&&\\[-1em]
				& &  $1\leq c\leq q-1$ & &\\
				&&&\\[-1em]
				\cline{2-5}
				&&&\\[-1em]
				&    $\widehat H_{s}=\langle \sigma, \tau\alpha_1,\epsilon^{1-s}\beta_s\rangle$  & $2\leq s\leq q$& $\G_{2-s}$ & $q-1$\\
				&&&\\[-1em]
				\hline
				&&&\\[-1em]
				$\mathcal K_s$ & $\widetilde K_s=\langle \sigma, \tau\gamma, \epsilon^{1-2s}\beta_s \rangle$ & $0\leq s\leq q-1$, & $\mathbb{Z}_p^2\times \mathbb{Z}_q$, if $s=1$,  &     $q-1$ \\
				&&&\\[-1em]
				& &  $s\neq 2^{-1}$ & $\G_2$, otherwise.  &\\
				&&&\\[-1em]
				\hline
				&&&\\[-1em]
				$\mathcal U$ &   $\widetilde U_c=\langle \sigma, \tau^{\frac{1}{g-1}}\alpha_2, \epsilon^c\beta_0 \rangle$  &  $1\leq c\leq q-1$  & $\mathbb{Z}_p^2\times \mathbb{Z}_q$, if $c=-1$,  &   $q-1$ \\
				&&&\\[-1em]
				& & & $\G_0$, otherwise.  &\\
				&&&\\[-1em]
				\cline{2-5}
				&&&\\[-1em]
				& $\widetilde{U}=\langle \tau, \sigma\alpha_2, \epsilon^{-1}\beta_0 \rangle$ & - & $\G_0$ & $1$ \\
				&&&\\[-1em]
				\hline
				&&&\\[-1em]
				$\mathcal M$  & $\widetilde M_{c}=\langle \sigma,\tau^{\frac{2}{g-1}} \alpha_1\alpha_2^2\gamma,\epsilon^c\beta_{2^{-1}}\rangle $   & $1\leq c\leq q-1$ & $\G_{2(c+1)}$   &  $q-1$ \\
				&&&\\[-1em]
				\hline
				&&&\\[-1em]
				$\mathcal V$ & $\widetilde V=\langle \sigma, \tau\gamma, \epsilon^{-2}\widetilde{\beta} \rangle$   & - & $\G_2$ &   $1$
		\end{tabular}}
	\end{center}
	
\end{lemma}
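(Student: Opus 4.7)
The plan is to follow the same template used in Lemma \ref{lemma G_k pq 1} and Lemma \ref{G_1 sub aut ppq}: first verify that each group in the table is regular and that the listed representatives lie in distinct conjugacy classes, then, for every admissible image under $\pi_2$, show that an arbitrary regular $G$ with that image is conjugate to one of the representatives by an element of the normalizer of $\pi_2(G)$ in $\aut{\G_1}$ that stabilizes the kernel.

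For regularity, each displayed generator triple has the shape $\langle u,v\delta,\epsilon^c h\rangle$ with $\delta$ fixing $v$ and $h$ fixing $\epsilon$, so Lemma \ref{pi_1 for fix}(2) gives $u,v,\epsilon^c\in\pi_1(G)$, hence $\pi_1(G)=\G_1$ and Lemma \ref{rem for regularity} applies. For non-conjugacy, I first separate rows using the invariant $\pi_2(G)$, and within each row I use two further invariants: the orbit of $\ker{\pi_2|_G}\leq \langle\sigma,\tau\rangle$ under the normalizer of $\pi_2(G)$ (yielding the representatives $\sigma$, $\tau$, $\sigma\tau$ up to that normalizer) and the image in the abelian quotient $\Hol(\G_1)/\H_5$, where $\H_5$ is as in \eqref{H_5}. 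Since in this quotient the class of $\epsilon^c\theta$ only depends on $c$ and on $\theta$ modulo $\langle\alpha_1,\alpha_2\rangle$, two representatives in the same row with different parameter $c$ map to distinct cosets and hence are not conjugate.

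For exhaustiveness, I fix $\pi_2(G)$ as one of $\mathcal H_s,\mathcal K_s,\mathcal U,\mathcal M,\mathcal V$ and write the standard presentation of $G$. Then I apply conditions \NC{} and \nc{} together with formula \eqref{conj by alphas} specialized to $k=1$ and with \eqref{F1}--\eqref{F2}. Projecting the $p$-th power relations into $\H_5$ kills the $\langle\sigma,\tau\rangle$-coordinates of the order-$p$ generator (forcing the analogue of $b=0$ in Lemma \ref{lemma G_k pq 1}), while the commutation relation with the order-$q$ generator fixes the remaining $\langle\sigma,\tau\rangle$-coordinate (e.g.\ $\sigma^{1/(g-1)}$ or $\tau^{1/(g-1)}$). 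Non-regularity when $\pi_1(G)\subseteq\langle\sigma,\tau\rangle$ forces the exponent $c$ of $\epsilon$ to be nonzero, and the \nc{} relation $(w\epsilon^c\theta)^q\in\ker{\pi_2|_G}$ pins $c$ down (this is where one sees, for instance, $c=1-s$ in the row $\widehat H_s$ and $c=1-2s$ in $\widetilde K_s$). A final conjugation by a suitable element of $\langle\alpha_1,\alpha_2,\gamma\rangle$, computed via \eqref{conj by alphas} and \eqref{F1}, clears any residual $\langle\sigma,\tau\rangle$-coefficient from the order-$q$ generator.

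The main obstacle is the row with $\pi_2(G)=\mathcal M=\langle\alpha_1\alpha_2^2\gamma,\beta_{2^{-1}}\rangle$: here the $p$-generator mixes $\alpha_1,\alpha_2$ with $\gamma$, so one cannot directly apply the diagonal formula \eqref{conj by alphas} but must combine it with \eqref{F2} to compute $(\tau^a\alpha_2^b\gamma)^n$ and the conjugation action of $\beta_{2^{-1}}$ on $\alpha_1\alpha_2^2\gamma$. I expect this case to require a small linear-algebra computation in $\Z_p^2$ (analogous to the systems appearing in Lemma \ref{G_k_sub_ppq}) to normalize $w$, after which the identification $G=\widetilde M_c$ and the check that $s=2^{-1}$ is forced both fall out. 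The cases $\mathcal K_s$ (excluding $s=2^{-1}$, which degenerates because $\dm_{1,2^{-1}}$ centralizes an extra line) and $\mathcal V$ are handled analogously but with simpler bookkeeping since the $p$-generator is precisely $\gamma$.
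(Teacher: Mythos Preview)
Your overall strategy matches the paper's, but two concrete steps in your sketch are off and would trip you up in the actual computation.

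First, the exponent $c$ of $\epsilon$ in rows like $\widehat H_s$ and $\widetilde K_s$ is \emph{not} pinned down by the power relation $(w\epsilon^c\theta)^q\in\ker\pi_2|_G$; it comes from lifting the \emph{conjugation} relation between the generators of $\pi_2(G)$. For instance, in $\mathcal K_s$ one has $\beta_s\gamma\beta_s^{-1}=\gamma^{g^{1-s}}$, and lifting this via \nc{} to $(\tau^b\epsilon^c\beta_s)(\tau\gamma)(\tau^b\epsilon^c\beta_s)^{-1}=(\tau\gamma)^{g^{1-s}}\pmod{\langle\sigma\rangle}$ forces $c=1-2s$. The $q$-th power relation is used only afterwards, to clear the residual $\tau$-coefficient $b$ (or to show $b=0$ when the relevant conjugation normalizes $\tau$).

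Second, your kernel analysis for $\mathcal H_s$ is incomplete: you list $\sigma,\tau,\sigma\tau$ as orbit representatives, but the table has no $\sigma\tau$ row. The reason is that setting $k=1$ in the computation of Lemma \ref{lemma G_k pq 1}(iii) makes the formula $d=(s-1)/(1-k)$ degenerate; redoing condition \NC{} directly shows that $\ker\pi_2|_G=\langle\sigma\tau\rangle$ forces $s=1$, and for $s=1$ the normalizer $N_{\aut{\G_1}}(\mathcal H_1)$ is large enough (because $\dm_{1,1}$ is central in $GL_2(p)$) to conjugate $\langle\sigma\tau\rangle$ to $\langle\sigma\rangle$. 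You need this step explicitly, and an analogous check for $\mathcal U$.
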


\begin{proof}	
The groups with the same image under $\pi_2$ but in different rows of the table are not pairwise conjugate because their $p$-Sylow subgroups are not conjugate by the normalizer of their image under $\pi_2$. The same argument used in Lemma \ref{lemma G_k pq 1} shows that the groups in the same row are not pairwise conjugate (employing the group $\H_5$).
%

Let $G$ be a regular subgroup of $\Hol(\G_1)$ with $\pi_2(G)\in \setof{\mathcal{H}_s, \, \mathcal{U}}{0\leq s\leq q-1}$, i.e. with $\pi_2(G)\leq \iota(\aut{\G_k})$. Then $G\leq \G_{1}\rtimes \iota(\aut{\G_k})$ and we can assume that 
$$G=\langle u, v\alpha_i, w\epsilon^c\beta_s\rangle$$
for $c\neq 0$ and $u,v,w\in \langle \sigma,\tau\rangle$. Up to conjugation by an element in the normalizer of $\pi_2(G)$ we can assume that $u\in \{\sigma,\tau,\sigma\tau\}$.

If $\ker{\pi_2}=\langle \sigma\tau\rangle$, the condition \NC forces $s=1$ and $i=1$. In such case, up to conjugation by $h\in N_{\aut{\G_1}}(\mathcal{H}_1)$, we can assume that the kernel is generated by $\sigma$. 

Therefore, setting $k=1$ in the proof of Lemma \ref{lemma G_k pq 1}, it follows that $G$ is conjugate to one of the groups in the table corresponding to the same image under $\pi_2$ by an element of $\iota(\aut{\G_k)}$ (we are using that \eqref{conj by alphas} and \eqref{formula for powers} holds for $k=1)$.

	In all the other cases we can assume that $G$ has the form 
	$$G=\langle u,v \alpha_1^n\alpha_2^m\gamma, w\epsilon^c\theta\rangle$$
	for $c\neq 0$ and $u,v,w\in \langle \sigma,\tau\rangle$. From condition \NC we have that $\ker{\pi_2|_G}=\langle \sigma\rangle$ and so $v,w\in \langle \tau\rangle$. 
	
	Let $\pi_2(G)=\mathcal{K}_s$. Then	
	\[
	G=\langle \sigma,\, \tau^a\gamma,\, \tau^b\epsilon^c\beta_s \rangle
	\]
	for $a\neq 0$ and so up to conjugation by $h=[(0,0),a^{-1}Id]$ we can assume that $a=1$. In $\mathcal{K}_s$ the relation $\beta_s\gamma\beta_s^{-1}=\gamma^{g^{1-s}}$ holds. Then the condition \nc and formula \eqref{F2} imply that 
	$$(\tau^b\epsilon^c\beta_s)\tau\gamma(\tau^b\epsilon^c\beta_s)^{-1}  =\tau^{g^{s+c}}\gamma^{g^{1-s}} \overset{\nc}{=}(\tau \gamma)^{g^{1-s}}=\tau^{g^{1-s}}\gamma^{g^{1-s}} \pmod{\langle \sigma\rangle}$$
	and so $c=1-2s$ and then $s\neq 2^{-1}$.  Since $(\tau^b\epsilon^{1-2s}\beta_s)^q\in \ker{\pi_2}$, if $s=1$ then $b=0$ (see \eqref{formula for powers}). Otherwise, $G$ is conjugate to $\widetilde K_s$ by $\gamma^n$ where $n=\frac{b}{1-g^{1-s}}$.
	
	The other cases are similar and so we omit the computations.
\end{proof}

\begin{lemma}
	A set of representatives of conjugacy classes of regular subgroups $G$ of $\Hol(\G_{1})$ with $|\pi_2(G)|=p^2q$ is 
	
	\bigskip
	\begin{center}
		\small{
			\begin{tabular}{c|l|c|c|c}
				$\pi_2(G)$ & Subgroups & Parameters  & Class & \#\\
				\hline
				&&&\\[-1em]
				$\mathcal  T_{s}$    &    $\widetilde T_{s,d}=\langle \sigma^{\frac{1}{g-1}}\alpha_1, \tau^{\frac{1}{g-1}}\alpha_2,\epsilon^{d}\beta_s\rangle$  & $s\in\B\setminus\{-1\}$,  & $\G_{s}$& $\frac{(q-1)(q+1)}{2}$\\
				&&&\\[-1em]
				& &  $1\leq d\leq q-1$ & & \\
				&&&\\[-1em]
				\cline{2-5}
				&&&\\[-1em]
				&    $\widetilde T_{-1,d}=\langle \sigma^{\frac{1}{g-1}}\alpha_1, \tau^{\frac{1}{g-1}}\alpha_2,\epsilon^{d}\beta_{-1}\rangle$ &  $d\in \mathfrak{A}$ &$\G_{-1}$ & $\frac{q-1}{2}$\\
				&&&\\[-1em]
				\cline{2-5}
				&&&\\[-1em]
				& $\widehat T_{s}=\langle \sigma^{\frac{1}{g-1}}\alpha_1, \sigma\tau^{\frac{1}{g-1}}\alpha_2,\epsilon^{s-1}\beta_s\rangle$  & $s\in\B\setminus\{1\}$ & $\G_s$ &   $\frac{q+1}{2}$\\
				&&&\\[-1em]
				\cline{2-5}
				&&&\\[-1em]
				& $\overline T_{s}=\langle \tau\sigma^{\frac{1}{g-1}}\alpha_1, \tau^{\frac{1}{g-1}}\alpha_2,\epsilon^{1-s}\beta_s\rangle$  &  $s\in\B\setminus\{1,-1\}$ & $\G_s$ & $\frac{q-1}{2}$\\
				&&&\\[-1em]
				
				
				\hline
				&&&\\[-1em]
				$\mathcal R_s$    & $\widetilde R_{s}=\langle \sigma^{\frac{1}{g-1}}\alpha_1, \tau\gamma,\tau^{\frac{1-g^{1-s}}{2}}\epsilon^{1-2s}\beta_s\rangle$ &  $0\leq s \leq q-1$, $s\neq 2^{-1}$ &$\G_{1-s}$ & $q-1$\\
				&&&\\[-1em]
				\hline
				&&&\\[-1em]
				$\mathcal N$ & $\widetilde N=\langle \sigma^{\frac{1}{g-1}}\alpha_1,\tau\gamma,\tau^{\frac{1-g^{-1}}{2}} \epsilon^{-2}\widetilde{\beta}\rangle$  &  -  & $\G_{0}$ & $1$\\ 
				&&&\\[-1em]
				\hline
				&&&\\[-1em]
				$\mathcal L$ & $\widetilde L_d=\langle \sigma^{\frac{1}{g-1}}\alpha_1,\tau^{\frac{1}{g-1}}\alpha_2\gamma,\epsilon^{d}\beta_{2^{-1}}\rangle$  &   $1\leq d\leq q-1$  & $\G_{2}$ &$q-1$
		\end{tabular}}
	\end{center}
where $\mathfrak{A}$ is as in Lemma \ref{label}.
\end{lemma}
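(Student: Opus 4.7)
The plan is to follow the template established in Lemmas \ref{G_k_sub_ppq} and \ref{label}. First I would verify that each group in the table is regular via Lemma \ref{pi_1 for fix}(2): each has the shape $\langle u\alpha_1,\, v\alpha_2^i\gamma^j,\, w\epsilon^c\theta\rangle$ (or similar) with $u,v$ non-trivial in $\langle\sigma,\tau\rangle$, so $\langle\sigma,\tau\rangle\subseteq\pi_1(G)$ by iterated applications of the formulas in \eqref{PP}, and the remaining generator contributes $\epsilon^c$ with $c\neq 0$, forcing $|\pi_1(G)|=p^2q$ and regularity by Lemma \ref{rem for regularity}. To establish pairwise non-conjugacy, groups with different images under $\pi_2$ are immediately distinguished; within a fixed $\pi_2(G)$ the $p$-Sylow subgroups are not conjugate by the normalizer of the image; and groups belonging to the same row are separated by passing to the abelian quotient $\Hol(\G_1)/\H_5$ from \eqref{H_5}, exactly as in Lemma \ref{lemma G_k pq 1}.

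For the case $\pi_2(G)=\mathcal{T}_s$, I would observe that $\mathcal{T}_s\leq \iota(\aut{\G_k})$, so the classification of Lemma \ref{G_k_sub_ppq} (with $k=1$) gives a complete list of candidates up to conjugation by $\iota(\aut{\G_k})$. The point is that $N_{\aut{\G_1}}(\mathcal{T}_s)$ is strictly larger than $\iota(N_{\aut{\G_k}}(\mathcal{T}_s))$ whenever $s=\pm 1$. For $s=1$, the element $\beta_1$ acts on $\langle\sigma,\tau\rangle$ by the scalar matrix $g\cdot Id$, which is central in $GL_2(p)$, so the whole of $GL_2(p)$ normalizes $\mathcal{T}_1$; the involution that swaps $\sigma$ and $\tau$ then identifies $\widetilde{T}_{1,d}$ with $\widetilde{T}_{1,-d}$, explaining the reduction of the parameter to $d\in\mathfrak{A}$. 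For $s=-1$, the same swap normalizes $\mathcal{T}_{-1}$ (it conjugates $\dm_{1,-1}$ to its inverse), and the bookkeeping is identical to the analysis carried out in Lemma \ref{label} for $\G_{-1}$. For $s\in\B\setminus\{\pm 1\}$, no extra identifications occur and one obtains the families $\widetilde{T}_{s,d}$, $\widehat{T}_s$ and $\overline{T}_s$ without further collapse.

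For $\pi_2(G)=\mathcal{R}_s$, $\mathcal{N}$ or $\mathcal{L}$, the image contains $\gamma$ and a fresh computation is required. I would start from a standard presentation
\[
G=\langle u\alpha_1,\; v\alpha_2^i\gamma,\; w\epsilon^c\theta\rangle
\]
with $u,v,w\in\langle\sigma,\tau\rangle$, $i\in\{0,1\}$ and $\theta\in\{\beta_s,\widetilde\beta\}$; necessarily $c\neq 0$ by Lemma \ref{pi_1 for fix}(1). Condition \NC applied to the generator $v\alpha_2^i\gamma$ combined with formula \eqref{F1} forces $\ker\pi_2|_G=\langle\sigma\rangle$ (so $u,v,w\in\langle\tau\rangle$), and then condition \nc applied to the relation $\theta\gamma\theta^{-1}=\gamma^{g^{1-s}}$ (or its analogue with $\widetilde\beta$) together with formula \eqref{F2} pins down $c$: one gets $c=1-2s$ for $\mathcal{R}_s$ (hence $s\neq 2^{-1}$), $c=-2$ for $\mathcal{N}$, and $c$ free in $\mathcal{L}$. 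The remaining free coefficient in $w$ is absorbed by conjugating by a suitable power of $\gamma$, using \eqref{conj by alphas} specialized to $k=1$, exactly as $\alpha$-conjugation was used in Lemma \ref{G_0_pq}.

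The main obstacle will be the case $\pi_2(G)=\mathcal{T}_{\pm 1}$: keeping track of precisely which of the candidates produced by Lemma \ref{G_k_sub_ppq} become identified under the larger normalizer, and exhibiting explicit conjugators from $GL_2(p)$, is the most delicate step and is where a naive count would over-enumerate. The other potentially tricky ingredient is the interplay between $\gamma$ and $\alpha_2$ coming from \eqref{F1}, which has to be handled correctly when checking both \NC and the uniqueness of the representative in the $\mathcal{L}$ case.
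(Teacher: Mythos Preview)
Your overall strategy matches the paper's, but the analysis of the $\mathcal T_{\pm 1}$ cases is reversed, and this is precisely the delicate step you flagged. For $s=1$ the swap $\psi$ fixing $\epsilon$ and interchanging $\sigma,\tau$ satisfies $\psi\beta_1\psi^{-1}=\beta_1$ (since $\dm_{1,1}$ is scalar), and it interchanges $\sigma^{\frac{1}{g-1}}\alpha_1$ with $\tau^{\frac{1}{g-1}}\alpha_2$; hence $\psi\widetilde T_{1,d}\psi^{-1}=\widetilde T_{1,d}$ and there is \emph{no} collapse of the parameter $d$. The table accordingly lists all $q-1$ values of $d$ for $s=1$. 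It is for $s=-1$ that $\psi\beta_{-1}\psi^{-1}=\beta_{-1}^{-1}$, whence $\psi\widetilde T_{-1,d}\psi^{-1}=\widetilde T_{-1,-d}$ and the parameter is restricted to $\mathfrak A$; the same $\psi$ also identifies $\widehat T_{-1}$ with $\overline T_{-1}$, which is why $\overline T_s$ is listed only for $s\in\B\setminus\{1,-1\}$. With your reversed attribution the count would come out wrong.

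There is also a slip in the $\mathcal R_s$, $\mathcal N$, $\mathcal L$ paragraph: since $|\pi_2(G)|=p^2q$, the kernel of $\pi_2|_G$ is trivial, so the sentence ``forces $\ker\pi_2|_G=\langle\sigma\rangle$'' is meaningless here, and the conclusion ``$u,v,w\in\langle\tau\rangle$'' contradicts the first generator $\sigma^{\frac{1}{g-1}}\alpha_1$ in the table. What actually happens (as in the paper) is that the \nc\ relations coming from $[\alpha_1,\alpha_2^i\gamma]$ and from $\theta$ acting on $\alpha_1$ force the first generator to be $\sigma^{\frac{1}{g-1}}\alpha_1$; then, after conjugating by powers of $\alpha_1$ and $\gamma$ (and by $[(0,0),xId]$), one reduces $u$ and $w$ to the forms in the table. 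The relation $\beta_s\gamma\beta_s^{-1}=\gamma^{g^{1-s}}$ then fixes both $c=1-2s$ \emph{and} the exponent of $\tau$ in $w$, not just $c$.
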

\begin{proof}
	Let us show that the groups in the table are not pairwise conjugate:
	\begin{itemize}
	\item If $s\neq -1$, if $\widehat T_{s,d}$ (resp. $\widetilde L_{d}$) and $\widehat T_{s,c}$ (resp. $\widetilde L_{c}$) are conjugate by an element in the normalizer of their image under $\pi_2$ then, by looking at their images in $\Hol(\G_1)/\H_5$ it follows that $c=d$ (and the same argument shows that the groups $\widehat T_s$ and $\overline T_s$ for $s\neq -1$ and the groups $\widetilde L_d$ are not pairwise conjugate). If $s=-1$ the same argument shows that $c=-d$.
	
		\item If $\pi_2(G)= \mathcal{T}_{s}$ for $s\neq \pm 1$ then the action of $h\in N_{\aut{\G_1}}(\pi_2(G))$ restricted to $\langle \sigma,\tau\rangle$ is a diagonal matrix. Therefore the groups $\widetilde T_{s,d}$, $\widehat{T}_s$ and $\overline T_s$ are not conjugate, since their $p$-Sylow subgroups are not. 
	
		\item If $s\neq -1$ and $\widetilde T_{s,d}$ and $\widetilde T_{s,c}$ are conjugate then, by looking at their images in $\Hol(\G_1)/\H_5$ it follows that $c=d$ (and the same argument shows that the groups $\widehat T_s$ and $\overline T_s$ for $s\neq -1$ and the groups $\widetilde L_d$ are not pairwise conjugate).

		\item If $\pi_2(G)= \mathcal{T}_{-1}$ then the action of $h\in N_{\aut{\G_1}}(\pi_2(G))$ restricted to $\langle \sigma,\tau\rangle$ is either a diagonal matrix or it has the form
		$$h|_{\langle \sigma,\tau\rangle}=\begin{bmatrix} 0 & b \\ a& 0 \end{bmatrix}.$$
		The groups $\widehat T_{-1}$ and $\overline T_{-1}$ and the groups $\widetilde T_{-1,c}$  and $\widetilde T_{-1,-c}$ are conjugate by the automorphism $\psi$ swapping $\sigma$ and $\tau$. 
		The other groups are not conjugate, since their $p$-Sylow subgroups are not. 
	\end{itemize}
		
	Let $G$ be a regular subgroup and let $\pi_2(G)=\mathcal{T}_s\leq \iota(\aut{\G_k})$ for $s\in \mathcal{B}$. Then we can argue as in Lemma \ref{G_k_sub_ppq} to show that $G$ is conjugate to one of the groups in the table (formulas \eqref{conj by alphas} and \eqref{formula for powers} hold for $k=1$).	
		
		Otherwise, as in Lemma \ref{G_1 pq} we have that $G$ has the form
		$$G=\langle \sigma^{\frac{1}{g-1}} \alpha_1 ,\ u   \alpha_2^i \gamma,\ w\epsilon^c \theta\rangle$$
		for $c\neq 0$, $i=0,1$ and $\theta\in\setof{\beta_s, \, \widetilde\beta}{0\leq s\leq q-1}$.
\begin{itemize}
    \item If $\pi_2(G)=\mathcal{R}_s$, up to conjugation by a power of $\alpha_1$ we can assume that $w\in \langle \tau\rangle$ and that $u\in \langle \tau\rangle$ up to conjugation by a power of $\gamma$. Conjugating by an automorphism of the form $[(0,0),xId]$ we can assume that $u=\tau$. Since 
		$$\beta_{s}\gamma \beta_{s}^{-1}=(\gamma)^{g^{1-s}}$$
	 the condition \nc implies that the last two generators of $G$ satisfy the same relation. Accordingly $c=1-2s$ and $w=\tau^{\frac{1-g^{1-s}}{2}}$. Therefore $G$ is conjugate to $\widetilde {R}_s$. If $\pi_2(G)=\mathcal{N}$ we can similarly conclude that $G$ is conjugate to $\widetilde N$.
    
    \item If $\pi_2(G)=\mathcal{L}$, up to conjugation by an element of the form $$h=   \left[(0,0),\begin{bmatrix} x^2 & y \\ 0 & x \end{bmatrix}\right]$$ we can assume that $u\in \langle \tau\rangle$. Using \eqref{F2} we have that
		$$\beta_{2^{-1}}\alpha_2\gamma (\beta_{2^{-1}})^{-1}=\alpha_1^{-g^{2^{-1}}\frac{g^{2^{-1}}-1}{2}}(\alpha_2 \gamma)^{g^{2^{-1}}},$$
		so, as in the previous case, condition \nc implies that $w\in \langle \tau\rangle$ and $u=\tau^{\frac{1}{g-1}}$. Finally $G$ is conjugate to $\widetilde{L}_c$ by a suitable power of $\alpha_1$.\qedhere
\end{itemize}

\end{proof}

The following tables summarize the content of this subsection.

\begin{table}[H]
	\centering
	
	\small{
		\begin{tabular}[t]{c|c|c|c|c|c|c}
			$\ker{\lambda}$ & $\Z_p^2\times\Z_q$ &  $\G_k$, $k\neq 0,\pm 1, 2$ & $\G_2$ & $\G_0$ & $\G_{-1}$ & $\G_1$ \\
			\hline
			$1$ & - & $q+3$ & $2q+1$ & $q+3$ & $\frac{q+3}{2}$  & $q$\\
			$p$ & $2$ & $2(q+1)$ & $3q$ & $2q$ & $q+1$ & $q$  \\
			$q$ & $2$ & - &- &- &- &-  \\
			$pq$ &- &- & - & $2$ &- &- \\
			$p^2$ & $1$ & $q-1$ & $q-1$ & $q-1$ & $\frac{q-1}{2}$ & $q-2$ \\
			$p^2q$ &- &- & - &- &- & $1$ \\
		\end{tabular}
		\qquad
		\begin{tabular}[t]{c|c|c|c|c}
			$\ker{\lambda}$ & $\Z_p^2\times\Z_3$ &  $\G_0$ & $\G_{-1}$ & $\G_1$ \\
			\hline
			$1$ & - & $6$ & $4$  & $3$ \\
			$p$ & $2$ & $6$ & $5$ & $3$  \\
			$3$ & $2$ &- &- &-  \\
			$3p$ &- & $2$ &- &- \\
			$p^2$ & $1$ & $2$ & $1$ & $1$  \\
			$3p^2$ &-  &- &- & $1$
	\end{tabular}}
	\vs
	\caption{Enumeration of skew braces of $\G_1$-type for $p=1\pmod{q}$: the first table assumes $q>3$ and the second table assumes $q=3$.}
	\label{table:G_1}
\end{table}

\section{Skew braces of size $p^2q$ with $p=-1 \pmod{q}$}\label{section:p=-1(q)}

In this section we assume that $p$ and $q$ are odd primes such that $p=-1\pmod{q}$ (recall that we are omitting the case $q=2$ and $p^2q=12$) and that $\h(x)=x^2+\k x+1$ is an irreducible polynomial over $\Z_p$ such that its companion matrix 
\begin{equation}\label{def of F}
F=\begin{bmatrix}
0 & -1 \\
1 & -\xi
\end{bmatrix}    
\end{equation}
has order $q$. 

According to \cite[Proposition 21.17]{EnumerationGroups}, the unique non-abelian group of size $p^2q$ is:
$$\G_F=\langle \sigma,\tau,\epsilon\,|\, \sigma^p=\tau^p=\epsilon^q=1,\, \epsilon \sigma \epsilon^{-1}=\tau,\, \epsilon \tau \epsilon^{-1}=\sigma^{-1}\tau^{-\k}\rangle \cong \mathbb{Z}_p^2\rtimes_F \mathbb{Z}_q.$$
An automorphism of $\G_F$ is determined by its image on the generators, i.e. by its restriction to $\langle\sigma, \tau\rangle$ given by a matrix and by the image on $\epsilon$.
According to \cite[subsections 4.1, 4.4]{auto_pq}, the map
$$\phi:\mathbb{Z}_p^2\rtimes_{\rho} (N_{GL_2(p)}(F))\longrightarrow \aut{\G_F}, \quad [(n,m),M]\mapsto 	
h_{\pm} = 	\begin{cases}
h|_{\langle \sigma,\tau\rangle}=M_{\pm }\\
\epsilon \mapsto  \sigma^n\tau^m \epsilon^{\pm 1} 
\end{cases}
$$
where $\phi([(n,m),M])=h_+$ if $M\in C_{GL_2(p)}(F)$ and $h_-$ otherwise, is a group isomorphism. The form of $M_{\pm}=h_{\pm}|_{\langle \sigma,\tau\rangle}$ is the following:
\begin{equation}\label{normalizer of F}
M_+=\begin{bmatrix}
x & -y \\
y & x-\xi y
\end{bmatrix},\quad 
M_-=\begin{bmatrix}
x & y-\xi x\\
y & -x
\end{bmatrix}
\end{equation}
where $x,y\in \Z_p$ and $x^2+y^2-\xi xy\neq 0$.
The $p$-Sylow subgroup of $\G_F$ is characteristic, so the map
$$\nu:\aut{\G_F}\longrightarrow \aut{\G_F/\langle \sigma,\tau\rangle},\quad h_{\pm}\mapsto \pm 1$$
is a group homomorphism. The kernel of $\nu$ is $\aut{\G_F}^+=\Z_p^2\rtimes C_{GL_2(p)}(F)$ and it contains the $p$-Sylow subgroup of $\aut{\G_F}$, generated by $\alpha_1=[(1,0),Id]$ and $\alpha_2=[(0,1),Id]$, and the elements of odd order of $\aut{\G_F}$.

\begin{proposition}
	Let $G$ be a regular subgroup of $\Hol(\G_F)$. Then $|\pi_2(G)|\neq p, pq$.
\end{proposition}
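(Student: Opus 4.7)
The plan is to rule out both sizes by the same underlying obstruction: $\G_F$ has no subgroup of order $pq$ (which handles $|\pi_2(G)|=p$), and $\aut{\G_F}$ has no subgroup of order $pq$ either (which handles $|\pi_2(G)|=pq$). Both statements reduce to the fact that $F^k$ acts on $\Z_p^2$ without eigenvectors for every $k\not\equiv 0\pmod q$.

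First I would record two preliminary facts. Since $p\equiv -1\pmod q$ with $p,q$ odd primes, necessarily $p\geq 2q-1>q$; in particular every group of order $pq$ has a unique, hence normal, Sylow $p$-subgroup by Sylow's theorem. And since the minimal polynomial $x^2+\xi x+1$ of $F$ is irreducible over $\Z_p$, the subalgebra $\Z_p[F]\subseteq\mathrm{Mat}_2(\Z_p)$ is a copy of $\mathbb{F}_{p^2}$; since $q\nmid p-1$, no primitive $q$-th root of unity lies in $\mathbb{F}_p$, so each $F^k$ with $k\not\equiv 0\pmod q$ has no eigenvalue in $\Z_p$, and hence no eigenvector in $\Z_p^2$.

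For $|\pi_2(G)|=p$: the subgroup $\ker\pi_2|_G$ sits inside $\G_F\times\{1\}$ and has order $pq$, so it suffices to show $\G_F$ has no subgroup of order $pq$. If $H\leq\G_F$ had order $pq$, then counting sizes forces $K=H\cap\langle\sigma,\tau\rangle$ to be a $1$-dimensional subspace of $\Z_p^2$, and $H=K\rtimes\langle v\epsilon^k\rangle$ for some $v\in\langle\sigma,\tau\rangle$ and $k\not\equiv 0\pmod q$. A direct computation in $\G_F$ gives $(v\epsilon^k)w(v\epsilon^k)^{-1}=F^k(w)$ for $w\in\langle\sigma,\tau\rangle$, so normality of $K$ would make $K$ an $F^k$-invariant line, contradicting the second preliminary.

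For $|\pi_2(G)|=pq$: it suffices to show $\aut{\G_F}$ has no subgroup of order $pq$. Every $p$-element of $\aut{\G_F}$ lies in the unique Sylow $p$-subgroup $\langle\alpha_1,\alpha_2\rangle\leq\aut{\G_F}^+$, and every element $h$ of order $q$ lies in $\aut{\G_F}^+$ too, since otherwise $1=\nu(h^q)=\nu(h)^q=(-1)^q=-1$, impossible for $q$ odd. So any subgroup of order $pq$ is contained in $\aut{\G_F}^+=\Z_p^2\rtimes C_{GL_2(p)}(F)$. The cyclic group $C_{GL_2(p)}(F)\cong\mathbb{F}_{p^2}^\times$ contains a unique subgroup of order $q$, namely $\langle F\rangle$, so such a hypothetical subgroup would be generated by $[w,I]$ with $0\neq w\in\Z_p^2$ and $[v,F^k]$ with $k\not\equiv 0\pmod q$. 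Computing $[v,F^k][w,I][v,F^k]^{-1}=[F^k w,I]$ and imposing normality of the Sylow $p$-subgroup forces $w$ to be an eigenvector of $F^k$, again contradicting the second preliminary. The only mildly delicate step is confirming that all $q$-elements of $\aut{\G_F}$ sit inside $\aut{\G_F}^+$; once this is in place, everything reduces to a routine Sylow-plus-eigenvector argument, and I do not expect any genuine obstacle.
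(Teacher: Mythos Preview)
Your argument is correct. For the case $|\pi_2(G)|=p$ you and the paper proceed identically: $\ker\pi_2|_G$ would be a subgroup of $\G_F$ of order $pq$, and you both rule this out (you supply the details, which boil down to the eigenvector obstruction for $F^k$).

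For the case $|\pi_2(G)|=pq$ the routes diverge. You prove the stronger statement that $\aut{\G_F}$ has no subgroup of order $pq$ whatsoever, by reducing to $\aut{\G_F}^+\cong\Z_p^2\rtimes\mathbb{F}_{p^2}^\times$ and applying the same eigenvector obstruction to the normal Sylow $p$-subgroup of a hypothetical order-$pq$ subgroup. The paper instead argues indirectly: since $\ker\pi_2|_G$ would be a normal subgroup of $G$ of order $p$, and $\G_F$ (the only non-abelian group of order $p^2q$ here) has no such normal subgroup, $G$ must be abelian; hence $\pi_2(G)$ would be an \emph{abelian} subgroup of $\aut{\G_F}$ of order $pq$, which does not exist. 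Your approach is more self-contained and avoids invoking the classification of groups of order $p^2q$; the paper's approach is shorter on the page because it leans on that classification. Both ultimately rest on the irreducibility of $F$.
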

\begin{proof}
Let $G$ be a regular subgroup of $\Hol(\G_F)$. The group $\G_F$ has no subgroups of order $pq$ and then $|\pi_2(G)|\neq p$. The group $\G_F$ is the unique non-abelian group of size $p^2q$ and  has no normal subgroup of order $p$. If $\pi_2(G)=pq$ then $G$ has a normal subgroup of order $p$ and so it has to be abelian. Accordingly, $\pi_2(G)$ is an abelian subgroup of order $pq$ of $\aut{\G_F}$. On the other hand, $\aut{\G_F}$ has no such subgroup, contradiction.
%
%
	%
	%
	%
	%
\end{proof}
The subgroup
\begin{equation}\label{sub H p=-1 (q)}
\H_F=\langle \sigma,\tau, \alpha_1,\alpha_2\rangle
\end{equation}
is normal in $\Hol(\G_F)$. 

\begin{lemma}\label{G_A_sub_q}
	The skew braces of $\G_F$-type with $|\ker{\lambda}|=p^2$ are $(B_a,+,\circ)$ where $(B_a,+)=\Z_p^2\rtimes _F \Z_q$ and 
	$$(B_a,\circ)=\Z_p^2\rtimes _{F^{\frac{a+1}{a}}} \Z_q\cong \begin{cases}\mathbb{Z}_p^2\times \mathbb{Z}_q,\,\text{ if }a=q-1\\
	\G_F,\, \text{ otherwise}\end{cases}$$
	for $1\leq a\leq q-1$. In particular they are bi-skew braces.
\end{lemma}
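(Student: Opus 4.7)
The plan is to mimic the approach of Propositions \ref{G_k_sub_q} and \ref{G_{-1}_sub_q}. By Theorem \ref{thm:skew_holomorph} and Remark \ref{remark for lambdas}, skew braces of $\G_F$-type with $|\ker{\lambda}|=p^2$ correspond to regular subgroups $G\leq \Hol(\G_F)$ with $|\pi_2(G)|=q$, so I need to enumerate these up to $\aut{\G_F}$-conjugation. I would propose the candidates
\[
G_a = \langle \sigma,\ \tau,\ \epsilon^a\beta\rangle, \qquad 1\leq a\leq q-1,
\]
where $\beta=[(0,0),F]$ generates the unique subgroup of order $q$ inside the cyclic group $C_{GL_2(p)}(F)\cong \mathbb{F}_{p^2}^\times$. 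Since $\beta$ fixes $\epsilon$ and acts on $\langle\sigma,\tau\rangle$ as $F$, the element $\epsilon^a\beta$ has order $q$, and the generic element of $G_a$ has the form $\sigma^i\tau^j\epsilon^{ak}\beta^k$. By Lemma \ref{rem for regularity} one then checks immediately that $|G_a|=p^2q$ and $G_a\cap(\{1\}\times\aut{\G_F})=1$, hence $G_a$ is regular.

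For non-conjugacy, I would work modulo the normal subgroup $\H_F$ defined in \eqref{sub H p=-1 (q)}. A short computation shows $h(\epsilon^a\beta)h^{-1}\H_F = (\epsilon^a\beta)^{\pm 1}\H_F$ for every $h\in\aut{\G_F}$: the sign is $+$ whenever $h\in\aut{\G_F}^+$, because $C_{GL_2(p)}(F)$ is abelian and the $\Z_p^2$-part becomes trivial in the quotient, and the sign is $-$ when $h\in\aut{\G_F}^-$, since the nontrivial coset of $N_{GL_2(p)}(F)/C_{GL_2(p)}(F)$ inverts $F$ and simultaneously conjugates $\epsilon$ to $\epsilon^{-1}$ modulo $\langle\sigma,\tau\rangle$. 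Thus if $hG_ah^{-1}=G_b$, then $\langle\epsilon^a\beta\rangle=\langle\epsilon^b\beta\rangle$ in $\Hol(\G_F)/\H_F$, which forces $a=b$.

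To show these representatives are exhaustive, let $G$ be any regular subgroup with $|\pi_2(G)|=q$. Since $\langle\sigma,\tau\rangle$ is the unique subgroup of order $p^2$ of $\G_F$, we must have $\ker{\pi_2|_G}=\langle\sigma,\tau\rangle$. As $q$ is odd, every element of order $q$ in $\aut{\G_F}$ lies in $\aut{\G_F}^+$ and has the form $[(n,m),F^k]$ with $k\neq 0$. Because $Id-F^k$ is invertible on $\Z_p^2$, conjugation by a suitable $\alpha_1^s\alpha_2^t$ kills the $\Z_p^2$-component and lets us assume $\pi_2(G)=\langle\beta\rangle$. A standard presentation of $G$ then reads $\langle\sigma,\tau,u\epsilon^a\beta\rangle$ with $u\in\langle\sigma,\tau\rangle$ and $a\neq 0$, and the same elimination trick absorbs $u$, so $G$ is conjugate to $G_a$.

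Finally, formula \eqref{circ} identifies the lift of $\sigma^i\tau^j\epsilon^k$ in $G_a$ as $\sigma^i\tau^j\epsilon^k\beta^{ka^{-1}}$, whence $\lambda_{\sigma^i\tau^j\epsilon^k}=\beta^{ka^{-1}}$ acts as $F^{ka^{-1}}$ on $\langle\sigma,\tau\rangle$ and trivially on $\epsilon$. Hence $\ker{\lambda}=\langle\sigma,\tau\rangle$ and the $\circ$-operation takes the form
\[
\begin{pmatrix} v_1\\ k_1\end{pmatrix}\circ\begin{pmatrix} v_2\\ k_2\end{pmatrix}=\begin{pmatrix} v_1+F^{k_1(a+1)/a}v_2\\ k_1+k_2\end{pmatrix},
\]
so $(B_a,\circ)\cong \Z_p^2\rtimes_{F^{(a+1)/a}}\Z_q$, which is abelian precisely when $a=q-1$. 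Since the images of the $+$ and $\circ$ actions both lie in the cyclic subgroup $\langle F\rangle$, they commute and \cite[Proposition 1.1]{skew_pq} yields the bi-skew property. The main technical hurdle, as in previous sections, is the careful bookkeeping in $\Hol(\G_F)$ needed to perform the two elimination steps, which relies crucially on the non-singularity of $Id-F^k$.
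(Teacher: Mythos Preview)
Your proposal is correct and follows essentially the same route as the paper: the paper also takes $G_a=\langle\sigma,\tau,\epsilon^a f\rangle$ with $f=[(0,0),F]$, checks regularity, uses the normal subgroup $\H_F$ to separate the conjugacy classes via the $\pm$-sign coming from $\nu$, and for exhaustiveness reduces $\pi_2(G)$ to $\langle f\rangle$ and absorbs the $\langle\sigma,\tau\rangle$-part into the kernel. Your write-up is in fact more explicit than the paper's in two places---the justification that every order-$q$ automorphism is conjugate into $\langle\beta\rangle$ via the invertibility of $\mathrm{Id}-F^k$, and the derivation of the $\circ$-formula---whereas the paper simply defers the structural description to the analogous Proposition~\ref{G_k_sub_q}.
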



\begin{proof}
	The groups 
	$$G_{a}=\langle \sigma,\tau,\epsilon^a f\rangle$$
	where $f=[(0,0),F]$ have size $p^2q$ and they are regular. Let $\H_F$ be as in \eqref{sub H p=-1 (q)} and assume that $G_a$ and $G_b$ are conjugate by $h$, then 
	$$h(\epsilon)^a h f h^{-1}\H_F=\epsilon^{\pm a} f^{\pm 1}\H_F=(\epsilon ^b f)^n \H_F$$ 
	for some $n$. So $n=\pm 1$ and $a=b$.
	
	Let $G$ be a regular subgroup of $\Hol(\G_F)$ such that $|\pi_2(G)|=q$. Up to conjugation, we can assume that $\pi_2(G)$ is generated by $f$. The kernel of $\pi_2$ is the $p$-Sylow subgroup of $\G_F$ and then we can assume that
	$$G=\langle \sigma,\tau, \epsilon^a f\rangle$$
	where $a\neq 0$. Arguing as in Proposition \ref{G_k_sub_q} we can provide the structure of the skew braces associated to the group $G_a$.
\end{proof}

\begin{lemma}
	There exists a unique conjugacy class of regular subgroups $G$ of $\Hol(\G_F)$ with $|\pi_2(G)|=p^2$. A representative is 
	$$H=\langle \epsilon, u\alpha_1,w\alpha_2\rangle\cong\mathbb{Z}_p^2 \times\mathbb{Z}_q$$
	where $u=(F-1)^{-1}(\sigma)$ and $w=(F-1)^{-1}(\tau)$. 
\end{lemma}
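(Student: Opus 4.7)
The strategy is parallel to the analogous statements in the previous subsections. Since $|\aut{\G_F}|$ has $p$-part equal to $p^2$, the Sylow $p$-subgroup of $\aut{\G_F}$ is unique, and consequently $\pi_2(G)=\langle\alpha_1,\alpha_2\rangle$ is forced. The kernel $K=\ker\pi_2|_G$ has order $q$, hence is one of the subgroups of order $q$ of $\G_F$; all of them are conjugate inside $\G_F$ to $\langle\epsilon\rangle$, and since the normalizer of $\pi_2(G)$ in $\aut{\G_F}$ acts transitively on such subgroups (through the action induced on $\G_F/\langle\sigma,\tau\rangle$), after conjugation we may take $K=\langle\epsilon\rangle$. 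Thus the standard presentation is
\[
G=\langle \epsilon,\, v_1\alpha_1,\, v_2\alpha_2\rangle,\qquad v_1,v_2\in\langle\sigma,\tau\rangle.
\]

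The key step is to exploit the \NC condition, namely that $K=\langle\epsilon\rangle$ is normal in $G$. For $i=1,2$ we compute
\[
(v_i\alpha_i)\,\epsilon\,(v_i\alpha_i)^{-1}=v_i\,\alpha_i(\epsilon)\,v_i^{-1}\in\langle\epsilon\rangle.
\]
Since $\alpha_1(\epsilon)=\sigma\epsilon$ and $\alpha_2(\epsilon)=\tau\epsilon$, and the coset $\epsilon\langle\sigma,\tau\rangle$ meets $\langle\epsilon\rangle$ only in $\epsilon$, the right-hand side must equal $\epsilon$ itself. Using $\epsilon v_i\epsilon^{-1}=F(v_i)$, this rewrites as
\[
(F-1)(v_1)=\sigma,\qquad (F-1)(v_2)=\tau.
\]
Because $F$ has order $q$ and its minimal polynomial $H(x)=x^2+\xi x+1$ is irreducible over $\Z_p$, the matrix $F-1$ is invertible on $\Z_p^2$, and therefore $v_1=(F-1)^{-1}(\sigma)=u$ and $v_2=(F-1)^{-1}(\tau)=w$ are uniquely determined. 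This immediately gives $G=H$, proving uniqueness up to conjugation.

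It remains to show that $H$ is a well-defined regular subgroup of order $p^2q$ and is abelian. The relations $(u\alpha_1)^p=(w\alpha_2)^p=1$ hold because $u,w\in\langle\sigma,\tau\rangle$ have order dividing $p$ and $\alpha_1,\alpha_2$ commute with $\langle\sigma,\tau\rangle$; commutation of $u\alpha_1$ with $w\alpha_2$ follows from $\alpha_i|_{\langle\sigma,\tau\rangle}=\mathrm{id}$ together with abelianness of $\langle\sigma,\tau\rangle$; and the derivation above shows that $\epsilon$ commutes with both generators of the $p$-part. Hence $H\cong \Z_p^2\times\Z_q$ with $|H|=p^2q$.

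For regularity, since $\alpha_i(u)=u$ and $\alpha_i(w)=w$, Lemma \ref{pi_1 for fix}(2) yields $\langle u\rangle,\langle w\rangle\subseteq\pi_1(H)$; as $\{u,w\}$ is the image of the basis $\{\sigma,\tau\}$ under the invertible linear map $(F-1)^{-1}$, we obtain $\langle\sigma,\tau\rangle\subseteq\pi_1(H)$. Together with $\epsilon\in\pi_1(H)$ this gives $\pi_1(H)=\G_F$, so $H$ is regular by Lemma \ref{rem for regularity}. The main (and only substantive) obstacle is the invertibility of $F-1$, which is immediate from irreducibility of $H(x)$; the rest of the argument is the usual \NC/\nc bookkeeping.
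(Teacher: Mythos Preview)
Your argument is correct and follows essentially the same route as the paper's proof. The only cosmetic difference is that the paper first observes that the kernel $\langle\epsilon\rangle$, being a normal $q$-Sylow of $G$, forces $G$ to be abelian (since $p\nmid q-1$ under the hypothesis $p\equiv -1\pmod q$), and then reads off the equations $(F-1)(u)=\sigma$, $(F-1)(w)=\tau$ from the commuting relations; you instead derive the same equations directly from the \NC condition $(v_i\alpha_i)\epsilon(v_i\alpha_i)^{-1}\in\langle\epsilon\rangle$, which is a slightly more explicit but equivalent computation.
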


\begin{proof}
	The group $H$ has the desired properties. Assume that $G$ is a regular subgroup of $\Hol(\G_F)$ with $|\pi_2(G)|=p^2$.  Then the image of $\pi_2$ is the normal $p$-Sylow subgroup of $\aut{\G_F}^+$ generated by $\alpha_1$ and $\alpha_2$. Up to conjugation, we can assume that the kernel is generated by $\epsilon$ and so we have
	$G=\langle \epsilon, u \alpha_1,w \alpha_2\rangle.$
	The kernel of $\pi_2$ is a normal subgroup of size $q$ of $G$, and so $G$ is abelian. Thus, by abelianness, it follows that $u=(F-1)^{-1}(\sigma)$ and $w=(F-1)^{-1}(\tau)$. 
\end{proof}

\begin{remark}\label{rem on F} 
    
Let us collect some basic properties of the linear mapping defined by the matrix \eqref{def of F}.

\begin{itemize}
    \item[(i)] Let $W$ be a polynomial. Then
\begin{equation}\label{solutions of h(F)=0} 
\ker{(W(F))}\neq 0 \, \text{ if and only if } \, W(F)=0.
\end{equation}
Indeed $\ker(W(F))$ is an $F$-invariant subgroup and therefore it is either $0$ or $\Z_p^2$. Therefore, the polynomial algebra generated by $F$ is a field and moreover, if $W_1$ and $W_2$ are polynomials, then $W_1(F)=W_2(F)$ if and only if $W_1(F)(x)=W_2(F)(x)$ for some $x\neq 0$.

\item[(ii)] Let $n\in \mathbb N$. Since 
\begin{eqnarray*}
    \h(F^n)-\h(F)&=& F^{2n}-\xi F^n+1-(F^2+\xi F+1)\\
    &=&(F^n-F)(F^n+\underbrace{F+\xi}_{=-F^{-1}})=(F^n-F)(F^n-F^{-1})
\end{eqnarray*}
and the matrix $F$ satisfies the equation $\h(F)=F^2+\xi F+1=0$, we have that $\h(F^n)=0$ if and only if $n=\pm 1\pmod q$.

\item[(iii)]  The automorphism $F$ acts irreducibly on $\Z_p^2$ and so 
\begin{equation}\label{basis x F(x)}
\langle x,\, F(x)\rangle=\Z_p^2
\end{equation}
 for every $x\neq 0$.
 
\end{itemize}
\end{remark}

Let us define the map $\Psi$ by setting
$$\Psi:\Z_p^2\longrightarrow \Z_p, \quad (x,y)\mapsto x^2+y^2-x+y-\k xy.$$
It is easy to check that $\Psi$ is surjective. 

\begin{lemma}\label{level of psi}
    Let $v\in \langle \sigma,\tau\rangle$, $\widetilde{v}=F(v)-(1+F)\sigma$ and $S_v=\langle \widetilde{v}\alpha_1, v \alpha_2\rangle\leq \Hol(\G_F).$ Then
     $$|N_{\Hol(\G_F)}(S_v)\cap \aut{\G_F}|=\begin{cases} 2p^2(p^2-1)
     , \, \text{ if } v=(\frac{1}{\xi+2},-\frac{1}{\xi+2}),\\
    2p^2(p-1), \, \text{ otherwise.}\end{cases}
    $$
    Moreover, if $S_v$ and $S_w$ are conjugate by an element of $\aut{\G_F}$ then $\Psi(v)=\Psi(w)$. 
\end{lemma}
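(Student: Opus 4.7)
The plan is to compute the normalizer via orbit-stabilizer, after identifying the orbit of $S_v$ under $\aut{\G_F}$ with the level set $\Psi^{-1}(\Psi(v))$. First, since $\langle\alpha_1,\alpha_2\rangle$ is the (normal) $p$-Sylow of $\aut{\G_F}$ and $\pi_2|_{S_v}$ is an isomorphism onto it, each subgroup $S_v$ is determined by the unique elements whose $\pi_2$-images are $\alpha_1$ and $\alpha_2$. A direct computation of $h\alpha_i h^{-1}$ for $h = [(n,m), M]$ shows that conjugation acts on $\langle\alpha_1,\alpha_2\rangle$ by a matrix $N$ equal to $M$ in the ``$+$'' case ($M \in C_{GL_2(p)}(F)$) and to $-FM$ in the ``$-$'' case ($M \in N_{GL_2(p)}(F) \setminus C_{GL_2(p)}(F)$). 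Hence $h S_v h^{-1} = S_w$ is equivalent to the matrix identity $MP_v = P_w N$, where $P_v := [\widetilde v \,|\, v]$. The key technical step is verifying that the family $\{S_w\}$ is $\aut{\G_F}$-stable: the requirement that $MP_v N^{-1}$ again has the form $[\widetilde w \,|\, w]$ unfolds, in the minus case, to the equation $M^{-1}e_1 = F M^{-1}e_2$, which is exactly $MFM^{-1} = F^{-1}$, i.e.~the defining relation of $N_{GL_2(p)}(F) \setminus C_{GL_2(p)}(F)$.

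The second assertion of the lemma follows immediately: taking determinants in $MP_v = P_w N$ and using $\det M = \det N$ (here $\det F = 1$) gives $\det P_v = \det P_w$, and a short calculation shows $\det P_v = -\Psi(v)$. This also bounds the orbit of $S_v$ from above by $|\Psi^{-1}(\Psi(v))|$. Completing the square for $\Psi$ reveals that $v^* = \left(\tfrac{1}{\xi+2},-\tfrac{1}{\xi+2}\right)$ is the unique critical point, with $\Psi(v^*+u)-\Psi(v^*) = u_1^2 + u_2^2 - \xi u_1 u_2$; the quadratic form on the right is the norm form of $\mathbb{F}_{p^2}/\mathbb{F}_p$ (as $F + F^{-1} = -\xi$), which is anisotropic since $\h$ is irreducible. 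Consequently $|\Psi^{-1}(c)| = 1$ for $c = \Psi(v^*)$ and $|\Psi^{-1}(c)| = p+1$ otherwise.

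To saturate this upper bound, I restrict to the subgroup $\{[(0,0), M] : M \in C_{GL_2(p)}(F)^\times\}$. Identifying $C_{GL_2(p)}(F) \cong \mathbb{F}_{p^2}$ and $\langle \sigma,\tau\rangle \cong \mathbb{F}_{p^2}$ so that $F$ acts by multiplication, the induced map on the parameter $v$ is the affine transformation $v \mapsto M^{1-p}(v-v^*)+v^*$ (easily checked at $M=F$, where the map is $v \mapsto F^2 v + \mathrm{const}$ with unique fixed point $v^*$). As $M$ runs through $\mathbb{F}_{p^2}^\times$, the multiplier $M^{1-p}$ traces out the kernel of the norm map $\mathbb{F}_{p^2}^\times \to \mathbb{F}_p^\times$, a cyclic subgroup of order $p+1$ acting freely on $\mathbb{F}_{p^2}^\times$. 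Therefore the orbit of any $v \ne v^*$ already has size $p+1$, matching the upper bound; the orbit of $v^*$ is trivially a singleton. Orbit-stabilizer then yields $|N_{\Hol(\G_F)}(S_v) \cap \aut{\G_F}| = |\aut{\G_F}|/|\mathrm{Orb}(S_v)|$, evaluating to $2p^2(p^2-1)$ at $v^*$ and to $2p^2(p-1)$ otherwise. The delicate step is the stability verification in Step~1 (minus case), which is what makes the clean orbit-stabilizer argument go through.
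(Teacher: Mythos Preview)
Your argument is correct and reaches the result by a genuinely different route than the paper. The paper fixes $v,w$ and writes the condition $hS_vh^{-1}=S_w$ directly as a homogeneous linear system in the parameters $(x,y)$ of $M_\pm$; the determinant of that $2\times2$ system is computed to be $\Psi(v)-\Psi(w)$, which simultaneously proves the ``moreover'' clause and, setting $v=w$, lets one read off the normalizer by counting solutions. Your approach instead packages the conjugacy relation as the matrix identity $MP_v=P_wN$, extracts $\Psi$ as $-\det P_v$, analyzes the level sets of $\Psi$ via the anisotropic norm form, and then matches orbits to level sets by orbit--stabilizer. This is more conceptual: it explains \emph{why} $\Psi$ is the invariant (a determinant) and why $v^*$ is special (the vertex of the paraboloid), and it avoids the paper's somewhat ad hoc $4\times4 \to 2\times2$ reduction.

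One point deserves tightening. Your formula $v\mapsto M^{1-p}(v-v^*)+v^*$ for the action of $[(0,0),M]$ with $M\in C_{GL_2(p)}(F)$ is correct, but checking it only at $M=F$ is not enough: $F$ has order $q$ in $\mathbb F_{p^2}^\times$, so it does not generate, and the homomorphism $M\mapsto(\text{multiplier})$ is not pinned down by its value at $F$ alone. The honest computation is short: writing $M=x+yF\in\mathbb F_p[F]$, one finds $w=MP_vM^{-1}e_2=\tfrac{M^2}{\det M}\,v+(\text{const})$, and $M^2/\det M=M/\overline M=M^{1-p}$. With that in place, your surjectivity of the multiplier onto the norm-one subgroup of order $p+1$ is justified, and the orbit--stabilizer step goes through as written.
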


\begin{proof}
Let $h=[(n,m),M_+]\in \aut{\G_F}$ as defined in \eqref{normalizer of F}, i.e. $M_+\in C_{GL_2(p)}(F)$.
%
Then
$$h S_v h^{-1}=\langle M(\widetilde v)\alpha_1^x\alpha_2^y,\ M(v)\alpha_1^{-y}\alpha_2^{x-\xi y}\rangle.$$
The equality $h S_v h^{-1} = S_w$ is equivalent to a system of linear equations, namely:
\begin{equation}\label{system for psi 1}
\begin{cases}
    M(\tilde v)=x \tilde w+y w\\
    M(v)=- y\tilde w + (x-\xi y)w.
    \end{cases}
\end{equation}
Given $v=\sigma^n\tau^m$ and $w=\sigma^s\tau^t$, we can translate \eqref{system for psi 1} into a system of linear equations for $x$ and $y$. The initial system of $4$ equations in $x$ and $y$ is equivalent to the following homogeneous system:
\begin{equation}\label{for normalizer}
\begin{bmatrix} m-t & -\xi m+s+n-1\\
n-s& -m+\xi s-t-1\end{bmatrix}\begin{bmatrix} x \\ y \end{bmatrix}=L\begin{bmatrix} x \\ y \end{bmatrix}=0.
\end{equation}
If the system admits non-trivial solutions, i.e. $S_v$ and $S_w$ are conjugate, then  $\mathrm{det}(L)=\psi(n,m)-\psi(s,t)=0$.

Setting $v=w$, it is easy to deduce that if $v=(\frac{1}{\xi+2},-\frac{1}{\xi+2})$, every $(x,y)\in \Z_p^2$ is a solution of \eqref{for normalizer}, i.e. $\aut{\G_1}$ normalizes $S_v$. Otherwise, the solutions to \eqref{for normalizer} are given by $y=0$.

A similar argument leads to the same condition if we consider $M_-$ as in \eqref{normalizer of F}, with the property $M_- F=F^{-1}M_-$. In such case we get that $S_{(\frac{1}{\xi+2},-\frac{1}{\xi+2})}$ is normalized by any $h=[(n,m),M_-]$, otherwise the coefficients of $M_-$ satisfy $x=0$ and $y\neq 0$.
\end{proof}

\begin{lemma}
	A set of representatives of regular subgroups $G$ of $\Hol(\G_F)$ such that $|\pi_2(G)|=p^2q$ is
	
	\bigskip
	\begin{center}
		\small{
			\begin{tabular}{l|l|c|c}
				Groups & Parameters  & Class & \#\\
				\hline
				&&&\\[-1em]
				$G_{c}=\langle u_c\alpha_1, f(u_c)\alpha_2,\epsilon^c f\rangle $    & $1\leq c\leq q-1,\, c\neq -2$  & $\G_{F}$& $q-2$ \\
				&&&\\[-1em]
				$H_{a}=\langle  \widetilde {v_a}\alpha_1, v_a\alpha_2,\epsilon^{-2} f\rangle$ & $1\leq a\leq p-1$ & $\G_F$ & $p-1$
		\end{tabular}}
		
		\bigskip
		
	\end{center}
	where $f=[(0,0),F]$, $u_c=\h(F^{c+1})^{-1}(F-1)^{-1}(F^c-1)(F^{c+2}-1)(\sigma)$, and $\setof{v_a\in \Z_p^2}{a\in \Z_p}$ is a set of representatives of the level sets of $\Psi$ with $\Psi(v_a)=a$ and $\widetilde{v_a}=F(v_a)-(1+F)(\sigma)$ (as defined in Lemma \ref{level of psi}).
\end{lemma}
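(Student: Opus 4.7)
My plan follows the standard pipeline of the paper: reduce to a canonical image $\pi_2(G)$, put $G$ in standard form, solve the lifted $\G_F$-relations, and normalize under $\aut{\G_F}$-conjugation. Since $|\pi_2(G)|=p^2q=|\G_F|$, the restriction $\pi_2|_G$ is an isomorphism, so every $\G_F$-relation must hold on the nose in $G$.

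The first step is to show that, up to $\aut{\G_F}$-conjugation, $\pi_2(G)=\mathcal{T}:=\langle\alpha_1,\alpha_2,f\rangle$, where $f=[(0,0),F]$. Indeed $\langle\alpha_1,\alpha_2\rangle$ is the normal Sylow $p$-subgroup of $\aut{\G_F}$, and since $q\mid p+1$, every order-$q$ subgroup of $N_{GL_2(p)}(F)$ is conjugate to $\langle F\rangle$, the unique order-$q$ subgroup of the cyclic centralizer $C_{GL_2(p)}(F)\cong\mathbb{F}_{p^2}^{\times}$. After this reduction $G$ has the unique standard presentation
\[
G=\langle u_1\alpha_1,\ u_2\alpha_2,\ v\epsilon^c f\rangle,\qquad u_1,u_2,v\in\G_F,\ 0\le c<q.
\]
I would then check that $(u_i\alpha_i)^p=1$ forces $u_1,u_2\in\langle\sigma,\tau\rangle$, because otherwise the $\langle\epsilon\rangle$-component of $(u_i\alpha_i)^p$ equals $\epsilon^{p d_i}\ne 1$ (using $\gcd(p,q)=1$); Lemma \ref{pi_1 for fix}(1) combined with regularity forces $c\ne 0$.

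The crux is to impose the conjugation relations $f\alpha_1 f^{-1}=\alpha_2$ and $f\alpha_2 f^{-1}=\alpha_1^{-1}\alpha_2^{-\xi}$ on the lifted generators. Using $\alpha_1(\epsilon)=\sigma\epsilon$, $\alpha_2(\epsilon)=\tau\epsilon$, the identity $(y\epsilon)^{-c}=-\bigl(\sum_{j=1}^{c}F^{-j}(y)\bigr)\epsilon^{-c}$ for $y\in\langle\sigma,\tau\rangle$, and the fact that $v\in\langle\sigma,\tau\rangle$ commutes with all its $F$-translates, I will obtain, in additive notation on $\langle\sigma,\tau\rangle$,
\[
u_2=F^{c+1}(u_1)-T_c,\qquad \h(F^{c+1})(u_1)=(F^{c+1}-F^{-1})(T_c),
\]
where $T_c=(F^c-1)(F-1)^{-1}(\tau)$ and $v$ drops out of both equations. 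The algebraic identity $(F^c-1)(F^{c+2}-1)=F^{2(c+1)}+\xi F^{c+1}+1=\h(F^{c+1})$, proved using $F^2+1=-\xi F$, then collapses the second equation to
\[
\h(F^{c+1})(u_1)=\h(F^{c+1})(F-1)^{-1}(\sigma).
\]
By Remark \ref{rem on F}(ii), $\h(F^{c+1})$ is invertible unless $c+1\equiv\pm 1\pmod q$, i.e.\ $c\in\{0,-2\}$, and $c=0$ is already excluded.

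For $c\ne -2$, the above forces $u_1=(F-1)^{-1}(\sigma)=:u_c$ (this coincides with the stated formula after cancelling $\h(F^{c+1})^{-1}(F^c-1)(F^{c+2}-1)=1$) and $u_2=F(u_c)$; the parameter $v$, still free, is absorbed by conjugation with a suitable element of $\langle\alpha_1,\alpha_2\rangle\subseteq N_{\aut{\G_F}}(\mathcal{T})$, which translates $v$ through $\langle\sigma,\tau\rangle$ (after rewriting the conjugated third generator as a preimage of $f$), producing exactly $G_c$. For $c=-2$, the equation in $u_1$ becomes trivial, so $u_1$ is a free parameter subject only to $u_2=F^{-1}(u_1)-T_{-2}$; writing $v:=u_2$ and $\widetilde{v}:=u_1$, a direct substitution shows $\widetilde{v}=F(v)-(1+F)(\sigma)$, which is exactly the shape of Lemma \ref{level of psi}. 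That lemma then normalizes the family: $H_a$ and $H_{a'}$ are conjugate iff $\Psi(v_a)=\Psi(v_{a'})$, hence iff $a=a'$; the orbit $a=0$ is excluded because its representative $v=0$, $\widetilde{v}=-(1+F)(\sigma)$ leaves $\alpha_2\in G\cap(\{1\}\times\aut{\G_F})$, killing regularity via Lemma \ref{rem for regularity}. Surjectivity of $\Psi$ onto $\Z_p$ thus produces the $p-1$ classes $H_a$ with $a\in\Z_p^{\times}$.

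Non-conjugacy across the two families will be detected by projection to the quotient $\Hol(\G_F)/\H_F$, with $\H_F$ the normal subgroup \eqref{sub H p=-1 (q)}: the third generators of $G_c$ and $H_a$ project to $\epsilon^c f\cdot\H_F$ and $\epsilon^{-2}f\cdot\H_F$ respectively, and these are pairwise distinct for distinct exponents in $\Z_q$, separating all listed representatives. Regularity will be verified via Lemma \ref{pi_1 for fix}(2) and Remark \ref{rem on F}(iii): $u_c$ and $F(u_c)$ span $\langle\sigma,\tau\rangle$ by irreducibility of $F$, and $\epsilon^c$ is contributed by the third generator; for $H_a$, the required linear independence of $\widetilde{v_a}$ and $v_a$ is equivalent to $\Psi(v_a)=a\ne 0$. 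The main technical obstacle I anticipate is the holomorph calculation that reduces the two conjugation relations to the clean identity displayed above: the interaction of $\alpha_i$ with $\epsilon^{-c}$ in the non-abelian $\G_F$ produces terms that must be rewritten as elements of $\langle\sigma,\tau\rangle\cdot\epsilon^{-c}$ via the $(y\epsilon)^{-c}$-formula, and only after several applications of $F^2+\xi F+1=0$ does the exceptional role of $c=-2$ crystallize and align with the $\Psi$-parametrization supplied by Lemma \ref{level of psi}.
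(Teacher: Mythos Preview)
Your approach coincides with the paper's, and you have noticed a simplification the paper does not state: the identity $(F^c-1)(F^{c+2}-1)=\h(F^{c+1})$ collapses the stated $u_c$ to $(F-1)^{-1}(\sigma)$, independent of $c$. However, two steps are genuinely incomplete.

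First, the absorption of the third-generator coefficient (your $v$, the paper's $w$) by conjugation in $\langle\alpha_1,\alpha_2\rangle$ is not uniform. For $c\notin\{-1,-2\}$ the paper shows this is equivalent to expressing $w$ in a basis $\{x,F(x)\}$ with $x=(F-1)^{-1}\h(F^{c+1})^{-1}(F^c-1)(F^{c+1}-1)(F^{c+2}-1)(\sigma)$; at $c=-1$ this $x$ vanishes and conjugation does nothing, so the paper instead uses the order condition $(w\epsilon^{-1}f)^q=1$ to force $w=0$. For $c=-2$ you do not address $w$ at all (your redefinition $v:=u_2$ collides with the original $v$), and the paper carries out a separate determinant computation showing the relevant vectors span iff $-(\xi+2)\Psi(v_a)\ne 0$.

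Second, Lemma~\ref{level of psi} only proves one implication: conjugate $\Rightarrow$ equal $\Psi$-value. To conclude that the $H_a$ are pairwise non-conjugate \emph{and} exhaust the $c=-2$ case you need the orbit count the paper performs from the normalizer sizes in that lemma: the $p^2-p-1$ admissible $v$ (those with $\Psi(v)\ne 0$) fall into $p-1$ orbits. Your exclusion of $a=0$ via the single representative $v=0$ is also insufficient, since $\Psi^{-1}(0)$ has $p+1$ elements; the correct argument is that regularity requires $\{v,\widetilde v\}$ linearly independent, and $\det\!\begin{bmatrix}v & \widetilde v\end{bmatrix}=\Psi(v)$.
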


\begin{proof}
	The subgroups $G_c$ in the statement are regular. Indeed $\epsilon^c\in \pi_1(G_c)$ and according to Lemma \ref{pi_1 for fix}(2) we have $\langle u_c, f(u_c)\rangle\subseteq \pi_1(G_c)$ and by Remark \ref{rem on F}(iii), $\{u_c, F(u_c)\}$ is a basis of $\Z_p^2$. So $|\pi_1(G_c)|>p^2$ and then $\pi_1(G_c)=\G_F$ (and similarly for $H_a$). 
	
	If $G_c$ is conjugate to $G_d$ or to $H_a$, then arguing as in Lemma \ref{G_A_sub_q} it follows that $c=d$ or $c=-2$. If $H_a$ and $H_b$ are conjugate then their $p$-Sylow subgroups are conjugate. Thus, by Lemma \ref{level of psi} we have $a=\Psi(v_a)=\Psi(v_b)=b$. 
	
	The unique subgroup of order $p^2q$ of $\aut{\G_F}$ is generated by $\alpha_1,\alpha_2$ and $f=[(0,0),F]$ and it is isomorphic to $\G_F$. Therefore, the standard presentation of the regular subgroup $G$ of $\Hol(\G_F)$ with $|\pi_2(G)|=p^2q$ is
	$$G=\langle u\epsilon^a\alpha_1,\ v\epsilon^b\alpha_2,\ w\epsilon^c f\rangle $$
	where $u,v,w\in \langle \sigma,\tau\rangle$. The \nc conditions are:
	\begin{align}
	(u\epsilon^a \alpha_1)^p =(v\epsilon^b \alpha_2)^p &=(w\epsilon^c f)^q=1 \label{R 1}\\
	(w\epsilon^c f)u\epsilon^a \alpha_1(w\epsilon^c f)^{-1}&=v\epsilon^b \alpha_2\label{R 2}\\
	(w\epsilon^c f)v\epsilon^b \alpha_2(w\epsilon^c f)^{-1}&=(u\epsilon^a \alpha_1)^{-1}(v\epsilon^b \alpha_2)^{-\xi}\label{R 3}.
	\end{align}
	From \eqref{R 1} we have $a=b=0$. If $c=0$, then by Lemma \ref{pi_1 for fix}(1), $\pi_1(G)\subseteq \langle \sigma,\tau \rangle$ and then $G$ is not regular. Thus $c\neq 0$ and \eqref{R 2} and \eqref{R 3} are equivalent to
	\begin{equation}\label{condition on u and v}
	\begin{cases}
	v=F^{c+1}(u)-(F-1)^{-1}(F^c-1)F(\sigma)\\
	h(F^{c+1})(u)=(F-1)^{-1}(F^c-1)(F^{c+2}-1)(\sigma).
	\end{cases}
	\end{equation}
	According to Remark \ref{rem on F}(ii) and (i), if $c\neq -2$ then 
	$\h(F^{c+1})$ is an automorphism of $\Z_p^2$. 
	If $c\neq -2$ then, substituting the second equation in the first one in \eqref{condition on u and v} and using that $F^2=-\xi F-1$ we have that \eqref{condition on u and v} is equivalent to
	\begin{equation}\label{condition on u and v 2}
	\begin{cases}
	v=F(u)\\
	u=h(F^{c+1})^{-1}(F-1)^{-1}(F^c-1)(F^{c+2}-1)(\sigma).
	\end{cases}
	\end{equation}
	Therefore $u=u_c$ is uniquely determined by $c$ and 
	$$G=\langle u_c\alpha_1,\ f(u_c)\alpha_2,\ w\epsilon^c f\rangle$$
	where $c\neq -2$. If $c+1=0$, the last condition in \eqref{R 1} implies that $w=0$ and so $G=G_{-1}$. 
	
		Otherwise, we will prove that $G$ is conjugate to $G_c$ by an element of the form $h=\alpha_1^n\alpha_2^m$ for some $n,m$. Indeed, $h$  centralizes $\langle u\alpha_1, f(u)\alpha_2\rangle$ and so $hGh^{-1}=G_c$ if and only if the conjugate of the last generator is in $G_c$, i.e.
	\begin{equation}\label{condition_G_c}
	 	h w \epsilon^c f h^{-1}=w'\alpha_1^{n+m}\alpha_2^{-n+(1+\xi)m}\epsilon^c f=(u_c \alpha_1)^{n+m}(f(u_c)\alpha_2)^{-n+(1+\xi )m}\epsilon^c f\in G_c.
	\end{equation} 
	Equation \eqref{condition_G_c} is equivalent to have that $w=nx+mF(x)$, where $$x=\frac{(F^c-1)(F^{c+1}-1)(F^{c+2}-1)}{(F-1)H(F^{c+1})}\neq 0.$$
		According to Remark \eqref{rem on F}(iii), $\{x,F(x)\}$ is a basis of $\Z_p^2$ and therefore $w\in \langle x, F(x)\rangle$.


	Let $c=-2$. In such case, the second condition in \eqref{condition on u and v} is trivial according to \eqref{solutions of h(F)=0} and the first one is equivalent to
	\begin{equation}\label{u and v c=-2}
	u=F(v)-(F+1)(\sigma)=\widetilde v.
	\end{equation}
	If $v=(x,y)$ then $\widetilde v=(-y-1,x-\xi y-1)$. Since $G$ is regular, then $u$ and $v$ are linearly independent, i.e.
	\begin{equation}\label{determinant for v}
	\det\begin{bmatrix}v & \widetilde v \end{bmatrix}=\det\begin{bmatrix}x & -y-1 \\ y & x-\xi y-1\end{bmatrix}=\Psi(x,y)\neq 0.
	\end{equation}
	The number of solutions of the equation $\Psi(x,y)=0$ is $p+1$, so there are $p^2-p-1$ choices for $v$.
	
	
The $p$-Sylow subgroup of $G$ is the group $S_{v}=\langle \widetilde v \alpha_1, v\alpha_2\rangle$. According to Lemma \ref{level of psi}, if $v =(\frac{1}{\k+2},-\frac{1}{\k+2})$ then $S_v$ is normalized by $\aut{\G_F}$, otherwise its normalizer has index $p+1$ in $\aut{\G_1}$. Accordingly, there are 
$$\frac{p^2-p-2}{p+1}+1=\frac{(p-2)(p+1)}{p+1}+1=p-1$$
choices for $v$ up to conjugation and the representatives can be chosen to be $\setof{v_a}{0\neq a\in \Z_p}$, since the value $\Psi(v)$ is an invariant. Thus, we can assume that $G$ is conjugate to a group of the form
$$G=\langle \widetilde v_a \alpha_1,\ v_a\alpha_2,\ w\epsilon^{-2} f\rangle $$
for $1\leq a\leq p-1$.
We show that $G$ is conjugate to some $H_{a}$ by $h\in \langle\alpha_1,\alpha_2\rangle$. Indeed, this is equivalent to have that
$$hw\epsilon^{-2}fh^{-1}=w\sigma^{m\frac{F^{-2}-1}{F-1}}\tau^{(\xi m-n)\frac{F^{-2}-1}{F-1}}\alpha_1^{n+m}\alpha_2^{m(1+\xi)-n}=(\widetilde{v}_a \alpha_1)^{n+m} (v_a\alpha_2)^{(1+\xi)m-n}.$$
Working out this condition, it turns out to be equivalent that $w\in \langle x,y\rangle$, where
$$x=\widetilde v-v+z,\quad y=\widetilde v +(1+\xi)v+F(z)$$
with $z=(F+\xi -1)(\sigma)$. It is easy to check that
$$\mathrm{det}\begin{bmatrix} x & y\end{bmatrix}=-(\xi+2)\Psi(v_a)\neq 0,$$
i.e. $x,y$ is a basis and so $G$ is conjugate to $H_a$.
%
\end{proof}

We summarize the content of this subsection in the following table:
\begin{table}[H]
	\centering
	
	\small{    \begin{tabular}{c|c|c}
			$|\ker{\lambda}|$ & $\Z_p^2\times\Z_q$ &  $\G_F$ \\
			\hline
			$1$  & - & $p+q-3$\\
			$q$  & $1$ &- \\
			$p^2$  & $1$ & $q-2$ \\
			$p^2q$  &- & $1$
	\end{tabular}}
	\vs
	\caption{Enumeration of skew braces of $\G_F$-type for $p=-1\pmod{q}$.}
\end{table}

\section{Skew braces of size $p^2q$ with $q=1 \pmod{p}$ and $q\neq 1 \pmod{p^2}$}\label{section:q=1(p)}

In this section we assume that $p$ and $q$ are primes such that $q=1 \pmod{p}$ and $q\not=1 \pmod{p^2}$ (including the case $p=2$ unless stated otherwise). Let $\r$ be a fixed element of order $p$ in $\Z_q^\times$. Since $p^2q\neq 12$, the relevant non-abelian groups for this section are the following:

\begin{itemize}
    \item[(i)] $\mathbb{Z}_{q}\rtimes_{\r} \mathbb{Z}_{p^2}=\langle \sigma,\tau\, | \, \sigma^{p^2}=\tau^{q}=1,\, \sigma\tau\sigma^{-1}=\tau^{\r} \rangle$,
	\item[(ii)] $\mathbb{Z}_{p}\times (\mathbb{Z}_{q}\rtimes_{\r} \mathbb{Z}_{p})=\langle \sigma,\tau,\epsilon\,|\, \sigma^p=\tau^p=\epsilon^q=1,\ [\epsilon,\tau]=[\tau, \sigma]=1,\, \sigma\epsilon\sigma^{-1}=\epsilon^{\r} \rangle$.
\end{itemize}

Tables \ref{table:q=1_p>2_1} and \ref{table:4q_1} summarize the enumeration of skew braces according to the isomorphism class of their additive and multiplicative groups.

\begin{table}[H]
	\centering
	\small{
		\begin{tabular}{c|c|c|c|c}
			$+ \backslash \circ$ &  $\Z_{p^2q}$ & $\Z_q\rtimes_{\r}\Z_{p^2}$ & $\Z_p^2\times\Z_q$ & $\Z_p\times(\Z_q\rtimes_{\r}\Z_p)$  \\
			\hline
			$\Z_q\rtimes_{\r}\Z_{p^2}$  & $2p$& $2p(p-1)$&- &- \\
			$\Z_p\times(\Z_q\rtimes_{\r}\Z_p)$   &- &-  & $4$ & $6p-4$
	\end{tabular}}
	\vs
	\caption{Enumeration of skew braces of size $p^2q$ with $q=1\pmod{p}$, $q\neq 1\pmod{p^2}$ and $p>2$.}
	\label{table:q=1_p>2_1}

	\small{
		\begin{tabular}{c|c|c|c|c}
			$+\backslash \circ$ & $\Z_{4q}$ & $\Z_q\rtimes_{-1}\Z_4$ & $\Z_2^2\times\Z_q$ & $\Z_2\times(\Z_q\rtimes_{-1}\Z_2)$  \\
			\hline
			$\Z_q\rtimes_{-1}\Z_4$ & $2$ & $2$ & $2$ & $4$ \\
			$\Z_2\times (\Z_q\rtimes_{-1}\Z_2)$ & $2$ & $2$ & $2$ & $4$
	\end{tabular}}
	\vs
	\caption{Enumeration of skew braces of order $4q$ with $q=1\pmod{2}$ and $q\neq 1\pmod{4}$. Notice that in this case we can assume $r=-1$.}
	\label{table:4q_1}
\end{table}

\subsection{Skew braces of $\mathbb{Z}_{q}\rtimes_{\r} \mathbb{Z}_{p^2}$-type}\label{subsection:q=1(p)_non-abelian_p-Sylow_cyclic}

In this section we denote by $A$ the group $\mathbb{Z}_{q}\rtimes_{\r} \mathbb{Z}_{p^2}$. Such group has the following presentation:
$$A=\langle \sigma,\ \tau\ \lvert\ \sigma^{p^2}=\tau^q=1, \, \sigma\tau\sigma^{-1}=\tau^{\r}\rangle.$$
According to \cite[Subsection 4.5]{auto_pq}, the map 
\begin{equation*}
	\Z_p\times (\Z_q \rtimes \Z_q^\times)     \longrightarrow \aut{A}, \qquad(k,j,i)\mapsto  \varphi_{i,j}^k =\begin{cases} \tau\mapsto \tau^i \\ \sigma \mapsto \tau^j\sigma^{kp+1}\end{cases}
\end{equation*}
is an isomorphism. In particular, $p^2q$ divides $|\aut{A}|=pq(q-1)$ and so we need to check all the possible values for the size of the image of regular subgroups under $\pi_2$. Note that $Z(A)=\langle \sigma^p\rangle$ and so $\langle \tau,\sigma^p\rangle$ is an abelian characteristic subgroup of $A$ of order $pq$.

The conjugacy classes of subgroups of $\aut{A}$ are given in Table \ref{table:subgroups_semidirect_q=1(p^2)}.

\begin{table}[H] 
	\centering
	
	\small{
		\begin{tabular}{c|c|c|c}
			Size &    Group & Parameters  & Class \\
			\hline
			&&&\\[-1em]
			$p$ & $\langle \varphi_{r,0}^k\rangle$ & $0\leq k\leq p-1$ & $\Z_p$ \\
			&&&\\[-1em]
			& $\langle \varphi_{1,0}^1\rangle $ &- &  \\
			&&&\\[-1em]
			\hline
			&&&\\[-1em]
			$q$ & $\langle \varphi_{1,1}^0\rangle$ &- & $\Z_q$ \\
			&&&\\[-1em]
			\hline
			&&&\\[-1em]
			$p^2$    & $\langle \varphi_{1,0}^1,\ \varphi_{\r,0}^0\rangle $ &- & $\Z_p^2$\\
			&&&\\[-1em]
			\hline
			&&&\\[-1em]
			$pq$ & $\langle \varphi_{\r,0}^k,\ \varphi_{1,1}^0\rangle $ & $0\leq k\leq p-1$ & $\Z_q\rtimes_\r \Z_p$ \\
			&&&\\[-1em]
			& $\langle \varphi_{1,0}^1,\ \varphi_{1,1}^0\rangle$ &- & $\Z_{pq}$ \\
			&&&\\[-1em]
			\hline
			&&&\\[-1em]
			$p^2q$ & $\langle \varphi_{1,0}^1,\ \varphi_{\r,0}^0,\ \varphi_{1,1}^0\rangle $ &- & $\Z_p\times (\Z_q\rtimes_\r \Z_p)$
			
	\end{tabular}}
	\vs
	\caption{Conjugacy classes of subgroups of $\aut{A}$.}\label{table:subgroups_semidirect_q=1(p^2)}
\end{table}

\begin{lemma}\label{case q of Z_q semi Z_p2}
	The unique skew brace of $A$-type with $|\ker{\lambda}|=p^2$ is $(B,+,\circ)$ where 
	\begin{eqnarray*} \begin{pmatrix} x_1 \\ y_1 \end{pmatrix} + \begin{pmatrix} x_2 \\ y_2 \end{pmatrix} 
		=\begin{pmatrix}   x_1+ \r^{y_1}x_2\\ y_1+y_2\end{pmatrix},\qquad \begin{pmatrix} x_1 \\ y_1 \end{pmatrix} \circ \begin{pmatrix} x_2 \\ y_2 \end{pmatrix} 
		=\begin{pmatrix}   \r^{y_2} x_1+ \r^{y_1}x_2\\ y_1+y_2\end{pmatrix}
	\end{eqnarray*}
	for every $0\leq x_1,x_2\leq q-1$ and every $0\leq y_1,y_2  \leq p^2-1$. In particular, $(B,\circ)\cong \Z_{p^2q}$.
\end{lemma}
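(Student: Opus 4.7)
The plan is to invoke Theorem \ref{thm:skew_holomorph} and Remark \ref{remark for lambdas}: skew braces of $A$-type with $|\ker\lambda|=p^2$ correspond to conjugacy classes of regular subgroups $G\leq\Hol(A)$ whose image under $\pi_2$ has order $|A|/|\ker\lambda|=q$. Table \ref{table:subgroups_semidirect_q=1(p^2)} shows that $\aut{A}$ has a single subgroup of order $q$, namely $\langle\varphi\rangle$ with $\varphi=\varphi_{1,1}^0$, so we may assume $\pi_2(G)=\langle\varphi\rangle$. The kernel $\ker\pi_2|_G$ is then a Sylow $p$-subgroup of $A$, of the form $\langle\tau^j\sigma\rangle$ for some $j\in\Z_q$; a direct computation identifies the inner automorphism $\iota_\tau$ of $A$ with $\varphi^{1-\r}\in\langle\varphi\rangle$, so conjugation by an appropriate power of $\varphi$ (which lies inside $\pi_2(G)$ and hence does not disturb it) moves any such kernel onto $\langle\sigma\rangle$.

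After this reduction, the standard presentation is $G=\langle\sigma,\,\tau^a\varphi\rangle$ with $a\in\Z_q\setminus\{0\}$ (the exclusion $a\neq 0$ coming from Lemma \ref{rem for regularity}). Condition \NC for the normality of $\langle\sigma\rangle$ in $G$ reads $\tau^a\varphi\cdot\sigma\cdot(\tau^a\varphi)^{-1}=\tau^{a(1-\r)+1}\sigma\in\langle\sigma\rangle$, forcing $a=(\r-1)^{-1}$, which uniquely determines $G$. Regularity follows from Lemma \ref{pi_1 for fix}(2), since $\varphi(\tau^a)=\tau^a$ gives $\tau^a\in\pi_1(G)$ and $\langle\sigma,\tau^a\rangle=A$. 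For the brace operation, note that $\sigma\in\ker\lambda$ and, because $\varphi$ fixes $\tau$, also $\tau\in\Fix(B)$; by \cite[Lemma 2.2]{abelian_case} the computation of $\lambda$ reduces to that of $\lambda_\tau$. Solving $\pi_1(\sigma^m(\tau^a\varphi)^n)=\tau$ gives $m=0$, $n=\r-1$, hence $\lambda_\tau=\varphi^{\r-1}$ and more generally $\lambda_{(x_1,y_1)}=\varphi^{x_1(\r-1)\r^{-y_1}}$. Substituting in $(x_1,y_1)\circ(x_2,y_2)=(x_1,y_1)+\lambda_{(x_1,y_1)}(x_2,y_2)$ and expanding $(\tau^n\sigma)^{y_2}=\tau^{n(\r^{y_2}-1)/(\r-1)}\sigma^{y_2}$ produces the stated formula. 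Commutativity of $\circ$ together with $|B|=p^2q$ having coprime prime-power factors then forces $(B,\circ)\cong\Z_{p^2q}$.

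The main obstacle I anticipate is the first reduction step: since $A$ has $q$ distinct Sylow $p$-subgroups and the normaliser of $\langle\varphi\rangle$ in $\aut{A}$ is not immediately transparent from the presentation of automorphisms, it is not a priori clear that $\ker\pi_2|_G$ can be conjugated to $\langle\sigma\rangle$ while preserving $\pi_2(G)$. The key observation that unlocks the argument is that the inner automorphisms of $A$ coming from $\langle\tau\rangle$ actually lie inside $\pi_2(G)=\langle\varphi\rangle$ itself, so the required conjugation can be performed internally to $\pi_2(G)$; once this is in place, the remaining analysis reduces to solving the single linear congruence arising from \NC and a routine expansion of $\lambda$.
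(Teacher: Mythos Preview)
Your proof is correct and follows essentially the same approach as the paper: identify $\pi_2(G)=\langle\varphi_{1,1}^0\rangle$, reduce the kernel to $\langle\sigma\rangle$, pin down $a=(\r-1)^{-1}$ via condition \NC, and then read off the $\circ$-formula. The only notable difference is that the paper dispatches your ``main obstacle'' more directly by observing that $\langle\varphi_{1,1}^0\rangle$ is normal in $\aut{A}$ (being the unique subgroup of order $q$), so \emph{any} conjugation in $\aut{A}$---in particular one carrying an arbitrary Sylow $p$-subgroup to $\langle\sigma\rangle$---preserves $\pi_2(G)$; your explicit identification $\iota_\tau=\varphi^{1-\r}$ works but is unnecessary.
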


\begin{proof}

Let us consider the group
	\[
	G=\langle \sigma,\ \tau^{\frac{1}{r-1}}\varphi_{1,1}^0\rangle\cong \Z_{p^2q}.
	\]
	The subset $\pi_1(G)$ contains $\langle \sigma\rangle$ and $\tau^{\frac{1}{r-1}}$ and then $|\pi_1(G)|>p^2$ and it divides $p^2q$. So $\pi_1(G)=A$ and according to Lemma \ref{rem for regularity} we have that $G$ is regular.

Let $G$ be a regular subgroup of $\Hol(A)$ with $|\pi_2(G)|=q$. According to Table \ref{table:subgroups_semidirect_q=1(p^2)}, we have that $\pi_2(G)=\langle \varphi_{1,1}^0\rangle$, that is a normal subgroup of $\aut{A}$. The subgroups of size $p^2$ of $A$ are all conjugate to $\langle\sigma\rangle$, so we have that $G$ has the standard presentation:
	\[
	G=\langle \sigma,\ \tau^a\sigma^b\varphi_{1,1}^0\rangle = \langle \sigma,\ \tau^a\varphi_{1,1}^0\rangle.
	\]
	By condition \NC, we have that $a=\frac{1}{\r-1}$. The formula follows by the same argument of Lemma \ref{Zp2 rtimes Z_q q}.
\end{proof}

In the following we consider the group $\mathfrak G_1=\langle  \sigma^p,\ \tau,\ \varphi^0_{1,1} \rangle\unlhd \Hol(A)$ with $\Hol(A)/\mathfrak G_1\cong \Z_{p}\times \Z_p\times \Z_q^{\times}$.

\begin{lemma}\label{prop:semi_p}
	A set of representatives of conjugacy classes of regular subgroups $G$ of $\Hol(A)$ with $|\pi_2(G)|=p$ is
	
	\begin{center}
		\small{
			\begin{tabular}{c|l|c|l|l|c}
				$\pi_2(G)$ & Groups & Parameters  & Class for $p>2$ & Class for $p=2$ & \#\\
				\hline
				&&&&&\\[-1em]
				$\langle \varphi_{1,0}^1\rangle$  &  $G_a=\langle \tau,\ \sigma^p,\ \sigma^a\varphi_{1,0}^1 \rangle$ & $1\leq a\leq p-1$, & $A$ & $\Z_2\times (\Z_q\rtimes_{-1} \Z_2)$ & $p-1$\\
				&&&&&\\[-1em]
				\hline
				&&&&&\\[-1em]
				$\langle \varphi_{\r,0}^k    \rangle$ &   $ H_{a,k}=\langle \tau,\ \sigma^p, \, \sigma^a\varphi_{\r,0}^k \rangle$ & $1\leq a\leq p-1$, & $\Z_{p^2q}$, if $a=p-1$, & $\Z_{4p}$, if $k=0$, & $p(p-1)$\\
				&&&&&\\[-1em]
				& &  $0\leq k\leq p-1$ & $A$, otherwise & $\Z_{2}^2\times \Z_p$, if $k=1$ &
		\end{tabular}}
	\end{center}
	
\end{lemma}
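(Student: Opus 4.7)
The plan is to follow the three-step template used throughout the paper: first verify that every subgroup in the table is regular, then distinguish the members up to conjugation, and finally show that any regular subgroup $G$ with $|\pi_2(G)|=p$ is conjugate to one of the listed representatives.

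For regularity, both families contain the kernel generators $\tau$ and $\sigma^p$, together with one further generator $\sigma^a\alpha$ where $\alpha\in\{\varphi_{1,0}^1,\varphi_{\r,0}^k\}$. Applying $\pi_1$ to this third generator contributes $\sigma^a$ to $\pi_1(G)$. Since $1\leq a\leq p-1$ forces $\gcd(a,p)=1$, we have $\langle\sigma^p,\sigma^a\rangle=\langle\sigma\rangle$, hence $\pi_1(G)\supseteq\langle\sigma,\tau\rangle=A$, and Lemma \ref{rem for regularity} yields regularity.

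For non-conjugacy, members from different rows are distinguished by their image under $\pi_2$: Table \ref{table:subgroups_semidirect_q=1(p^2)} lists $\langle\varphi_{1,0}^1\rangle$ and the subgroups $\langle\varphi_{\r,0}^k\rangle$ with $0\leq k\leq p-1$ as pairwise non-conjugate, so in the second row the value of $k$ is already encoded in $\pi_2(G)$. To separate the parameter $a$ I would pass to the abelian quotient $\Hol(A)/\mathfrak G_1\cong\Z_p\times\Z_p\times\Z_q^{\times}$ introduced just before the statement: the cyclic subgroup generated by the image of $\sigma^a\alpha$ is conjugation-invariant in an abelian group and its first coordinate recovers $a\pmod p$, giving the counts $p-1$ and $p(p-1)$.

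For completeness, let $G$ be a regular subgroup with $|\pi_2(G)|=p$, so $|\ker\pi_2|_G|=pq$. The unique subgroup of order $pq$ of $A$ is the characteristic abelian subgroup $\langle\tau,\sigma^p\rangle$, which must therefore coincide with $\ker\pi_2|_G$ independently of conjugation. After conjugating by an element of $\aut{A}$ we may assume that $\pi_2(G)=\langle\alpha\rangle$ is one of the representatives in Table \ref{table:subgroups_semidirect_q=1(p^2)}. A standard presentation is $G=\langle\tau,\sigma^p,\tau^b\sigma^a\alpha\rangle$; since any element of the coset of $\tau^b\sigma^a$ modulo $\ker\pi_2|_G$ generates the same $G$, we may reduce to $G=\langle\tau,\sigma^p,\sigma^a\alpha\rangle$ with $a\in\{1,\dots,p-1\}$, the case $a\equiv 0\pmod p$ being excluded because it would put $\alpha\in G\cap(\{1\}\times\aut A)$, contradicting regularity. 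The conditions \NC and \nc are automatic since $\langle\tau,\sigma^p\rangle$ is stable under every automorphism of $A$ and $(\sigma^a\alpha)^p\in\langle\sigma^p\rangle$. The isomorphism class of the multiplicative group in each case is read off by applying Remark \ref{remark for lambdas} to compute the order of the element of $G$ mapping to $\sigma$ under $\pi_1$ and its commutation with $\tau$; the main technical nuisance I expect is the case-by-case bookkeeping when $p=2$, where the smaller automorphism tower forces a different tally from the $p>2$ analysis.
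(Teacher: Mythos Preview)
Your proposal is correct and follows essentially the same approach as the paper's proof: regularity via $\pi_1(G)=A$ using Lemma~\ref{rem for regularity}, non-conjugacy via the images under $\pi_2$ (from Table~\ref{table:subgroups_semidirect_q=1(p^2)}) together with the abelian quotient $\Hol(A)/\mathfrak G_1$, and completeness via the uniqueness of the order-$pq$ subgroup $\langle\tau,\sigma^p\rangle$ of $A$ combined with the coset reduction of the third generator. The paper omits the verification of the isomorphism classes (as announced in Section~2), so your closing remark on reading these off is a small addition rather than a deviation.
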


\begin{proof}
	The groups $G$ of order $p^2q$ in the statement have the general form:
	\[
	\langle \tau,\ \sigma^p,\ \sigma^a\theta\rangle
	\]
	for $1\leq a\leq p-1$ and $\theta\in\{\varphi_{1,0}^1,\varphi_{r,0}^k\}$. So $\langle\tau,\sigma^p\rangle\subseteq\pi_1(G)$ and $\sigma^a\in\pi_1(G)$. Thus $|\pi_1(G)|>pq$ and it divides $p^2q$. So, $\pi_1(G)=A$ and according to Lemma \ref{rem for regularity} we have that $G$ is regular.
	
	If $G_a$ and $G_b$ are conjugate, then their images in $\Hol(A)/\mathfrak G_1$ coincide, i.e. $\langle \sigma^a\varphi_{1,0}^1\rangle=\langle \sigma^b\varphi_{1,0}^1\rangle$ and so $a=b$ (the same idea applies to the groups $H_{a,k}$). Also, $G_a$ and $H_{b,k}$ are not conjugate since their images under $\pi_2$ are not.
	
	The only subgroup of size $pq$ in $A$ is $\langle \tau, \sigma^p \rangle$. If $G$ is a regular subgroup of $\Hol (A)$, we have two cases to consider according to Table \ref{table:subgroups_semidirect_q=1(p^2)}. If $\pi_2(G)=\langle\varphi_{1,0}^1\rangle$, then $G$ has the following standard presentation:
	\[
	G=        \langle \tau,\ \sigma^p,\ \tau^b \sigma^a\varphi_{1,0}^1 \rangle = \langle \tau,\ \sigma^p,\ \sigma^a\varphi_{1,0}^1 \rangle
	\]
	where $1\leq a\leq p-1$, i.e. $G=G_a$. 
	
	If $\pi_2(G)=\langle\varphi_{\r,0}^k\rangle$ for $0\leq k\leq p-1$, a standard presentation for $G$ is
	\[
	G = \langle \tau,\ \sigma^p,\ \sigma^a\varphi_{\r,0}^k \rangle
	\]
	with $1\leq a\leq p-1$, i.e. $G=H_{a,k}$.
\end{proof}

\begin{lemma}\label{prop:semi_pq}
	A set of representatives of conjugacy classes of regular subgroups $G$ of $\Hol(A)$ with $|\pi_2(G)|=pq$ is
	
	\begin{center}
		\small{
			\begin{tabular}{c|l|c|l|l|c}
				$\pi_2(G)$ & Groups & Parameters  & Class for $p>2$ & Class for $p=2$ & \#\\
				\hline
				&&&&&\\[-1em]
				$\langle \varphi_{1,0}^1,\ \varphi_{1,1}^0\rangle$  &  $G_a=\langle \sigma^p,\ \sigma^a \varphi_{1,0}^1,\ \tau^{\frac{1}{\r-1}}\varphi_{1,1}^0 \rangle$ & $1\leq a\leq p-1$ & $\Z_{p^2q}$ & $\Z_2\times (\Z_q\rtimes_{-1}\Z_2) $ & $p-1$\\
				&&&&&\\[-1em]
				\hline
				&&&&&\\[-1em]
				$\langle \varphi_{\r,0}^k,\ \varphi_{1,1}^0 \rangle$ &   $ H_{a,k}=\langle \sigma^p,\ \sigma^a\varphi_{\r,0}^k,\ \tau^{\frac{1}{\r-1}}\varphi_{1,1}^0  \rangle$ & $1\leq a\leq p-1$, & $A$ & $\Z_{4q}$, if $k=0$, & $p(p-1)$\\
				&&&&&\\[-1em]
				& &  $0\leq k\leq p-1$ & & $\Z_2^2\times \Z_q$, if $k=1$ &
		\end{tabular}}
	\end{center}
\end{lemma}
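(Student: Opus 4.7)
The plan is to follow the same three-step template used in Lemma \ref{prop:semi_p} and Lemma \ref{case pq}: exhibit the candidates, verify regularity and pairwise non-conjugacy, then classify all possibilities via the standard presentation together with the \nc relations. Since $\langle\sigma^p\rangle=Z(A)$ is the unique subgroup of order $p$ in $A$, the kernel of $\pi_2|_G$ is forced and the only flexibility comes from the lifts of the $p$-part and the $q$-part generators of $\pi_2(G)$.

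For regularity, each listed group contains $\langle\sigma^p\rangle$ explicitly. Since $\varphi_{1,1}^0$ fixes $\tau$, Lemma \ref{pi_1 for fix}(2) applied to $\tau^{\frac{1}{\r-1}}\varphi_{1,1}^0$ gives $\langle\tau\rangle\subseteq\pi_1(G)$, and the remaining generator contributes $\sigma^a$ with $\gcd(a,p)=1$, producing $\sigma$ modulo $\sigma^p$. Hence $\pi_1(G)=A$ and regularity follows from Lemma \ref{rem for regularity}. For non-conjugacy, the two families are separated by $\pi_2$, so the question reduces to within-family comparisons. I would use the normal subgroup $\mathfrak{G}_1=\langle \sigma^p,\tau,\varphi_{1,1}^0\rangle\trianglelefteq\Hol(A)$ as in Lemma \ref{prop:semi_p}: the images of $G_a$ and $H_{a,k}$ in the abelian quotient $\Hol(A)/\mathfrak{G}_1$ are $\langle\sigma^a\varphi_{1,0}^1\rangle$ and $\langle\sigma^a\varphi_{\r,0}^k\rangle$ respectively, and any conjugation in $\Hol(A)$ descends to equality in that quotient, forcing the parameters to coincide.

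For exhaustiveness, given a regular $G$ with $|\pi_2(G)|=pq$, the kernel has order $p$ and must equal the characteristic subgroup $\langle\sigma^p\rangle$. Up to conjugation in $\aut{A}$, Table \ref{table:subgroups_semidirect_q=1(p^2)} gives $\pi_2(G)\in\{\langle\varphi_{1,0}^1,\varphi_{1,1}^0\rangle,\,\langle\varphi_{\r,0}^k,\varphi_{1,1}^0\rangle\}$, and I would write the standard presentation
\[
G=\langle\sigma^p,\ \sigma^a\tau^b\theta,\ \sigma^c\tau^d\varphi_{1,1}^0\rangle,\qquad \theta\in\{\varphi_{1,0}^1,\varphi_{\r,0}^k\}.
\]
The \nc relations then pin down the exponents. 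Reducing $(\sigma^a\tau^b\theta)^p\in\langle\sigma^p\rangle$ modulo $\mathfrak{G}_1$ forces $b\equiv 0$; the condition $(\sigma^c\tau^d\varphi_{1,1}^0)^q\in\langle\sigma^p\rangle$ forces $c\equiv 0\pmod{p}$, allowing $\sigma^c$ to be absorbed into $\langle\sigma^p\rangle$; the conjugation relation between $\theta$ and $\varphi_{1,1}^0$ (trivial for $\theta=\varphi_{1,0}^1$, and $\theta\varphi_{1,1}^0\theta^{-1}=(\varphi_{1,1}^0)^{\r}$ for $\theta=\varphi_{\r,0}^k$) then forces $d=\frac{1}{\r-1}$. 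Finally $a\not\equiv 0\pmod{p}$ by regularity, and no normalizer conjugation stabilizing the kernel can alter $a$ further.

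The main obstacle is verifying the precise form of the conjugation relation in $\pi_2(G)$ for the $H_{a,k}$ family, namely that $\varphi_{\r,0}^k\varphi_{1,1}^0(\varphi_{\r,0}^k)^{-1}=(\varphi_{1,1}^0)^\r$. This requires the identity $\sum_{i=0}^{kp}\r^i\equiv 1\pmod q$, which I would obtain from $\r^p=1$ by grouping the sum into $k$ blocks of length $p$, each summing to $(\r^p-1)/(\r-1)=0$, leaving only the trailing $\r^{kp}=1$. Once that identity is established, the constraints on $(a,b,c,d)$ unravel as in the standard template and the enumeration is complete.
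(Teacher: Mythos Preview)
Your overall architecture matches the paper's, but there is a genuine gap in the exhaustiveness step: the parameter $b$ is \emph{not} killed by the relation $(\sigma^a\tau^b\theta)^p\in\langle\sigma^p\rangle$. Since $\tau\in\mathfrak{G}_1$, passing to $\Hol(A)/\mathfrak{G}_1$ erases $\tau^b$ entirely and yields no constraint on $b$. If instead you compute the $p$-th power directly, using $(\sigma^a\theta)\tau^b=\tau^{b\r^{a+e}}(\sigma^a\theta)$ with $e=0$ for $\theta=\varphi_{1,0}^1$ and $e=1$ for $\theta=\varphi_{\r,0}^k$, you get
\[
(\tau^b\sigma^a\theta)^p=\tau^{\,b\sum_{i=0}^{p-1}\r^{i(a+e)}}(\sigma^a\theta)^p,
\]
and for $a+e\not\equiv 0\pmod p$ the geometric sum is $\frac{\r^{p(a+e)}-1}{\r^{a+e}-1}=0$, so $b$ is unconstrained. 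Only in the boundary case $a=p-1$, $\theta=\varphi_{\r,0}^k$ (where $a+e\equiv 0$) does the sum equal $p$ and force $b=0$.

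What actually eliminates $b$ in the generic case is a further conjugation by an element of the normalizer of $\pi_2(G)$, namely a power $\varphi_{1,n}^0=(\varphi_{1,1}^0)^n$. One checks that $\varphi_{1,n}^0$ fixes $\sigma^p$ and $\tau^{\frac{1}{\r-1}}\varphi_{1,1}^0$, while sending $\tau^b\sigma^a\theta$ to an element whose $\tau$-exponent is $b+n\frac{\r^{a+e}-1}{\r-1}$ modulo the other generators; choosing $n=b\frac{1-\r}{\r^{a+e}-1}$ kills $b$. Your final sentence (``no normalizer conjugation \ldots can alter $a$ further'') is therefore misleading: one more conjugation is both needed and available, and without it the list of representatives is not shown to be exhaustive. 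The paper carries out exactly this step, treating the exceptional case $a=p-1$ via the $p$-th power relation above.

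As a side remark, your verification of $\varphi_{\r,0}^k\varphi_{1,1}^0(\varphi_{\r,0}^k)^{-1}=(\varphi_{1,1}^0)^{\r}$ via the sum $\sum_{i=0}^{kp}\r^i$ is unnecessarily elaborate: the $\Z_p$-factor (the superscript $k$) is a direct factor of $\aut{A}\cong\Z_p\times(\Z_q\rtimes\Z_q^\times)$ and hence central, so the conjugation reduces immediately to $(0,\r)(1,1)(0,\r)^{-1}=(\r,1)$ in $\Z_q\rtimes\Z_q^\times$.
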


\begin{proof}	
	The same argument of Lemma \ref{prop:semi_p} shows that the groups in the statement are regular and that they are not pairwise conjugate. 

	Let $G$ be a regular subgroup of $\Hol(A)$ such that $|\pi_2(G)|=pq$. According to Table \ref{table:subgroups_semidirect_q=1(p^2)} we have two cases, and in both cases the image of $\pi_2$ is normal. Up to conjugation, the unique subgroup of size $p$ of $A$ is $\langle \sigma^p\rangle$ and so $\ker{\pi_2}=\langle \sigma^p\rangle$.
	If $\pi_2(G)=\langle \varphi_{1,0}^1,\, \varphi_{1,1}^0\rangle$, then
	\[
	G=\langle \sigma^p,\ \tau^b \sigma^a\varphi_{1,0}^1,\ \tau^c \sigma^d\varphi_{1,1}^0 \rangle.
	\]
	
	From \nc conditions we have:
\begin{itemize}
    \item the third generator has order $q$ modulo $\langle\sigma^p\rangle$ and so it follows that $d=0\pmod{p}$. \item the last two generators commute modulo $\langle\sigma^p\rangle$ and so $c=\frac{1}{\r-1}$.
\end{itemize}
	If $a=0\pmod{p}$ then by Lemma \ref{pi_1 for fix}(1) we have $\pi_1(G)\subseteq \langle \sigma^p,\tau\rangle$, contradiction by regularity. Thus we can assume that $1\leq a\leq p-1$ and so
	\[
	G=\langle \sigma^p,\ \tau^b\sigma^a \varphi_{1,0}^1,\ \tau^{\frac{1}{\r-1}}\varphi_{1,1}^0 \rangle.
	\]
	Now, the group $G$ is conjugate to $G_a$ by $h=\varphi_{1,n}^0$ where $n=b\frac{1-\r}{\r^a-1}$.
	
	If $\pi_2(G)=\langle \varphi_{\r,0}^k, \, \varphi_{1,1}^0 \rangle$ then, by the same computations of the previous case the standard presentation of $G$ is
	\[
	G=\langle \sigma^p,\ \tau^b\sigma^a\varphi_{\r,0}^k,\ \tau^{\frac{1}{\r-1}}\varphi_{1,1}^0  \rangle.
	\]
	By the same argument, we can also assume that $1\leq a\leq p-1$. If $a=p-1$, from $(\tau^b\sigma^{p-1}\varphi_{\r,0}^k)^p=\tau^{bp}(\sigma^{p-1}\varphi_{\r,0}^k)^p \in \langle\sigma^p\rangle$ it follows that $b=0$. Otherwise, $G$ is conjugate to $H_{a,k}$ by a power of $\varphi_{1,1}^0$. To do this, notice that $\varphi_{1,n}^0=(\varphi_{1,1}^0)^n$ centralizes the first and the third generators of $G$ and that
	\begin{equation*}
	    (\varphi_{1,n}^0)(\tau^b\sigma^a\varphi_{r,0}^k)(\varphi_{1,n}^0)^{-1}=\tau^{b+n\frac{r^a-1}{r-1}}\sigma^a(\varphi_{1,1}^0)^{(1-r)n}\varphi_{r,0}^k=\tau^{b+n\frac{r^{a+1}-1}{r-1}}(\tau^{\frac{1}{r-1}}\varphi_{1,1}^0)^{(1-r)n}\sigma^a\varphi_{r,0}^k.
	\end{equation*}
holds. Thus, $n=b\frac{1-\r}{\r^{a+1}-1}$ will do.
%
\end{proof}

\begin{proposition}\label{prop:non_existence_p2}
	Let $p>2$ and let $G$ be a regular subgroup of $\Hol(A)$. Then $|\pi_2(G)|\not\in\{p^2,p^2q\}$.
\end{proposition}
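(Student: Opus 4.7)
My plan is to prove this by invoking \cite[Lemma 2.1]{abelian_case} (the same result already used twice in Section \ref{subsection:p=1(q)_non-abelian_cyclic_p-Sylow}), which says that for $p>2$ a skew brace of size $p^2q$ whose additive group has cyclic $p$-Sylow subgroup must also have cyclic $p$-Sylow multiplicative group. Since $(A,+)=\Z_q\rtimes_\r \Z_{p^2}$ has cyclic $p$-Sylow $\langle\sigma\rangle\cong\Z_{p^2}$, Theorem \ref{thm:skew_holomorph} implies that the $p$-Sylow subgroup of $(A,\circ)\cong G$ is cyclic, and in particular the $p$-Sylow subgroup of $G$ is cyclic as well.

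With this constraint in hand, I will rule out the two forbidden values of $|\pi_2(G)|$ by simply identifying the $p$-Sylow subgroup of $G$ in each case, using Table \ref{table:subgroups_semidirect_q=1(p^2)}. If $|\pi_2(G)|=p^2$, then $\ker{\pi_2|_G}$ has order $q$, coprime to $p$, so the $p$-Sylow subgroup of $G$ projects isomorphically onto $\pi_2(G)$; but up to conjugation the unique subgroup of order $p^2$ of $\aut{A}$ is $\langle \varphi_{1,0}^1,\,\varphi_{\r,0}^0\rangle\cong\Z_p^2$, which is not cyclic, contradiction. If $|\pi_2(G)|=p^2q$, then $\ker{\pi_2|_G}$ is trivial, so $G\cong \pi_2(G)$ is the (unique up to conjugation) subgroup $\langle\varphi_{1,0}^1,\,\varphi_{\r,0}^0,\,\varphi_{1,1}^0\rangle\cong \Z_p\times(\Z_q\rtimes_\r\Z_p)$; its $p$-Sylow subgroup is again $\Z_p\times\Z_p$, giving the same contradiction.

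There is essentially no obstacle: both cases reduce to the purely group-theoretic observation that every subgroup of $\aut{A}$ of order divisible by $p^2$ has non-cyclic $p$-Sylow part, which has already been recorded in Table \ref{table:subgroups_semidirect_q=1(p^2)}. The hypothesis $p>2$ is used only in one place, namely to invoke \cite[Lemma 2.1]{abelian_case}; this is exactly where the case $p=2$ (already treated by Crespo, see Table \ref{table:4q_1}) differs, since for $p=2$ one does observe skew braces with additive group $\Z_q\rtimes_{-1}\Z_4$ and multiplicative group $\Z_2^2\times \Z_q$.
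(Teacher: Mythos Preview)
Your argument is correct and takes a genuinely different route from the paper. The paper argues directly: it writes a standard presentation of $G$ in each case, uses the power formula $(\sigma^a\varphi_{1,0}^1)^n=\sigma^{a(p\frac{n(n-1)}{2}+n)}(\varphi_{1,0}^1)^n$ (valid for $p>2$) together with condition \nc to force the exponents of $\sigma$ in the generators to lie in $p\Z$, and then concludes via Lemma \ref{pi_1 for fix}(1) that $\pi_1(G)\subseteq\langle\sigma^p,\tau\rangle$, contradicting regularity. Your approach instead leverages \cite[Lemma 2.1]{abelian_case} to force the $p$-Sylow of $G$ to be cyclic, and then reads off from Table \ref{table:subgroups_semidirect_q=1(p^2)} that no subgroup of $\aut{A}$ of order divisible by $p^2$ has cyclic $p$-Sylow (indeed $|\aut{A}|=pq(q-1)$ with $q\not\equiv 1\pmod{p^2}$, so the $p$-Sylow of $\aut{A}$ itself is $\Z_p\times\Z_p$).

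Your proof is shorter and more conceptual: it recycles a structural lemma already invoked twice in Section \ref{subsection:p=1(q)_non-abelian_cyclic_p-Sylow} and the subgroup classification already tabulated, avoiding any new computation. The paper's proof, by contrast, is self-contained within Section \ref{section:q=1(p)} and makes explicit exactly where the hypothesis $p>2$ enters (through the binomial factor $\frac{p(p-1)}{2}$ in the power formula); this computational template is then reused verbatim in Proposition \ref{prop:non_existence_p2}'s analogues in Sections \ref{subsection:q=1(p^2)_non-abelian_cyclic_p-Sylow1} and \ref{subsection:q=1(p^2)_non-abelian_non-cyclic_p-Sylow}. Your closing remark identifying the failure for $p=2$ via Table \ref{table:4q_1} is a nice consistency check.
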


\begin{proof}
	If $p>2$ then 
	\begin{equation}\label{powers}
	(\sigma^a \varphi_{1,0}^1)^n=\sigma^{a\left(p\frac{n(n-1)}{2}+n\right)}\left(\varphi_{1,0}^1\right)^n
	\end{equation}
	for every $n\in \mathbb{N}$.
	
	Let $G$ be a subgroup of $\Hol(A)$ of size $p^2q$. If $|\pi_2(G)|=p^2$, then according to Table \ref{table:subgroups_semidirect_q=1(p^2)} and the fact that the kernel of $\pi_2$ is the normal $q$-Sylow subgroup of $A$, we have the following standard presentation for $G$:
	\[
	G=\langle \tau,\ \tau^b\sigma^a\varphi_{1,0}^1,\ \tau^d \sigma^c\varphi_{\r,0}^0 \rangle = \langle \tau,\ \sigma^a\varphi_{1,0}^1,\ \sigma^c\varphi_{\r,0}^0 \rangle.
	\]
	By condition \nc, $(\sigma^a\varphi_{1,0}^1)^p=(\sigma^c\varphi_{\r,0}^0)^p\in \langle \tau\rangle$. Using \eqref{powers}, we have $a=c=0\pmod p$.
	
	Assume that $|\pi_2(G)|=p^2q$. According to Table \ref{table:subgroups_semidirect_q=1(p^2)}, $G$ has the standard presentation:
	\[
	G=\langle \tau^a\sigma^b\varphi_{1,0}^1,\ \tau^c\sigma^d\varphi_{\r,0}^0,\ \tau^e\sigma^f\varphi_{1,1}^0 \rangle
	\]
	where the first two generators have order $p$ and the third one has order $q$. So, $f=0$ and $b=d=0\pmod{p}$ where we are using formula \eqref{powers} again. The group $\langle \sigma^p,  \tau\rangle$ is characteristic in $A$. Hence in both cases, according to Lemma \ref{pi_1 for fix}(1) we have $\pi_1(G)\subseteq \langle \sigma^p,\tau\rangle$, and so $G$ is not regular.
\end{proof}

The last result is quite different when we consider the case $p=2$.

\begin{lemma}\label{lem:4p_i_Qp_4}
	Let $p=2$. There exists a unique conjugacy class of regular subgroups of $\Hol(A)$ with $|\pi_2(G)|=4$. A representative is given by
	\[
	H=	\langle \tau,\ \sigma\varphi_{1,0}^1,\ \sigma^2\varphi_{-1,0}^0 \rangle \cong \Z_2\times (\Z_q\rtimes_{-1}  \Z_{2}).
	\]
\end{lemma}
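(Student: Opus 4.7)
The plan is to run the standard classification machinery exactly as in the previous lemmas of this subsection, but with the crucial warning that the argument of Proposition \ref{prop:non_existence_p2} fails for $p=2$: the formula \eqref{powers} was derived under the assumption $p>2$, and when $p=2$ one has instead $(\sigma^a\varphi_{1,0}^1)^2=\sigma^{4a}=1$, which no longer forces $a\equiv 0\pmod p$. So we cannot rule out $|\pi_2(G)|=4$; we must build the regular subgroups by hand.

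First I would identify $\pi_2(G)$ and $\ker(\pi_2|_G)$. By Table \ref{table:subgroups_semidirect_q=1(p^2)} there is, up to conjugation, a unique subgroup of $\aut{A}$ of order $4$, namely $\langle \varphi_{1,0}^1,\varphi_{\r,0}^0\rangle=\langle \varphi_{1,0}^1,\varphi_{-1,0}^0\rangle$ (recall $\r=-1$ for $p=2$), so we may assume $\pi_2(G)$ equals this group. Its kernel has size $q$, and the only subgroup of $A$ of order $q$ is the normal $q$-Sylow subgroup $\langle\tau\rangle$. Thus $G$ has a standard presentation
$$G=\langle\tau,\ \sigma^a\varphi_{1,0}^1,\ \sigma^c\varphi_{-1,0}^0\rangle,\qquad a,c\in\{0,1,2,3\}.$$

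Next I would extract constraints on $a$ and $c$ from conditions \NC, \nc and from regularity. A direct computation gives $(\sigma^a\varphi_{1,0}^1)^2=\sigma^{4a}=1$ (automatic), while $(\sigma^c\varphi_{-1,0}^0)^2=\sigma^{2c}$, which lies in $\langle\tau\rangle$ only when $c$ is even; the same parity condition handles the commutator relation. If $c=0$ then $\varphi_{-1,0}^0\in G\cap(\{1\}\times\aut{A})$, contradicting Lemma \ref{rem for regularity}; so $c=2$. For the parameter $a$: $a=0$ places $\varphi_{1,0}^1$ in $G$, and $a=2$ places the product $(\sigma^2\varphi_{1,0}^1)(\sigma^2\varphi_{-1,0}^0)=\sigma^{2+6}\varphi_{1,0}^1\varphi_{-1,0}^0=\varphi_{1,0}^1\varphi_{-1,0}^0$ in $G\cap(\{1\}\times\aut{A})$; both violate regularity. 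Hence $a\in\{1,3\}$.

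Now I would show the two remaining cases $a=1$ and $a=3$ are conjugate. Conjugating by $h=\varphi_{1,0}^1$ (which normalizes $\pi_2(G)$ and fixes $\tau$) sends $\sigma^3\varphi_{1,0}^1\mapsto \sigma^9\varphi_{1,0}^1=\sigma\varphi_{1,0}^1$ and fixes $\sigma^2\varphi_{-1,0}^0$, so the $a=3$ group is conjugate to $H$. Thus $H$ is, up to conjugation, the unique regular candidate.

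Finally I would verify $H$ is indeed regular and identify its isomorphism type. Regularity follows from Lemma \ref{pi_1 for fix}(2): $\pi_1(H)$ already contains $\tau$ and $\sigma$, hence $H$ is regular by Lemma \ref{rem for regularity}. For the isomorphism class, set $x=\sigma\varphi_{1,0}^1$ and $y=\sigma^2\varphi_{-1,0}^0$; one checks $x^2=y^2=1$, $xy=yx$, and using $\sigma\tau\sigma^{-1}=\tau^{-1}$ together with the fact that $\sigma^2$ centralizes $\tau$, one obtains $x\tau x^{-1}=y\tau y^{-1}=\tau^{-1}$. Consequently $xy$ centralizes $\tau$, and $H=\langle\tau,x\rangle\times\langle xy\rangle\cong (\Z_q\rtimes_{-1}\Z_2)\times\Z_2$. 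The only mildly tricky step is the case analysis pinning down $a$; everything else is a routine application of the framework developed in Subsection \ref{subsection:q=1(p)_non-abelian_p-Sylow_cyclic}.
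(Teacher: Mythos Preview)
Your proof is correct and follows essentially the same approach as the paper's: fix $\pi_2(G)$ and $\ker(\pi_2|_G)$, write down the standard presentation, use \nc to force $c=2$, rule out the even values of $a$ by regularity, and conjugate the $a=3$ case to $a=1$ by $\varphi_{1,0}^1$. Your treatment is in fact slightly more detailed than the paper's (you spell out why $a=2$ fails and verify the isomorphism class explicitly); the only minor imprecision is the appeal to Lemma \ref{pi_1 for fix}(2) for regularity, since $\varphi_{1,0}^1$ does not fix $\sigma$ --- but the conclusion is still immediate once you note that $\pi_1(H)$ contains $\{\tau^n,\tau^n\sigma:0\le n\le q-1\}$ together with $\sigma^2$, exactly as the paper argues.
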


\begin{proof}
	For the group $H$, we have that $\{\tau^n,\tau^n\sigma: 0\leq n\leq q-1\}\subseteq\pi_1(H)$ and $\sigma^2\in\pi_1(H)$. So $|\pi_1(H)|>2q$ and it divides $4q$. Thus, $H$ is regular by Lemma \ref{rem for regularity}.
	
	Let $G$ be a regular subgroup with $|\pi_2(G)|=4$. According to Table \ref{table:subgroups_semidirect_q=1(p^2)}, we have that $G$ has the following standard presentation:
	\[
	G=	\langle \tau,\ \sigma^a\varphi_{1,0}^1,\ \sigma^b\varphi_{-1,0}^0 \rangle
	\]
	where $1\leq a,b\leq 3$.
	By the condition \nc, $(\sigma^b \varphi_{-1,0}^0)^2=\sigma^{2b}\in \langle \tau\rangle$, thus we have $b=2$. So $a=1$ or $a=3$. If $a=1$, we have $G=H$ and if $a=3$, the group $G$ is conjugate to $H$ by $\varphi_{1,0}^1$.
\end{proof}

\begin{lemma}\label{lem:4p_i_Qp_4p}
	Let $p=2$. There exists a unique conjugacy class of regular subgroups with $|\pi_2(G)|=4q$. A representative is given by
	\[
	H=\langle \sigma\varphi_{1,0}^1,\ \sigma^2\varphi_{-1,0}^0,\ \tau^{-\frac{1}{2}}\varphi_{1,1}^0 \rangle \cong \Z_2\times (\Z_q\rtimes_{-1}  \Z_{2}).
	\]
\end{lemma}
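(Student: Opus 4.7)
The plan is to follow the same strategy as in Lemma \ref{lem:4p_i_Qp_4}. First I would show that $H$ is regular and identify its isomorphism type, and then classify all regular subgroups $G\leq \Hol(A)$ with $|\pi_2(G)|=4q$ via the standard presentation machinery.

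For regularity of $H$: applying Lemma \ref{pi_1 for fix}(2) to $\sigma^2 \varphi_{-1,0}^0$ (since $\varphi_{-1,0}^0(\sigma^2)=\sigma^2$) and to $\tau^{-\frac{1}{2}}\varphi_{1,1}^0$ (since $\varphi_{1,1}^0$ fixes $\tau$) gives $\langle\sigma^2\rangle,\langle\tau\rangle \subseteq \pi_1(H)$. Multiplying the first two generators yields an element whose $\pi_1$-image is $\sigma\cdot\varphi_{1,0}^1(\sigma^2)=\sigma^{7}=\sigma^{3}$, which together with $\sigma^2$ gives $\sigma\in\pi_1(H)$. Hence $\pi_1(H)=A$ and $H$ is regular by Lemma \ref{rem for regularity}. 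A short order and commutator check then identifies $H\cong \Z_2\times(\Z_q\rtimes_{-1}\Z_2)$: the first two generators square to $1$ and commute, while each of them inverts the unique order-$q$ generator.

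For uniqueness: let $G$ be any regular subgroup with $|\pi_2(G)|=4q$. By Table \ref{table:subgroups_semidirect_q=1(p^2)}, up to conjugation the only subgroup of $\aut{A}$ of that size is $K=\langle\varphi_{1,0}^1,\varphi_{-1,0}^0,\varphi_{1,1}^0\rangle$, so I may assume $\pi_2(G)=K$. Since $|G|=|K|$, the kernel of $\pi_2|_G$ is trivial and the standard presentation reads
\[
G=\langle \tau^a\sigma^b\varphi_{1,0}^1,\ \tau^c\sigma^d\varphi_{-1,0}^0,\ \tau^e\sigma^f\varphi_{1,1}^0\rangle,
\]
with these three generators obeying \emph{exactly} the defining relations of $K$. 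I would then process the relations one by one: $(\tau^c\sigma^d\varphi_{-1,0}^0)^2=\sigma^{2d}=1$ forces $d\in\{0,2\}$ (with $d=c=0$ excluded by regularity); an inductive power computation as in Lemma \ref{case q of Z_q semi Z_p2} applied to $(\tau^e\sigma^f\varphi_{1,1}^0)^q=1$ forces $f=0$; and the two conjugation relations $\varphi_{1,0}^1\varphi_{1,1}^0\varphi_{1,0}^1=(\varphi_{1,1}^0)^{-1}=\varphi_{-1,0}^0\varphi_{1,1}^0\varphi_{-1,0}^0$ pin down the remaining parameters modulo a residual freedom.

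The final step is to conjugate by elements of $N_{\aut{A}}(K)$ to absorb that freedom and reach $(a,b,c,d,e,f)=(0,1,0,2,-\frac{1}{2},0)$, giving $G=H$. As in Lemma \ref{prop:semi_pq}, the useful conjugators are the powers of $\varphi_{1,1}^0$ and the inner-type elements $\varphi_{1,n}^0$, which shift the $a$- and $e$-parameters. The main obstacle will be tracking the nonabelian structure of $K$ alongside the \nc constraints: compared with the case $|\pi_2(G)|=4$ of Lemma \ref{lem:4p_i_Qp_4}, the presence of a third generator and the two independent inversion relations makes the bookkeeping of \nc substantially more involved, although each constraint remains linear in the unknowns modulo their respective orders.
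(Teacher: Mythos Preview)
Your approach is essentially the paper's: verify regularity of $H$, write an arbitrary $G$ with $|\pi_2(G)|=4q$ in standard form, impose the \nc relations, and conjugate down to $H$. One concrete correction, however: the relation $\varphi_{1,0}^1\varphi_{1,1}^0\varphi_{1,0}^1=(\varphi_{1,1}^0)^{-1}$ is false. In the isomorphism $\aut{A}\cong\Z_p\times(\Z_q\rtimes\Z_q^\times)$ of Subsection~\ref{subsection:q=1(p)_non-abelian_p-Sylow_cyclic}, the element $\varphi_{1,0}^1$ sits in the direct $\Z_p$-factor and is therefore \emph{central} in $\pi_2(G)$. The paper uses exactly this: lifting centrality via \nc forces $\tau^a\sigma^b\varphi_{1,0}^1$ to be central in $G$, and commuting it with the other two generators yields $a=c$ and $e=-\tfrac{1}{2}$ directly. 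With your stated inversion relation you would obtain the wrong constraint on $e$. The second relation you wrote, $\varphi_{-1,0}^0\varphi_{1,1}^0\varphi_{-1,0}^0=(\varphi_{1,1}^0)^{-1}$, is correct and is the only nontrivial conjugation relation in $K$. Also note the order-$2$ condition on the \emph{first} generator gives $b\in\{1,3\}$ (you only recorded the constraint on $d$); the paper then reduces $b=3$ to $b=1$ by conjugating with $\varphi_{1,0}^1$, and finally kills $a$ by conjugating with $\varphi_{1,-a}^0$.
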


\begin{proof}

	Analogously to Lemma \ref{lem:4p_i_Qp_4}, it is straightforward to check that the group in the statement is a regular subgroup of $\Hol(A)$.
	A regular subgroup with $|\pi_2(G)|=4q$ has the following standard presentation:
	\[
	G=\langle \tau^a\sigma^b\varphi_{1,0}^1,\ \tau^c\sigma^d\varphi_{-1,0}^0,\ \tau^e\sigma^f\varphi_{1,1}^0 \rangle.
	\]
	By condition \nc we have $(\tau^a\sigma^b\varphi_{1,0}^1)^2=(\tau^c\sigma^d\varphi_{-1,0}^0)^2=(\tau^e\sigma^f\varphi_{1,1}^0)^q=1$, and so $b\in\{1,3\}$, $d\in\{0,2\}$ and $f=0$. Since $\varphi_{1,0}^1$ is a central element in $\pi_2(G)$, by condition \nc again, we have that $\tau^a\sigma^b\varphi_{1,0}^1$ is central in $G$, hence $a=c$ and $e=-\frac{1}{2}$ and so
	\[
	G=\langle \tau^a\sigma^b\varphi_{1,0}^1,\ \tau^c\sigma^d\varphi_{-1,0}^0,\ \tau^{-\frac{1}{2}}\varphi_{1,1}^0 \rangle
	\]
	for some $1\leq a\leq q-1$ and the constraints on $b$ and $d$ from above. We can assume $b=1$, otherwise we conjugate by $\varphi_{1,0}^1$. If we conjugate by $\varphi_{1,-a}^0$, we get
	\[
	G=\langle \sigma\varphi_{1,0}^1,\ \sigma^d\varphi_{-1,0}^0,\ \tau^{-\frac{1}{2}}\varphi_{1,1}^0 \rangle.
	\]
	Finally, from regularity, it follows that $d\neq 0$, so $d=2$ and we have the group in the statement.
\end{proof}

We summarize the content of this subsection in the following tables:

\begin{table}[H]
	\centering
	
	\small{
		\begin{tabular}[t]{c|c|c}
			$|\ker{\lambda}|$ &  $\mathbb{Z}_{p^2q}$  &  $\Z_q\rtimes_r\Z_{p^2}$  \\
			\hline
			$p$ & $p-1$ & $p(p-1)$ \\
			$pq$ & $p$ & $p^2-p-1$ \\
			$p^2$ & $1$ &- \\
			$p^2q$ &- & $1$
		\end{tabular}
		\qquad
		\begin{tabular}[t]{c|c|c|c|c}
			$|\ker{\lambda}|$ &  $\Z_{4q}$ & $\Z_2^2\times\Z_q$ & $\Z_2\times (\Z_q \rtimes_{-1} \Z_{2})$ & $\Z_q\rtimes_{-1} \Z_4$ \\
			\hline
			$1$ &- &- & $1$ &- \\
			$2$ &- & $1$ & $1$ & $1$ \\
			$4$ & $1$ &- &- &- \\
			$q$ &- &- & $1$ &- \\
			$2q$ & $1$ & $1$ & $1$ &- \\
			$4q$ &- &- &- & $1$
	\end{tabular}}
	\vs
	\caption{Enumeration of skew braces of $A$-type for $q=1\pmod{p}$,  $q\neq 1\pmod{p^2}$ and $p>2$ and $p=2$ respectively.}
\end{table}

\subsection{Skew braces of $\mathbb{Z}_{p}\times(\mathbb{Z}_q\rtimes_{\r} \mathbb{Z}_p)$-type}\label{subsection:q=1(p)_non-abelian_non-cyclic_p-Sylow}

In this section we denote by $A$ the group $\mathbb{Z}_{p}\times(\mathbb{Z}_q\rtimes_{\r} \mathbb{Z}_p)$. A presentation of this group is
$$A=\langle \sigma,\tau,\epsilon\,|\, \sigma^p=\tau^p=\epsilon^q=1,\ [\epsilon,\tau]=[\tau, \sigma]=1,\, \sigma \epsilon \sigma^{-1}=\epsilon^{\r} \rangle.$$
According to \cite[Subsection 4.6]{auto_pq} the mapping
$$\phi:(\Z_{p}\rtimes\Z_p^\times)\times(\Z_q\rtimes \Z_{q}^\times)\longrightarrow \aut{A}, \quad [(l,i),(s,j)]\mapsto \alpha_{l,i}\beta_{s,j} $$
where $$\alpha_{l,i}=\begin{cases}
\epsilon\mapsto  \epsilon\\
\tau\mapsto \tau^i
\\ \sigma\mapsto \tau^l \sigma
\end{cases} \quad\text{and}\quad \beta_{s,j}=\begin{cases}
\epsilon\mapsto  \epsilon^j\\
\tau\mapsto \tau
\\ \sigma\mapsto \epsilon^s \sigma
\end{cases}$$	
is an isomorphism of groups. In particular, note that $\alpha_{l,i}$ and $\beta_{s,j}$ commute.
In particular, $p^2q$ divides $|\aut{A}|=	pq(p-1)(q-1)$ and so we need to discuss all the possible values for the size of the image of regular subgroups under $\pi_2$.
The conjugacy classes of subgroups of $\aut{A}$ are collected in Table \ref{lem:subgroups_with center}.

\begin{table}[H] 
	\centering
	
	\small{
		\begin{tabular}{c|c|c}
			Size &    Group  & Class \\
			\hline
			&&\\[-1em]
			$p$    & $\langle\alpha_{1,1}\rangle$  & $\Z_p$\\
			&&\\[-1em]
			& $\langle\beta_{0,\r}\rangle$   & \\
			&&\\[-1em]
			& $\langle\alpha_{1,1}\beta_{0,\r}\rangle$ & \\
			&&\\[-1em]
			\hline
			&&\\[-1em]
			$q$    & $\langle\beta_{1,1}\rangle$   & $\Z_q$\\
			&&\\[-1em]
			\hline
			&&\\[-1em]
			$pq$    & $\langle\alpha_{1,1},\ \beta_{1,1}\rangle$   & $\Z_{pq} $\\
			&&\\[-1em]
			& $\langle\beta_{0,\r},\ \beta_{1,1}\rangle$   & $\Z_q\rtimes_{\r} \Z_p $\\
			&&\\[-1em]
			& $\langle\alpha_{1,1}\beta_{0,\r},\ \beta_{1,1}\rangle$   & $\Z_q\rtimes_{\r} \Z_p $\\
			&&\\[-1em]
			\hline
			&&\\[-1em]
			$p^2$    & $\langle\alpha_{1,1},\ \beta_{0,\r}\rangle$   & $\Z_{p}^2$\\
			&&\\[-1em]
			\hline
			&&\\[-1em]
			$p^2q$    & $\langle\alpha_{1,1},\ \beta_{1,1},\ \beta_{0,\r}\rangle$   & $A$
			
	\end{tabular}}
	\vs
	\caption{Conjugacy classes of subgroups of $\aut{A}$.}\label{lem:subgroups_with center}
\end{table}

The procedure to check the regularity of a subgroup has already been established in the previous sections, so we are not showing that part of the classification strategy explicitly in what remains of Section \ref{section:q=1(p)}.

\begin{proposition}\label{with center q 2}
	The unique skew brace of $A$-type with $|\ker{\lambda}|=p^2$ is $B=B_1\times B_2$, where $B_1$ is the trivial skew brace of size $p$ and $B_2$ is the unique skew brace of non-abelian type of size $pq$ with $|\ker{\lambda_{B_2}}|=p$. 
	In particular, $(B,\circ)\cong \Z_p^2\times \Z_q$.
\end{proposition}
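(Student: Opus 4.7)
The plan is to translate the question into one about regular subgroups via Theorem \ref{thm:skew_holomorph} and Remark \ref{remark for lambdas}: a skew brace $(A,+,\circ)$ of $A$-type with $|\ker\lambda|=p^2$ corresponds to a regular subgroup $G\leq\Hol(A)$ with $|\pi_2(G)|=q$. From Table \ref{lem:subgroups_with center}, the unique conjugacy class of subgroups of order $q$ in $\aut{A}$ is $\langle\beta_{1,1}\rangle$, and since this subgroup is characteristic in $\aut{A}$ its normalizer is all of $\aut{A}$. So after conjugation we may assume $\pi_2(G)=\langle\beta_{1,1}\rangle$.

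I would then pin down $K=\ker\pi_2|_G$. Since $|K|=p^2$, $K$ is a Sylow $p$-subgroup of $A$; a quick count shows $A$ has $q$ such subgroups, all conjugate under $\aut{A}$ (e.g., the $\beta_{s,1}$, which act as $\sigma\mapsto\epsilon^s\sigma$ on $\langle\sigma,\tau\rangle$, realize the orbit), so we may take $K=\langle\sigma,\tau\rangle$. Thus $G$ has a standard presentation $G=\langle\sigma,\tau,\,u\beta_{1,1}\rangle$ with $u\in A$ determined only modulo $K$, and we may write $u=\epsilon^c$ for some $c\in\Z_q$.

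The key step is to apply condition \NC, namely that $u\beta_{1,1}$ normalize $K$. A short computation, using $\beta_{1,1}(\sigma)=\epsilon\sigma$ together with the relation $\epsilon\sigma=\sigma\epsilon^{r^{-1}}$, shows that conjugating $\sigma$ by $\epsilon^{c}\beta_{1,1}$ yields an element $\sigma\cdot\epsilon^{r^{a-1}(1+c(1-r))}$ that lies in $K$ exactly when $c=(r-1)^{-1}$. Condition \nc is then automatic, since each $\beta_{k,1}$ fixes $\epsilon$ and therefore $(\epsilon^{c}\beta_{1,1})^q=\epsilon^{cq}=1\in K$. Regularity follows from Lemma \ref{pi_1 for fix}(2), because $\langle\sigma,\tau,\epsilon^{c}\rangle=A$ when $c\neq 0$. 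This proves existence and uniqueness of the regular subgroup, hence of the skew brace.

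Finally, I would identify the skew brace explicitly. Using formula \eqref{circ} and the standard presentation of $G$ one computes $\lambda_{a}=\beta_{z(r-1),1}$ for $a=\sigma^{x}\tau^{y}\epsilon^{z}$, from which a direct calculation gives
\[
(\sigma^{x_0}\tau^{y_0}\epsilon^{z_0})\circ(\sigma^{x_1}\tau^{y_1}\epsilon^{z_1})=\sigma^{x_0+x_1}\tau^{y_0+y_1}\epsilon^{z_0+z_1},
\]
so $(B,\circ)\cong\Z_p^2\times\Z_q$. The subgroup $\langle\tau\rangle$ is an ideal of $B$ (central in both $(B,+)$ and $(B,\circ)$ and clearly $\lambda$-invariant) with complementary ideal $\langle\sigma,\epsilon\rangle$, and this produces the direct product decomposition $B=B_1\times B_2$ of the statement, where $B_2$ is the unique non-abelian skew brace of size $pq$ with $|\ker\lambda_{B_2}|=p$ in the classification of \cite{skew_pq}. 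The main obstacle is simply the bookkeeping of the conjugation computation that forces $c=(r-1)^{-1}$, but this is routine.
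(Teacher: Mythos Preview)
Your proof is correct and starts from the same setup as the paper: fix $\pi_2(G)=\langle\beta_{1,1}\rangle$ (the unique order-$q$ subgroup of $\aut{A}$) and take $\ker\pi_2|_G=\langle\sigma,\tau\rangle$ up to conjugation. From there, however, the two arguments diverge slightly. You finish computationally: you solve condition \NC for the exponent $c$, obtain the explicit regular subgroup $G=\langle\sigma,\tau,\epsilon^{1/(r-1)}\beta_{1,1}\rangle$, compute $\lambda$ and the $\circ$-operation directly, and then read off the ideal decomposition. The paper instead finishes structurally: once $\ker\pi_2|_G=\langle\sigma,\tau\rangle$ is known, it observes immediately that $I=\langle\tau\rangle\leq\ker\lambda\cap\Fix(B)\cap Z(B,+)$ (hence a central ideal) and that $J=\langle\sigma,\epsilon\rangle$ is a complementary ideal, so $B\cong B_1\times B_2$; uniqueness then comes from the uniqueness of $B_2$ in \cite{skew_pq} rather than from pinning down $c$. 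Your route is more explicit and mirrors the style of Lemma~\ref{case q of Z_q semi Z_p2}; the paper's route is shorter and avoids the normalizer computation altogether. Both are perfectly valid.
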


\begin{proof}
	The unique subgroup of size $q$ of $\aut{A}$ is generated by $\beta_{1,1}$ and the kernel is a $p$-Sylow subgroup of $A$ which can be taken as $\langle\sigma,\tau\rangle$ up to conjugation. Therefore $I=\langle \tau\rangle_+\leq \ker{\lambda}\cap \Fix(B)\cap Z(B,+)$ and so $I$ is an ideal of $B$ contained in $Z(B,\circ)$. The subgroup $J=\langle \epsilon, \sigma\rangle_+\unlhd (B,+)$ is a left ideal and since $\tau\in Z(B,\circ)$ then $J$ is an ideal of $B$. Therefore $B=I+J$ and $I\cap J=0$ and so $B$ is a direct product of the trivial skew brace of size $p$ and a skew brace $B_2$ of size $pq$ with $|\ker{\lambda_{B_2}}|=p$. According to \cite[Theorem 3.6]{skew_pq}, there exists a unique such skew brace and $(B_2,\circ)\cong \Z_{pq}$.
\end{proof}

In the following we consider the subgroup $\mathfrak G_2=\langle \epsilon,\tau, \alpha_{1,1}, \beta_{1,1}\rangle\unlhd \Hol(A)$. In particular, $\Hol(A)/\mathfrak G_2 \cong \Z_p\times \Z_p^\times   \times \Z_q^\times$.

\begin{lemma}\label{with center p} 
	A set of representatives of conjugacy classes of regular subgroups $G$ of $\Hol(A)$ with $|\pi_2(G)|=p$ is

	\begin{center}
		\small{
			\begin{tabular}{c|l|c|l|l|c}
				$\pi_2(G)$ & Groups & Parameters  & Class for $p>2$ & Class for $p=2$ & \#\\
				\hline
				&&&&&\\[-1em]
				$\langle \alpha_{1,1}\rangle$  &  $K=\langle \epsilon,\ \tau,\ \sigma\alpha_{1,1}\rangle$ & - & $A$ & $\Z_q\rtimes_{-1} \Z_4$ & $1$\\
				&&&&&\\[-1em]
				\hline
				&&&&&\\[-1em]
				$\langle \beta_{0,r}    \rangle$ &   $ L=\langle \epsilon,\ \sigma,\ \tau\beta_{0,\r}\rangle$ & - & $A$ & $A$ & $1$\\
				&&&&&\\[-1em]
				\cline{2-6}
				&&&&&\\[-1em]
				&    $G_{c}=\langle  \epsilon,\ \tau,\ \sigma^c\beta_{0,r} \rangle$ & $1\leq c\leq p-1$  & $ \Z_p^2\times \Z_q$, if $c=-1$, & $\Z_2^2\times \Z_q$ & $p-1$\\
				& & & $A$, otherwise &\\
				&&&&&\\[-1em]
				\hline
				&&&&&\\[-1em]
				$\langle \alpha_{1,1}\beta_{0,\r}\rangle$ &    $H_{c}=\langle  \epsilon,\ \tau,\ \sigma^c\alpha_{1,1}\beta_{0,\r} \rangle$ & $1\leq c\leq p-1$  & $ \Z_p^2\times \Z_q$, if $c=-1$, & $\Z_{4q}$ & $p-1$\\
				&&&&&\\[-1em]
				& & & $A$, otherwise & & 
		\end{tabular}}
	\end{center}
\end{lemma}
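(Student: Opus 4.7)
The plan is to follow the template established in earlier lemmas of this section: exhibit the listed groups as regular subgroups, prove they are pairwise non-conjugate, and then show that every regular subgroup with $|\pi_2(G)|=p$ is conjugate to one of them. Regularity follows from Lemma \ref{pi_1 for fix}(2) and Lemma \ref{rem for regularity}: each candidate has the form $\langle \epsilon, m, u\theta\rangle$ where $m \in \{\sigma, \tau\}$, $\theta$ fixes both $\epsilon$ and $m$, and $u \notin \langle \epsilon, m\rangle$, so that $\langle \epsilon, m\rangle \subseteq \pi_1(G)$ together with $u \in \pi_1(G)$ forces $\pi_1(G) = A$. For non-conjugacy, the three families $K$, $\{L, G_c\}$, $\{H_c\}$ have pairwise non-conjugate images under $\pi_2$ by Table \ref{lem:subgroups_with center}. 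Within the middle family, $L$ and $G_c$ are distinguished by the isomorphism type of $\ker(\pi_2|_G)$: non-abelian $\langle \epsilon, \sigma\rangle$ for $L$ versus abelian $\langle \epsilon, \tau\rangle$ for $G_c$. The values of $c$ within each parametric family are separated via projection to the abelian quotient $\Hol(A)/\mathfrak G_2$ using diagram \eqref{diagram H}, by the same argument as in Lemma \ref{prop:semi_p}.

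For exhaustiveness, I would partition the analysis by the conjugacy class of $\pi_2(G)$. In all three cases $\ker(\pi_2|_G)$ has order $pq$, and the only such subgroups of $A$ up to conjugation are $\langle \epsilon, \tau\rangle$ and $\langle \epsilon, \sigma\rangle$. When $\pi_2(G) = \langle \alpha_{1,1}\rangle$ or $\langle \alpha_{1,1}\beta_{0,r}\rangle$, condition \NC rules out $\langle \epsilon, \sigma\rangle$ as kernel, because the generating automorphism sends $\sigma \mapsto \tau\sigma \notin \langle \epsilon, \sigma\rangle$; this leaves only the candidate $K$ and the family $H_c$ respectively. When $\pi_2(G) = \langle \beta_{0,r}\rangle$ both choices of kernel are admissible, since $\beta_{0,r}$ fixes both $\sigma$ and $\tau$, yielding $L$ and the family $G_c$ respectively. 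In each subcase, the standard presentation combined with condition \nc and conjugation by the stabiliser in $N_{\aut{A}}(\pi_2(G))$ of the kernel reduces the parameters to exactly those in the table.

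The main obstacle is the careful bookkeeping of the residual conjugation once $\pi_2(G)$ and the kernel are fixed: the coefficient of $\tau$ in the third generator must be absorbed by a suitable element of $N_{\aut{A}}(\pi_2(G))$ that stabilises the kernel and acts nontrivially on $\langle \tau\rangle$, whereas the exponent $c$ of $\sigma$ survives and becomes the parameter of the family. For the case $p=2$ the range of $c$ collapses to $c=1$, so each parametric family reduces to a single representative; the only remaining task is then to identify the isomorphism class of $(B,\circ)$ listed in the table from formula \eqref{circ}, which is a direct calculation.
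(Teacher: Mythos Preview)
Your proposal is correct and follows essentially the same strategy as the paper's proof: split by the conjugacy class of $\pi_2(G)$ from Table~\ref{lem:subgroups_with center}, use condition \NC to constrain the kernel, and separate the parametric families $G_c$ and $H_c$ via the abelian quotient $\Hol(A)/\mathfrak{G}_2$. One small slip: the claim ``$\theta$ fixes $\epsilon$'' is false when $\theta$ involves $\beta_{0,r}$ (which sends $\epsilon\mapsto\epsilon^r$), but this is harmless since $\langle\epsilon,m\rangle=\ker(\pi_2|_G)$ is automatically contained in $\pi_1(G)$ regardless, so your regularity argument goes through unchanged.
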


\begin{proof}
	The groups $K,L$, $G_c$ and $H_{c}$ are not conjugate since their images under $\pi_2$ or their kernels are not. If $G_{c}$ and $G_{d}$ (resp. $H_c$ and $H_d$) are conjugate, then their images in $\Hol(A)/\mathfrak G_2$ coincide, i.e. $\langle \sigma^c \beta_{0,r}\mathfrak G_2 \rangle=\langle \sigma^d \beta_{0,r}\mathfrak G_2 \rangle$. Thus, $c=d$. 
	
	Let $G$ be a regular subgroup of $\Hol(A)$ such that $|\pi_2(G)|=p$. According to Table \ref{lem:subgroups_with center} we need to discuss three cases. Assume that $\pi_2(G)=\langle\alpha_{1,1}\rangle$. Then the kernel has order $pq$ and so $G$ has the form
	$$G=\langle \epsilon,\ \sigma^n\tau^m,\ \sigma^a\tau^b\alpha_{1,1}\rangle.$$
	By condition \NC we have $n=0$. So we can assume $b=0$ and $G=\langle \epsilon,\ \tau,\ \sigma^a\alpha_{1,1}\rangle$. By regularity, $a\neq 0$ and so $G$ is conjugate to $K$ by $\alpha_{0,a}$. 
	
	Assume that $\pi_2(G)=\langle\beta_{0,\r}\rangle$. Then $G$ has the following standard presentation:
	$$G=\langle \epsilon,\ \sigma^n\tau^m,\ \sigma^a\tau^b\beta_{0,\r}\rangle.$$
	If $n=0$ then $G=G_{a}$. If $n\neq 0$, we can assume $m=0$, otherwise, we conjugate by $\alpha_{1,1}^{-\frac{m}{n}}$. Thus we can assume $a=0$ and $n=1$. By regularity, $b\neq 0$ and so $G$ is conjugate to $L$ by $\alpha_{0,b^{-1}}$.
	
	Finally, assume that $\pi_2(G)=\langle \alpha_{1,1}\beta_{0,\r}\rangle$. Then
	$$G=\langle \epsilon,\ \sigma^n\tau^m,\ \sigma^a\tau^b\alpha_{1,1}\beta_{0,\r}\rangle.$$
	The condition \NC implies that $n=0$ and so $G=\langle \epsilon,\ \tau,\ \sigma^a\alpha_{1,1}\beta_{0,\r}\rangle=H_{a}.$
	
	Notice that for the case $p=2$, the groups $K$ and $H_1$ (which is the only $H_c$) have elements of order $4$ since $(\sigma\alpha_{1,1})^2 =(\sigma\alpha_{1,1}\beta_{0,r})^2=\tau$. For $p>2$, there are no elements of order $p^2$ and then we have a different isomorphism class for the corresponding groups in the table of the statement.
\end{proof}

\begin{lemma}\label{with center pq}
	A set of representatives of conjugacy classes of regular subgroups $G$ of $\Hol(A)$ with $|\pi_2(G)|=pq$ is

	\begin{center}
		\small{
			\begin{tabular}{c|l|c|c|c|c}
				$\pi_2(G)$ & Groups & Parameters  & Class for $p>2$ & Class for $p=2$ & \#\\
				\hline
				&&&&&\\[-1em]
				$\langle \alpha_{1,1}, \beta_{1,1}\rangle$  &  $K_1=\langle \tau,\ \sigma \alpha_{1,1},\ \epsilon^{\frac{1}{\r-1}}\beta_{1,1}\rangle$ & - & $\Z_p^2\times \Z_q$ & $\Z_{4q}$ & $1$\\
				&&&&&\\[-1em]
				\hline
				&&&&&\\[-1em]
				$\langle \beta_{1,1}, \beta_{0,r}    \rangle$ &   $ K_2=\langle\sigma,\ \tau\beta_{0,\r},\ \epsilon^{\frac{1}{\r-1}}\beta_{1,1}\rangle$ & - & $A$ & $A$ & $1$\\
				&&&&&\\[-1em]
				\cline{2-6}
				&&&&&\\[-1em]
				&  $G_{a}=\langle \tau,\ \sigma^a\beta_{0,r},\ \epsilon^{\frac{1}{\r-1}}\beta_{1,1}\rangle$ & $1\leq a\leq p-1$  & $ A$ & $A$  & $p-1$\\
				&&&&&\\[-1em]
				\hline
				&&&&&\\[-1em]
				$\langle \beta_{1,1}, \alpha_{1,1}\beta_{0,\r}\rangle$ &    $H_{a}=\langle \tau,\ \sigma^a\alpha_{1,1}\beta_{0,\r},\ \epsilon^{\frac{1}{\r-1}}\beta_{1,1}\rangle$ & $1\leq a\leq p-1$  & $ A$ & $\Z_q\rtimes_{-1} \Z_4$  & $p-1$
		\end{tabular}}
	\end{center}
\end{lemma}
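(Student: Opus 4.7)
The plan is to follow the three-step template established in Lemma \ref{with center p} and Lemma \ref{prop:semi_pq}. First I would verify regularity of each listed group: every candidate has the form $\langle u, v\theta_1, w\beta_{1,1}\rangle$, where $u$ generates the (order $p$) kernel of $\pi_2|_G$, both $\theta_1$ and $\beta_{1,1}$ fix $u$, and the remaining generators together with $u$ cover the three $A$-directions; Lemma \ref{pi_1 for fix}(2) then gives $\langle u\rangle\subseteq \pi_1(G)$, and combined with the projections of $v,w$ this forces $\pi_1(G)=A$, so $G$ is regular by Lemma \ref{rem for regularity}. Pairwise non-conjugacy rests on three standard invariants: $\pi_2(G)$ separates rows with different images; $\ker{\pi_2|_G}$ separates $K_2$ from the family $\{G_a\}$ within the case $\pi_2(G)=\langle \beta_{1,1},\beta_{0,r}\rangle$; and passage to the abelian quotient $\Hol(A)/\mathfrak G_2$ shows that the images $\langle \sigma^a\beta_{0,r}\mathfrak G_2\rangle$ and $\langle \sigma^a\alpha_{1,1}\beta_{0,r}\mathfrak G_2\rangle$ are pairwise distinct for $1\le a\le p-1$, exactly as in Lemma \ref{with center p}.

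The substance of the argument is the classification step: showing every regular $G$ with $|\pi_2(G)|=pq$ is conjugate to one on the list. Table \ref{lem:subgroups_with center} gives three conjugacy classes of $pq$-subgroups of $\aut{A}$; in each case $\ker{\pi_2|_G}$ is an order-$p$ subgroup of $\langle\sigma,\tau\rangle$, and the action of $N_{\aut{A}}(\pi_2(G))$ on such subgroups typically leaves $\langle\sigma\rangle$ and $\langle\tau\rangle$ as the only orbit representatives. For each admissible choice of kernel I would write the standard presentation $G=\langle u, v_1\theta_1, v_2\beta_{1,1}\rangle$ with $u\in\{\sigma,\tau\}$ and $v_1,v_2\in A$, and then exploit the \nc conditions: the relation $(v_2\beta_{1,1})^q\in\ker{\pi_2}$ (an order-$p$ group) forces the $\langle\sigma,\tau\rangle$-part of $v_2$ to vanish, and the commutation or semidirect relation between $\theta_1$ and $\beta_{1,1}$, read modulo the kernel, pins down $v_2=\epsilon^{1/(r-1)}$. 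A final conjugation by a suitable power of $\alpha_{0,i}$ or $\beta_{0,j}$ (in the spirit of the $\varphi_{1,n}^0$-trick used in Lemma \ref{prop:semi_pq}) absorbs the residual freedom in $v_1$ and produces the canonical forms in the table.

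The main obstacle will be the case $\pi_2(G)=\langle \beta_{1,1},\alpha_{1,1}\beta_{0,r}\rangle$: since $\alpha_{1,1}\beta_{0,r}$ sends $\sigma\mapsto \tau\sigma$ while $\beta_{1,1}$ involves the $\epsilon$-direction, the \NC conditions on a putative kernel generator $\sigma^n\tau^m$ and on the $\epsilon$-component of $v_1$ become coupled, and one must verify carefully that only the kernel $\langle\tau\rangle$ survives (ruling out $\langle\sigma\rangle$ and the remaining order-$p$ cyclic subgroups of $\langle\sigma,\tau\rangle$). Once the kernel is fixed, the reduction to the canonical form $H_a$ parallels the argument for the family $G_a$, and the isomorphism-class columns follow by inspecting the resulting presentations, with separate attention in the case $p=2$, where identities such as $(\sigma\alpha_{1,1})^2=\tau$ and $(\sigma\alpha_{1,1}\beta_{0,r})^2=\tau$ produce elements of order $4$ that alter the isomorphism type of $K_1$ and $H_a$ relative to odd $p$.
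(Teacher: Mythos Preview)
Your plan is correct and mirrors the paper's proof almost exactly: the paper also invokes the argument of Lemma~\ref{with center p} for non-conjugacy, splits into the three $\pi_2$-images from Table~\ref{lem:subgroups_with center}, uses \NC to force $\ker{\pi_2|_G}=\langle\tau\rangle$ in the first and third cases (both possibilities $\langle\sigma\rangle$, $\langle\tau\rangle$ survive in the second), and then uses the \nc relations to pin down the $\epsilon^{1/(r-1)}\beta_{1,1}$ generator before conjugating away the residual parameters. Two small corrections to your write-up: the final normalizing conjugation is by a power of $\beta_{1,1}$ (i.e.\ $\beta_{n,1}$), not $\beta_{0,j}$, which is exactly the analogue of the $\varphi_{1,n}^0$-trick you cite; and in the families $G_a$, $H_a$ the value $a=p-1$ (so $a+1\equiv 0$) must be handled separately since the conjugation denominator $r^{a+1}-1$ vanishes---there the \nc order condition directly forces the stray $\epsilon$-exponent to be zero.
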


\begin{proof}
	The same argument as in Lemma \ref{with center p} shows that the groups in the statement are not conjugate. Let $G$ be a regular subgroup of $\Hol(A)$ with $|\pi_2(G)|=pq$. According to Table \ref{lem:subgroups_with center} we have to check three cases. In all such cases, up to the action of the normalizer of $\pi_2(G)$, we can assume that $\ker{\pi_2|_G}$ is generated by $\tau$ or $\sigma$.
	
	Let $\pi_2(G)=\langle \alpha_{1,1},\ \beta_{1,1}\rangle$. By the condition \NC we have that $\ker{\pi_2|_G}=\langle\tau\rangle$. So, 
	$$G=\langle \tau,\ \epsilon^a\sigma^b\alpha_{1,1},\ \epsilon^c\sigma^d\beta_{1,1}\rangle.$$
	The condition \nc implies that $d=0$ and $c=\frac{1}{\r-1}$. If $b=0$ then, by Lemma \ref{pi_1 for fix}(1), $\pi_1(G)\subseteq \langle \epsilon,\tau\rangle$, a contradiction since $G$ is regular. So $b\neq 0$ and therefore $G$ is conjugate to $K_1$ by $\alpha_{0,b}\beta_{1,1}^n$ where $n=a\frac{1-\r}{\r^b-1}$.
		
	Let $\pi_2(G)=\langle \beta_{0,\r},\ \beta_{1,1}\rangle$. If $\ker{\pi_2|_G}=\langle \sigma\rangle$, then 
	$$G=\langle\sigma,\ \epsilon^a\tau^b\beta_{0,\r},\ \epsilon^c\tau^d\beta_{1,1}\rangle.$$
	According to the \NC condition we have $c=\frac{1}{\r-1}$ and $a=0$ (so, $b\neq 0$ by regularity) and by \nc we have $d=0$. Thus $G$ is conjugate to $K_2$ by $\alpha_{0,b^{-1}}$.
		
	If $\ker{\pi_2|_G}$ is generated by $\tau$ then
	$$G=\langle \tau,\ \epsilon^a\sigma^b\beta_{0,\r},\ \epsilon^c\sigma^d\beta_{1,1}\rangle.$$
	From the \nc conditions we have that $d=0$ and $c=\frac{1}{\r-1}$. So, $b\neq 0$ otherwise $\pi_1(G)\subseteq \langle \tau,\epsilon\rangle$. If $b+1=0$, then $\epsilon$ commutes with $\sigma^b\beta_{0,r}$ and since $(\epsilon^a \sigma^b\beta_{0,r})^p\in \langle \tau\rangle$ we have $a=0$, and so $G=G_{-1}$. Otherwise $G$ is conjugate to $G_b$ by $\beta_{1,1}^{n}$ where $n=a\frac{1-\r}{r^{b+1}-1}$.
		
	Let $\pi_2(G)=\langle \alpha_{1,1}\beta_{0,\r},\ \beta_{1,1}\rangle$. The condition \NC implies that $\ker{\pi_2|_G}=\langle\tau\rangle$. Then
	$$G=\langle\tau,\ \epsilon^a\sigma^b\alpha_{1,1}\beta_{0,\r},\ \epsilon^c\sigma^d\beta_{1,1}\rangle.$$
	The \nc conditions imply that $d=0$, $c=\frac{1}{\r-1}$. We also have $b\neq 0$ as above. In the same fashion, if $b+1=0$, then $a=0$ and thus $G=H_{-1}$. Otherwise, $G$ is conjugate to $H_{b}$ by $\beta_{1,1}^{n}$ where $n=a\frac{1-\r}{\r^{b+1}-1}$.
	
	Similar to Lemma \ref{with center p}, note that for the case $p=2$, the groups $K_1$ and $H_1$ (which is the only $H_c$) have elements of order 4 because $(\sigma\alpha_{1,1})^2=(\sigma\alpha_{1,1}\beta_{0,r})^2=\tau$. So, the isomorphism classes of these groups are different to the case $p>2$ in the table of the statement.
\end{proof}

As in the previous subsection, for the following results we have different consequences when considering $p=2$ and $p>2$. Recall that if $p=2$ then we can assume $r=-1$.

\begin{proposition}\label{prop:4p_i_dihedral_non-existence}
	Let $p=2$. If $G$ is a regular subgroup of $\Hol(A)$, then $|\pi_2(G)|\not\in\{4,4q\}$.
\end{proposition}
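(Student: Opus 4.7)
My plan is to split the argument on the value $|\pi_2(G)|\in\{4,4q\}$ and in each case reach a contradiction with Lemma \ref{rem for regularity} by combining the \nc conditions with Lemma \ref{pi_1 for fix}(1). In both cases, Table \ref{lem:subgroups_with center} fixes the conjugacy class of $\pi_2(G)$ uniquely, so only one shape of subgroup needs to be analysed per case.

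First, suppose $|\pi_2(G)|=4$. Then up to conjugation $\pi_2(G)=\langle\alpha_{1,1},\beta_{0,-1}\rangle$, the kernel of $\pi_2|_G$ is the unique subgroup of order $q$ of $A$, namely $\langle\epsilon\rangle$, and $G$ admits a standard presentation $\langle\epsilon,\sigma^b\tau^c\alpha_{1,1},\sigma^e\tau^f\beta_{0,-1}\rangle$. Using $p=2$ together with the explicit action of $\alpha_{1,1}$ and $\beta_{0,-1}$ on $\langle\sigma,\tau\rangle$, I would compute $(\sigma^b\tau^c\alpha_{1,1})^2=\tau^b$ and, using that $[\alpha_{1,1},\beta_{0,-1}]=1$ in $\aut{A}$, the commutator $[\sigma^b\tau^c\alpha_{1,1},\sigma^e\tau^f\beta_{0,-1}]=\tau^e$; the \nc condition forces both to lie in $\langle\epsilon\rangle$, hence $b=e=0$. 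The $A$-parts of the generators then all lie in $\langle\epsilon,\tau\rangle$, while every element of $\pi_2(G)$ fixes $\tau$ and stabilises $\langle\epsilon\rangle$. Lemma \ref{pi_1 for fix}(1) therefore yields $\pi_1(G)\subseteq\langle\epsilon,\tau\rangle$, of size $2q<4q$, which contradicts the regularity of $G$.

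Second, suppose $|\pi_2(G)|=4q$. Then $\pi_2(G)=\langle\alpha_{1,1},\beta_{0,-1},\beta_{1,1}\rangle\cong A$ and the kernel of $\pi_2|_G$ is trivial, so $\pi_2|_G\colon G\to\pi_2(G)$ is an isomorphism. Since the center of $\pi_2(G)$ is $\langle\alpha_{1,1}\rangle$, the natural identification with $A$ sends $\alpha_{1,1}\leftrightarrow\tau$, $\beta_{0,-1}\leftrightarrow\sigma$, $\beta_{1,1}\leftrightarrow\epsilon$; under this correspondence the defining relations of $A$ must lift exactly in $G$. Writing a standard presentation with generators $\epsilon^a\sigma^b\tau^c\alpha_{1,1}$, $\epsilon^g\sigma^h\tau^i\beta_{0,-1}$ and $\epsilon^d\sigma^e\tau^f\beta_{1,1}$, I plan to exploit these relations successively: the relation $(\cdot\,\alpha_{1,1})^2=1$ gives $\epsilon^{a(1+(-1)^b)}\tau^b=1$, forcing $a=b=0$; the relation $(\cdot\,\beta_{1,1})^q=1$ rules out $e=1$ via the telescoping product $\prod_{i=0}^{q-1}\beta_{1,1}^i(\epsilon^d\sigma\tau^f)=\epsilon^{d+(q-1)/2}\sigma\tau^f\neq 1$, so $e=0$ and then $f=0$; finally, the relation $[\alpha_{1,1},\beta_{0,-1}]=1$ lifted to $G$ produces a commutator equal to $\tau^h$, forcing $h=0$.

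After these reductions $G$ takes the form $\langle\tau^c\alpha_{1,1},\,\epsilon^g\tau^i\beta_{0,-1},\,\epsilon^d\beta_{1,1}\rangle$, so the subgroup $U$ of $A$ in Lemma \ref{pi_1 for fix}(1) is contained in $\langle\epsilon,\tau\rangle$. Since every element of $\pi_2(G)$ fixes $\tau$ and restricts to an automorphism of $\langle\epsilon\rangle$, the lemma again forces $\pi_1(G)\subseteq\langle\epsilon,\tau\rangle$, of size at most $2q<4q$, contradicting regularity. The main technical obstacle is the telescoping product step ruling out $e=1$ in the third generator; once this is settled, the remaining reductions are routine applications of the defining relations of $A$ together with Lemmas \ref{rem for regularity} and \ref{pi_1 for fix}.
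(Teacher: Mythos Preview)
Your proof is correct and follows essentially the same strategy as the paper: in both cases you use the \nc\ conditions to force the $\sigma$-components of the generators to vanish, then invoke Lemma \ref{pi_1 for fix}(1) to conclude $\pi_1(G)\subseteq\langle\epsilon,\tau\rangle$.

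The one noteworthy difference is in the $|\pi_2(G)|=4q$ case, for the generator above $\beta_{1,1}$. You eliminate the $\sigma$-component by computing the $q$-th power via the telescoping product, which works but is somewhat laborious. The paper instead uses the commutation relation $[\alpha_{1,1},\beta_{1,1}]=1$: once you already know the $\alpha_{1,1}$-generator is $\tau^c\alpha_{1,1}$, lifting this commutation to $G$ immediately forces the $\sigma$-component of the $\beta_{1,1}$-generator to vanish, exactly as you did for the $\beta_{0,-1}$-generator. This avoids the telescoping computation entirely and also makes the extra conclusion $f=0$ unnecessary (only $w\in\langle\tau\rangle$ is needed for Lemma \ref{pi_1 for fix}(1)).
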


\begin{proof}
	Let us assume that $G$ is a subgroup of size $p^2q$ of $\Hol(A)$ such that $|\pi_2(G)|=4$. By Table \ref{lem:subgroups_with center} and the fact that $A$ has a unique $q$-Sylow subgroup, we have that $G$ has the following standard presentation:
	\[
	G=    \langle \epsilon,\ \sigma^a\tau^b\alpha_{1,1},\ \sigma^c\tau^d\beta_{0,-1} \rangle
	\]
	for some $0\leq a,b,c,d\leq 1$. By condition \nc, since $(\sigma^a\tau^b\alpha_{1,1})^2\in\langle\epsilon\rangle$ we have $a=0$. We also have that the second and the third generators must commute modulo the kernel, so $c=0$.
	
	Now, assume that $|\pi_2(G)|=4q$, so $G$ has the following standard presentation:
	\[
	G= \langle \epsilon^a u \alpha_{1,1},\ \epsilon^b v \beta_{0,-1},\ \epsilon^c w \beta_{1,1} \rangle
	\]
	where $u,v,w\in\langle\sigma,\tau\rangle$. By condition \NC, the first generator has order $2$ and we have $u\in \langle\tau\rangle$. Since $\alpha_{1,1}\beta_{0,-1}=\beta_{0,-1}\alpha_{1,1}$, condition \nc leads to $v\in \langle\tau\rangle$. For the same reason, $\alpha_{1,1}\beta_{1,1}=\beta_{1,1}\alpha_{1,1}$ implies $w\in \langle\tau\rangle$.
	
	Then, in both cases $\pi_1(G)\subseteq \langle \tau,\epsilon\rangle$ by Lemma \ref{pi_1 for fix}(1) and so $G$ is not regular.
\end{proof}

The following two lemmas hold just for odd primes. In the proofs we are using that if $p>2$ then 
\begin{equation}\label{powers2}
(\sigma^a \alpha_{1,1})^n=\tau^{a\frac{n(n-1)}{2}}\sigma^{na}\alpha_{1,1}^n
\end{equation}
for every $n\in\mathbb{N}$.

\begin{lemma}\label{with center pp}
	Let $p>2$. A set of representatives of conjugacy classes of regular subgroups $G$ of $\Hol(A)$ with $|\pi_2(G)|=p^2$ is
	$$G_a=\langle \epsilon,\ \sigma^a\alpha_{1,1},\ \tau\beta_{0,\r}\rangle \cong A$$
	for $1\leq a\leq p-1$.
\end{lemma}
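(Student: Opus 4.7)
The plan is to follow the established three-step strategy of the paper: verify that each $G_a$ is regular, that distinct values of $a$ yield non-conjugate subgroups, and that every regular subgroup with $|\pi_2(G)|=p^2$ is conjugate to one of the $G_a$.

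\textbf{Regularity.} Since $\alpha_{1,1}$ fixes $\tau$ and $\epsilon$ and $\beta_{0,r}$ fixes $\tau$, Lemma~\ref{pi_1 for fix}(2) gives $\tau,\epsilon\in\pi_1(G_a)$. Applying formula \eqref{powers2} one computes $(\sigma^a\alpha_{1,1})^p=\tau^{ap(p-1)/2}\sigma^{ap}\alpha_{1,1}^p=1$ (since $p$ is odd), so the first generator has order $p$ and together with $\tau,\epsilon$ its first $\pi_1$-coordinate $\sigma^a$ gives $\sigma\in\pi_1(G_a)$ as $\gcd(a,p)=1$; hence $\pi_1(G_a)=A$ and Lemma~\ref{rem for regularity} yields regularity.

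\textbf{Non-conjugacy.} Unlike in the preceding lemmas, the image of $G_a$ in the quotient $\Hol(A)/\mathfrak G_2$ equals $\langle\sigma,\beta_{0,r}\rangle\mathfrak G_2$ for \emph{every} nonzero $a$, so that invariant fails. I will argue directly. A calculation in $\Z_q\rtimes\Z_q^\times$ gives $\beta_{s,j}\beta_{0,r}\beta_{s,j}^{-1}=\beta_{s(1-r),r}$, so any $h\in\aut{A}$ normalizing $\pi_2(G_a)=\langle\alpha_{1,1},\beta_{0,r}\rangle$ must have the form $h=\alpha_{l,i}\beta_{0,j}$. Such an $h$ sends $\tau\beta_{0,r}$ to $\tau^i\beta_{0,r}$; for this to equal $\tau\beta_{0,r}$ inside $G_b$ (comparing $\beta$-parts fixes the power to $1$) we must have $i=1$. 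Then $h$ sends $\sigma^a\alpha_{1,1}$ to $\tau^{la}\sigma^a\alpha_{1,1}$; the unique element of $G_b$ with trivial $\beta$-part and $\aut{A}$-part $\alpha_{1,1}$ is $\sigma^b\alpha_{1,1}$, forcing $a=b$.

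\textbf{Exhaustiveness.} Let $G$ be regular with $|\pi_2(G)|=p^2$. Table~\ref{lem:subgroups_with center} gives $\pi_2(G)=\langle\alpha_{1,1},\beta_{0,r}\rangle$ up to conjugation, and since $\langle\epsilon\rangle$ is the unique subgroup of order $q$ of $A$, $\ker\pi_2|_G=\langle\epsilon\rangle$. Write $G=\langle\epsilon,u\alpha_{1,1},v\beta_{0,r}\rangle$ with $u=\sigma^{u_1}\tau^{u_2}$, $v=\sigma^{v_1}\tau^{v_2}$. Using $\alpha_{1,1}(\tau)=\tau$, $\beta_{0,r}(\sigma)=\sigma$ and the commutativity of $\sigma,\tau$, one finds the commutator $[u\alpha_{1,1},v\beta_{0,r}]$ equals $\tau^{v_1}$ in $\Hol(A)$; the condition \nc that this lies in $\langle\epsilon\rangle$ gives $v_1=0$. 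Regularity then forces $v_2\neq 0$ (otherwise $\beta_{0,r}\in G\cap(\{1\}\times\aut{A})$ contradicts Lemma~\ref{rem for regularity}) and $u_1\neq 0$ (otherwise Lemma~\ref{pi_1 for fix}(1) would imply $\pi_1(G)\subseteq\langle\epsilon,\tau\rangle$). Choosing $h=\alpha_{l,v_2^{-1}}$ with $l=(v_2^{-1}-1)/2 - v_2^{-1}u_2/u_1$, the conjugate $hGh^{-1}$ has generators $h(\tau^{v_2}\beta_{0,r})h^{-1}=\tau\beta_{0,r}$ and $h(u\alpha_{1,1})h^{-1}$ whose $v_2$-th power coincides (by \eqref{powers2}) with $\sigma^{u_1v_2}\alpha_{1,1}$, so $hGh^{-1}=G_a$ with $a=u_1v_2\in\{1,\dots,p-1\}$.

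\textbf{Main obstacle.} The genuine difficulty is the non-conjugacy step: the standard invariant through the normal quotient $\Hol(A)/\mathfrak G_2$ is insensitive to $a$, so one must track both generators $\sigma^a\alpha_{1,1}$ and $\tau\beta_{0,r}$ simultaneously under the admissible conjugations $h=\alpha_{l,i}\beta_{0,j}$ and exhibit $a$ itself as the conjugation invariant.
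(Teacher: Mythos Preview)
Your proof is correct and follows essentially the same approach as the paper: both determine the normalizer $N_{\aut{A}}(\pi_2(G))=\{\alpha_{l,i}\beta_{0,j}\}$, track the two generators under such conjugations to extract $a$ as an invariant, and then conjugate a general standard presentation into one of the $G_a$. Your observation that the quotient $\Hol(A)/\mathfrak G_2$ fails to separate the $G_a$ is exactly why the paper (like you) argues directly with the normalizer here, and your exhaustiveness computation---taking the $v_2$-th power to land on $\sigma^{u_1v_2}\alpha_{1,1}$---is in fact slightly more careful than the paper's, which names a conjugator but leaves the resulting generator with automorphism part $\alpha_{1,1}^{d^{-1}}$ rather than $\alpha_{1,1}$.
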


\begin{proof}
	If $G_a$ is conjugate to $G_b$ by $h\in \aut{A}$ then $h\in N(\pi_2(G))$ and so $h=\alpha_{l,i}\beta_{0,j}$. Then 
	\begin{eqnarray*}
		h \tau \beta_{0,r} h^{-1}&=&\tau^i \beta_{0,r}\in G_b\\
		h\sigma^a\alpha_{1,1} h^{-1}&=& \tau^{la}\sigma^a\alpha_{1,1}^i\in G_b.
	\end{eqnarray*}
	Then $i=1$ and $l=0$ and so we have that $a=b$.
	
	Let $G$ be a regular subgroup with $|\pi_2(G)|=p^2$. Then up to conjugation $\pi_2(G)=\langle   \alpha_{1,1},\beta_{0,r}\rangle$ and the kernel of $\pi_2|_G$ is the subgroup generated by $\epsilon$. Hence, we can assume that
	$$G=\langle \epsilon,\ \sigma^a\tau^b\alpha_{1,1},\ \sigma^c\tau^d\beta_{0,\r}\rangle.$$
	From the \nc condition, the second and the third generators must commute modulo the kernel, so we have $c=0$ and then $d\neq 0$ by regularity. If $a=0$ then according to Lemma \ref{pi_1 for fix}(1) we have $\pi_1(G)\subseteq \langle \tau,\epsilon\rangle$, a contradiction. Therefore $a\neq 0$ and $G$ is conjugate to $G_a$ by the automorphism $\alpha_{0,d^{-1}}\alpha_{1,1}^{-\frac{b}{a}}$.
\end{proof}

\begin{lemma}\label{with center ppq}
	Let $p>2$. A set of representatives of conjugacy classes of regular subgroups $G$ of $\Hol(A)$ with $|\pi_2(G)|=p^2q$ is
	$$G_{a}=\langle \sigma\alpha_{1,1},\ \tau^a\beta_{0,\r},\ \epsilon^{\frac{1}{\r-1}}\beta_{1,1}\rangle\cong A$$
	for $1\leq a\leq p-1$.
\end{lemma}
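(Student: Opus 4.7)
The plan follows the established strategy of this subsection. First, I would verify regularity of $G_a$: the three generators $\sigma\alpha_{1,1}$, $\tau^a\beta_{0,\r}$, $\epsilon^{\frac{1}{\r-1}}\beta_{1,1}$ have orders $p,p,q$ respectively (using $(\sigma\alpha_{1,1})^p=\tau^{p(p-1)/2}\sigma^p=1$ via \eqref{powers2} since $p$ is odd), they commute in pairs except for the semidirect relation $(\tau^a\beta_{0,\r})(\epsilon^{\frac{1}{\r-1}}\beta_{1,1})(\tau^a\beta_{0,\r})^{-1}=(\epsilon^{\frac{1}{\r-1}}\beta_{1,1})^\r$ (which is verified directly using $\beta_{0,\r}\beta_{1,1}\beta_{0,\r}^{-1}=\beta_{1,1}^\r$), and thus generate a quotient of $A$. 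Combined with $|\pi_2(G_a)|=p^2q$ we obtain $|G_a|=p^2q$ and $G_a\cong A$. The generic element has $\pi_1$-part $\tau^{i_1(i_1-1)/2+ai_2}\epsilon^{i_3 \r^{i_1+i_2}/(\r-1)}\sigma^{i_1}$, so $\pi_1=1$ forces $i_1=0$, then $i_3=0$, and (crucially, because $a\neq 0$) also $i_2=0$, giving regularity by Lemma \ref{rem for regularity}.

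For non-conjugacy, suppose $hG_ah^{-1}=G_b$ for $h=\alpha_{l,i}\beta_{s,j}\in \aut{A}$. Using $h(\sigma)=\tau^l\epsilon^s\sigma$ and $h\alpha_{1,1}h^{-1}=\alpha_{1,1}^i$, the conjugate of $\sigma\alpha_{1,1}$ is $\tau^l\epsilon^s\sigma\alpha_{1,1}^i$; matching this to the unique element of $G_b$ with $\pi_2=\alpha_{1,1}^i$, namely $\tau^{i(i-1)/2}\sigma^i\alpha_{1,1}^i$, forces $i=1$, $s=0$ and $l=0$. Hence $h=\beta_{0,j}$, which centralizes $\tau^a\beta_{0,\r}$, so matching against the corresponding element of $G_b$ gives $a=b$.

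For the main direction, let $G$ be a regular subgroup with $|\pi_2(G)|=p^2q$. Since $\langle\alpha_{1,1},\beta_{0,\r},\beta_{1,1}\rangle$ is the unique subgroup of $\aut{A}$ of this order (Table \ref{lem:subgroups_with center}) and $\ker\pi_2|_G$ is trivial, $G$ has a unique standard presentation $G=\langle u\alpha_{1,1},v\beta_{0,\r},w\beta_{1,1}\rangle$ with $u,v,w\in A$. Writing $u=\sigma^{u_1}\epsilon^{u_2}\tau^{u_3}$ and analogously for $v,w$, the relation $[u\alpha_{1,1},v\beta_{0,\r}]=1$ yields $v_1=0$ and $u_2(\r-1)=v_2(1-\r^{-u_1})$; the relation $[u\alpha_{1,1},w\beta_{1,1}]=1$ yields $w_1=0$ and, when $u_1\neq 0$, $w_2=\tfrac{1}{\r-1}$; and the semidirect relation $(v\beta_{0,\r})(w\beta_{1,1})(v\beta_{0,\r})^{-1}=(w\beta_{1,1})^\r$ yields $w_3=0$. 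Regularity then rules out $u_1=0$ (otherwise $u_2=0$ as well, and $\pi_1(G)\subseteq\langle\tau,\epsilon\rangle$ by Lemma \ref{pi_1 for fix}(1)) and $v_3=0$ (otherwise the element $v\beta_{0,\r}\cdot(\epsilon^{\frac{1}{\r-1}}\beta_{1,1})^{-v_2(\r-1)/\r}\in G$ simplifies to $\beta_{0,\r}\beta_{1,1}^{-v_2(\r-1)/\r}$, which has trivial $\pi_1$-part but non-trivial $\pi_2$-part).

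Finally, conjugation by $h=\alpha_{l,u_1}\beta_{s,j}$ with $s=-jv_2$, $l=\tfrac{u_1-1}{2}-u_3$, and any $j\in\mathbb{Z}_q^\times$ sends $G$ to $G_{u_1 v_3}$; since $u_1,v_3\in\mathbb{Z}_p^\times$, the parameter $a=u_1 v_3$ ranges over $\{1,\ldots,p-1\}$. The main computational obstacle is the bookkeeping in this last conjugation: the $\epsilon$-exponent of $h(u)$ picks up the term $\tfrac{s(1-\r^{-u_1})}{\r-1}$ from the non-commutativity $\sigma\epsilon=\epsilon^\r\sigma$, and its cancellation under $s=-jv_2$ is precisely the content of the earlier identity $u_2(\r-1)=v_2(1-\r^{-u_1})$, which guarantees that the same $h$ brings all three generators simultaneously into the standard form of $G_a$.
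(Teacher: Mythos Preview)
Your proof is correct and follows essentially the same route as the paper's: both reduce an arbitrary regular $G$ with $|\pi_2(G)|=p^2q$ to the standard presentation by exploiting that $\alpha_{1,1}$ is central in $\pi_2(G)$, derive the constraints on the coefficients, and then conjugate by an element of the form $\alpha_{l,u_1}\beta_{s,j}$ (the paper takes $j=1$) to reach $G_{u_1v_3}$. Your argument is in fact slightly more careful than the paper's in two places: you verify regularity of $G_a$ explicitly, and you rule out $v_3=0$ directly (the paper's formulation ``$G$ is conjugate to $G_{xt}$'' leaves the exclusion of $t=0$ implicit). For non-conjugacy you compute directly that any $h$ sending $G_a$ to $G_b$ must lie in $\langle\beta_{0,j}\rangle$, whereas the paper shortcuts this by observing that $h$ must normalize the center $\langle\sigma\alpha_{1,1}\rangle$; the two arguments are equivalent.
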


\begin{proof}
	If $G_a$ and $G_b$ are conjugate by $h$, then $h$ normalizes $\langle\sigma\alpha_{1,1}\rangle$, the center of $G_a$ and $G_b$, and so $h=\beta_{0,j}$ for some $j$. Then $h\tau^a\beta_{0,\r}h^{-1}=\tau^{a}\beta_{0,\r}\in G_b$. Therefore $a=b$.
	
	Let $G$ be a regular subgroup of $\Hol(A)$ with $|\pi_2(G)|=p^2q$. The unique subgroup of order $p^2q$ up to conjugation is $\langle \alpha_{1,1},\beta_{0,g},\beta_{1,1}\rangle\cong A$. Hence a standard presentation of $G$ is the following:
	\[G=\langle \epsilon^a u\alpha_{1,1},\ \epsilon^b v\beta_{0,\r},\ \epsilon^c w\beta_{1,1}\rangle
	\]
	for some $u,v,w\in \langle \tau,\sigma\rangle$. Let $u=\sigma^x\tau^y$. Since $\alpha_{1,1}$ is central in $\pi_2(G)$, by condition \nc, we have that $\epsilon^a u \alpha_{1,1}\in Z(G)$ and so $v,w\in\langle\tau\rangle$ and $$a=b\frac{r^x-1}{r-1} \quad\text{and}\quad c(r^x-1)=\frac{r^x-1}{r-1}.$$ Now, lifting the relation $\beta_{0,r}\beta_{1,1}=\beta_{1,1}^r\beta_{0,r}$ we have $w=1$. If $x=0$, by Lemma \ref{pi_1 for fix}(1) we have $\pi_1(G)\subseteq \langle \epsilon,\tau\rangle$, a contradiction. So, we assume $x\neq 0$ and then $c=\frac{1}{\r-1}$. Thus, 
	\[
	G=\langle \epsilon^{ b\frac{r^x-1}{r-1} }\sigma^x\tau^y\alpha_{1,1},\ \epsilon^b\tau^t\beta_{0,\r},\ \epsilon^{\frac{1}{\r-1}}\beta_{1,1} \rangle
	\]
	for some $0\leq t\leq p-1$. Hence, using \eqref{powers2} we have that $G$ is conjugate to $G_{xt}$ by $h=\alpha_{n,x}\beta_{-b,1}$ where $n=\frac{x-1}{2}-y$.
\end{proof}

The following remark is analogous to Remark \ref{direct of G_0 type}.

\begin{remark}\label{direct of other type}
	The skew braces of $A$-type which decompose as direct products are the following: 
	\begin{itemize}
		\item[(i)] the skew brace in Proposition \ref{with center q 2}.		
		\item[(ii)] the skew brace associated to the group $G_{c}$ for $1\leq c\leq p-1$ as defined in Lemma \ref{with center p} is the direct product of the trivial skew brace of size $p$ and a skew brace of size $pq$ with $|\ker{\lambda}|=q$, see \cite[Theorem 3.9]{skew_pq}.		
		\item[(iii)] The skew brace associated to $G_{a}$ for $1\leq a\leq p-1$ as defined in Lemma \ref{with center pq} is the direct product of the trivial skew brace of size $p$ and a skew brace of size $pq$ with $|\ker{\lambda}|=1$, see \cite[Theorem 3.12]{skew_pq}.
	\end{itemize}
	According to the enumeration of skew braces of size $pq$ collected in \cite[Theorems 3.6, 3.9 and 3.12]{skew_pq}, a complete list of skew braces of non-abelian type of size $p^2 q$ that decompose as a direct product is given by the skew braces above together with those in Remark \ref{direct of G_0 type}.
\end{remark}

We summarize the content of this subsection in the following tables.

\begin{table}[H]
	\centering
	
	\small{
		\begin{tabular}[t]{c|c|c}
			$|\ker{\lambda}|$ &  $\mathbb{Z}_p^2\times\Z_q$  &  $\Z_p\times(\Z_q\rtimes_g\Z_p)$  \\
			\hline
			$1$ &- & $p-1$ \\
			$p$ & $1$ & $2p-1$ \\
			$q$ &- & $p-1$ \\
			$pq$ & $2$ & $2(p-1)$ \\
			$p^2$ & $1$ &- \\
			$p^2q$ &- & $1$
		\end{tabular}
		\qquad 
		\begin{tabular}[t]{c|c|c|c|c}
			$|\ker{\lambda}|$ &  $\Z_{4q}$ & $\Z_2^2\times\Z_q$ & $\Z_2\times(\Z_q\rtimes_{-1}\Z_2)$ & $\Z_q\rtimes_{-1} \Z_4$ \\
			\hline
			$2$ & $1$ &- & $2$ & $1$ \\
			$2q$ & $1$ & $1$ & $1$ & $1$ \\
			$4$ &- & $1$ &- &- \\
			$4q$ &- &- & $1$ &-
	\end{tabular}    }
	\caption{Enumeration of skew braces of $A$-type for $q=1\pmod{p}$, $q\neq 1\pmod{p^2}$: the left table assumes $p>2$ and the right table assumes $p=2$.}\label{table with center q=1 (p)}
\end{table}

 \section{Skew braces of size $p^2q$ with $q=1\pmod{p^2}$}\label{section:q=1(p^2)}

In this section we assume that $q=1\pmod{p^2}$ and we will denote by $h$ a fixed element of order $p^2$ in $\Z_q^\times$. Accordingly, we have the following non-abelian groups of size $p^2q$: 

\begin{itemize}
	\item[(i)] $\mathbb{Z}_{q}\rtimes_{h^p} \mathbb{Z}_{p^2}=\langle \sigma,\tau\, | \, \tau^q=\sigma^{p^2}=1,\, \sigma\tau\sigma^{-1}=\tau^{h^p} \rangle$,
	\item[(ii)] $\mathbb{Z}_{p}\times (\mathbb{Z}_{q}\rtimes_{h^p} \mathbb{Z}_{p})=\langle \sigma,\tau,\epsilon\,|\, \sigma^p=\tau^p=\epsilon^q=1,\ [\epsilon,\tau]=[\tau, \sigma]=1,\, \sigma \epsilon \sigma^{-1}=\epsilon^{h^p}\rangle$,
	\item[(iii)] $\mathbb{Z}_{q}\rtimes_{h} \mathbb{Z}_{p^2}=\langle \sigma,\tau\, | \, \tau^q=\sigma^{p^2}=1,\, \sigma\tau\sigma^{-1}=\tau^h \rangle$.
\end{itemize}

In Tables \ref{table:q=1_p>2_2} and \ref{table:4q_2} we have the enumeration of skew braces according to their additive and multiplicative groups.

\begin{table}[H]
	\centering
	\small{
		\begin{tabular}{c|c|c|c|c|c}
			$+ \backslash \circ$ & $\Z_{p^2q}$ & $\Z_q\rtimes_{h^p}\Z_{p^2}$ & $\Z_q\rtimes_h\Z_{p^2}$ & $\Z_p^2\times\Z_q$ & $\Z_p\times(\Z_q\rtimes_{h^p}\Z_p)$  \\
			\hline
			$\Z_q\rtimes_{h^p}\Z_{p^2}$ &$2p$ &$2p(p-1)$ & $2p(p-1)$ &- &- \\
			$\Z_q\rtimes_h\Z_{p^2}$  & $2$ & $2(p-1)$ & $2p(p-1)$&- &-\\
			$\Z_p\times(\Z_q\rtimes_{h^p}\Z_p)$  & -  & - & - & $4$ & $6p-4$\\
	\end{tabular}}
	\vs
	\caption{Enumeration of skew braces of size $p^2q$ with $q=1\pmod{p^2}$ for $p>2$.}
	\label{table:q=1_p>2_2}
		
	\small{
		\begin{tabular}{c|c|c|c|c|c}
			$+\backslash \circ$ & $\Z_{4q}$  & $\Z_q\rtimes_{-1}\Z_4$ & $\Z_q\rtimes_{h}\Z_4$ & $\Z_2^2\times\Z_q$ & $\Z_2\times (\Z_q\rtimes_{-1}\Z_2)$\\
			\hline
			$\Z_q\rtimes_{-1}\Z_4$ & $2$ & $2$ & $2$ & $2$ & $4$ \\
			$\Z_q\rtimes_{h}\Z_4$ & $2$ & $2$ & $4$ & - & - \\
			$\Z_2 \times (\Z_q\rtimes_{-1}\Z_2)$ & $2$ & $2$ & $2$ & $2$ & $4$
	\end{tabular}}
	
	\vs
	\caption{Enumeration of skew braces of order $4q$ with $q=1\pmod{4}$. }
	\label{table:4q_2}
\end{table}

As the strategy to check regularity has been widely discussed in the previous sections, we are not referring to it from now on.

\subsection{Skew braces of $\mathbb{Z}_{q}\rtimes_{h^p} \mathbb{Z}_{p^2}$-type}\label{subsection:q=1(p^2)_non-abelian_cyclic_p-Sylow1}

In this section we denote by $A$ the group $\mathbb{Z}_{q}\rtimes_{h^p} \mathbb{Z}_{p^2}$. The automorphism group of $A$ can be described as in Subsection \ref{subsection:q=1(p)_non-abelian_p-Sylow_cyclic} and we employ the same notation. In particular, the subgroups of order $p,q$ and $pq$ of $\aut{A}$ coincide with the subgroups in Table \ref{table:subgroups_semidirect_q=1(p^2)}.
Therefore, if $G$ is a regular subgroup with $|\pi_2(G)| \in \{ q, p, pq \}$ we can apply Lemma \ref{case q of Z_q semi Z_p2}, \ref{prop:semi_p} and \ref{prop:semi_pq}, respectively.

Up to conjugation, the elements of order dividing $p^2$ are contained in the subgroup of $\aut{A}$ given by $\langle \varphi_{1,0}^1,\ \varphi_{h,0}^0\rangle\cong \Z_p\times \Z_p^2$. So, according to \cite[Theorem 3.3]{subgroups_finite_p-group}, it has $p+1$ subgroups of order $p^2$, namely
\begin{equation}\label{lem:sub_aut_p2_section6}
\langle \varphi_{h,0}^k\rangle\cong\Z_{p^2} \quad\text{and}\quad \langle \varphi_{1,0}^1,\ \varphi_{h^p,0}^0\rangle\cong\Z_p\times\Z_p
\end{equation}
for $0\leq k\leq p-1$.

On the other hand, since $\langle \varphi_{1,1}^0\rangle$ is the unique $q$-Sylow subgroup of $\aut A$, we have that the subgroups of size $p^2q$ in $\aut A$ up to conjugation are the following $p+1$ subgroups:
\begin{equation}\label{lem:sub_aut_p2q_section6}
\langle\varphi_{h,0}^k,\ \varphi_{1,1}^0\rangle \quad\text{and}\quad \langle \varphi_{1,0}^1,\ \varphi_{h^p,0}^0,\ \varphi_{1,1}^0\rangle
\end{equation}
for $0\leq k\leq p-1$.

The following lemmas enumerate the regular subgroups of $\Hol(A)$ up to conjugation with $|\pi_2(G)|\in\{p^2,p^2q\}$. In order to do this, we separate $p=2$ and $p>2$.

\begin{lemma}\label{Sec 7 pp}
	Let $p>2$. A set of representatives of conjugacy classes of regular subgroups $G$ of $\Hol(A)$ with $|\pi_2(G)|=p^2$ is
	$$  G_{b,k}= \langle \tau,\ \sigma^b\varphi_{h,0}^k \rangle\cong \Z_q\rtimes_h \Z_{p^2}$$
	for $0\leq k\leq p-1$ and $1\leq b\leq p-1$.
\end{lemma}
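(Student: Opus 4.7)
The plan is to verify that each $G_{b,k}$ is a regular subgroup of the claimed isomorphism type, then to prove pairwise non-conjugacy of the listed representatives, and finally to show every regular $G$ with $|\pi_2(G)|=p^2$ is conjugate to one of them.

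For regularity and isomorphism type, since $\tau\in G_{b,k}$ and $\pi_1(\sigma^b\varphi_{h,0}^k)=\sigma^b$ with $\gcd(b,p)=1$, Lemma \ref{pi_1 for fix} gives $\langle\tau,\sigma^b\rangle = A\subseteq\pi_1(G_{b,k})$, and Lemma \ref{rem for regularity} yields regularity. A direct computation in $\Hol(A)$ using the semidirect-product formula shows $(\sigma^b\varphi_{h,0}^k)^p = \sigma^{bp}\varphi_{h^p,0}^0$, where the assumption $p>2$ is used to discard the quadratic term modulo $p^2$; this element is nontrivial because $h^p\neq 1$, so $\sigma^b\varphi_{h,0}^k$ has order $p^2$. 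Its conjugation action on $\tau$ (using $\sigma\tau\sigma^{-1}=\tau^{h^p}$) sends $\tau\mapsto \tau^{h^{1+bp}}$, where $h^{1+bp}\in \Z_q^\times$ has order $p^2$; hence $G_{b,k}\cong \Z_q\rtimes_h \Z_{p^2}$.

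For non-conjugacy, the subgroups $\langle \varphi_{h,0}^k\rangle$ of $\aut(A)\cong \Z_p\times(\Z_q\rtimes\Z_q^\times)$ are pairwise non-conjugate for distinct values of $k$: the projection $\aut(A)\to\Z_p$ onto the central direct factor is preserved under conjugation, and $\varphi_{h,0}^k$ has $\Z_p$-component $k$. Thus $G_{b,k}$ and $G_{b',k'}$ with $k\neq k'$ have non-conjugate $\pi_2$-images. For fixed $k$ and varying $b$, one uses the normal subgroup $\mathfrak G_1 = \langle \sigma^p, \tau, \varphi_{1,1}^0\rangle\trianglelefteq \Hol(A)$: the quotient $\Hol(A)/\mathfrak G_1$ is abelian, and the image of $G_{b,k}$ is the cyclic subgroup generated by the class of $\sigma^b\varphi_{h,0}^k$, corresponding to $(b,k,h)\in\Z_p\times\Z_p\times\Z_q^\times$. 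Two such cyclic subgroups coincide only when $b=b'$.

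For the classification, let $G$ be a regular subgroup with $|\pi_2(G)|=p^2$. By \eqref{lem:sub_aut_p2_section6}, up to conjugation $\pi_2(G)$ is either cyclic $\langle \varphi_{h,0}^k\rangle$ or the non-cyclic $\langle \varphi_{1,0}^1, \varphi_{h^p,0}^0\rangle$; in either case $\ker\pi_2|_G=\langle\tau\rangle$, the unique subgroup of order $q$ in $A$. For the non-cyclic case, the argument of Proposition \ref{prop:non_existence_p2} applies unchanged: a standard presentation $G=\langle\tau,\sigma^a\varphi_{1,0}^1,\sigma^c\varphi_{h^p,0}^0\rangle$ together with the \nc conditions $(\sigma^a\varphi_{1,0}^1)^p, (\sigma^c\varphi_{h^p,0}^0)^p\in\langle\tau\rangle$ and the formula for powers used there force $a\equiv c\equiv 0\pmod p$, whence Lemma \ref{pi_1 for fix}(1) gives $\pi_1(G)\subseteq\langle\sigma^p,\tau\rangle$, contradicting regularity. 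In the cyclic case, starting from $G=\langle\tau,\sigma^b\tau^c\varphi_{h,0}^k\rangle$ and using the relation $\sigma\tau=\tau^{h^p}\sigma$, we multiply the second generator on the left by an appropriate power of $\tau\in G$ to absorb $\tau^c$, arriving at $G=\langle\tau,\sigma^b\varphi_{h,0}^k\rangle$; regularity forces $b\not\equiv 0\pmod p$ (otherwise $\pi_1(G)\subseteq\langle\sigma^p,\tau\rangle$), so $G=G_{b,k}$.

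The main obstacle I anticipate is the computation identifying the isomorphism type and order of $\sigma^b\varphi_{h,0}^k$, since one must track terms modulo $p^2$ and invoke $p>2$ to eliminate the quadratic contribution; the remaining arguments are largely parallel to those already carried out in Subsection \ref{subsection:q=1(p)_non-abelian_p-Sylow_cyclic}.
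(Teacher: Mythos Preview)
Your classification step has a genuine gap: after arriving at $G=\langle\tau,\sigma^b\varphi_{h,0}^k\rangle$ with $b\not\equiv 0\pmod p$, the exponent $b$ lives in $\Z_{p^2}$, so there are $p(p-1)$ possible values, not the $p-1$ values $1\le b\le p-1$ in the statement. You still need to show that $\langle\tau,\sigma^b\varphi_{h,0}^k\rangle$ is conjugate to $\langle\tau,\sigma^{b'}\varphi_{h,0}^k\rangle$ whenever $b\equiv b'\pmod p$. The paper does this by conjugating by a suitable power of $\varphi_{1,0}^1$, which lies in the normalizer of $\langle\varphi_{h,0}^k\rangle$: since $\varphi_{1,0}^l(\sigma)=\sigma^{lp+1}$ and $\varphi_{1,0}^l$ commutes with $\varphi_{h,0}^k$, one gets $\varphi_{1,0}^l\,\sigma^b\varphi_{h,0}^k\,(\varphi_{1,0}^l)^{-1}=\sigma^{b(lp+1)}\varphi_{h,0}^k$, and as $l$ ranges over $\Z_p$ the exponent $b(lp+1)=b+blp$ runs through the full residue class of $b$ modulo $p$. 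Without this step the list is not shown to be exhaustive.

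Two smaller points. Your non-conjugacy argument for distinct $k$ via the projection $\aut(A)\to\Z_p$ is not sufficient as written: the image of the \emph{subgroup} $\langle\varphi_{h,0}^k\rangle$ in $\Z_p$ is all of $\Z_p$ whenever $k\neq 0$, so the projection alone only separates $k=0$ from $k\neq 0$; you need to also track the $\Z_q^\times$-component (or simply use your $\mathfrak G_1$ argument, which in fact distinguishes both $b$ and $k$ at once). Also, your appeal to Lemma~\ref{pi_1 for fix} for regularity is imprecise: part~(2) requires $\varphi_{h,0}^k(\sigma^b)=\sigma^b$, which fails for $k\neq 0$ since $\varphi_{h,0}^k(\sigma)=\sigma^{kp+1}$; regularity is more cleanly checked via Lemma~\ref{rem for regularity}(iii) after verifying $|G_{b,k}|=p^2q$.
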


\begin{proof}
	Arguing as in Lemma \ref{prop:semi_p} we can show that the groups in the statement are not conjugate. Let $G$ be a regular subgroup of $\Hol(A)$ with $|\pi_2(G)|=p^2$. The unique subgroup of order $q$ of $A$ is $\langle \tau \rangle$. According to \eqref{lem:sub_aut_p2_section6} we have two cases to consider. 
	\begin{itemize}
		\item[(i)] If $\pi_2(G)=\langle \varphi_{h,0}^k \rangle$, then a standard presentation is
		\[
		G= \langle \tau, \tau^a\sigma^b\varphi_{h,0}^k \rangle = \langle \tau, \sigma^b\varphi_{h,0}^k \rangle
		\]
		where $b\neq 0 \pmod{p}$. Up to conjugation by the normalizer of $\langle\varphi_{h,0}^k\rangle$ in $\aut{A}$, we can assume $1\leq b\leq p-1$, so $G=G_{b,k}$.
		\item[(ii)] If $\pi_2(G)=\langle \varphi_{1,0}^1,\ \varphi_{h^p,0}^0 \rangle$, then we can argue as in Proposition \ref{prop:non_existence_p2}, and prove that there are no such regular subgroups. \qedhere
	\end{itemize}
\end{proof}

\begin{lemma}\label{Sec 7 ppq}
	Let $p>2$. A set of representatives of conjugacy classes of regular subgroups $G$ of $\Hol(A)$ with $|\pi_2(G)|=p^2q$ is
	\[
	G_{b,k} = \langle \sigma^b\varphi_{h,0}^k,\ \tau^{\frac{1}{h^p-1}}\varphi_{1,1}^0 \rangle \cong \Z_q\rtimes_h \Z_{p^2}
	\]
	for $1\leq b\leq p-1$ and $0\leq k\leq p-1$.
\end{lemma}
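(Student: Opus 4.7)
The proof follows the template of earlier results in this section. For regularity of each $G_{b,k}$: since $\varphi_{1,1}^0(\tau)=\tau$, Lemma \ref{pi_1 for fix}(2) gives $\langle\tau\rangle\subseteq\pi_1(G_{b,k})$, and $\sigma^b=\pi_1(\sigma^b\varphi_{h,0}^k)\in\pi_1(G_{b,k})$ together with $\gcd(b,p)=1$ yields $\langle\sigma\rangle\subseteq\pi_1(G_{b,k})$, so $\pi_1(G_{b,k})=A$ and regularity follows from Lemma \ref{rem for regularity}. For pairwise non-conjugacy, the subgroups $\langle\varphi_{h,0}^k,\varphi_{1,1}^0\rangle$ for distinct values of $k$ are non-conjugate in $\aut A$ by \eqref{lem:sub_aut_p2q_section6}, so $G_{b,k}$ and $G_{c,l}$ conjugate forces $k=l$. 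With $k=l$, I would pass to the normal subgroup $\mathfrak G_1=\langle\sigma^p,\tau,\varphi_{1,1}^0\rangle\unlhd\Hol(A)$ and observe that $\Hol(A)/\mathfrak G_1\cong \Z_p\times\Z_p\times\Z_q^\times$ is abelian; the image of $G_{b,k}$ is the cyclic subgroup generated by the class of $\sigma^b\varphi_{h,0}^k$, corresponding to $(b,k,h)$, and equality of two such cyclic groups (sharing the $\Z_q^\times$-component $h$) forces $b=c$.

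For exhaustion, by \eqref{lem:sub_aut_p2q_section6} there are two conjugacy classes of $p^2q$-subgroups of $\aut A$ to check. The case $\pi_2(G)=\langle\varphi_{1,0}^1,\varphi_{h^p,0}^0,\varphi_{1,1}^0\rangle$ is ruled out exactly as in Proposition \ref{prop:non_existence_p2}: formula \eqref{powers} applied to the \nc relations $(\cdot)^p=1$ on the two $p$-order generators, together with $(\cdot)^q=1$ on the $q$-order generator, forces all $\sigma$-exponents to lie in $p\Z_{p^2}$, so Lemma \ref{pi_1 for fix}(1) gives $\pi_1(G)\subseteq\langle\tau,\sigma^p\rangle$, contradicting regularity.

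In the remaining case $\pi_2(G)=\langle\varphi_{h,0}^k,\varphi_{1,1}^0\rangle$, the restriction $\pi_2|_G$ is an isomorphism since $|G|=|\pi_2(G)|=p^2q$, so a standard presentation $G=\langle X,Y\rangle$ with $X=\tau^a\sigma^b\varphi_{h,0}^k$ and $Y=\tau^c\sigma^d\varphi_{1,1}^0$ has $X,Y$ satisfying exactly the defining relations $Y^q=1$ and $XYX^{-1}=Y^h$. The first relation forces $d=0$ by projecting to $A^{\mathrm{ab}}\cong\Z_{p^2}$; the commutation relation, computed directly in $\Hol(A)$, then yields $c=\frac{1}{h^p-1}$ provided $b\not\equiv 0\pmod p$ (otherwise Lemma \ref{pi_1 for fix}(1) would again trap $\pi_1(G)$ inside $\langle\tau,\sigma^p\rangle$). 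Conjugation by a power of $\varphi_{1,0}^1$ (which commutes with both $\varphi_{h,0}^k$ and $\varphi_{1,1}^0$, and acts on $A$ by $\sigma\mapsto\sigma^{p+1}$) reduces $b$ modulo $p^2$ to its residue in $\{1,\ldots,p-1\}$, and a final conjugation by $\varphi_{1,n}^0$ with $n=-a(h^p-1)/(h^{pb+1}-1)$—well-defined because $pb+1\not\equiv 0\pmod{p^2}$ for $1\leq b\leq p-1$—sends $G$ onto $G_{b,k}$.

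The main obstacle will be the explicit computation of $XYX^{-1}=Y^h$ in $\Hol(A)$, both to extract $c=\frac{1}{h^p-1}$ and to verify that the final $\varphi_{1,n}^0$-conjugation lands exactly in $G_{b,k}$; this requires repeatedly pushing $\varphi_{1,1}^0$ past $\sigma$ using $\varphi_{1,1}^0(\sigma)=\tau\sigma$, composing with the diagonal $\varphi_{h,0}^k$, and managing the non-commutativity $\sigma\tau=\tau^{h^p}\sigma$ inside $A$, but it is entirely mechanical once the normal form for elements of $A\rtimes\aut A$ is written down.
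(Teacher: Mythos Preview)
Your proof is correct and follows essentially the same approach as the paper's: rule out the non-cyclic $p^2$-Sylow image via Proposition~\ref{prop:non_existence_p2}, then in the cyclic case use the \nc relations $Y^q=1$ and $XYX^{-1}=Y^h$ to pin down $d=0$ and $c=\tfrac{1}{h^p-1}$, reduce $b$ via a power of $\varphi_{1,0}^1$, and kill the remaining $\tau^a$ by conjugating with a power of $\varphi_{1,1}^0$. Your argument is in fact more explicit than the paper's in several places (the regularity check, the non-conjugacy via $\mathfrak G_1$, and the well-definedness of the final conjugating exponent), and your formula $n=-a(h^p-1)/(h^{pb+1}-1)$ is the correct one---the paper's printed exponent appears to be its reciprocal.
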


\begin{proof}
	The groups in the statement are not conjugate by the same argument of Lemma \ref{prop:semi_p}.
	Let $G$ be a regular subgroup of $\Hol(A)$ with $|\pi_2(G)|=p^2q$. According to \eqref{lem:sub_aut_p2q_section6} we have two cases to consider. In the first one, $G$ has the standard presentation:
	\[
	G=\langle \tau^a\sigma^b\varphi_{h,0}^k,\  \tau^c\sigma^d\varphi_{1,1}^0 \rangle.
	\]
	From the \nc condition $(\tau^a\sigma^b\varphi_{h,0}^k)^q=1$, we have $d=0$ and so $b\neq 0\pmod{p}$ (otherwise by Lemma \ref{pi_1 for fix}(1) it follows that $\pi_1(G)\subseteq \langle \sigma^p, \tau\rangle$). Moreover, we can also assume that $1\leq b\leq p-1$, up to conjugation by a power of $\varphi_{1,0}^1$.  From the \nc condition, lifting the relation $\varphi_{h,0}^k\varphi_{1,1}^0=(\varphi_{1,1}^0)^h\varphi_{h,0}^k$ we have 
	that $c=\frac{1}{h^p-1}$. 
	
	Now, if $a=0$, we have one of the representatives in the statement. If not, we conjugate by $\varphi_{n,1}^0$ where $n=-\frac{h^{pb+1}-1}{a(h^p-1)}$.
	
	In the second case we have no regular subgroups by the same argument of Proposition \ref{prop:non_existence_p2}.
\end{proof}

Now, we deal with the case $p=2$. Recall that since $h$ is an element of order $4$ in $\Z_q^\times$, we can assume that $h^2=-1$.

\begin{lemma}
	Let $p=2$. A set of representatives of conjugacy classes of regular subgroups $G$ of $\Hol(A)$ with $|\pi_2(G)|=4$ is given by
	\[
	G_1=\langle \tau,\ \sigma\varphi_{1,0}^1,\ \sigma^2\varphi_{-1,0}^0 \rangle \cong \Z_2\times (\Z_q\rtimes_{-1} \Z_2), \qquad G_2=\langle \tau,\ \sigma\varphi_{h,0}^0 \rangle\cong \Z_q\rtimes_{h} \Z_4.
	\]
\end{lemma}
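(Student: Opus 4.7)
The plan is to run through the three conjugacy classes of order $p^2=4$ subgroups of $\aut{A}$ listed in \eqref{lem:sub_aut_p2_section6} (noting $h^p=-1$), namely the two cyclic classes $\langle \varphi_{h,0}^k\rangle$ for $k=0,1$ and the Klein four class $\langle \varphi_{1,0}^1, \varphi_{-1,0}^0\rangle$, and in each case determine which standard presentations yield regular subgroups and how to reduce them to one of $G_1,G_2$.

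The first step is to check that $G_1$ and $G_2$ are regular via Lemma \ref{rem for regularity}: in both cases $\tau\in\pi_1$ and a coset representative of $\sigma$ or $\sigma^2$ modulo $\langle\tau\rangle$ lies in $\pi_1$ by Lemma \ref{pi_1 for fix}(2), so $\pi_1(G_i)=A$. They are non-conjugate because their images under $\pi_2$ belong to different conjugacy classes of $\aut A$ (cyclic of order $4$ versus Klein four).

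For uniqueness, since $A$ has the unique $q$-subgroup $\langle\tau\rangle$, the kernel of $\pi_2|_G$ must equal $\langle\tau\rangle$. In the cyclic case $\pi_2(G)=\langle \varphi_{h,0}^k\rangle$ the standard presentation is $G=\langle\tau,\sigma^b\varphi_{h,0}^k\rangle$; I would compute $(\sigma^b\varphi_{h,0}^k)^2=\sigma^{b(2k+2)}\varphi_{-1,0}^0$. For $k=1$ this collapses to $\varphi_{-1,0}^0\in G\cap(\{1\}\times\aut A)$, violating regularity, so this case contributes nothing. For $k=0$, regularity forces $b$ odd; the case $b=3$ conjugates to $b=1$ (i.e.\ to $G_2$) by $\varphi_{1,0}^1$, using the identity $\varphi_{1,0}^1\varphi_{h,0}^0\varphi_{1,0}^1=\varphi_{h,0}^0$, which I would verify directly on generators. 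In the Klein four case $G=\langle\tau,\sigma^a\varphi_{1,0}^1,\sigma^b\varphi_{-1,0}^0\rangle$, the condition $(\sigma^b\varphi_{-1,0}^0)^2=\sigma^{2b}\in\langle\tau\rangle$ forces $b\in\{0,2\}$ and regularity excludes $b=0$; $(\sigma^a\varphi_{1,0}^1)^2=\sigma^{4a}=1$ is automatic, while regularity forces $a$ odd (else $\varphi_{1,0}^1$ or a product like $\sigma^2\varphi_{1,0}^1\cdot(\sigma^2\varphi_{-1,0}^0)^{-1}$ would lie in $\{1\}\times\aut A$). The case $a=3$ then conjugates to $a=1$, i.e.\ to $G_1$, again by $\varphi_{1,0}^1$.

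The main obstacle is the bookkeeping inside $\Hol(A)=A\rtimes\aut A$: each square and conjugate involves the twisting action of an automorphism on powers of $\sigma$, and the heart of the proof amounts to spotting when these products land in $\{1\}\times\aut A$ (ruling out regularity) versus in the kernel $\langle\tau\rangle$. In particular, the argument that $k=1$ yields no regular subgroup and that in the Klein four case only $b=2$ and $a$ odd survive both rest on such hidden automorphisms appearing in squares of generators. Once the small identities $(\sigma^b\varphi_{h,0}^k)^2=\sigma^{b(2k+2)}\varphi_{-1,0}^0$ and $\varphi_{1,0}^1\varphi_{h,0}^0\varphi_{1,0}^1=\varphi_{h,0}^0$ are in hand, the reduction to $G_1$ and $G_2$ is a short conjugation argument inside the normalizer of $\pi_2(G)$.
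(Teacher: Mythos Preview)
Your proposal is correct and follows essentially the same approach as the paper: split according to the three conjugacy classes of order-$4$ subgroups of $\aut A$, rule out the cyclic class with $k=1$ via the square $(\sigma^b\varphi_{h,0}^1)^2=\varphi_{-1,0}^0$, and handle the Klein four case exactly as in Lemma~\ref{lem:4p_i_Qp_4}. The paper's proof simply defers both the Klein four case and the reduction to $b=1$ to the earlier Lemmas~\ref{lem:4p_i_Qp_4} and~\ref{Sec 7 pp}, whereas you spell out those reductions explicitly (including the explicit exclusion of $a=2$ in the Klein four case and the conjugation by $\varphi_{1,0}^1$), but the substance is the same.
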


\begin{proof}
	The groups $G_1$ and $G_2$ are not conjugate since their images under $\pi_2$ are not. Let $G$ be a regular subgroup of $\Hol(A)$. If $\pi_2(G)=\langle \varphi_{1,0}^1, \varphi_{-1,0}^0 \rangle$
	we can argue as in Lemma \ref{lem:4p_i_Qp_4} to get $G_1$. If $\pi_2(G)=\langle \varphi_{h,0}^k \rangle$ for $ k=0,1$, arguing as in Lemma \ref{Sec 7 pp}, we have 
	\[
	G=\langle \tau,\ \sigma\varphi_{h,0}^k \rangle.
	\]
	If $k=1$ then $(\sigma\varphi_{h,0}^1)^2=(\varphi_{h,0}^1)^2=\varphi_{-1,1}^0 \in G$ and so $G$ is not regular. Hence $k=0$ and so $G=G_2$.
\end{proof}

\begin{lemma}
	Let $p=2$. There exists two regular subgroups $G$ of $\Hol(A)$ with $|\pi_2(G)|=4q$ up to conjugation. A set of representatives is given by
	\[
	G_1=\langle \sigma\varphi_{1,0}^1,\ \sigma^2\varphi_{-1,0}^0,\ \tau^{-\frac{1}{2}}\varphi_{1,1}^0 \rangle \cong \Z_2\times (\Z_q\rtimes_{-1} \Z_2),\qquad G_2=\langle \sigma\varphi_{h,0}^0,\ \tau^{-\frac{1}{2}} \varphi_{1,1}^0\rangle \cong \Z_q\rtimes_{h} \Z_4.
	\]
\end{lemma}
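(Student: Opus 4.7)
The plan is to combine the strategies of Lemma \ref{lem:4p_i_Qp_4p} and Lemma \ref{Sec 7 ppq}, with one extra computation needed to rule out the $k=1$ branch of the first family of subgroups from \eqref{lem:sub_aut_p2q_section6}. First I would verify that $G_1$ and $G_2$ are regular via Lemma \ref{rem for regularity} and Lemma \ref{pi_1 for fix}, and observe that they lie in distinct $\aut{A}$-conjugacy classes because their images under $\pi_2$, namely $\langle\varphi_{1,0}^1,\varphi_{-1,0}^0,\varphi_{1,1}^0\rangle$ and $\langle\varphi_{h,0}^0,\varphi_{1,1}^0\rangle$, are non-conjugate subgroups of $\aut{A}$ by \eqref{lem:sub_aut_p2q_section6}.

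Conversely, given a regular subgroup $G$ with $|\pi_2(G)|=4q$, I would split into cases using \eqref{lem:sub_aut_p2q_section6}: up to conjugation, either $\pi_2(G)=\langle\varphi_{1,0}^1,\varphi_{-1,0}^0,\varphi_{1,1}^0\rangle$, or $\pi_2(G)=\langle\varphi_{h,0}^k,\varphi_{1,1}^0\rangle$ for some $k\in\{0,1\}$. In the first case the argument of Lemma \ref{lem:4p_i_Qp_4p} carries over verbatim and produces $G$ conjugate to $G_1$. In the second case, from the standard presentation $G=\langle \tau^a\sigma^b\varphi_{h,0}^k,\ \tau^c\sigma^d\varphi_{1,1}^0\rangle$, the \nc condition $(\tau^c\sigma^d\varphi_{1,1}^0)^q=1$ together with $q\equiv 1\pmod{4}$ forces $d=0$; lifting the commutation relation $\varphi_{h,0}^k\varphi_{1,1}^0\varphi_{h,0}^{-k}=\varphi_{1,1}^{0\,h}$ modulo the kernel then gives $c=-\tfrac{1}{2}$; and Lemma \ref{pi_1 for fix}(1) forces $b=1$ by regularity.

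The crucial step, and the main obstacle, will be to rule out $k=1$. The computation
\[
(\tau^a\sigma\varphi_{h,0}^1)^2=(\tau^{a(1-h)},\varphi_{-1,0}^0)
\]
in $\Hol(A)$ together with multiplication by a suitable power of $\tau^{-1/2}\varphi_{1,1}^0$ produces
\[
(\tau^{-1/2}\varphi_{1,1}^0)^{2a(1-h)}\cdot(\tau^a\sigma\varphi_{h,0}^1)^2=(1,\varphi_{1,1}^{0\,2a(1-h)}\varphi_{-1,0}^0)\in G\cap(\{1\}\times\aut{A}),
\]
a non-trivial element for every $a$, so $G$ cannot be regular by Lemma \ref{rem for regularity}. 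Hence $k=0$, and the parameter $a$ can then be absorbed by conjugating with a suitable power of $\varphi_{1,1}^0$ exactly as in Lemma \ref{Sec 7 ppq}, yielding $G$ conjugate to $G_2$. The subtle point is that at first glance the generators in the $k=1$ branch satisfy all the defining relations of $\Z_q\rtimes_h\Z_4$ and $|\pi_2(G)|=4q$, so one might be tempted to conclude regularity from a naive count; the trick is that the regularity criterion requires $\ker(\pi_1|_G)=1$ rather than $\ker(\pi_2|_G)=1$, and it is precisely this first kernel which the displayed element shows to be non-trivial.
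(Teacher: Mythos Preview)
Your proposal is correct and follows essentially the same approach as the paper. The only difference is a minor reordering of steps: the paper first invokes the argument of Lemma~\ref{Sec 7 ppq} to reduce to the normalised presentation $G=\langle \sigma\varphi_{h,0}^k,\ \tau^{-1/2}\varphi_{1,1}^0\rangle$ (i.e.\ it conjugates away the parameter $a$ \emph{before} treating the two values of $k$), and then rules out $k=1$ by the single computation $(\sigma\varphi_{h,0}^1)^2=(\varphi_{h,0}^1)^2=\varphi_{-1,0}^0\in G\cap(\{1\}\times\aut{A})$, whereas you keep $a$ arbitrary and therefore need the extra step of multiplying by a power of $\tau^{-1/2}\varphi_{1,1}^0$ to kill the $\tau^{a(1-h)}$ factor. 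Both routes are valid; the paper's ordering makes the $k=1$ exclusion slightly shorter.
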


\begin{proof}
	The groups $G_1$ and $G_2$ are not conjugate since their images under $\pi_2$ are not. Let $G$ be a regular subgroup of $\Hol(A)$. If $\pi_2(G)=\langle\varphi_{1,0}^1,\varphi_{-1,0}^0, \varphi_{1,1}^0\rangle$, arguing as in Lemma \ref{lem:4p_i_Qp_4p} we get $G_1$. If $\pi_2(G)=\langle\varphi_{h,0}^k,\ \varphi_{1,1}^0\rangle$, by the same argument as in Lemma \ref{Sec 7 ppq}, we have that $G$ has the following standard presentation:
	\[
	G=\langle \sigma\varphi_{h,0}^k,\ \tau^{-\frac{1}{2}}\varphi_{1,1}^0\rangle.
	\]
	If $k=1$ then $(\sigma\varphi_{h,0}^1)^2=(\varphi_{h,0}^1)^2=\varphi_{-1,0}^0\in G$ and so $G$ is not regular. So $k=0$ and we get $G_2$.
\end{proof}

The results of this subsection are summarized in the following tables:

\begin{table}[H]
		
	\centering
	\small{  \begin{tabular}[t]{c|c|c|c}
			$|\ker{\lambda}|$ & $\Z_{p^2q}$ &  $\Z_q\rtimes_{h^p}\Z_{p^2}$  & $\Z_q\rtimes_h\Z_{p^2}$    \\
			\hline
			$1$ &  - & - & $p(p-1)$\\
			$p$ & $p-1$  & $p(p-1)$ & - \\
			$q$ & - &  - & $p(p-1)$\\
			$p^2$ & $1$ & - & -\\
			$pq$ &$p$ & $p^2-p-1$ & - \\
			$p^2q$  &- & $1$ &- 
		\end{tabular}
		\quad
		\begin{tabular}[t]{c|c|c|c|c|c}
			$|\ker{\lambda}|$ &  $\Z_{4q}$ & $\Z_2^2\times\Z_q$ & $\Z_{2}\times(\Z_q\rtimes_{-1}\Z_2$) & $\Z_q\rtimes_{-1}\Z_4$ & $\Z_q\rtimes_{h}\Z_4$\\
			\hline
			$1$ &- &- & $1$ &- & $1$ \\
			$2$ &- & $1$ & $1$ & $1$ &- \\
			$q$ &- &- & $1$ &- & $1$ \\
			$4$ & $1$ &- &- &- &- \\
			$2q$ & $1$ & $1$ & $1$ &- &- \\
			$4q$ &- &- &- & $1$ &-
	\end{tabular}}
	\vs
	\caption{Enumeration of skew braces of $\Z_q\rtimes_{h^p}\Z_{p^2}$-type for $q=1\pmod{p^2}$: the first table assumes $p>2$ and the second table assumes $p=2$.}
\end{table}

\subsection{Skew braces of $\mathbb{Z}_p\times(\mathbb{Z}_q\rtimes_{h^p} \mathbb{Z}_{p})$-type}\label{subsection:q=1(p^2)_non-abelian_non-cyclic_p-Sylow}

In this section we denote by $A$ the group $\mathbb{Z}_p\times(\mathbb{Z}_{q}\rtimes_{h^p} \mathbb{Z}_{p})$. The regular subgroups of $\Hol(A)$ with size of the image under $\pi_2$ equal to $q,p,pq$ are the same as the one described in Proposition \ref{with center q 2}, Lemma \ref{with center p} and Lemma \ref{with center pq}, respectively, since the subgroups of the automorphism group of $A$ of order $p,q$ and $pq$ coincide with the ones in the case $q=1\pmod{p}$ and $q\neq 1\pmod{p^2}$. Following the notation in Subsection \ref{subsection:q=1(p)_non-abelian_non-cyclic_p-Sylow}, the subgroups of order $p^2$ in $\aut{A}$ are
\begin{equation}\label{subgroups for this case}
\langle \alpha_{1,1},\ \beta_{0,h^p}\rangle, \quad \langle \alpha_{1,1}^k\beta_{0,h}\rangle
\end{equation}
for $0\leq k\leq p-1$, up to conjugation. As we did before, we consider the cases $p=2$ and $p>2$ separately. First, we start with the case $p>2$.

\begin{lemma}\label{with center pp q=1 pp}
	Let $p>2$. A set of representatives of conjugacy classes of regular subgroups $G$ of $\Hol(A)$ with $|\pi_2(G)|=p^2$ is
	$$G_a=\langle \epsilon,\ \sigma^a\alpha_{1,1},\ \tau\beta_{0,h^p}\rangle\cong A$$
	for $1\leq a\leq p-1$.
\end{lemma}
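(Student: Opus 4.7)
The plan is to mimic Lemma \ref{with center pp} while treating separately the new case arising from cyclic subgroups of order $p^2$ in $\aut{A}$. By \eqref{subgroups for this case}, the subgroups of order $p^2$ of $\aut{A}$ split, up to conjugation, into the non-cyclic $\langle\alpha_{1,1},\beta_{0,h^p}\rangle$ and the cyclic $\langle\alpha_{1,1}^k\beta_{0,h}\rangle$ for $0\leq k\leq p-1$, and I would argue along this dichotomy.

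For $\pi_2(G)=\langle\alpha_{1,1},\beta_{0,h^p}\rangle$, since $h^p$ has order $p$ in $\Z_q^\times$ the action of this subgroup on $A$ is identical to that of $\langle\alpha_{1,1},\beta_{0,r}\rangle$ in Section \ref{subsection:q=1(p)_non-abelian_non-cyclic_p-Sylow}, so the entire argument of Lemma \ref{with center pp} transfers. In particular, Lemma \ref{pi_1 for fix}(2) applied to the fixed generators shows each $G_a$ is regular; non-conjugacy follows from the observation that any conjugator in $\aut{A}$ has the form $\alpha_{l,i}\beta_{0,j}$ (since conjugating $\beta_{0,h^p}$ by $\beta_{s,j}$ gives $\beta_{s(1-h^p),h^p}$, forcing $s=0$), and testing this against $\tau\beta_{0,h^p}\in G_b$ forces $i=1$, after which the image of $\sigma^a\alpha_{1,1}$ gives $a=b$; completeness follows from the same \NC/\nc analysis forcing $\ker\pi_2|_G=\langle\epsilon\rangle$ and yielding $G=G_a$ for $1\leq a\leq p-1$.

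The main obstacle is to rule out the cyclic case $\pi_2(G)=\langle\theta\rangle$ with $\theta=\alpha_{1,1}^k\beta_{0,h}$, $0\leq k\leq p-1$. Since $\ker\pi_2|_G$ has order $q$, it coincides with the unique Sylow-$q$ subgroup $\langle\epsilon\rangle$ of $A$, and $G$ admits the standard presentation $G=\langle\epsilon,u\theta\rangle$ with $u=\sigma^n\tau^m\epsilon^l$. Using $\theta(\sigma)=\tau^k\sigma$, $\theta(\tau)=\tau$, $\theta(\epsilon)=\epsilon^h$, the centrality of $\tau$ in $A$ and the identity $\sigma^n\epsilon=\epsilon^{h^{pn}}\sigma^n$, an induction on $j$ produces $(u\theta)^j=u_j\theta^j$ with
$$u_j=\tau^{kn\,j(j-1)/2+mj}\,\epsilon^{E_j}\,\sigma^{jn}$$
for a suitable $E_j\in\Z_q$. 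Every element of $\pi_1(G)$ is of the form $\epsilon^i u_j$ for $0\leq i<q$ and $0\leq j<p^2$, and replacing $j$ by $j+p$ leaves the $\sigma$-exponent unchanged modulo $p$ (as $pn\equiv 0\pmod p$) and alters the $\tau$-exponent by $p\bigl[kn(2j-1+p)/2+m\bigr]\equiv 0\pmod p$ (which requires $p>2$, so that $p-1$ is even and $(2j-1+p)/2$ is an integer). Hence the image of $\pi_1(G)$ in $A/\langle\epsilon\rangle\cong\langle\sigma,\tau\rangle$ has at most $p$ elements, so $|\pi_1(G)|\leq pq<p^2q=|A|$ and $G$ is not regular by Lemma \ref{rem for regularity}. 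This disposes of all cyclic $\pi_2(G)$ of order $p^2$ and completes the classification.
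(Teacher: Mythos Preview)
Your argument is correct and follows the same overall dichotomy as the paper: reduce the non-cyclic case $\pi_2(G)=\langle\alpha_{1,1},\beta_{0,h^p}\rangle$ to Lemma \ref{with center pp}, and rule out the cyclic images $\langle\alpha_{1,1}^k\beta_{0,h}\rangle$. The underlying power computation is also the same.

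The only difference is in how the cyclic case is dispatched. You track all powers $(u\theta)^j$, observe that the $(\sigma,\tau)$-part is periodic in $j$ with period $p$, and conclude via Lemma \ref{rem for regularity}(ii) that $|\pi_1(G)|\leq pq$. The paper instead evaluates a single power: since $\sigma^p=\tau^p=1$ and, for $p>2$, $p\mid p(p-1)/2$, one gets directly
\[
(\sigma^a\tau^b\alpha_{1,1}^k\beta_{0,h})^p=\tau^{bp+ka\frac{p(p-1)}{2}}\sigma^{ap}\alpha_{1,1}^{pk}\beta_{0,h}^p=\beta_{0,h}^p\neq 1,
\]
so $G\cap(\{1\}\times\aut{A})\neq 1$ and Lemma \ref{rem for regularity}(iii) applies. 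This is a shade more economical since it avoids the periodicity discussion and the passage to $A/\langle\epsilon\rangle$, but both routes hinge on the same identity and the same use of $p>2$.
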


\begin{proof}
	If $\pi_2(G)=\langle \alpha_{1,1},\beta_{0,h^p}\rangle$ we can argue as in Lemma \ref{with center pp} to get $G_a$ as in the statement. We show that if $\pi_2(G)=\langle \alpha_{1,1}^k\beta_{0,h}\rangle$ for $0\leq k\leq p-1$ then $G$ is not regular. In such case we have
	$$G=\langle \epsilon,\ \sigma^a\tau^b \alpha_{1,1}^k\beta_{0,h}\rangle.$$
	If $k=0$ then $(\sigma^a\tau^b\beta_{0,h})^p=\beta_{0,h}^p\in G$. Otherwise, since $p>2$, we have $$(\sigma^a\tau^b\alpha_{1,1}^k\beta_{0,h})^p=\tau^{bp+k\frac{p(p-1)a}{2}}\sigma^{ap}\alpha_{1,1}^{pk}\beta_{0,h}^p=\beta_{0,h}^p\in G.$$ In both cases, $G$ is not regular according to Lemma \ref{rem for regularity}.
\end{proof}

Now, we deal with the case $p=2$. Note that in this case we have $h^2=-1$.

\begin{lemma}
	Let $p=2$.	There exists a unique conjugacy class of regular subgroups $G$ of $\Hol(A)$ with $|\pi_2(G)|=4$. A representative is given by
	\[
	H=\langle \epsilon,\ \sigma\alpha_{1,1}\beta_{0,h} \rangle \cong \Z_{q}  \rtimes_h \Z_4.	
	\]
\end{lemma}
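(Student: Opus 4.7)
The plan is to first verify that $H$ is regular and isomorphic to $\mathbb{Z}_q \rtimes_h \mathbb{Z}_4$, then enumerate the conjugacy classes of order-$4$ subgroups of $\aut{A}$ and rule out all images under $\pi_2$ other than $\langle \alpha_{1,1}\beta_{0,h}\rangle$.

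For the structural description, using $\sigma^2=\tau^2=1$ and $[\sigma,\tau]=1$, a direct computation yields
$(\sigma\alpha_{1,1}\beta_{0,h})^2 = \tau\,\beta_{0,-1}$ and hence $(\sigma\alpha_{1,1}\beta_{0,h})^4 = 1$, so the generator has order $4$. Since $(\sigma\alpha_{1,1}\beta_{0,h})\,\epsilon\,(\sigma\alpha_{1,1}\beta_{0,h})^{-1} = \alpha_{1,1}\beta_{0,h}(\epsilon) = \epsilon^h$, we obtain $H \cong \mathbb{Z}_q \rtimes_h \mathbb{Z}_4$. The elements $\epsilon$, $\sigma$, and $\tau = \pi_1((\sigma\alpha_{1,1}\beta_{0,h})^2)$ all lie in $\pi_1(H)$, so $\pi_1(H) = A$ and Lemma \ref{rem for regularity} gives regularity.

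By \eqref{subgroups for this case}, the subgroups of $\aut{A}$ of order $4$ fall into three conjugacy classes: the elementary abelian $\langle \alpha_{1,1}, \beta_{0,-1}\rangle$ and the two cyclic groups $\langle \alpha_{1,1}^k \beta_{0,h}\rangle$ for $k=0,1$. In each case $\ker(\pi_2|_G) = \langle \epsilon \rangle$, since $\langle\epsilon\rangle$ is the unique subgroup of $A$ of order $q$. For $\pi_2(G) = \langle \alpha_{1,1},\beta_{0,-1}\rangle$, the computation in Proposition \ref{prop:4p_i_dihedral_non-existence} carries over verbatim, forcing $\pi_1(G) \subseteq \langle \epsilon,\tau\rangle$ via Lemma \ref{pi_1 for fix}(1) and contradicting regularity.

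For the cyclic cases, write $G = \langle \epsilon,\, \sigma^a\tau^b \alpha_{1,1}^k \beta_{0,h}\rangle$ with $a,b \in \{0,1\}$. Since $(\alpha_{1,1}^k\beta_{0,h})^2 = \beta_{0,-1}$ fixes $\sigma$ and $\tau$ pointwise, Lemma \ref{pi_1 for fix}(1) yields $\pi_1(G) \subseteq \langle \epsilon,\, \sigma^a\tau^b,\, \tau^{ka}\rangle$, whose order is at most $2q < 4q$ whenever $k=0$ or $a=0$. This eliminates $k=0$ entirely and forces $a=1$ when $k=1$. The remaining groups are $\langle \epsilon,\, \sigma\tau^b \alpha_{1,1}\beta_{0,h}\rangle$ for $b=0,1$, and a direct computation shows that conjugation by $\alpha_{l,1}$ sends the generator $\sigma\tau^b\alpha_{1,1}\beta_{0,h}$ to $\sigma\tau^{l+b}\alpha_{1,1}\beta_{0,h}$; choosing $l=b$ collapses $G$ to $H$. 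The main bookkeeping step is the order-$4$ computation for the generator of $H$; the rest reduces to applying Lemma \ref{pi_1 for fix}(1) to each of the three candidate images of $\pi_2$.
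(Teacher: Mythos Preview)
Your argument is correct and follows essentially the same route as the paper: both treat the three conjugacy classes of order-$4$ subgroups of $\aut{A}$ from \eqref{subgroups for this case}, dismiss the elementary abelian image via Proposition~\ref{prop:4p_i_dihedral_non-existence}, and then pin down $k=1$, $a=1$ before conjugating away $b$. The only cosmetic difference is that for $k=0$ the paper computes $(\sigma^a\tau^b\beta_{0,h})^2=\beta_{0,-1}\in G\cap(\{1\}\times\aut{A})$ and invokes Lemma~\ref{rem for regularity}(iii), whereas you bound $\pi_1(G)$ via Lemma~\ref{pi_1 for fix}(1); both are immediate.

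One small slip: conjugation in the holomorph gives $(\sigma\alpha_{1,1}\beta_{0,h})\,\epsilon\,(\sigma\alpha_{1,1}\beta_{0,h})^{-1}=\sigma\cdot\alpha_{1,1}\beta_{0,h}(\epsilon)\cdot\sigma^{-1}=\sigma\epsilon^{h}\sigma^{-1}=\epsilon^{-h}$, not $\epsilon^{h}$ (you dropped the inner conjugation by $\sigma$). Since $-h=h^{3}$ still has order $4$ in $\Z_q^\times$, the isomorphism $H\cong\Z_q\rtimes_h\Z_4$ is unaffected.
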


\begin{proof}
	If $\pi_2(G)$ is generated by $\alpha_{1,1}$ and $\beta_{0,-1}$, then $G$ is not regular by the same computations we performed in Proposition \ref{prop:4p_i_dihedral_non-existence}. Assume that $\pi_2(G)=\langle \alpha_{1,1}^k\beta_{0,h} \rangle$ for $k=0,1$, then $G$ has the following standard presentation
	\[
	G=\langle \epsilon,\ \sigma^a\tau^b\alpha_{1,1}^k\beta_{0,h} \rangle
	\]
	for some $0\leq a,b\leq 1$. If $k=0$, then $(\sigma^a\tau^b\beta_{0,h})^2=\beta_{0,h}^2=\beta_{0,-1}\in G$ and so $G$ is not regular. Then $k=1$ and $a=1$, otherwise $\pi_1(G)\subseteq \langle \tau,\epsilon\rangle$. Finally, we can assume that $b=0$, up to conjugation by $\alpha_{1,1}$.
\end{proof}

The subgroups of order $p^2q$ in $\aut{A}$ are
\begin{equation*}\label{subgroups for this case_2}
\langle \alpha_{1,1},\ \beta_{0,h^p},\ \beta_{1,1} \rangle, \quad \langle\beta_{1,1},\ \alpha_{1,1}^k\beta_{0,h}\rangle
\end{equation*}
for $0\leq k\leq p-1$, up to conjugation. We also consider the cases $p>2$ and $p=2$ separately. We start with the case $p>2$.

\begin{lemma}\label{with center ppq 2}
	Let $p>2$. A set of representatives of conjugacy classes of regular subgroups $G$ of $\Hol(A)$ with $|\pi_2(G)|=p^2q$ is
	$$G_{a}=\langle \sigma\alpha_{1,1},\,\tau^a\beta_{0,h^p},\,\epsilon^{\frac{1}{h^p-1}}\beta_{1,1}\rangle\cong A$$
	for $1\leq a\leq p-1$.
\end{lemma}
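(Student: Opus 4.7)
The plan is to mirror the argument of Lemma \ref{with center ppq}, combined with the non-regularity argument of Lemma \ref{with center pp q=1 pp}, since the subgroups of $\aut{A}$ of order $p^2 q$ split into two cases, one that behaves exactly like the $q\equiv 1\pmod p$, $q\not\equiv 1\pmod{p^2}$ case, and one that contains the order-$p^2$ automorphism $\alpha_{1,1}^k\beta_{0,h}$ and yields no regular subgroups.

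First I would show that the groups $G_a$ are regular (using Lemma \ref{pi_1 for fix}(2) applied to $\sigma\alpha_{1,1}$ and $\tau^a\beta_{0,h^p}$, noting that $\sigma$ and $\tau$ are fixed by their respective automorphisms) and that they are pairwise non-conjugate by the same center-based argument as in Lemma \ref{with center ppq}: if $G_a$ and $G_b$ are conjugate by $h\in\aut{A}$, then $h$ must normalize $\langle\sigma\alpha_{1,1}\rangle$, so $h=\beta_{0,j}$ for some $j$, and then conjugating the second generator forces $a=b$.

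Next I would verify the standard presentation. For $\pi_2(G)=\langle\alpha_{1,1},\beta_{0,h^p},\beta_{1,1}\rangle$, the same chain of conditions as in Lemma \ref{with center ppq} applies verbatim: writing
\[
G=\langle \epsilon^a u\alpha_{1,1},\ \epsilon^b v\beta_{0,h^p},\ \epsilon^c w\beta_{1,1}\rangle
\]
with $u=\sigma^x\tau^y$, the centrality of $\alpha_{1,1}$ in $\pi_2(G)$ together with the \nc conditions yields $v,w\in\langle\tau\rangle$, $a=b\frac{h^{px}-1}{h^p-1}$, $w=1$, and $x\neq 0$ by Lemma \ref{pi_1 for fix}(1); the relation between $\beta_{0,h^p}$ and $\beta_{1,1}$ then forces $c=\frac{1}{h^p-1}$. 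Finally, conjugation by $h=\alpha_{n,x}\beta_{-b,1}$ with $n=\frac{x-1}{2}-y$ brings $G$ to $G_{xt}$, using formula \eqref{powers2} which holds because $p>2$.

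The main (albeit short) obstacle is the second case, $\pi_2(G)=\langle\beta_{1,1},\alpha_{1,1}^k\beta_{0,h}\rangle$, where one must show no regular subgroup arises. I would imitate Lemma \ref{with center pp q=1 pp}: the standard presentation gives a generator of the form $\epsilon^m v\alpha_{1,1}^k\beta_{0,h}$ with $v=\sigma^a\tau^b$, and raising to the $p$-th power (using \eqref{powers2} when $k\neq 0$ and directly when $k=0$) collapses the $\epsilon$- and $\langle\sigma,\tau\rangle$-parts to leave $\beta_{0,h}^p=\beta_{0,h^p}\in G\cap(\{1\}\times\aut{A})$, so $G$ is not regular by Lemma \ref{rem for regularity}. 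Combining the two cases gives exactly the list $\{G_a : 1\leq a\leq p-1\}$, completing the enumeration.
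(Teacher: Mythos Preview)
Your overall strategy matches the paper's: handle the non-cyclic $p$-Sylow case exactly as in Lemma \ref{with center ppq}, and rule out the cyclic case $\pi_2(G)=\langle\beta_{1,1},\alpha_{1,1}^k\beta_{0,h}\rangle$ by producing a nontrivial element of $G\cap(\{1\}\times\aut{A})$. The first half is fine.

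However, in the cyclic case your key claim is wrong: raising $\epsilon^m\sigma^a\tau^b\alpha_{1,1}^k\beta_{0,h}$ to the $p$-th power does \emph{not} collapse the $\epsilon$-part. For instance, already with $k=0$ and $a=b=0$ one has
\[
(\epsilon^m\beta_{0,h})^p=\epsilon^{m\frac{h^p-1}{h-1}}\beta_{0,h}^p,
\]
and since $h$ has order $p^2$ the exponent $m\frac{h^p-1}{h-1}$ is nonzero whenever $m\neq 0$. In general the paper obtains $(\epsilon^m\sigma^a\tau^b\alpha_{1,1}^k\beta_{0,h})^p=\epsilon^s\beta_{0,h}^p$ for some $s$ with no control on $s$. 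The fix, which is exactly what the paper does, is to first use the \nc condition $(\epsilon^c u\beta_{1,1})^q=1$ on the other generator to force $u=1$, so that $\epsilon^c\beta_{1,1}\in G$ with $c\neq 0$ by regularity; then
\[
(\epsilon^c\beta_{1,1})^{-s/c}\cdot(\epsilon^s\beta_{0,h}^p)=\beta_{1,1}^{-s/c}\beta_{0,h}^p\in G\cap(\{1\}\times\aut{A}),
\]
which is nontrivial, and Lemma \ref{rem for regularity} applies. Without this extra step your argument does not exclude regularity.
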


\begin{proof}
	If the $p$-Sylow subgroup of the image of $\pi_2(G)$ is not cyclic we can conclude as in Lemma \ref{with center ppq}.
	We show that if $\pi_2(G)=\langle \beta_{1,1},\ \alpha_{1,1}^k\beta_{0,h}\rangle$, then $G$ is not regular. Indeed if
	$$G=\langle \epsilon^a u\beta_{1,1},\ \epsilon^b v\alpha_{1,1}^k\beta_{0,h}\rangle$$
	for some $u,v\in \langle\sigma,\tau\rangle$ then, from $(\epsilon^a u\beta_{1,1})^q=1$ by condition \nc, we have $u=1$, i.e. $G=\langle \epsilon^a\beta_{1,1},\  \epsilon^b v\alpha_{1,1}^k\beta_{0,h}\rangle$ where $v=\sigma^c\tau^d$. Moreover, we can assume $a\neq 0$ by regularity. Since $p>2$, we have $$(\epsilon^b\sigma^c\tau^d\alpha_{1,1}^k\beta_{0,h})^p=\epsilon^s\tau^{dp+ck\frac{p(p-1)}{2}}\sigma^{cp}\alpha_{1,1}^{kp}\beta_{0,h}^p=\epsilon^s\beta_{0,h}^p \in G$$	for some $s$. So, 
	$$(\epsilon^{a}\beta_{1,1})^{-\frac{s}{a}}(\epsilon^s\beta_{0,h}^p)=\beta_{1,1}^{-\frac{s}{a}}\beta_{0,h}^{p}\in G$$
	and so $G$ is not regular.
\end{proof}

\begin{lemma}
	Let $p=2$.	There exists a unique conjugacy class of regular subgroups $G$ of $\Hol(A)$ with $|\pi_2(G)|=4q$. A representative is given by
	\[
	H=\langle \sigma\alpha_{1,1}\beta_{0,h},\ \epsilon^{-\frac{1}{2}}\beta_{1,1} \rangle \cong \Z_q\rtimes_h \Z_4.	
	\]
\end{lemma}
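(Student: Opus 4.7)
The plan is to follow the same template used in Lemma \ref{with center ppq 2}, but adapted to the peculiarities of the case $p=2$, exploiting in particular that $h^2=-1\in\Z_q^\times$.

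First I would verify that $H$ is regular. Since $(\sigma\alpha_{1,1}\beta_{0,h})^2=\tau\beta_{0,-1}$ and $(\sigma\alpha_{1,1}\beta_{0,h})^4=\beta_{0,-1}^2=1$, the first generator has order $4$, while the second has order $q$. Using Lemma \ref{pi_1 for fix}(1) together with the fact that $\epsilon^{-\frac{1}{2}}\beta_{1,1}\in H$ forces $\tau,\sigma$ to eventually appear in $\pi_1(H)$, one sees $\pi_1(H)=A$, so $H$ is regular by Lemma \ref{rem for regularity}.

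Next, let $G$ be a regular subgroup of $\Hol(A)$ with $|\pi_2(G)|=4q$. Up to conjugation there are only two possibilities for $\pi_2(G)$: either $\pi_2(G)=\langle \alpha_{1,1},\beta_{0,-1},\beta_{1,1}\rangle$, or $\pi_2(G)=\langle \beta_{1,1},\alpha_{1,1}^k\beta_{0,h}\rangle$ with $k\in\{0,1\}$. In the first case, the very same argument as in Proposition \ref{prop:4p_i_dihedral_non-existence} (lifting the commutation relations in $\pi_2(G)$ via condition \nc) forces each generator to lie in $\langle \tau,\epsilon\rangle\,\pi_2(G)$, hence $\pi_1(G)\subseteq\langle\tau,\epsilon\rangle$ by Lemma \ref{pi_1 for fix}(1), contradicting regularity.

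The substantive case is $\pi_2(G)=\langle \beta_{1,1},\alpha_{1,1}^k\beta_{0,h}\rangle$. A standard presentation has the form
\[
G=\langle \epsilon^a u\beta_{1,1},\ \epsilon^b v\alpha_{1,1}^k\beta_{0,h}\rangle
\]
with $u,v\in \langle\sigma,\tau\rangle$. Lifting $\beta_{1,1}^q=1$ via \nc yields $u=1$, and regularity forces $a\neq 0$. For $k=0$ we have $(\epsilon^b v\beta_{0,h})^2=\epsilon^{b(1+h)}v^{2}\beta_{0,-1}$, and since $v\in\langle\sigma,\tau\rangle$ and $\beta_{0,-1}$ fixes $\sigma,\tau$, some power of this element combined with a power of $\epsilon^a\beta_{1,1}$ lies in $\{1\}\times\aut{A}$, ruling out regularity (this is the $p=2$ analogue of the cancellation argument in Lemma \ref{with center ppq 2}). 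So $k=1$, and writing $v=\sigma^c\tau^d$, one computes $(\epsilon^b\sigma^c\tau^d\alpha_{1,1}\beta_{0,h})^2$ explicitly to derive linear constraints on $b,c,d$ modulo~$2$ from $\NC$ and $\nc$; regularity forces $c=1$. Finally, conjugating by suitable powers of $\alpha_{1,1}$ and $\beta_{1,1}$, and using the normalizer of $\pi_2(G)$ which stabilizes $\ker{\pi_2|_G}$, I would eliminate the remaining parameters $d$ and $b$, rescaling the second generator to $\sigma\alpha_{1,1}\beta_{0,h}$ and $a$ to normalize the coefficient of $\epsilon$ in the first generator to $-\tfrac12$, obtaining $G=H$.

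The main obstacle will be the bookkeeping in the last step: when $p=2$ the squaring map mixes $\epsilon$- and $\sigma$-components in a way that does not happen for $p>2$ (compare the use of \eqref{powers2}), so one has to track the identity $\beta_{0,h}^2=\beta_{0,-1}$ carefully to ensure the conjugations genuinely reduce $G$ to $H$ and do not accidentally change $\pi_2(G)$. Once that is handled, the isomorphism $H\cong \Z_q\rtimes_h \Z_4$ follows from the orders of the generators computed above.
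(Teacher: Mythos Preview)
Your overall plan mirrors the paper's proof: rule out the non-cyclic image $\langle\alpha_{1,1},\beta_{0,-1},\beta_{1,1}\rangle$ via Proposition~\ref{prop:4p_i_dihedral_non-existence}, then in the cyclic case eliminate $k=0$ by exhibiting a nontrivial element of $G\cap(\{1\}\times\aut{A})$, and finally conjugate to reach $H$. The order in which you treat the $k=0$ case (before pinning down $c$) differs slightly from the paper but is harmless.

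There is, however, a genuine gap in your final normalization step. You propose to set the exponent $a$ in the generator $\epsilon^a\beta_{1,1}$ equal to $-\tfrac12$ by conjugating with powers of $\alpha_{1,1}$ and $\beta_{1,1}$. But both of these automorphisms fix $\epsilon$ and commute with $\beta_{1,1}$, so they \emph{centralize} $\epsilon^a\beta_{1,1}$; no such conjugation can alter $a$. More generally, since $|\pi_2(G)|=|G|$ the restriction $\pi_2|_G$ is injective, so the element of $G$ lying over $\beta_{1,1}$ is uniquely determined, and one checks that any automorphism in the normalizer of $\pi_2(G)$ sends $\epsilon^a\beta_{1,1}$ to a power of itself, leaving the ratio $a$ unchanged. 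In the paper the value $a=-\tfrac12$ (their $e$) is not obtained by conjugation but is \emph{forced} by the \nc condition: lifting the relation $\beta_{0,h}\beta_{1,1}\beta_{0,h}^{-1}=\beta_{1,1}^h$ to $G$, after one already knows $c=1$, yields exactly this constraint. Once $a=-\tfrac12$ is established this way, the remaining conjugations you describe (by $\alpha_{1,1}$ to remove the $\tau$-component, and by a suitable power of $\beta_{1,1}$ to kill $b$) go through as planned.

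A minor side remark: your formula $(\epsilon^b v\beta_{0,h})^2=\epsilon^{b(1+h)}v^2\beta_{0,-1}$ tacitly assumes $v$ commutes with $\epsilon$, which fails when the $\sigma$-component of $v$ is nontrivial (then $\sigma\epsilon\sigma^{-1}=\epsilon^{-1}$). The conclusion for $k=0$ is still correct, but the exponent of $\epsilon$ in the square needs adjusting.
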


\begin{proof}
	If $\pi_2(G)=\langle \alpha_{1,1},\ \beta_{0,-1},\ \beta_{1,1}\rangle$, then $G$ is not regular using the same computations we performed in Proposition \ref{prop:4p_i_dihedral_non-existence}. Assume that $\pi_2(G)=\langle \alpha_{1,1}^k\beta_{0,h},\ \beta_{1,1}\rangle$ for $k=0,1$, so $G$ has the following standard presentation
	\[
	\langle \tau^a\epsilon^b\sigma^c\alpha_{1,1}^k\beta_{0,h},\ \tau^d\epsilon^e\sigma^f\beta_{1,1} \rangle
	\]
	for $0\leq a,c,d,f\leq 1$ and $0\leq b,e\leq q-1$.
	By condition \nc, since $(\tau^d\epsilon^e\sigma^f\beta_{1,1})^q=1$ we have $f=d=0$. By regularity, $e\neq 0\pmod{q}$. If $c=0$, then $\pi_1(G)\subseteq \langle\tau,\epsilon\rangle$. Thus, $c=1$. By condition \nc and the fact that $\beta_{0,h}\beta_{1,1}=\beta_{1,1}^h\beta_{0,h}$, we have $e=-\frac{1}{2}$. Up to conjugation by $\alpha_{1,1}$ we can assume $a=0$.
	
	If $k=0$ then $(\epsilon^{-\frac{1}{2}}\beta_{1,1})^{-2b(h-1)}(\epsilon^b\sigma\beta_{0,h})^2=\beta_{1,1}^{-2b(h-1)}\beta_{0,h}^2\in G$ and then $G$ is not regular. Then $k=1$ and $G$ is conjugate to $H$ by $\beta_{1,1}^n$ where $n=-\frac{2b}{h+1}$. 
\end{proof}

According to Lemma \ref{with center pp q=1 pp} and Lemma \ref{with center ppq 2}, the enumeration of the skew braces of $\Z_p\times(\Z_q\rtimes_{h^p}\Z_p)$-type with $q=1\pmod{p^2}$ and $p>2$ is as in Table \ref{table with center q=1 (p)}. Table \ref{table:dihedral_4p_ii} collects the enumeration for the case $p=2$.

\begin{table}[H]
	\centering
	
	\small{
		\begin{tabular}{c|c|c|c|c|c}
			$\ker{\lambda}$ &  $\Z_{4q}$ & $\Z_2^2\times\Z_q$ & $\Z_2\times (\Z_q\rtimes_{-1} \Z_2)$ & $\Z_q\rtimes_{-1} \Z_4$ & $\Z_q\rtimes_{h} \Z_4$\\
			\hline
			$1$ &- &- &- &- & $1$ \\
			$2$ & $1$ &- & $2$ & $1$ &- \\
			$4$ &- & $1$ &- &- &- \\
			$q$ &- &- &- &- & $1$ \\
			$2q$ & $1$ & $1$ & $1$ & $1$ &- \\
			$4q$ &- &- & $1$ &- &-
	\end{tabular}}
	\vs
	\caption{Enumeration of skew braces of $\Z_2\times(\Z_q\rtimes_{-1}\Z_2)$-type with $q=1\pmod{4}$.}
	\label{table:dihedral_4p_ii}
\end{table}

\subsection{Skew braces of $\mathbb{Z}_{q}\rtimes_h \mathbb{Z}_{p^2}$-type}\label{subsection:q=1(p^2)_non-abelian_cyclic_p-Sylow2} 

In this section, we denote by $A$ the group $\Z_q\rtimes_h \Z_{p^2}$. 
A presentation of such group is
$$G=\langle \sigma,\tau\ \lvert\ \sigma^{p^2}=\tau^{q}=1, \, \sigma \tau \sigma^{-1}=\tau^h\rangle.$$

According to \cite[Theorem 3.4]{auto_pq}, the map
$$\phi: \Z_q\rtimes \Z_q^\times\longrightarrow\aut{A},\quad (i,j)\mapsto \varphi_{i,j}=\begin{cases} \tau\mapsto \tau^j \\ \sigma \mapsto \tau^i\sigma\end{cases}$$
is a group isomorphism. Since $q=1\pmod{p^2}$, then $p^2q$ divides  $|\aut{G}|=q(q-1)$ and so we need to discuss all the possible values of $|\pi_2(G)|$. 

A set of representatives of the conjugacy classes of subgroups of $\aut{A}$ is displayed in Table \ref{lem:subgroups_aut_6.5}.

\begin{table}[H]
	\centering
	
	\small{
		\begin{tabular}{c|c|l} 
			Size &    Group  & Class \\
			\hline
			&&\\[-1em]
			$p$    & $\langle\varphi_{0,h^p}\rangle$   & $\Z_p$\\
			&&\\[-1em]
			$q$    & $\langle\varphi_{1,1}\rangle$   & $\Z_q$\\
			&&\\[-1em]
			$pq$    & $\langle\varphi_{0,h^p},\, \varphi_{1,1}\rangle$   & $\Z_q\rtimes_{h^p} \Z_p $\\
			&&\\[-1em]
			$p^2$    & $\langle\varphi_{0,h}\rangle$   & $\Z_{p^2}$\\
			&&\\[-1em]
			$p^2q$    & $\langle\varphi_{1,1}, \, \varphi_{0,h}\rangle$   & $A$
	\end{tabular}}
	\vs
	\caption{Conjugacy classes of groups of $\aut{A}$.}\label{lem:subgroups_aut_6.5}
\end{table}

\begin{lemma}
	The unique skew brace of $A$-type with $|\ker{\lambda}|=p^2$ is $(B,+,\circ)$ where
	\begin{eqnarray*} 
	\begin{pmatrix} x_1 \\ x_2 \end{pmatrix} + \begin{pmatrix} y_1 \\ y_2 \end{pmatrix} 
		=\begin{pmatrix}    x_1+ h^{x_2}y_1\\ x_2+y_2\end{pmatrix},\qquad
	\begin{pmatrix} x_1 \\ x_2 \end{pmatrix} \circ \begin{pmatrix} y_1 \\ y_2 \end{pmatrix} 
		=\begin{pmatrix}   h^{y_2} x_1+ h^{x_2} y_1\\ x_2+y_2\end{pmatrix}
	\end{eqnarray*}
	for every $0\leq x_1,y_1\leq q-1$ and $0\leq x_2,y_2  \leq p^2-1$. In particular, $(B,\circ)\cong \Z_{p^2q}$.
\end{lemma}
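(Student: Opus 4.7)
The plan mirrors Lemma \ref{case q of Z_q semi Z_p2}, adapted to the present action $\sigma\tau\sigma^{-1}=\tau^h$ with $h$ of order $p^2$ instead of $\tau^r$ with $r$ of order $p$. By Remark \ref{remark for lambdas}, the condition $|\ker\lambda|=p^2$ is equivalent to $|\pi_2(G)|=q$ for the associated regular subgroup $G\leq\Hol(A)$. From Table \ref{lem:subgroups_aut_6.5}, the unique subgroup of $\aut{A}$ of order $q$ is $\langle\varphi_{1,1}\rangle$, so $\pi_2(G)=\langle\varphi_{1,1}\rangle$. Since $\langle\sigma\rangle$ is the unique subgroup of $A$ of order $p^2$, the kernel $\ker{\pi_2|_G}$ must be $\langle\sigma\rangle$. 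Hence $G$ admits the standard presentation
\[
G=\langle \sigma,\ \tau^a\varphi_{1,1}\rangle
\]
for some $0\leq a\leq q-1$, and any regular subgroup with the prescribed invariants has this shape.

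Next I would apply condition \NC, namely that $\langle\sigma\rangle$ be normal in $G$. A direct computation in the holomorph using \eqref{PP} gives
\[
\sigma\cdot(\tau^a\varphi_{1,1})\cdot\sigma^{-1}=\tau^{ah-1}\varphi_{1,1}.
\]
The elements of $G$ whose image under $\pi_2$ equals $\varphi_{1,1}$ are of the form $\sigma^l(\tau^a\varphi_{1,1})=\tau^{ah^l}\sigma^l\varphi_{1,1}$, so normality forces $l=0$ and $ah-1=a$, i.e.\ $a=\tfrac{1}{h-1}$. Regularity of $G=\langle\sigma,\tau^{\frac{1}{h-1}}\varphi_{1,1}\rangle$ then follows from Lemma \ref{pi_1 for fix}(2), which gives $\langle\sigma\rangle\leq\pi_1(G)$, together with $\tau^{\frac{1}{h-1}}\in\pi_1(G)$, so that $|\pi_1(G)|>p^2$ and thus $\pi_1(G)=A$ by Lemma \ref{rem for regularity}.

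To extract the operation, I would use that $\sigma\in\ker\lambda$ while $\lambda_{\tau^{\frac{1}{h-1}}}=\varphi_{1,1}$; iterating in $(B,\circ)$ and employing $\varphi_{1,1}^n(\tau^{y_1}\sigma^{y_2})=\tau^{y_1+n(h^{y_2}-1)/(h-1)}\sigma^{y_2}$, then substituting into $a\circ b=a+\lambda_a(b)$, a short simplification reproduces the formula in the statement. Finally, the displayed multiplication is manifestly symmetric under swapping $(x_1,x_2)$ with $(y_1,y_2)$, so $(B,\circ)$ is abelian; the element $(0,1)$ has $\circ$-order $p^2$ and $(1,0)$ has $\circ$-order $q$, so $(1,1)$ has order $p^2q$, which identifies $(B,\circ)$ with $\Z_{p^2q}$ rather than $\Z_p^2\times\Z_q$.

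No real obstacle is expected: the subgroup $\langle\varphi_{1,1}\rangle$ is characteristic in $\aut{A}$ and the relevant subgroup of $A$ of order $p^2$ is unique, so no further normalizer/orbit analysis is required, and the argument collapses to the two bookkeeping steps above. The only mildly delicate point is to correctly track the exponent $a=\tfrac{1}{h-1}$ arising from condition \NC, and to recognize the resulting multiplication formula as cyclic rather than elementary abelian.
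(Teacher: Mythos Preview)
Your approach matches the paper's: identify the unique order-$q$ image $\langle\varphi_{1,1}\rangle$, reduce the kernel to $\langle\sigma\rangle$, pin down $a=\tfrac{1}{h-1}$ via \NC, and then read off the operation as in Lemma~\ref{case q of Z_q semi Z_p2}. One small correction: $\langle\sigma\rangle$ is \emph{not} the unique subgroup of $A$ of order $p^2$---the $p$-Sylow subgroups of $A=\Z_q\rtimes_h\Z_{p^2}$ are not normal, there are $q$ of them---so you should instead say that all $p$-Sylow subgroups are conjugate and, since $\langle\varphi_{1,1}\rangle$ is normal in $\aut{A}$, you may conjugate $G$ to arrange $\ker\pi_2|_G=\langle\sigma\rangle$ (this is exactly what the paper does).
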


\begin{proof}
	Let $G$ be a regular subgroup of $\Hol(A)$ with $|\pi_2(G)|=q$. Then we can assume that $\pi_2(G)=\langle \varphi_{1,1}\rangle$ which is normal in $\aut{A}$. Since all $p$-Sylow subgroups of $A$ are conjugate to each other and have order $p^2$, we can assume that $G$ has the standard presentation
	\[
	G=\langle \sigma,\ \tau^a\sigma^b\varphi_{1,1} \rangle = \langle \sigma,\ \tau^{a}\varphi_{1,1} \rangle,
	\]
	for some $a\neq 0$. The condition \NC is fulfilled if and only if $a=\frac{1}{h-1}$.
	
	The structure of the skew brace associated to $G$ can be obtained as in Lemma \ref{case q of Z_q semi Z_p2}.
\end{proof}

The group $\mathfrak G_3=\langle \tau,\varphi_{1,1}\rangle$ is normal in $\Hol(A)$ and $\Hol(A)/\mathfrak G_3\cong \Z_{p^2}\times \Z_q^\times$.

\begin{lemma}\label{6.5}
	The skew braces of $A$-type with $|\ker{\lambda}|=pq$ are $(B_c,+,\circ)$ where $(B_c,+)=\Z_q\rtimes_h \Z_{p^2}$ and $(B_c,\circ)=\Z_q\rtimes_{h^{\frac{p}{c}+1}}\Z_{p^2}\cong A$ for $1\leq c\leq p-1$. In particular, $B_c$ is a bi-skew brace.
\end{lemma}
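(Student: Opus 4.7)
The plan is to follow the general strategy from Section 2 and the template used in the preceding lemmas of this subsection. Since $|\ker\lambda|=pq$, Remark \ref{remark for lambdas} forces $|\pi_2(G)|=p$, so by Table \ref{lem:subgroups_aut_6.5} I may assume $\pi_2(G)=\langle\varphi_{0,h^p}\rangle$ after conjugation. The group $A$ has $\langle\tau,\sigma^p\rangle$ as its unique (hence characteristic) subgroup of order $pq$, so necessarily $\ker{\pi_2|_G}=\langle\tau,\sigma^p\rangle$ and every such regular $G$ admits a standard presentation
\[
G=\langle \tau,\ \sigma^p,\ \tau^a\sigma^c\varphi_{0,h^p}\rangle.
\]
Since $\tau$ and $\sigma^p$ belong to the kernel, I can normalize $a=0$ and $0\le c\le p-1$; regularity together with Lemma \ref{pi_1 for fix}(1) rules out $c=0$, leaving $1\le c\le p-1$.

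For pairwise non-conjugacy, I would compute the normalizer of $\langle\varphi_{0,h^p}\rangle$ in $\aut{A}$ using the identity $\varphi_{i,j}\varphi_{0,h^p}\varphi_{i,j}^{-1}=\varphi_{i(1-h^p),h^p}$, which gives the diagonal subgroup $\setof{\varphi_{0,j}}{j\in\Z_q^\times}$ since $h^p\neq 1$. Such a $\varphi_{0,j}$ fixes both $\sigma$ and $\varphi_{0,h^p}$, so conjugation leaves the last generator of $G_c$ unchanged modulo $\ker{\pi_2|_G}$, and the residue of $c$ modulo $p$ is therefore an invariant. This yields $p-1$ pairwise non-conjugate regular subgroups
\[
G_c=\langle \tau,\ \sigma^p,\ \sigma^c\varphi_{0,h^p}\rangle.
\]

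The main computational step is extracting the multiplicative structure. Writing a general element of $G_c$ as $\tau^\alpha\sigma^{p\beta+c\gamma}\varphi_{0,h^{p\gamma}}$, the constraint $\pi_1=\sigma$ forces $\alpha=0$ and $\gamma\equiv c^{-1}\pmod p$, so the unique preimage of $\sigma$ is $\sigma\,\varphi_{0,h^{p/c}}$ with $p/c$ denoting $pc^{-1}\pmod{p^2}$; hence $\lambda_\sigma=\varphi_{0,h^{p/c}}$. Since $\tau\in\ker\lambda$ and an easy induction using $\lambda_\sigma(\sigma)=\sigma$ gives $\sigma^{\circ n}=\sigma^n$, the homomorphism $\lambda\colon (B_c,\circ)\to\aut{B_c,+}$ satisfies $\lambda_{\tau^x\sigma^y}=\lambda_\sigma^y=\varphi_{0,h^{py/c}}$. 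Plugging this into $a\circ b=a+\lambda_a(b)$ and rewriting the exponent as $y_1+(p/c)y_1=y_1(1+p/c)$ modulo $p^2$ yields
\[
(x_1,y_1)\circ(x_2,y_2)=(x_1+h^{y_1(1+p/c)}x_2,\ y_1+y_2),
\]
so $(B_c,\circ)\cong\Z_q\rtimes_{h^{p/c+1}}\Z_{p^2}$.

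Finally, since $p/c+1\equiv 1\pmod p$ is coprime to $p$, the element $h^{p/c+1}$ retains order $p^2$ in $\Z_q^\times$, so $(B_c,\circ)\cong A$. The bi-skew assertion will follow from \cite[Corollary 1.2]{skew_pq} exactly as in Lemma \ref{Zp2 rtimes Z_q q}: both the additive and multiplicative actions of the $p^2$-Sylow on $\langle\tau\rangle$ are through powers of $h$ and hence commute in $\aut{\Z_q}$. I expect no genuine obstacle here; the only point that requires care is consistently distinguishing the parameter $c$ (read modulo $p$) from the exponent $p/c+1$ (read modulo $p^2$).
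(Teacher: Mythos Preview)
Your proof is correct and follows essentially the same route as the paper: identify $\pi_2(G)=\langle\varphi_{0,h^p}\rangle$, use the unique $pq$-subgroup $\langle\tau,\sigma^p\rangle$ as kernel to reach the standard form $G_c$, separate the classes, and read off $(B_c,\circ)$ via $\lambda$. The only tactical difference is that the paper establishes non-conjugacy by passing to the abelian quotient $\Hol(A)/\mathfrak G_3$ with $\mathfrak G_3=\langle\tau,\varphi_{1,1}\rangle$, whereas you compute the normalizer $N_{\aut A}(\langle\varphi_{0,h^p}\rangle)=\{\varphi_{0,j}\}$ directly; both arguments yield the invariant $c\bmod p$ immediately, and the paper defers the explicit $\circ$-formula to the template of Lemma~\ref{Zp2 rtimes Z_q q} while you spell it out.
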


\begin{proof}
	The subgroup 
	$$             G_c = \langle \tau,\ \sigma^p,\ \sigma^c\varphi_{0,h^p} \rangle\cong A$$
	is regular. 
	If $G_c$ and $G_d$ are conjugate, then their images in $\Hol(A)/\mathfrak G_3$ coincide and so $\langle \sigma^p, \sigma^c\varphi_{0,h^p}\rangle=\langle \sigma^p, \sigma^d\varphi_{0,h^p}\rangle$. Then $c=d$.
	
	The unique subgroup of $A$ of order $pq$ is $\langle \tau, \sigma^p\rangle$. Hence if $G$ is a regular subgroup of $\Hol(A)$, we can assume that $G=G_c$ for some $1\leq c\leq p-1$.
	
	Using the same argument as in Lemma \ref{Zp2 rtimes Z_q q} we can describe the structure of the skew brace $B_c$ associated to $G_c$ and according to \cite[Corollary 1.2]{skew_pq}, the associated skew brace is a bi-skew brace.
\end{proof}

\begin{lemma}
	A set of representatives of conjugacy classes of regular subgroups $G$ of $\Hol(A)$ such that $|\pi_2(G)|=pq$ is
	\[
	G_c = \langle \sigma^p,\ \sigma^c\varphi_{0,h^p},\ \tau^{\frac{1}{h-1}}\varphi_{1,1} \rangle\cong A
	\]
	for $1\leq c\leq p-1$.
\end{lemma}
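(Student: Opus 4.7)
I would first exhibit the $G_c$ as regular subgroups and show they are pairwise non-conjugate, then classify every regular subgroup with $|\pi_2(G)|=pq$. For regularity, invoke Lemma~\ref{pi_1 for fix}(2): since $\varphi_{0,h^p}(\sigma^c)=\sigma^c$ and $\varphi_{1,1}(\tau^{1/(h-1)})=\tau^{1/(h-1)}$, the subgroups $\langle \sigma^c\rangle$ and $\langle \tau\rangle$ lie in $\pi_1(G_c)$; together with the explicit generator $\sigma^p$ and $\gcd(c,p)=1$, this forces $\pi_1(G_c)=A$, so $G_c$ is regular by Lemma~\ref{rem for regularity}. For pairwise non-conjugacy I would use the quotient $\Hol(A)/\mathfrak G_3\cong\Z_{p^2}\times\Z_q^\times$, which is abelian: if $G_c$ and $G_{c'}$ were conjugate, their images in the quotient would coincide, and a short calculation shows the coset $\sigma^c\varphi_{0,h^p}\mathfrak G_3$ determines $c\in\{1,\dots,p-1\}$.

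Next, I would take an arbitrary regular subgroup $G$ with $|\pi_2(G)|=pq$. By Table~\ref{lem:subgroups_aut_6.5}, up to conjugation $\pi_2(G)=\langle\varphi_{0,h^p},\varphi_{1,1}\rangle$. The iterates $\varphi_{1,1}^n(\sigma^p)=\tau^{n(h^p-1)/(h-1)}\sigma^p$ run through representatives of all $q$ subgroups of order $p$ in $A$, so conjugating by a suitable power of $\varphi_{1,1}\in\pi_2(G)$ (which normalizes $\pi_2(G)$) brings $\ker{\pi_2|_G}$ to $\langle\sigma^p\rangle$. Absorbing kernel elements into the generators, $G$ admits the standard presentation
\[
G=\langle\sigma^p,\ \tau^a\sigma^b\varphi_{0,h^p},\ \tau^c\sigma^d\varphi_{1,1}\rangle
\]
with $0\le b,d\le p-1$.

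The heart of the argument is to extract constraints from the \nc conditions. Computing $(\tau^c\sigma^d\varphi_{1,1})^q$ isolates the $\sigma$-part as $\sigma^{qd}$, which must lie in $\langle\sigma^p\rangle$, forcing $d=0$. For the lifted relation $\varphi_{0,h^p}^p=1$, if $b=0$ then either the second generator equals $\varphi_{0,h^p}\in G\cap(\{1\}\times\aut A)$ (violating regularity by Lemma~\ref{rem for regularity}) or Lemma~\ref{pi_1 for fix}(1) forces $\pi_1(G)\subseteq\langle\sigma^p,\tau\rangle$ (again violating regularity); hence $1\le b\le p-1$, and the relation then forces $a=0$. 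Finally, lifting $\varphi_{0,h^p}\varphi_{1,1}\varphi_{0,h^p}^{-1}=\varphi_{1,1}^{h^p}$ modulo $\langle\sigma^p\rangle$ and using $\sigma^b\tau^x\sigma^{-b}=\tau^{h^bx}$ yields the equation $c(h^b-1)=(h^b-1)/(h-1)$; since $h^b\ne 1$, we conclude $c=1/(h-1)$, so $G=G_b$. The isomorphism claim then follows from a direct check on the orders of $\sigma^c\varphi_{0,h^p}$ and $\tau^{1/(h-1)}\varphi_{1,1}$ together with the established commutation relation.

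The main obstacle is the commutation computation at the end of the previous paragraph: expanding $\varphi_{1,1}^{h^p}\sigma^{-b}$ in $\Hol(A)$ produces an extra factor $\tau^{-h^p(h^b-1)/(h-1)}$ coming from $\varphi_{1,1}^{h^p}(\sigma^{-b})=(\tau^{h^p}\sigma)^{-b}$, and this contribution must be carefully balanced against the $\tau$-exponent on the right-hand side $(\tau^c\varphi_{1,1})^{h^p}=\tau^{ch^p}\varphi_{1,1}^{h^p}$ before the common factor $h^p(h^b-1)$ can be cleared to pin down $c$.
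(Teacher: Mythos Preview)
Your proof is correct and follows essentially the same strategy as the paper: normalize $\pi_2(G)$ and $\ker{\pi_2|_G}$, write down a standard presentation, and use the \nc conditions to pin down the parameters. The only noteworthy difference is that you extract $d=0$ from the order relation $(\tau^c\sigma^d\varphi_{1,1})^q\in\langle\sigma^p\rangle$, whereas the paper reads it off the $\sigma$-exponent in the lifted commutation relation; both routes are valid and the rest of the argument (getting $a=0$ from $(\tau^a\sigma^b\varphi_{0,h^p})^p\in\langle\sigma^p\rangle$ and $c=\tfrac{1}{h-1}$ from the lifted conjugation) matches the paper.
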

\begin{proof}
	Arguing as in Lemma \ref{6.5} we can show that the groups $G_c$ are not pairwise conjugate. Let $G$ be a regular subgroup of $\Hol(A)$ with $|\pi_2(G)|=pq$.
	According to Table \ref{lem:subgroups_aut_6.5}, we can assume that $\pi_2(G)=\langle \varphi_{0,h^p},\ \varphi_{1,1}\rangle$. The subgroups of order $p$ of $A$ are $\langle \tau^a\sigma^p\rangle$ for some $a$. Since $\varphi_{1,1}$ is in the normalizer of $\pi_2(G)$, up to conjugation by a power of  $\varphi_{1,1}$ we can assume that $a=0$. So, $G$ has the standard presentation
	\[ G = \langle \sigma^p,\ \tau^b\sigma^c\varphi_{0,h^p},\ \tau^d\sigma^e\varphi_{1,1} \rangle\]
	for some $0\leq c,e\leq p-1$ and $0\leq b,d\leq q-1$. From the condition \nc and the fact that $\varphi_{0,h^p}\varphi_{1,1}=\varphi_{1,1}^{h^p}\varphi_{0,h^p}$ it follows that $e(1-h^p)=0\pmod{p^2}$, so $e=0\pmod{p}$, and $d(h^c-1)=\frac{h^c-1}{h-1}$. If $c=0$, then $\pi_1(G)\subseteq \langle\tau,\sigma^p\rangle$ by Lemma \ref{pi_1 for fix}(1), so we have $c\neq 0$ and then $d=\frac{1}{h-1}$. From $(\tau^b\sigma^c\varphi_{0,h^p})^p\in\langle\sigma^p\rangle$ we get $b=0$. Thus $G=G_c$.
\end{proof}

\begin{lemma}
	A set of representatives of conjugacy classes of regular subgroups $G$ of $\Hol(A)$ such that $|\pi_2(G)|=p^2$ is
	$$G_b=\langle \tau,\ \sigma^b\varphi_{0,h} \rangle\cong \begin{cases} \Z_{p^2q}, \text{ if } b=-1\pmod{p^2},\\ \Z_q\rtimes_{h^p} \Z_{p^2},\text{ if } b=-1\pmod{p} \text{ and } b\neq -1\pmod{p^2},\\
	\Z_q\rtimes_h \Z_{p^2},\text{ otherwise},\\
	\end{cases}$$ where $1\leq b\leq p^2-1$ and $b\neq 0\pmod{p}$. 
\end{lemma}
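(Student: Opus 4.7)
My plan is to follow the template established throughout Subsection \ref{subsection:q=1(p^2)_non-abelian_cyclic_p-Sylow2}. The key identity
$(\sigma^b\varphi_{0,h})^n = \sigma^{bn}\varphi_{0,h^n}$, which follows from $\varphi_{0,h}(\sigma)=\sigma$, will be used repeatedly: it shows both that $\sigma^b\varphi_{0,h}$ has order $p^2$ and that $\sigma^b\in\pi_1(G_b)$. Combined with $\tau\in\pi_1(G_b)$ and the hypothesis $\gcd(b,p)=1$, this gives $\pi_1(G_b)=A$ and $|G_b|=p^2q$, so Lemma \ref{rem for regularity} yields regularity.

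To see that the $G_b$ are pairwise non-conjugate, I would observe that any automorphism conjugating $G_b$ to $G_{b'}$ must normalize $\pi_2(G_b)=\langle\varphi_{0,h}\rangle$. A direct computation with the multiplication rule $\varphi_{i,j}\varphi_{k,l}=\varphi_{i+jk,jl}$ gives $\varphi_{i,j}\varphi_{0,h}\varphi_{i,j}^{-1}=\varphi_{i(1-h),h}$, so the normalizer of $\langle\varphi_{0,h}\rangle$ in $\aut{A}$ reduces to $\{\varphi_{0,j}:j\in\Z_q^\times\}$. Since each $\varphi_{0,j}$ fixes $\sigma$ and commutes with $\varphi_{0,h}$, conjugation by $\varphi_{0,j}$ preserves $\sigma^b\varphi_{0,h}$ and sends $\tau$ into $\langle\tau\rangle\subseteq G_b$. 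Hence $G_b$ is self-normalized by this subgroup, and comparing generators forces $b=b'$.

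For the converse, let $G$ be any regular subgroup with $|\pi_2(G)|=p^2$. From Table \ref{lem:subgroups_aut_6.5} we may assume $\pi_2(G)=\langle\varphi_{0,h}\rangle$, and then $\ker(\pi_2|_G)$ must be the unique $q$-Sylow subgroup $\langle\tau\rangle$. The standard presentation collapses to $G=\langle\tau,\sigma^b\varphi_{0,h}\rangle$; Lemma \ref{pi_1 for fix}(1) gives $\pi_1(G)\subseteq\langle\tau,\sigma^b\rangle$, so regularity forces $b\not\equiv 0\pmod p$, and $G=G_b$ for some admissible $b$.

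Finally, to identify the isomorphism class, I would compute $s\tau s^{-1}$ for $s=\sigma^b\varphi_{0,h}$. Using the defining relation of $A$ one obtains $s\tau s^{-1}=\sigma^b\tau^h\sigma^{-b}=\tau^{h^{b+1}}$, so $G_b\cong\Z_q\rtimes_{h^{b+1}}\Z_{p^2}$. The trichotomy in the statement then follows by splitting on the order of $h^{b+1}$ in $\Z_q^\times$, which equals $1$, $p$, or $p^2$ precisely when $b\equiv -1\pmod{p^2}$, when $b\equiv -1\pmod p$ but $b\not\equiv -1\pmod{p^2}$, or otherwise. I anticipate no significant obstacles; the most delicate point is the self-normalization in the second step, since all other ingredients are direct adaptations of the preceding lemmas in this subsection.
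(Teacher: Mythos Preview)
Your proof is correct and follows essentially the same template as the paper. The only difference is in the non-conjugacy step: the paper invokes the abelian quotient $\Hol(A)/\mathfrak G_3$ (as in Lemma~\ref{6.5}) to conclude that conjugate $G_b,G_{b'}$ have equal images $\langle \sigma^b\varphi_{0,h}\mathfrak G_3\rangle=\langle \sigma^{b'}\varphi_{0,h}\mathfrak G_3\rangle$, while you compute the normalizer of $\langle\varphi_{0,h}\rangle$ directly and show it fixes each $G_b$; both arguments are short and equivalent in strength. You also supply the isomorphism-class computation $G_b\cong\Z_q\rtimes_{h^{b+1}}\Z_{p^2}$, which the paper omits.
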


\begin{proof}
	By the same argument as in Lemma \ref{6.5}, the groups in the statement are not pairwise conjugate. Let $G$ be a regular subgroup of $\Hol(A)$ with $|\pi_2(G)|=p^2$. Since $A$ has a unique subgroup of order $q$ and according to Table \ref{lem:subgroups_aut_6.5}, we have that every regular subgroup $G$ has the standard presentation
	\[
	G_b = \langle \tau,\ \sigma^b\varphi_{0,h} \rangle
	\]
	for $b\neq 0\pmod{p}$ (otherwise, $\pi_1(G)\subseteq\langle\tau,\sigma^p\rangle$), i.e. $G=G_b$.
\end{proof}

\begin{lemma}
	A set of representatives of conjugacy classes of regular subgroups $G$ of $\Hol(A)$ with $|\pi_2(G)|=p^2q$ is
	$$ G_{d} =\langle \tau^{\frac{1}{h-1}}\varphi_{1,1},\ \sigma^d\varphi_{0,h} \rangle\cong A$$ for $1\leq d\leq p^2-1$ and $d\neq 0\pmod{p}$.
\end{lemma}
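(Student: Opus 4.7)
My plan is to verify that each $G_d$ is regular and isomorphic to $A$, then show that distinct $G_d$ are pairwise non-conjugate, and finally show that every regular subgroup with $|\pi_2(G)|=p^2q$ is conjugate to some $G_d$. For regularity, since $\varphi_{1,1}$ fixes $\tau$ and $\varphi_{0,h}$ fixes $\sigma$, Lemma \ref{pi_1 for fix}(2) applied to the two generators of $G_d$ gives $\langle\tau^{1/(h-1)}\rangle,\ \langle\sigma^d\rangle\subseteq\pi_1(G_d)$. Since $h-1$ is a unit modulo $q$ and $\gcd(d,p)=1$, these coincide with $\langle\tau\rangle$ and $\langle\sigma\rangle$ respectively, so $\pi_1(G_d)=A$ and Lemma \ref{rem for regularity} gives regularity. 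A direct computation of $(\sigma^d\varphi_{0,h})(\tau^{1/(h-1)}\varphi_{1,1})(\sigma^d\varphi_{0,h})^{-1}=(\tau^{1/(h-1)}\varphi_{1,1})^h$ then confirms $G_d\cong A$.

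For pairwise non-conjugacy I would project to the abelian quotient $\Hol(A)/\mathfrak G_3\cong\Z_{p^2}\times\Z_q^\times$: conjugate subgroups have equal images, the image of $G_d$ is the cyclic subgroup generated by $(d,h)$, and matching its $k$-th power $(kd,h^k)$ with $(c,h)$ forces $k\equiv 1\pmod{p^2}$, hence $c=d$.

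For exhaustion, by Table \ref{lem:subgroups_aut_6.5} I can assume $\pi_2(G)=\langle\varphi_{1,1},\varphi_{0,h}\rangle$; since $\ker\pi_2|_G$ is trivial, the standard presentation reads
\[
G=\langle \tau^a\sigma^b\varphi_{1,1},\ \tau^c\sigma^d\varphi_{0,h}\rangle.
\]
The \nc relation $(\tau^a\sigma^b\varphi_{1,1})^q=1$, expanded via $\sigma\tau=\tau^h\sigma$, forces $qb\equiv 0\pmod{p^2}$, that is $b=0$; lifting $\varphi_{0,h}\varphi_{1,1}\varphi_{0,h}^{-1}=\varphi_{1,1}^h$ to $G$ produces a single linear equation in $a$ whose unique solution is $a=1/(h-1)$; and Lemma \ref{pi_1 for fix}(1) forces $\gcd(d,p)=1$. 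If $d\equiv -1\pmod{p^2}$, the \nc relation $(\tau^c\sigma^d\varphi_{0,h})^{p^2}=1$ directly gives $c=0$; otherwise I would conjugate by $\varphi_{n,1}$ with $n$ chosen to satisfy $c+n(h^{d+1}-1)/(h-1)\equiv 0\pmod q$, which is solvable since $h^{d+1}\neq 1$.

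The main obstacle is this last conjugation step: I need to compute how $\varphi_{n,1}$ conjugates the second generator and then identify, inside the conjugated subgroup, the unique element with $\pi_2$-image $\varphi_{0,h}$, showing its $\tau$-exponent is exactly $c+n(h^{d+1}-1)/(h-1)$. This relies on the identities $\varphi_{n,1}\varphi_{0,h}\varphi_{n,1}^{-1}=\varphi_{n(1-h),h}$ and $(\tau^k\sigma)^b=\tau^{k(h^b-1)/(h-1)}\sigma^b$, together with reduction modulo an appropriate power of the first generator to eliminate the stray $\varphi_{1,1}^{n(1-h)}$ factor produced by the conjugation.
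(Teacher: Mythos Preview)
Your proposal is correct and follows essentially the same approach as the paper's proof: the paper also reduces to the standard presentation, uses the \nc condition $(\tau^a\sigma^b\varphi_{1,1})^q=1$ to get $b=0$, lifts $\varphi_{0,h}\varphi_{1,1}=\varphi_{1,1}^h\varphi_{0,h}$ to obtain $a=\tfrac{1}{h-1}$ (after excluding $d\equiv 0\pmod p$ via Lemma~\ref{pi_1 for fix}(1)), handles the special case $d\equiv -1$ via the order condition, and otherwise conjugates by $\varphi_{-c\frac{h-1}{h^{d+1}-1},1}$, exactly as you propose. Your explicit treatment of regularity, the isomorphism $G_d\cong A$, and non-conjugacy via $\Hol(A)/\mathfrak G_3$ fills in details the paper leaves implicit (referring back to the analogous Lemma~\ref{6.5}), but the method is the same.
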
   
\begin{proof}
	Let $G$ be a regular subgroup with $|\pi_2(G)|=p^2q$. According to Table \ref{lem:subgroups_aut_6.5}, we can assume that a standard presentation is
	\[
	G=\langle \tau^a\sigma^b\varphi_{1,1},\ \tau^c\sigma^d\varphi_{0,h} \rangle.
	\]
	According to the \nc conditions, from $(\tau^a\sigma^b\varphi_{1,1})^q=1$ we have $b=0$. Lifting the relation $\varphi_{0,h}\varphi_{1,1}=\varphi_{1,1}^h\varphi_{0,h}$, we have the equation $a(h^d-1)=\frac{h^d-1}{h-1}$. If $d=0\pmod{p}$, then $\pi_1(G)\subseteq \langle \tau,\sigma^p\rangle$. So $d\neq 0\pmod{p}$ and $a=\frac{1}{h-1}$. Hence,
	\[
	G =\langle \tau^{\frac{1}{h-1}}\varphi_{1,1},\ \tau^c\sigma^d\varphi_{0,h} \rangle
	\]
	for $d\neq 0\pmod{p}$. If $d=-1$, then $c=0$ since $\tau^c\sigma^d\varphi_{0,h}$ has order $p^2$. Otherwise, $G$ is conjugate to $G_{d}$ by $\varphi_{-c\frac{h-1}{h^{d+1}-1},1}$.
\end{proof}

We summarize the content of this subsection in the following table:     

\begin{table}[H]
	\centering
	
	\small{
		\begin{tabular}{c|c|c|c}
			$|\ker{\lambda}|$ &  $\Z_{p^2q}$ & $\Z_q\rtimes_{h^p}\Z_{p^2}$ &  $\Z_q\rtimes_h\Z_{p^2}$ \\
			\hline
			$1$ &- &- & $p(p-1)$  \\
			$p$ &- & $p-1$ &- \\
			$q$ & $1$ & $p-1$ & $p(p-2)$ \\
			$p^2$ & $1$ &- &- \\
			$pq$ &- &- & $p-1$ \\
			$p^2q$ &- &- & $1$
	\end{tabular}}
	\vs
	\caption{Enumeration of skew braces of $\Z_q\rtimes_h\Z_{p^2}$-type for $q=1\pmod{p^2}$.}
\end{table}

\section{A proof to a conjecture on skew braces of size $p^2q$}\label{section:conjecture}

In this section, we give a proof to Conjecture 4.1 of \cite{skew_trick}. To do this, we need the tables we obtained in the present work and also in \cite{abelian_case} for the skew braces of abelian type (also known as \emph{classical} braces).

For an integer $n$, denote by $A(n)$ the number of non-isomorphic skew braces of abelian type of size $n$ (i.e. classical braces) and by $B(n)$ the number of non-isomorphic skew braces of non-abelian type of size $n$. So the total number of non-isomorphic skew braces $s(n)$ is given by:
\begin{equation}\label{def}
	s(n)=A(n)+B(n).
\end{equation}

From \cite{Dietzel}, it is known the value of $A(4q)$ for a prime $q\geq 5$ and also the value of $A(p^2q)$ for primes $p,q$ such that $q>p+1>3$. As a particular case, we reprove these results.

\begin{theorem}\cite[Conjecture 4.1]{skew_trick}
Let $p$ and $q$ be prime integers. If $q\geq 5$, then
\begin{equation*}
	s(4q)=\begin{cases} 29, & \text{if } q= 3 \pmod 4 \\ 43, & \text{if } q= 1 \pmod 4 \end{cases}
\end{equation*}
and if $q>p+1>3$, then
\begin{equation*}
s(p^2q)=\begin{cases} 4, & \text{if } q\neq 1 \pmod{p} \\ 2p^2+7p+8, & \text{if } q=1\pmod{p} \text{ and } q\neq 1\pmod{p^2} \\ 6p^2+6p+8, & \text{if } q=1\pmod{p^2}. \end{cases}
\end{equation*}

\end{theorem}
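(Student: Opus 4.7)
The proof is essentially a bookkeeping exercise: for every regime of $(p,q)$ appearing in the statement, one reads off the count $B(p^2q)$ of skew braces of non-abelian type from the enumeration tables established in the present paper, adds the count $A(p^2q)$ of skew braces of abelian type computed in the companion paper \cite{abelian_case}, and checks that $A(p^2q)+B(p^2q)$ coincides with the claimed closed form. So the plan is just to organize these sums case by case.

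The first case, $q\not\equiv 1\pmod p$ with $q>p+1>3$, is immediate: under these hypotheses $p<q-1$, so also $p\not\equiv \pm 1\pmod q$, and we are outside the scope of Sections \ref{section:p=1(q)}--\ref{section:q=1(p^2)}. Consequently no non-abelian group of order $p^2q$ exists, $B(p^2q)=0$, and the formula $s(p^2q)=4$ reduces to the enumeration $A(p^2q)=4$ recorded in \cite{abelian_case}.

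For $p>2$ and $q\equiv 1\pmod p$ with $q\not\equiv 1\pmod{p^2}$, summing the two rows of Table \ref{table:q=1_p>2_1} yields
\[
B(p^2q)=\bigl(2p+2p(p-1)\bigr)+\bigl(4+(6p-4)\bigr)=2p^2+6p,
\]
which, combined with the value $A(p^2q)=p+8$ from \cite{abelian_case}, gives $2p^2+7p+8$. For $p>2$ and $q\equiv 1\pmod{p^2}$, the three rows of Table \ref{table:q=1_p>2_2} sum to $4p^2-2p$, $2p^2$ and $6p$, producing $B(p^2q)=6p^2+4p$; adding $A(p^2q)=2p+8$ from \cite{abelian_case} yields $6p^2+6p+8$. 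The case $n=4q$ proceeds identically with $p=2$: both rows of Table \ref{table:4q_1} sum to $10$, giving $B(4q)=20$, and together with $A(4q)=9$ from \cite{abelian_case} we obtain $s(4q)=29$ when $q\equiv 3\pmod 4$; the three rows of Table \ref{table:4q_2} sum to $12$, $8$ and $12$, giving $B(4q)=32$, and together with $A(4q)=11$ from \cite{abelian_case} we obtain $s(4q)=43$ when $q\equiv 1\pmod 4$.

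The entire argument thus reduces to summing a short list of small-integer expressions, and there is no genuine obstacle: the conceptual weight has already been discharged by the classification of regular subgroups carried out in Sections \ref{section:q=1(p)} and \ref{section:q=1(p^2)} and by the parallel classification for abelian additive type in \cite{abelian_case}. The only care required is to verify that, in the regime $q>p+1>3$ of the conjecture, the non-abelian cases $p\equiv \pm 1\pmod q$ covered in Sections \ref{section:p=1(q)} and \ref{section:p=-1(q)} cannot occur, which is the initial observation above.
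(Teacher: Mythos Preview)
Your proof is correct and follows essentially the same approach as the paper: both decompose $s(p^2q)=A(p^2q)+B(p^2q)$, invoke the values of $A(p^2q)$ from \cite{abelian_case}, sum the relevant enumeration tables to obtain $B(p^2q)$, and dispose of the case $q\not\equiv 1\pmod p$ by observing that the hypothesis $q>p+1>3$ forces $p\not\equiv\pm1\pmod q$ (the paper phrases this as arithmetic independence, citing \cite{EnumerationGroups}, but the content is identical).
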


\begin{proof} For the first part of the statement, according to \cite[Table 1]{abelian_case} we have that
	\begin{equation*}
	A(4q)=\begin{cases} 9, & \text{if } q= 3 \pmod 4 \\ 11, & \text{if } q= 1 \pmod 4 \end{cases}
	\end{equation*}
	and adding up all the entries on Tables \ref{table:4q_1} and \ref{table:4q_2}, we have:
	\begin{equation*}
	B(4q)=\begin{cases} 20, & \text{if } q= 3 \pmod 4 \\ 32, & \text{if } q= 1 \pmod 4. \end{cases}
	\end{equation*}
	So, by \eqref{def} we have $s(4q)$ as desired.

    For the second part, according to \cite[p. $237$]{EnumerationGroups}, we have that both conditions $q>p+1>3$ and $q\neq 1\pmod{p}$ can only be fulfilled if $p$ and $q$ are arithmetically independent (that is $p\neq \pm 1\pmod q$ and $q\neq 1 \pmod p$). So, by \cite[Table 1]{abelian_case} we have:
	\begin{equation*}
	A(p^2q)=\begin{cases} 4, & \text{if } q\neq 1\pmod{p} \\ p+8, & \text{if } q=1\pmod{p} \text{ and } q\neq 1\pmod{p^2} \\ 2p+8, & \text{if } q=1\pmod{p^2}. \end{cases}
	\end{equation*}
	
	For $B(n)$, we have that the case $q=1\pmod{p}$ and $q\neq 1\pmod{p^2}$ comes from Table \ref{table:q=1_p>2_1} and the condition $q=1\pmod{p^2}$ comes from Table \ref{table:q=1_p>2_2}. Finally, since every group of order $p^2q$ with $p$ and $q$ arithmetically independent must be abelian, we only have braces of abelian type. Summarizing:
	 \begin{equation*}
	 B(p^2q)=\begin{cases} 0, & \text{if } q\neq 1\pmod{p} \\ 2p^2+6p, & \text{if } q=1\pmod{p} \text{ and } q\neq 1\pmod{p^2} \\ 6p^2+4p , & \text{if } q=1\pmod{p^2} \end{cases}
	 \end{equation*}
	 and then the second part of the conjecture follows. Notice that the numbers in the last equation were obtained adding up all the entries in Table \ref{table:q=1_p>2_1} and \ref{table:q=1_p>2_2}.
\end{proof}

\section*{Acknowledgement}
This work was partially supported by UBACyT 20020171000256BA and PICT 2016-2481. The authors want to thank Manoj Yadav and Leandro Vendramin for their comments on an earlier version of this work. We also thank the anonymous referee for many suggestions that improved the presentation of our work.

\bibliographystyle{abbrv}
\bibliography{refs}
 
 \end{document}